\def\testb#1{\testb@i#1,,\@nil}%
\def\testb@i#1,#2,#3\@nil{%
  \draw[->, thick] (O) --++(#1);
  \ifx\relax#2\relax\else\testb@i#2,#3\@nil\fi}
\newcommand{\makediagr}[1]{
    \coordinate (O) at (0,0); \coordinate (N) at (0,0.8);
    \coordinate (NE) at (0.8,0.8); \coordinate (E) at (0.8,0);
    \coordinate (SE) at (0.8,-0.8); \coordinate (S) at (0,-0.8);
    \coordinate (SW) at (-0.8,-0.8);\coordinate (W) at (-0.8,0);
    \coordinate (NW) at (-0.8,0.8); \coordinate (B1) at (1.2,1.2);
    \coordinate (B2) at (-1.2,-1.2);
    \testb{#1}
} 
\newcommand{\diagr}[1]{
  \begin{tikzpicture}[scale=0.8]\makediagr{#1}\end{tikzpicture}
}
\newcommand{\diag}[1]{
  \begin{tikzpicture}[scale=0.3]\makediag{#1}\end{tikzpicture}
}
\newcommand{\makediag}[1]{
	\coordinate (-22) at (-2,2); \coordinate (-21) at (-2,1); \coordinate (-20) at (-2,0); \coordinate (-2-1) at (-2,-1); \coordinate (-2-2) at (-2,-2);
	\coordinate (-12) at (-1,2); \coordinate (-11) at (-1,1); \coordinate (-10) at (-1,0); \coordinate (-1-1) at (-1,-1); \coordinate (-1-2) at (-1,-2);
	\coordinate (02) at (0,2); \coordinate (01) at (0,1); \coordinate (0-1) at (0,-1); \coordinate (0-2) at (0,-2);
	\coordinate (12) at (1,2); \coordinate (11) at (1,1); \coordinate (10) at (1,0); \coordinate (1-1) at (1,-1); \coordinate (1-2) at (1,-2);
	\coordinate (22) at (2,2); \coordinate (21) at (2,1); \coordinate (20) at (2,0); \coordinate (2-1) at (2,-1); \coordinate (2-2) at (2,-2);
    \coordinate (O) at (0,0);
    \coordinate (A1) at (-2,-2); \coordinate (A2) at (-2,-1); \coordinate (A3) at (-2,0); \coordinate (A4) at (-2,1); \coordinate (A5) at (-2,2);
    \coordinate (B1) at (-2,-2); \coordinate (B2) at (-1,-2); \coordinate (B3) at (0,-2); \coordinate (B4) at (1,-2); \coordinate (B5) at (2,-2);
    
    \draw[color=red!20] (A1) --++ (4,0); \draw[color=red!20] (A2) --++ (4,0); \draw[color=red!20] (A3) --++ (4,0); 
    \draw[color=red!20] (A4) --++ (4,0); \draw[color=red!20] (A5) --++ (4,0);
    \draw[color=red!20] (B1) --++ (0,4); \draw[color=red!20] (B2) --++ (0,4); \draw[color=red!20] (B3) --++ (0,4); 
    \draw[color=red!20] (B4) --++ (0,4); \draw[color=red!20] (B5) --++ (0,4); 
    \testbb{#1}
} 
\def\testbb#1{\testbb@i#1,,\@nil}%
\def\testbb@i#1,#2,#3\@nil{%
  \draw[->] (O) --++(#1);%  \draw (O) --++(#1);
  \ifx\relax#2\relax\else\testbb@i#2,#3\@nil\fi}
\def\section{\@startsection{section}{1}%
 \z@{.7\linespacing\@plus\linespacing}{.5\linespacing}%
 {\normalfont\bfseries\scshape\centering}}
\def\subsection{\@startsection{subsection}{2}%
  \z@{.5\linespacing\@plus\linespacing}{.5\linespacing}%
  {\normalfont\bfseries\scshape}}
\def\subsubsection{\@startsection{subsubsection}{3}%
 \z@{.5\linespacing\@plus\linespacing}{-.5em}%{.5\linespacing}%
 % {\normalfont\bfseries\itshape}}
 {\normalfont\bfseries}}
\newtheorem{Theorem}{Theorem}%[section]
\newtheorem{Lemma}[Theorem]{Lemma}
\newtheorem{Proposition}[Theorem]{Proposition}
\newtheorem{Definition}[Theorem]{Definition}
\newtheorem{Corollary}[Theorem]{Corollary}
\newtheorem{Conjecture}[Theorem]{Conjecture}
\newcommand{\sym}{{\rm sym} }
\def\qed{$\hfill{\vrule height 3pt width 5pt depth 2pt}$}
\def\qee{$\hfill{\Box}$}
\newfont{\bbold}{msbm10 scaled \magstep1}
\newfont{\bbolds}{msbm7 scaled \magstep1}
\newcommand{\ns}{\mathbb{N}}%{\mbox{\bbold N}}
\newcommand{\zs}{\mathbb{Z}}%{\mbox{\bbold Z}}
\newcommand{\qs}{\mathbb{Q}}%{\mbox{\bbold Q}}
\newcommand{\rs}{\mathbb{R}}%{\mbox{\bbold R}}
\newcommand{\cs}{\mathbb{C}}%{\mbox{\bbold C}}
\newcommand{\fps}{formal power series}
\newcommand{\bxun}{\bar x_1}%{\overline {x}_1}
\newcommand{\bxde}{\bar x_2}%{\overline {x}_2}
\newcommand{\bx}{\bar x}%{\overline x}
\newcommand{\bu}{\bar u}
\newcommand{\bv}{\bar v}
\newcommand{\by}{\bar y}%{\overline y}
\newcommand{\bz}{\bar z}
\newcommand{\Bx}{\mathbf{x}} % Bold x
\newcommand{\Bi}{\mathbf{i}} % Bold i
\newcommand{\Bu}{\mathbf{u}} 
\newcommand{\Bv}{\mathbf{v}} 
\newcommand{\Bw}{\mathbf{w}} 
\newcommand{\Bs}{\mathbf{s}}
\newcommand{\Ba}{\mathbf{a}}
\newcommand{\om}{\omega}
\newcommand{\GK}{\mathbb{K}}
\newcommand{\bGK}{\overline\GK}
\newcommand{\hGK}{\widehat\GK}
\newcommand{\GF}{\mathbb{F}}
\newcommand{\cG}{\mathcal G}
\newcommand{\cK}{\mathcal K}
\newcommand{\cN}{\mathcal N}
\newcommand{\cS}{\mathcal S}
\newcommand{\bcS}{\overline{\mathcal S}}
\newcommand{\PP}{\mathcal P}
\newcommand{\X}{\mathcal X}
\newcommand{\cQ}{\mathcal Q}
\newcommand{\cU}{\mathcal U}
\newcommand{\cV}{\mathcal V}
\newcommand{\cT}{\mathcal T}
\newcommand\atopfix[2]{\genfrac{}{}{0pt}{}{#1}{#2}}
\DeclareMathOperator{\id}{id}
\newcommand{\gf}{generating function}
\newcommand{\gfs}{generating functions}
\newcommand{\xp}{x_1}
\newcommand{\xm}{x_2}
\newcommand{\bone} {\bar 1}
\def\emm#1,{{\em #1}}
\newcommand{\vareps}{\varepsilon}
\def\twoFone#1#2#3#4{{_2F_1}\biggl(\begin{matrix}
  {#1}\kern.707em {#2}\\{#3}
\end{matrix}\,\bigg|\,#4\biggr)}
\begin{document} \title[Counting walks with large steps in an
orthant]{Counting walks with large steps in an orthant}

\author[A. Bostan]{Alin Bostan}
\author[M. Bousquet-M\'elou]{Mireille Bousquet-M\'elou}
\author[S. Melczer]{Stephen Melczer}

\thanks{S.M. was supported by the University of Waterloo, an Eiffel Fellowship, an NSERC Graduate Scholarship and Postdoctoral Fellowship, and NSF grant DMS-1612674.}

\address{AB: INRIA Saclay,  1 rue Honor{\'e} d'Estienne d'Orves, F-91120 Palaiseau, France} 
\email{Alin.Bostan@inria.fr}
 
\address{MBM: CNRS, LaBRI, Universit\'e de Bordeaux, 351 cours de la Lib\'eration,  F-33405 Talence Cedex, France} 
\email{bousquet@labri.fr}

\address{SM: Department of Mathematics, University of Pennsylvania, 209 S. 33rd Street, Philadelphia, PA 19104, USA.}
\email{smelczer@sas.upenn.edu}

\keywords{Enumerative combinatorics; Lattice paths; Discrete partial
  differential equations; D-finite generating functions}

\subjclass[2010]{Primary 05A15, 05A10, 05A16; Secondary 33C05, 33F10}

\begin{abstract}

In the past fifteen years, the enumeration of lattice walks with steps taken
in a prescribed set $\cS$ and confined to a given cone, especially the first
quadrant of the plane, has been intensely studied. As a result, the \gf s of
quadrant walks are now well-understood, provided the allowed steps are
\emph{small}, that is $\cS \subset \{-1, 0,1\}^2$. In particular, having small
steps is crucial for the definition of a certain group of bi-rational
transformations of the plane. It has been proved that this group is finite if
and only if the corresponding \gf\ is D-finite (that is, it satisfies a linear
differential equation with polynomial coefficients). This group is also the
key to the uniform solution of 19 of the 23 small step models possessing a
finite group.

In contrast, almost nothing is known for walks with arbitrary steps. In this
paper, we extend the definition of the group, or rather of the associated
orbit, to this general case, and generalize the above uniform solution of
small step models. When this approach works, it invariably yields a D-finite
\gf. We apply it to many quadrant problems, including some infinite families.

After developing the general theory, we consider the $13\ 110$ two-dimensional
models with steps in $\{-2,-1,0,1\}^2$ having at least one $-2$ coordinate. We
prove that only 240 of them have a finite orbit, and solve 231 of them with
our method. The 9 remaining models are the counterparts of the 4 models of the
small step case that resist the uniform solution method (and which are known
to have an algebraic \gf). We conjecture D-finiteness for their generating
functions, but only two of them are likely to be algebraic. We also prove
non-D-finiteness for the $12\ 870$ models with an infinite orbit, except for
16 of them.

\end{abstract} 

\date{\today}

\maketitle

%%%%%%%%%%%%%%%%%%%%%%%%%%%%%%%%%%%%%%%%%%%%%%%%%%%%%%%%%%%%%%
\section{Introduction}
%%%%%%%%%%%%%%%%%%%%%%%%%%%%%%%%%%%%%%%%%%%%%%%%%%%%%%%%%%%%%%

The enumeration of planar lattice walks confined to the quadrant has received
a lot of attention over the past fifteen years. The basic question reads as
follows: given a finite step set $\cS\subset \zs^2$ and a starting point $P\in
\ns^2$, what is the number $q_n$ of $n$-step walks, starting from $P$ and
taking their steps in $\cS$, that remain in the non-negative quadrant $\ns^2$?
This is a versatile question, since such walks encode in a natural fashion
many discrete objects (systems of queues, Young tableaux and their
generalizations, among others). More generally, the study of these walks fits
in the larger framework of walks confined to cones. These walks are also much
studied in probability theory, both in a discrete~\cite{duraj,FaIaMa99} and in
a continuous~\cite{deblassie,garbit-raschel} setting. From a technical point
of view, counting walks in the quadrant is part of a general program aiming at
solving functional equations {that involve} \emm divided differences with
respect to several variables, (or \emm discrete, partial differential
equations): see {Equation~\eqref{dd} below} for a typical example,
and~\cite[Sec.~2]{mbm-gessel} for a general discussion on these equations.

On the combinatorics side, much attention has focused on the \emm nature, of
the associated \gf\ $Q(t)=\sum_n q_n t^n$. Is it rational in $t$, as for
unconstrained walks? Is it algebraic over $\qs(t)$, as for walks confined to a
(rational) half-space? More generally, is $Q(t)$ the solution of a linear
differential equation with polynomial coefficients in $\qs[t]$? (in short: is
it \emph{D-finite}?) The answer depends on the step set and, to a lesser
extent, on the starting point.

A systematic study was initiated in~\cite{mishna-jcta,BoMi10} for walks
starting at the origin $(0,0)$ and taking only \emm small, steps (that is,
$\cS\subset \{-1, 0, 1\}^2$). For these walks, a complete classification is
now available (Figure~\ref{fig:class-2D}). In particular, the {trivariate}
\gf\ $Q(x,y;t)$ that also records the coordinates of the endpoint of the walk
is D-finite if and only if a certain group $G$ of bi-rational transformations
is finite. The proof involves an attractive variety of tools, ranging from
{elementary} power series algebra~\cite{Bous05,mishna-jcta,BoMi10,mbm-gessel}
to complex analysis~\cite{KuRa12,raschel-unified}, computer
algebra~\cite{BoKa10,KaKoZe08}, probability
theory~\cite{denisov-wachtel,duraj} and number theory~\cite{BoRaSa14}. The
most recent results on this topic discriminate, among non-D-finite models,
those that are still \emm D-algebraic, (that is, satisfy polynomial
differential equations) from those that are
not~\cite{BeBMRa-FPSAC-16,BeBMRa-17,DHRS-17,DHRS-sing}. Remarkably, a new tool
then comes into play: differential Galois theory.

\begin{figure}
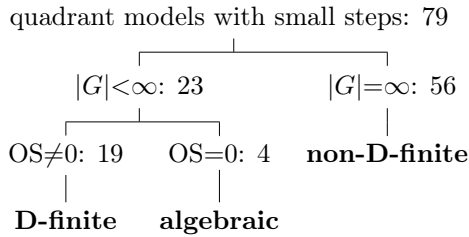

\centering
\begin{center}\edgeheight=5pt\nodeskip=1em\leavevmode
\tree{quadrant models with small steps: 79 
}{
\tree{ $|G|{<}\infty$: 23
}{
\tree{ OS${\neq}0$: {{19
}}}
{\tree{\textbf{{D-finite}}}{}}
\tree{ OS${=}0$:
            {{4
}}}{\tree{\textbf{{algebraic}}}{}}
}
\tree{ $|G|{=}\infty$:
          {56}}{\tree{\textbf{{non-D-finite}}}{}}
}
\end{center}

\caption{Classification of quadrant walks with small steps. The group of the walk is denoted by $G$, and OS stands for the \emm orbit sum,, a rational function which vanishes precisely for algebraic models. The 4 algebraic models are those of Figure~\ref{fig:alg}.} 
\label{fig:class-2D} 
\end{figure}

\begin{figure}[htb]
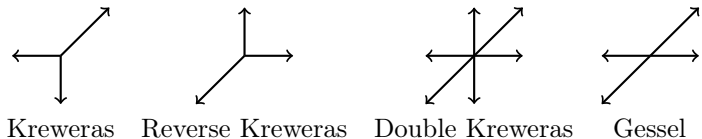

  \centering
   \begin{tabular}{cccc}
    $\diagr{NE,S,W}$  & $  \diagr{SW,E,N}$
            & $\diagr{NE,S,W,SW,E,N} $& $\diagr{E,W,NE,SW}$\\
    Kreweras& Reverse Kreweras & Double Kreweras & Gessel
  \end{tabular}
  \caption{The four algebraic small step models in the quadrant, with their usual names.}
  \label{fig:alg}
\end{figure}

\medskip Contrasting with the precision of this classification is the case of
quadrant walks with \emph{arbitrary steps}, for which it is fair to say that
almost nothing is known. Indeed, the small step assumption is crucial in all
methods used in the small step case, aside from two of them: the computer
algebra approach of~\cite{BoKa10,KaKoZe08} can in principle be adapted to any
steps, provided one is able to \emm guess, differential or algebraic equations
for the solution; and the asymptotic estimates of~\cite{denisov-wachtel} do
not require assumptions on the size of the steps. But even the definition of
the group that is central in the classification requires small steps. The
complex analytic approach of~\cite{KuRa12} that has proved very powerful for
small steps seems difficult to extend, and the first attempts have not yet led
to any explicit solution, nor indications on the nature of the
\gfs~\cite{fayolle-raschel-big}. {The classical reflection
principle~\cite{gessel-zeilberger} requires that no walk crosses the $x$- or
$y$-axis without actually touching it, which is equivalent to a small step
condition.}

The study of quadrant walks with arbitrary steps is not only a natural
mathematical challenge. It is also motivated by ``real life'' examples. For
instance, certain orientations of planar maps were recently shown by Kenyon et
al.~\cite{kenyon-bip} to be in bijection with quadrant walks taking their
steps in $\{(-p,0), (-p+1,1), \ldots, (0,p), (1,-1)\}$. In the forthcoming
paper~\cite{BoFuRa17} it is shown that the method of the current article
solves all these models. Other examples can be found in queuing theory, where
several clients may arrive, or be served, at the same time (think of ski-lifts
in a ski resort!). Also, a problem as innocuous as counting walks on the
square lattice confined to the cone bounded by the $x$-axis (for $x$ positive)
and the line $y=2x$  becomes, after a linear transformation, a quadrant
problem with large steps (Figure~\ref{fig:slope2}). Moreover, our study raises
intriguing combinatorial questions, which can be seen as an \emm a posteriori,
motivation of this work. For instance, some walks with large steps turn out to
be counted by simple hypergeometric numbers, for reasons that remain
combinatorially mysterious (see for instance Propositions~\ref{prop:12-a}
and~\ref{prop:hard-18}). Furthermore, our study gives rise to attractive
conjectures involving nine large step analogues of the four algebraic models
of Figure~\ref{fig:alg} (Section~\ref{sec:interesting}). We hope that this
paper will have a progeny as rich as its small step counterpart~\cite{BoMi10}.

\begin{figure}[htb]
  \centering
  \includegraphics[height=23mm]{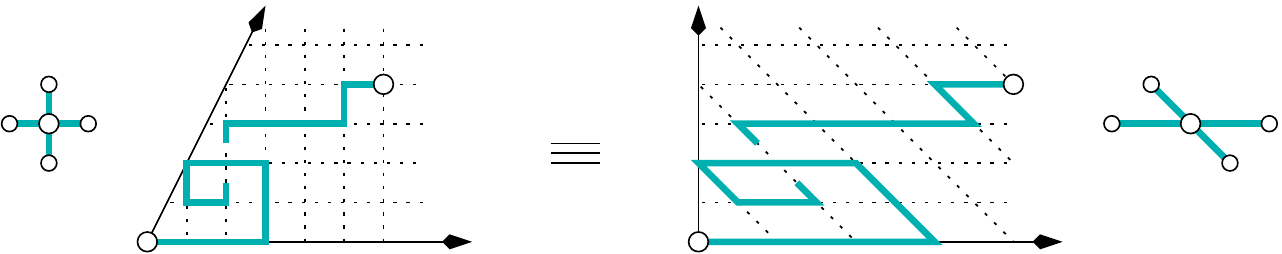}
  \caption{A square lattice walk confined to a wedge becomes a
    quadrant walk with large steps.}
  \label{fig:slope2}
\end{figure}

Our aim here is primarily to extend to arbitrary steps (and arbitrary
dimension, for walks confined to the orthant~$\ns^d$) a power series approach
that was introduced in~\cite{BoMi10} to solve the 19 easiest small step
models, namely those of the leftmost branch of Figure~\ref{fig:class-2D}. The
group is lost, but the associated orbit survives. When the method works, it
yields an expression of the \gf\, as the non-negative part of an algebraic
series --- a form which implies D-finiteness. On the negative side, we give a
criterion that simultaneously implies that the orbit of a
2-dimensional model is infinite
and that its \gf\ is not D-finite. We provide evidence that in 2D,
the finiteness of the orbit may still be related to the D-finiteness of the
solution. This is based in particular on the systematic exploration of
quadrant walks with steps in $\{-2,-1,0,1\}^2$.

Before we give more details on our
results, let us examine the solution of a simple small step model, as
presented in~\cite{BoMi10}. 

%=============================================
\subsection{A basic example: \texorpdfstring{$\cS= \{\searrow,\leftarrow,\uparrow\}$}{S = \{SE,E,N\}}}
\label{sec:basic}
%=============================================

We denote by $q(i,j;n)$ the number of walks with steps in $\cS$
 that start at $(0,0)$, end at $(i,j)$ and remain in the non-negative quadrant $\ns^2$. The associated \gf\ is 
\[
Q(x,y;t) :=\sum_{i,j,n\ge 0} q(i,j;n) x^i y^j t^n.
\]
We will find an explicit expression for this power series
using a four-step approach, sometimes
called the \emm algebraic kernel method, and borrowed from~\cite{BoMi10}, which we then generalize in the rest of the paper.

\smallskip\noindent
{\bf A functional equation.}
A step-by-step construction of quadrant walks with steps in $\{\searrow,\leftarrow,\uparrow\}$ yields the functional equation
\begin{equation}
\label{eqfunc-base}
Q(x,y)=1+t(x\by + \bx +y) Q(x,y) -tx\by Q(x,0) -t\bx Q(0,y),
\end{equation}
where we write $\bx:=1/x$, $\by:=1/y$ and {replace} $Q(x,y;t)$ by $Q(x,y)$ to
lighten notation. In this equation the constant term 1 stands for the
empty walk. The next term counts quadrant walks extended by one of our
three steps. The final two terms remove the contributions of the two
``forbidden moves'': either we have extended a walk ending on the
$x$-axis by a $\searrow$ step (term $-tx\by Q(x,0)$) or we have
extended a walk ending on the $y$-axis by a $\leftarrow$ step (term
$-t\bx Q(0,y)$). Observe that the above equation can also be written in a
form that involves two divided differences, one in $x$ and the other
in $y$:
\begin{equation}\label{dd}
Q(x,y)=1+ty Q(x,y) +tx\ \frac{Q(x,y)-Q(x,0)}{y} +t\ \frac{Q(x,y)- Q(0,y)}x.
\end{equation}
We refer to~\cite[Sec.~2]{mbm-gessel} for a general discussion on
equations involving divided differences with respect to two variables
(those that  involve divided differences with respect to one
variable only are known to have algebraic solutions~\cite{mbm-jehanne}).
We rewrite~\eqref{eqfunc-base} as
\begin{equation}
\label{eqfunc-base-RS}
K(x,y) xy Q(x,y)=xy -R(x) -S(y) ,
\end{equation}
where $R(x)= tx^2Q(x,0)$, $S(y)=ty Q(0,y)$, and $K(x,y)=1-t(x\by + \bx +y)$ is the \emm kernel, of the equation. Observe the decoupling of the $x$ and $y$ variables in the right-hand side. We call the bivariate series $R(x)$ and $S(y)$ \emm sections,.

\smallskip\noindent{\bf  The group of the walk.}
We now define two bi-rational transformations $\Phi$ and $\Psi$, acting
on pairs
 $(u,v)$ of coordinates (which will be, typically, algebraic
functions of $x$ and $y$):
\[ 
\Phi : (u,v) \mapsto ( \bu v, v) \qquad \hbox{and} \qquad \Psi: (u,v) \mapsto (u, u\bv).
\]
Each transformation fixes one coordinate, and transforms the other \emm so as to leave the step polynomial $u\bv + \bu +v$ unchanged,. Both transformations are involutions, and the orbit of $(x,y)$ under the action of $\Phi$ and $\Psi$ consists of 6 elements: 
\[ 
(x,y)  
 {\overset{\Phi}{\longleftrightarrow}} (\bx y,y)
 {\overset{\Psi}{\longleftrightarrow}} (\bx y,\bx)
 {\overset{\Phi}{\longleftrightarrow}} (\by,\bx)
 {\overset{\Psi}{\longleftrightarrow}} (\by,x\by)
 {\overset{\Phi}{\longleftrightarrow}} (x,x\by)
 {\overset{\Psi}{\longleftrightarrow}} (x,y).
\]
The group generated by $\Phi$ and $\Psi$ is thus the dihedral group of order 6.

\smallskip\noindent{\bf  A section-free equation.}
We now write, for each element $(x',y')$ of the orbit, the functional equation~\eqref{eqfunc-base-RS} with $(x,y)$ replaced by $(x',y')$: 
\begin{align}
K(x,y)\ xy Q(x,y) &= xy -R(x) -S(y), \nonumber\\
{K(x,y)}\  \bx y^2 Q(\bx y,y) &=  \bx y^2 -R(\bx y )- S(y),\nonumber
\\
{K(x,y)}\  \bx^2 y  Q(\bx y,\bx) &=  \bx^2 y  -R(\bx y )- S(\bx), \label{6-eqs}
\\
\vdots \qquad\qquad &= \qquad\qquad\vdots \nonumber\\
{K(x,y)}\  x^2\by Q(x,x\by) &= x^2\by   - R(x) - S(x\by).\nonumber
\end{align}
Due to the definition of $\Phi$ and $\Psi$, two consecutive equations have one section $R( \cdot)$ or $S(\cdot)$ in common. Thus,  the alternating sum of our 6 equations has a right-hand side \emph{free from sections}: 
\begin{multline}
 {K(x,y)} 
 \Big( xyQ(x,y) -\bx y^2 Q(\bx y,y)+ \bx ^2 y Q(\bx y,\bx)
 -\bx \by Q(\by,\bx)+x\by^2Q(\by,x\by)-x^2\by Q(x,x\by)\Big)
\\= {xy-\bx y^2+ \bx ^2 y -\bx \by+x\by^2-x^2\by}.\label{alt-tandem}
\end{multline}
The right-hand side of this equation is the \emm
  orbit sum, occurring in the classification of Figure~\ref{fig:class-2D}.
Equivalently,
\begin{multline*}
  xyQ(x,y) -\bx y^2 Q(\bx y,y)+ \bx ^2 y Q(\bx y,\bx)
-\bx \by Q(\by,\bx)+x\by^2Q(\by,x\by)-x^2\by Q(x,x\by)
\\={\frac
{xy \left( 1-\bx\by \right)  
\left( 1-\bx^2y \right)  
\left( 1-x\by ^2\right)}
{1-t(y+ \bx+ x\by)}}.
\end{multline*}

\smallskip\noindent{\bf Extracting $\boldsymbol{Q(x,y)}$.}
The last equation, combined with the fact that $Q(x,y)$ is a power series in~$t$
with polynomial coefficients in $x$ and $y$, characterizes $Q(x,y)$
uniquely: indeed, the series $xy Q(x,y)$ has coefficients in $xy
\qs[x,y]$, and thus involves only positive powers of $x$ and $y$.  But
the monomials occurring in each of the five other terms of the
left-hand side involve either a negative power of~$x$, or a negative
power of $y$ (or both). Hence the series $xy Q(x,y)$ is obtained by
expanding the  right-hand side as a series in $t$ with coefficients in
$\qs[x,\bx, y, \by]$, and then collecting terms with
positive powers of $x$ and $y$. We denote this extraction by:  
\[
xy Q(x,y) = [x^{>} y^{>}] \frac
{xy \left( 1-\bx\by \right)  
\left( 1-\bx^2y \right)  
\left( 1-x\by ^2\right)}
{1-t(y+ \bx+ x\by)}.
\]
Equivalently, upon dividing by $xy$, the series $Q(x,y)$ is obtained
by 
collecting the \emm non-negative part, in $x$ and $y$ of a rational function:
\[
Q(x,y)= [x^{\ge } y^{\ge }]\frac{  \left( 1-\bx\by \right)  
\left( 1-\bx^2y \right)  
\left( 1-x\by ^2\right)}{1-t(\bx+y+x\by)}.
\]
This explicit expression has strong consequences.
First, it guarantees that $Q(x,y)$ is D-finite~\cite{lipshitz-diag}. 
Second, expanding $(\bx+y+x\by)^n$ in powers of $x$ and $y$,  
it delivers a
hypergeometric expression for the  number of  walks of length $n=3m+2i+j$ ending at $(i,j)$:
\[
q(i,j;n)=\frac{(i+1)(j+1)(i+j+2) (3m+2i+j)!}{m!(m+i+1)!(m+i+j+2)!}.
\]
We conclude this example with a remark for the combinatorially inclined
readers: since walks with steps in $\cS=
\{\searrow,\leftarrow,\uparrow\}$ give a simple encoding of  Young
tableaux of height at most 3, the above formula is just the translation
in terms of walks of the classical hook 
formula~\cite[\S3.10]{sagan-book}.

%==============================================
\subsection{Outline of the paper}
%================================================

Based on the above example, we can now describe our results more precisely.
The next four sections {present} the extension to arbitrary steps (and
dimension) of the four stages involved in the above solution. The principles
of our approach are robust enough to be applicable to the enumeration of \emm
weighted, walks, which can be especially interesting in a probabilistic
context. We give many examples to illustrate these stages, but also to show
how they can fail: indeed, since our method only solves 19 of the 79 small
step models in the quadrant, we know in advance that it \emm has, to fail for
some models. Two obstacles can already be seen in the classification of
Figure~\ref{fig:class-2D}: the group (or what is left of it, namely its orbit)
can be infinite, and the orbit sum can vanish. Interestingly, we provide in
Section~\ref{sec:infinite-orbit} a criterion that implies simultaneously the
infiniteness of the orbit and the non-D-finiteness of the \gf.

In Sections~\ref{sec:1D} and~\ref{sec:hadamard}, we show that our approach
applies systematically in dimension 1 (walks on a half-line) and for the
so-called \emm Hadamard models, in dimension 2. Working in dimension 1 is the
least one can ask for, as walks on a half-line are very well
understood~\cite{banderier-flajolet,bousquet-petkovsek-recurrences,gessel-factorization}.
It is worth noting that the form of our solution is not exactly the standard
form obtained by earlier approaches. The second result, dealing with Hadamard
models, is more interesting as it seems that many models with finite orbit are
Hadamard. In the small step case for instance, 16 of the 19 models solvable by
our approach (that is, 16 of the 23 D-finite models) are of Hadamard type.

In Section~\ref{sec:m21} we apply these principles to the classification of
models with steps in $\{-2,-1,0,1\}^2$. Several results are still conjectural,
but in a sense we obtain a perfect analogue of the small step classification
shown in Figure~\ref{fig:class-2D}: our approach solves all 231 models with a
finite orbit and a non-vanishing orbit sum (Figure~\ref{classificationm2}).
For each of them, we express $Q(x,y;t)$ as the non-negative part in $x$ and
$y$ of an explicit rational function. Exactly 227 of these 231 solved models
are in fact Hadamard. This leaves out 9 models with a finite group but orbit
sum zero, for which we state several attractive conjectures. Finally, we
establish non-D-finiteness for the $12\ 870$ models with an infinite orbit,
except for 16 of them, which we still conjecture to be non-D-finite.

In Section~\ref{sec:asympt} we show that the form of the solutions that we
obtain is well-suited to the asymptotic analysis of their coefficients, and we
work out {explicitly} the analysis for the 4 non-Hadamard models with a finite
orbit solved in Section~\ref{sec:m21}.

We conclude in Section~\ref{sec:final} with a number of remarks and open
questions. \\

\noindent {\bf Notation and definitions.} For the sake of compactness we often
encode a step {into a word} consisting of its coordinates, with a bar above
negative coordinates: for example, the step $(-2,3,-5) \in \zs^3$ {will} be
denoted $\bar 23\bar 5$. Similarly, as used above, we use a bar over variables
to denote their reciprocals, so that $\bx=1/x$. A \emm small forward step, has
its coordinates in $\{1, 0, -1, -2, \ldots\}$ while a \emm large forward step,
has at least one coordinate larger than 1. We define similarly small and large
backward steps. A \emm small step, has only coordinates in $\{-1, 0, 1\}$. In
two dimensions, small steps can be identified by the compass directions, and
we sometimes draw them pictorially with arrows: for instance, $(1,1)$ can be
denoted $\nearrow$.

For a ring $R$, we denote by $R[x]$ (resp.~$R[[x]]$) the ring of polynomials
(resp. formal power series) in $x$ with coefficients in $R$. If $R$ is a
field, then $R(x)$ stands for the field of rational functions in $x$, and
$R((x))$ is the field of Laurent series in $x$ (that is, series of the form
$\sum_{n\ge n_0} a_n x^n$, with $n_0\in \zs$). This notation is generalized to
several variables in the usual way. For instance, the generating function
$Q(x,y;t)$ of walks restricted to the first quadrant is a series in
$\qs[x,y][[t]]$. We shall also consider \emm fractional power series,, namely
power series in a (positive) fractional power of $x$, and finally Puiseux
series, which are Laurent series in a fractional power of the variable. We
recall that if $R$ is an algebraically closed field, then Puiseux series in
$x$ with coefficients in~$R$ form an algebraically closed
field~(see~\cite{abhyankar-book} or~\cite[Chap.~6]{stanley-vol2}).

 If $F(u;t)$  is a power series in $t$ whose coefficients are Laurent series in $u$, 
\[
F(u;t)=\sum_{n\ge0} t^n \left(\sum_{i\ge i(n)} u^i f(i;n)\right),
\]
we denote by $[u^{>}]F(u;t)$ the \emm positive part of $F$ in $u$,:
\[
 [u^{>}]F(u;t)= \sum_{n\ge 0} t^n \left(\sum_{i>0 } u^i f(i;n)\right).
 \]
 We define the \emm non-negative part, $[u^{\ge}]F(u;t)$ in a similar
 fashion, by retaining as well the constant term in $u$.

We recall that a series $Q(x,y;t)$ is \emm algebraic, if there exists a
non-zero polynomial $P\in \qs[x,y,t,s]$ such that $P(x,y,t,Q(x,y;t))=0$. It is
\emm D-finite, (with respect to the variable $t$) if the vector space over
$\qs(x,y,t)$ spanned by the iterated derivatives $\partial_t^m Q(x,y;t)$, for
$m \ge 0$, has finite dimension (here $\partial_t$ denotes differentiation
with respect to $t$). The latter definition can be adapted to D-finiteness in
several variables, for instance $x$, $y$ and $t$: in this case we require
D-finiteness with respect to \emm each, variable
separately~\cite{lipshitz-df}. Every algebraic series is
D-finite~\cite[Prop.~2.3]{lipshitz-df}. If $Q(x,y;t)$ is D-finite in its three
variables, then so are $Q(0,0;t)$ and $Q(1,1;t)$. For a one-variable series
$F(t)=\sum f_n t^n$, D-finiteness is equivalent to the existence of a linear
recurrence relation with polynomial coefficients in $n$ satisfied by the
{coefficients} sequence $(f_n)$.

We often denote by $F_t$ the derivative ${\partial_t F}$ of a series $F(t)$.
This notation is generalized to several variables. For instance, $F_{t,u}$
stands for ${\partial_t\partial _u F}$.

%%%%%%%%%%%%%%%%%%%%%%%%%%%%%%%%%%%%%%%%%%%%%%%%%%%%%%%%%%%%%%
\section{A functional equation}
\label{sec:eqfunc}
%%%%%%%%%%%%%%%%%%%%%%%%%%%%%%%%%%%%%%%%%%%%%%%%%%%%%%%%%%%%%%

Let $d\ge 1$ and {let} $\cS$ be a finite subset of $\zs^d$. We would like to
count walks that take their steps in $\cS$, start from the origin and are
confined to {the orthant}~$\ns^d$. We denote by $q(i_1, \ldots , i_d;n)$ the
number of such walks consisting of $n$ steps and ending at $(i_1, \ldots,
i_d)$, and by $Q(x_1,\ldots, x_d;t)$ the associated \gf:
\[
Q(x_1, \ldots, x_d;t)\equiv Q(x_1, \ldots, x_d):=
\sum_{(i_1, \ldots, i_d , n ) \in \ns^{d+1}} q(i_1, \ldots, i_d;n) x_1^{i_1}
\cdots x_d^{i_d}t^n.
\]
Note that we often omit the dependence of $Q$ in $t$. The notation $Q$ refers
to the two-dimensional case (walks in a \emm quadrant,), from which we will
borrow most of our examples. In that case, we use the variables $x$ and $y$
instead of $x_1$ and $x_2$.

We use bold notation for multivariate quantities, so that $\Bx=(x_1, \ldots,
x_d)$, and for a $d$-tuple $\Bi=(i_1, \ldots, i_d) \in \zs^d$ we use the
abbreviation $\Bx^{\Bi} = x_1^{i_1} \cdots x_d^{i_d}$. The \emm step
polynomial, of a model (also called the \emm characteristic polynomial,) is
\begin{equation}
\label{S-def}
S(x_1, \ldots, x_d)=S(\Bx) = \sum_{\Bs \in\cS} {\Bx}^{\Bs}.
\end{equation}
The step polynomial is a Laurent polynomial in the variables $x_i$; here every
step has weight 1, but our approach can be adapted to the enumeration of
weighted walks with weights in some field $\GF$ (for
instance~{$\GF=\rs$} in a probabilistic context).

One can always write for the \gf\ $Q(x_1, \ldots, x_d;t)$ a functional
equation defining this series, based on a step-by-step construction of walks
confined to $\ns^d$, as was done in~\eqref{eqfunc-base}. This functional
equation is linear in the main series $Q(x_1, \ldots, x_d;t)$ {and,}
when the terms are grouped on one side of the {equation,} the
coefficient {in front of} $Q(x_1,\dots,x_d;t)$ is the \emm kernel,
\[ 
K(x_1,\ldots, x_d)= 1- t S(x_1, \ldots, x_d).
\]
The equation also involves unknown series that only depend on \emm some, of
the variables $x_1, \ldots, x_d$ {(and on $t$)}, {such as, for instance}, the series $Q(x,0)$ and
$Q(0,y)$ in~\eqref{eqfunc-base}. These series are called \emm sections, (of
$Q$). Let us consider a few examples.

\medskip
\noindent{\bf Example A.} 
Take $d=1$, and $\cS=\{\bar 1, 2\}$. The equation satisfied by the series $Q(x;t)\equiv Q(x)$ reads
\[
{Q(x)= 1+t(\bx +x^2) Q(x) -t \bx Q(0),}
\]
where the term $-t\bx Q(0)$ removes forbidden moves from position 0 to position $-1$. Equivalently, with $K(x)=1-t(\bx +x^2)$, 
{the previous equation reads}
\begin{equation}
\label{eqf-1D-bar}
K(x) Q(x) = 1- t\bx Q(0).
\end{equation}\qee

\medskip \noindent{\bf Example B.} Still with $d=1$, we now reverse the steps
of the previous example so as to have a long backward step, and study
$\cS=\{\bar 2, 1\}$. Extending a walk $w$ by the step $-2$ is now forbidden as
soon as $w$ ends at position $0$ or $1$. Hence, denoting by $Q_i\equiv Q_i(t)$
the length \gf\ of walks ending at position $i$, the equation satisfied by
$Q(x)$ reads
\[
{Q(x)= 1+t(\bx^2 +x) Q(x) -t \bx^2 Q_0 -t \bx Q_1,}
\]
or equivalently, with $K(x)=1-t(\bx^2 +x)$,
\begin{equation}\label{eqf-1D}
K(x) Q(x) = 1- t \bx^2 Q_0 -t \bx Q_1.
\end{equation}
Observe that $Q_0=Q(0)$ and $Q_1=
\partial_xQ(0)$. {The {occurrence} of a large backward step results in one
  more section on the right-hand side.} \qee

\medskip
\noindent{\bf Example C: Gessel's walks.} 
We return to two-dimensional models, now with the step set $\cS=\{\rightarrow, \nearrow, \leftarrow, \swarrow\}$. Appending a south-west step is forbidden as soon as the walk ends at abscissa or ordinate zero. The functional equation thus reads: 
\[
Q(x,y)=1+ t ( x+xy+\bx + \bx\by) Q(x,y) -t\bx Q(0,y) - t\bx \by
(Q(x,0)+Q(0,y)-Q(0,0)).
\]
The term in $Q(0,0)$ avoids removing twice walks {that end} at $(0,0)$. 
Equivalently, with $K(x,y)=1-t( x+xy+\bx + \bx\by)$, 
\[
K(x,y) Q(x,y) = 1 -t \bx(1+\by) Q(0,y) - t\bx\by (Q(x,0)-Q(0,0)).
\]
\qee

\medskip

\noindent{\bf Example D: {A model with a large forward step and a large backward step.}}
We now take $\cS=\{ \bar 20, \bar 11, 02, 1\bar 1\}$. Quadrant walks
formed of these steps, starting and ending at the origin, are known to
be in bijection with \emm bipolar orientations of
quadrangulations,~\cite{kenyon-bip,BoFuRa17}. The functional equation
reads 
\[
Q(x,y)=1+t(\bx^2+\bx y+y^2+x\by ) Q(x,y) -t\bx^2
\left(Q_{0,-}(y)+xQ_{1,-}(y)\right) -t\bx y Q_{0,-}(y)- tx\by Q(x,0),
\]
where $x^iQ_{i, -}(y)$ counts quadrant walks ending {at} $x$-coordinate~$i$. Note that $Q_{0,-}(y)=Q(0,y)$. We can rewrite {the functional equation}, using $K(x,y)=1-t( \bx^2+\bx y+y^2+x\by )$, as
\begin{equation}
\label{eqfunc:quadrangulations}
K(x,y) Q(x,y)= 1 -t \bx(\bx+y)Q_{0,-}(y) -t\bx Q_{1,-}(y)- tx\by Q(x,0).
\end{equation}\qee

\medskip

\noindent{\bf Example E: A model in three dimensions.}
We now take $\cS=\{\bar 1 \bar 1 \bar 1 ,\bar 1 \bar 1 1,\bar 1 10,100\}$. As
{for Gessel's walks} (Example C), the functional equation involves
inclusion-exclusion so as to avoid excluding several times the same move, and
one obtains:
\begin{multline}
   K(x,y,z)  Q(x,y,z)= 1  
-t\bx(\by \bz+\by z+ y) Q( 0,y,z)
-t\bx\by (\bz+z) Q(x, 0,z)
-t\bx\by\bz Q(x,y, 0)\\
\label{eqfunc-3D}
+t\bx\by (\bz+z) Q( 0, 0,z)
+ t\bx\by\bz Q( 0, y,  0)
+t\bx\by\bz Q(x, 0, 0)
-t\bx\by\bz Q( 0, 0, 0),
\end{multline}
where the kernel is
\[
K(x,y,z)=1-t( \bx\by\bz +\bx\by z+\bx y+x).
\]\qee

\medskip
After {seeing} all these examples, the reader should be convinced that a
functional equation can be written for any model $\cS$.
{We only give its general form} in two cases: first in
dimension two, and then for models with small backward steps. In
dimension {two}, the equation reads:
\begin{equation}\label{eq:quadrant}
K(x,y)Q(x,y)=1 - t \sum_{(k, \ell) \in \cS} x^k y^\ell
\left( \sum_{0\le i <-k} x^i Q_{i,-}(y) + \sum_{0\le  j<-\ell} y^j  Q_{-,j}(x) -
\sum_{\substack{0\le  i<-k\\ 0\le j<-\ell}} x^i y^j Q_{i,j}\right),
\end{equation}
where $K(x,y)=1-tS(x,y)$ is the kernel, $x^iQ_{i, -}(y)$ (resp.
$y^jQ_{-,j}(x)$) counts quadrant walks ending at abscissa $i$ (resp. at
ordinate $j$), and $Q_{i,j}$ is the length \gf\ of walks ending at
$(i,j)$.

For a model {of walks with \emph{small backward steps} confined to the orthant $\mathbb{N}^d$ in arbitrary dimension~$d$, the functional equation reads}:
\begin{equation}\label{eqfunc:small}
K(\Bx)Q(\Bx)=1+t \sum_{\emptyset \not = I \subset \llbracket 1, d\rrbracket} \left((-1)^{|I|} Q_I(\Bx) \sum_{\substack{\Bs \in \cS: \\ s_i =-1 \,\, \forall i \in I}} \Bx^{\Bs}\right),
\end{equation}
where $\Bx=(x_1, \ldots, x_d)$, $K(\Bx)=1-tS(\Bx)$, and $Q_I(\Bx)$ is the specialization of $Q(\Bx)$ where each $x_i, i\in I$, is set to $0$ (for instance, if $I=\{2,3\}$ then $Q_I(x)=Q(x_1, 0,0, x_4, \ldots, x_d)$). The proof is an inclusion-exclusion argument generalizing {the proof of}~\eqref{eqfunc-3D}.

%%%%%%%%%%%%%%%%%%%%%%%%%%%%%%%%%%%%%%%%%%%%%%%%%%%%%%%%%%%%%%
\section{The orbit of \texorpdfstring{$\boldsymbol{(x_1, \ldots, x_d)}$}{(x1,...,xd)}}
\label{sec:orbit}
%%%%%%%%%%%%%%%%%%%%%%%%%%%%%%%%%%%%%%%%%%%%%%%%%%%%%%%%%%%%%%

In Section~\ref{sec:basic}, we have {shown on one example} how to associate a
group to a 2D model with small steps. We now describe, for a general step set
$\cS$ in arbitrary dimension $d$, how to define the counterpart of this group,
or more precisely of its orbit. To avoid trivial cases, we only consider
models that have {both} positive and negative steps in each direction.

%=====================================================
\subsection{Definition and first examples}
%=====================================================
We denote by $\GK$ the field $\cs(x_1, \ldots, x_d)$, and by $\overline
\GK$ an algebraic closure of $\GK$. We first define two relations $\approx$ and
$\sim$ on elements of $\left(\overline \GK\setminus \{0\}\right)^d$;
recall that $S(\Bx)$ denotes the step polynomial of~$\cS$, defined
by~\eqref{S-def}. 

\begin{Definition}
\label{def:equiv} 
Let $\Bu$ and $\Bv$ be two {distinct} $d$-tuples in  $\left(\overline
  \GK\setminus \{0\}\right)^d$, and let $1\le i \le d$. Then $\Bu$ and $\Bv$ are \emm
$i$-adjacent,, denoted $\Bu \stackrel i \approx \Bv$, if  $S(\Bu)=S(\Bv)$  and $\Bu$
and $\Bv$ differ only by their $i$th coordinate. They are \emm
adjacent,, denoted $\Bu  \approx \Bv$, if they are $i$-adjacent for
some $i$.

Clearly, the relation $\approx$ is symmetric. We denote by $\sim$ its
reflexive and transitive closure. The \emm orbit, of $\Bu$ is its equivalence
class for this relation.

The \emm {$\Bu$-length}, of an element $\Bv$ in the orbit of $\Bu$ is the smallest $\ell$ such that there exists $\Bu^{(0)}=\Bu,
  \Bu^{(1)}, \ldots, \Bu^{(\ell)}= \Bv$ with $\Bu^{(0)}\approx
  \Bu^{(1)} \approx \cdots \approx \Bu^{(\ell)}$.
\end{Definition}
Note that the value of $S$ is constant over the orbit of $\Bu$. We will often
refer to the orbit of $\Bx=(x_1, \ldots,x_d)$ (or $(x,y)$ in two dimensions)
as \emph{the} orbit of the model $\cS$, {and to the \emm length, of an element
of this orbit as its $\Bx$-length}. We use the word \emm orbit, even though we
have not defined any underlying group: this terminology comes from the case of
small steps, as justified by Proposition~\ref{prop:small-group} below. Before
we proceed, let us check that the structure of the orbit does not depend on
the choice of the algebraic closure of $\GK$.

\begin{Lemma}\label{lem:isom}
  Let $ \bGK$ and $\hGK$ be two algebraic closures of
  $\GK$ and $\tau :  \bGK \rightarrow \hGK$ a field automorphism
  fixing $\GK$. For any $\Bu=(u_1, \ldots, u_d)\in
  \left(\bGK\setminus\{0\}\right)^d$, we denote by $\tau(\Bu)$ the
  element of {$\left(\hGK\setminus\{0\}\right)^d$}
 obtained by applying
  $\tau$ to $\Bu$ component-wise. Then $\tau$ {preserves  adjacencies,
  and sends the orbit of $\Bu$
  onto the orbit of   $\tau(\Bu)$.}
\end{Lemma}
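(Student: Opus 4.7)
The key observation is that the step polynomial $S(\Bx) = \sum_{\Bs \in \cS} \Bx^{\Bs}$ has integer coefficients, which certainly lie in $\GK$ and are therefore fixed by $\tau$. Consequently, for any $\Bu = (u_1,\ldots,u_d) \in (\bGK\setminus\{0\})^d$, one has
\[
\tau\bigl(S(\Bu)\bigr) \;=\; \tau\Bigl(\sum_{\Bs \in \cS} u_1^{s_1}\cdots u_d^{s_d}\Bigr) \;=\; \sum_{\Bs \in \cS} \tau(u_1)^{s_1}\cdots \tau(u_d)^{s_d} \;=\; S\bigl(\tau(\Bu)\bigr),
\]
since $\tau$ is a ring homomorphism (negative exponents pose no problem because $\tau(u_j)\ne 0$ as $\tau$ is injective and $u_j\neq 0$). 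This identity is the engine of the whole proof.

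Next, I would check that $\tau$ preserves $i$-adjacency. Suppose $\Bu \stackrel i \approx \Bv$, so that $u_j = v_j$ for $j\neq i$, $u_i \neq v_i$, and $S(\Bu)=S(\Bv)$. Applying $\tau$ component-wise, we get $\tau(u_j) = \tau(v_j)$ for $j\neq i$, while $\tau(u_i) \neq \tau(v_i)$ by injectivity of $\tau$; and by the displayed identity, $S(\tau(\Bu)) = \tau(S(\Bu)) = \tau(S(\Bv)) = S(\tau(\Bv))$. Hence $\tau(\Bu) \stackrel i \approx \tau(\Bv)$, and in particular $\tau$ sends $\approx$-adjacent pairs to $\approx$-adjacent pairs.

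For the statement on orbits, I proceed by induction on the $\Bu$-length. If $\Bv$ lies in the orbit of $\Bu$, there is a chain $\Bu = \Bu^{(0)} \approx \Bu^{(1)} \approx \cdots \approx \Bu^{(\ell)} = \Bv$; applying $\tau$ componentwise to this chain and using the previous paragraph produces a chain witnessing $\tau(\Bu) \sim \tau(\Bv)$, so $\tau$ maps the orbit of $\Bu$ into the orbit of $\tau(\Bu)$. For the reverse inclusion, I use the standard fact that any homomorphism $\tau$ between two algebraic closures of a field that fixes that field is in fact an isomorphism; thus $\tau^{-1}:\hGK\to\bGK$ exists and fixes $\GK$, so applying the preceding argument to $\tau^{-1}$ shows that every element of the orbit of $\tau(\Bu)$ is the image under $\tau$ of some element of the orbit of $\Bu$.

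The only mild subtlety — which I would mention — is the surjectivity of $\tau$, needed to invoke $\tau^{-1}$; everything else is a direct consequence of the fact that $S$ is defined over $\GK$ and $\tau$ acts component-wise.
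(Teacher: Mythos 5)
Your proof is correct and follows the same approach as the paper's (sketched) argument: observe that $S$ has coefficients in $\GK$, hence commutes with $\tau$; conclude that $i$-adjacency is preserved; and finish by induction on the length of a connecting chain. You usefully spell out that the reverse inclusion (surjectivity of the induced map on orbits) comes from applying the same argument to $\tau^{-1}$, a point the paper leaves implicit, though it is already built into the hypothesis that $\tau$ is an isomorphism.
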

\begin{proof} (sketch)
  Clearly, if $S(\Bv)=S(\Bw)$ then  $S(\tau(\Bv))=S(\tau(\Bw))$, because
  $S$ has rational coefficients. And if $\Bv$ and $\Bv'$ differ by
  their $i$th coordinate, the same holds for their images
  by~$\tau$. {This shows that adjacencies are preserved.}
  The {isomorphism of orbits} then follows by induction on the length.
\end{proof}

The next proposition tells that two models that are
  equivalent up to a symmetry of the  hypercube have isomorphic
  orbits. Since these  symmetries are generated by a
reflection and adjacent transpositions, it suffices to examine these
two cases.

\begin{Proposition}\label{prop:sym}
  Let $\cS \subset \zs ^d$ be a model with step polynomial $S(x_1,
  \ldots, x_d)$, and let $\tilde \cS$ be the model obtained by
  swapping the first two coordinates, with step
  polynomial $S(x_2, x_1, x_3 ,\ldots, x_d)$. Then the orbits of $\cS$
  and $\tilde \cS$ are \emm isomorphic, (there is a bijection from one to
  the other that preserves adjacencies).

The same holds if $\tilde S$ is obtained from $\cS$ by a reflection in
the hyperplane $x_1=0$; that is, if its step  polynomial is
$S(1/x_1, x_2, \ldots, x_d)$.
\end{Proposition}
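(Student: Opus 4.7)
The plan is to exhibit in each of the two cases an explicit involution $\phi:(\bGK\setminus\{0\})^d\to(\bGK\setminus\{0\})^d$ that intertwines $\cS$-adjacency with $\tilde\cS$-adjacency. Such a $\phi$ automatically restricts to a bijection between the orbit of~$\Bx$ under $\cS$ and the orbit of $\phi(\Bx)$ under $\tilde\cS$, preserving the adjacency structure; since $\phi(\Bx)$ is the ``natural'' starting point for~$\tilde\cS$ (namely, the image of~$\Bx$ under the variable-level transformation relating the two models), this gives the isomorphism of orbits claimed by the proposition.

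For the swap case I would take $\phi=\sigma:(u_1,u_2,u_3,\dots,u_d)\mapsto(u_2,u_1,u_3,\dots,u_d)$. The only computation needed is the identity
\[
\tilde S(\sigma\Bu)=S(\Bu),
\]
which is immediate from the hypothesis $\tilde S(x_1,\dots,x_d)=S(x_2,x_1,x_3,\dots,x_d)$. Combining this with the fact that $\Bu,\Bv$ agree outside position~$i$ if and only if $\sigma\Bu,\sigma\Bv$ agree outside position~$\sigma(i)$ (where $\sigma$ denotes here the transposition $(1\,2)$ acting on indices), I obtain the equivalence: $\Bu\stackrel i\approx\Bv$ for the model~$\cS$ iff $\sigma\Bu\stackrel{\sigma(i)}{\approx}\sigma\Bv$ for the model~$\tilde\cS$. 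Iterating along chains of adjacencies then yields a bijection from the $\cS$-orbit of~$\Bx$ onto the $\tilde\cS$-orbit of $\sigma(\Bx)=(x_2,x_1,x_3,\dots,x_d)$ that preserves adjacencies.

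The reflection case is entirely analogous, with $\phi=\rho:(u_1,u_2,\dots,u_d)\mapsto(1/u_1,u_2,\dots,u_d)$. The identity $\tilde S(\rho\Bu)=S(\Bu)$ is immediate from $\tilde S(x_1,\dots,x_d)=S(1/x_1,x_2,\dots,x_d)$. Since $\rho$ acts only on the first coordinate and is injective there, it preserves adjacency \emph{indices} verbatim: $\Bu\stackrel i\approx\Bv$ under~$\cS$ iff $\rho\Bu\stackrel i\approx\rho\Bv$ under~$\tilde\cS$. Hence $\rho$ bijects the $\cS$-orbit of~$\Bx$ with the $\tilde\cS$-orbit of $\rho(\Bx)=(1/x_1,x_2,\dots,x_d)$, preserving adjacencies.

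There is no serious obstacle beyond this bookkeeping. The only conceptual subtlety worth flagging is that $\phi(\Bx)\neq\Bx$ in general, so the constructed bijection identifies the $\cS$-orbit of~$\Bx$ with the $\tilde\cS$-orbit of $\phi(\Bx)$ rather than with the $\tilde\cS$-orbit of~$\Bx$ itself; but this matches exactly the proposition's demand once one adopts the natural identification of starting points on the two sides. The argument also makes clear that the same template handles \emph{any} symmetry of the hypercube — a permutation of variables combined with a subset of coordinate reflections $x_i\mapsto 1/x_i$ — by taking $\phi$ to act on $\Bu$ in the corresponding way and checking the one-line identity $\tilde S(\phi\Bu)=S(\Bu)$.
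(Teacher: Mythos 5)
Your bookkeeping for the swap map $\sigma$ and the reflection map $\rho$ is right: the identity $\tilde S(\sigma\Bu)=S(\Bu)$ (resp.\ $\tilde S(\rho\Bu)=S(\Bu)$) does show that $\Bu\stackrel i\approx\Bv$ for $\cS$ iff $\sigma\Bu\stackrel{\sigma(i)}{\approx}\sigma\Bv$ for $\tilde\cS$ (resp.\ $\rho\Bu\stackrel i\approx\rho\Bv$), and hence that $\sigma$ restricts to an adjacency-preserving bijection from the $\cS$-orbit of $\Bx$ onto the $\tilde\cS$-orbit of $\sigma(\Bx)=(x_2,x_1,x_3,\dots,x_d)$. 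But the final step has a real gap. What the proposition asks for, with the paper's convention that ``the orbit of a model'' means the orbit of the base tuple $\Bx$, is a bijection onto the $\tilde\cS$-orbit of $\Bx$ itself, and this is generally a different subset of $(\bGK\setminus\{0\})^d$ from the $\tilde\cS$-orbit of $\sigma(\Bx)$: there is no reason for $\sigma(\Bx)$ to lie in the $\tilde\cS$-orbit of $\Bx$. Your closing remark (``this matches the proposition's demand once one adopts the natural identification of starting points'') names the issue but does not resolve it; that identification is exactly what must be constructed.

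The missing ingredient is a second, variable-level step. Let $\tau$ be a field automorphism of $\bGK$ extending the $\qs$-automorphism of $\GK=\cs(x_1,\dots,x_d)$ that transposes $x_1$ and $x_2$ (resp.\ sends $x_1\mapsto 1/x_1$) and fixes the remaining $x_i$. Componentwise application of $\tau$ preserves $\tilde\cS$-adjacency: this is the mechanism of Lemma~\ref{lem:isom}, though the lemma does not apply verbatim since this $\tau$ does not fix $\GK$; what is actually used is only that $\tilde S$ has rational coefficients, so that $\tau(\tilde S(\Bu))=\tilde S(\tau\Bu)$. Since $\tau(\sigma(\Bx))=\Bx$ (resp.\ $\tau(\rho(\Bx))=\Bx$), composing your map with componentwise $\tau$ carries the $\tilde\cS$-orbit of $\sigma(\Bx)$ onto the $\tilde\cS$-orbit of $\Bx$, and the composite $\tau\circ\sigma$ is the bijection the proposition requires. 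The paper merges both steps into one: it builds the $\cS$-orbit inside $\bGK$ and the $\tilde\cS$-orbit inside a differently-ordered algebraic closure $\hGK$, takes $\delta$ to be the ``swap $x$ and $y$'' (resp.\ ``send $x$ to $\bx$'') field isomorphism $\bGK\to\hGK$, and uses the single map $(u,v)\mapsto(\delta(v),\delta(u))$ (resp.\ $(u,v)\mapsto(1/\delta(u),\delta(v))$), which visibly fixes the base point. Once you add the $\tau$ step, your two-stage argument is correct and equivalent; without it, the proof as written does not reach the stated conclusion.
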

\begin{proof}
To lighten notation, we prove this result in two dimensions. The
proof is similar in higher dimensions.

In the first case, let us construct the orbit of $\cS$ in
the field $\bGK$ of iterated Puiseux series in $x$ and $y$
(Puiseux series in $x$ whose coefficients are Puiseux series in
$y$). We shall construct the orbit of $\tilde \cS$ in the field $\hGK$ of iterated Puiseux series in $y$ and $x$ (note the inversion).
If $u\in  \bGK$,  let $\delta(u) \in  \hGK$  be obtained from
$u$ by
swapping $x$ and $y$. We claim that, if $(u,v)$ is in the
orbit of~$\cS$, then the
pair $(\delta(v), \delta(u))$ is in the orbit of $\tilde \cS$, and
vice-versa. {First, if $(u,v)=(x,y)$, then $(\delta(v),
\delta(u))=(x,y)$. Then, if $(u,v)$ is 2-adjacent to $(u,w)$ in the
orbit of $\cS$, then $(\delta(v),
\delta(u))$ is 1-adjacent to $(\delta(w),
\delta(u))$ in the orbit of $\tilde \cS$, because
\[
\tilde S(\delta(w),\delta(u))= S(\delta(u), \delta(w))= S(\delta(u),
\delta(v))=\tilde S(\delta(v),\delta(u)).
\]
(The second equality comes from the 2-adjacency of $(u,v)$ and $(u,w)$
for $\cS$.) One proves similarly that 1-adjacencies for $\cS$ become
2-adjacencies for $\tilde \cS$.
The isomorphism between the orbits of
$\cS$ and $\tilde \cS$ then follows by induction on the length.}

The proof is similar in the second case, upon constructing the orbit
of $\tilde \cS$ in the field $\hGK$ of iterated Puiseux series in $\bar x$
and $y$.  Denoting by $\delta$ the transformation from $\bGK$
to $\hGK$ that sends $x$ to $\bx$, a pair $(u,v)$ is in the orbit
of $\cS$ if and only if $(1/\delta(u), {\delta(v)})$ is in the orbit of $\tilde \cS$.
\end{proof}
 
We will now examine examples. One important observation is the
following.
\begin{Lemma}\label{lem:ind}
   If the coordinates of $\Bu$  are algebraically independent over $\qs$, then
the same holds for any $\Bv$ {in the orbit of $\Bu$}.
Moreover, the number
of elements $\Bv$ that are $i$-adjacent to $\Bu$ is
{$M_i+m_i-1$}, where
$M_i$ (resp. $-m_i$) is the largest (resp. smallest) move in the $i$th
direction among the steps of~$\cS$.  In particular, for small step models
($M_i=m_i=1$ {for all $i$}) there is one adjacent element in every direction.
\end{Lemma}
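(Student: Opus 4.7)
The plan is to treat the two claims separately, reducing both to facts about a one-variable Laurent polynomial equation obtained by isolating the coordinate that changes under an $i$-adjacency.

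For the algebraic-independence claim, I would induct on the $\Bu$-length: it then suffices to show that if $\Bu$ has algebraically independent coordinates and $\Bv$ is $i$-adjacent to $\Bu$, then so does $\Bv$. Write $S(\Bx)=\sum_k x_i^k P_k(\Bx_{\hat i})$ and set $\phi(X) := S(u_1,\ldots,u_{i-1},X,u_{i+1},\ldots,u_d)$, a Laurent polynomial in $X$ with coefficients in $\qs(\Bu_{\hat i})$, non-constant because $M_i,m_i\ge 1$. The $i$-adjacency reads $\phi(v_i)=\phi(u_i)$, and only $v_i$ is new, so it suffices to show $v_i$ is transcendental over $\qs(\Bu_{\hat i})$. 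If it were algebraic, then $\phi(v_i)=\phi(u_i)$ would be algebraic over $\qs(\Bu_{\hat i})$, and hence $u_i$ would satisfy the nonzero polynomial $X^{m_i}\phi(X)-X^{m_i}\phi(u_i)$ over a finite extension of $\qs(\Bu_{\hat i})$, contradicting the algebraic independence of~$\Bu$.

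For the counting claim, the $i$-adjacent elements of $\Bu$ are in bijection with solutions $v_i\in\bGK\setminus\{0,u_i\}$ of $\phi(X)=\phi(u_i)$. Clearing denominators yields the polynomial $p(X)=X^{m_i}\phi(X)-X^{m_i}\phi(u_i)$ of degree $M_i+m_i$, with leading coefficient $P_{M_i}(\Bu_{\hat i})$ and constant term $P_{-m_i}(\Bu_{\hat i})$; both are nonzero Laurent polynomials in $\Bu_{\hat i}$, hence nonzero at algebraically independent $\Bu$. Since $u_i$ is a root, factor $p(X)=(X-u_i)q(X)$ with $\deg q=M_i+m_i-1$. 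A short computation gives $q(0)=-P_{-m_i}(\Bu_{\hat i})/u_i\ne 0$ and $q(u_i)=u_i^{m_i}\phi'(u_i)\ne 0$, again by algebraic independence applied to the nonzero Laurent polynomial $\phi'$. So the roots of $q$ in $\bGK$ give at most $M_i+m_i-1$ candidates for $v_i$, none equal to $0$ or $u_i$.

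The remaining subtlety, which I expect to be the main obstacle, is to show that these $M_i+m_i-1$ roots of $q$ are simple. The discriminant $\Delta$ of $q$, as a polynomial in $X$ with coefficients in $\qs(\Bu)$, is a Laurent polynomial in $\Bu$; it suffices to show $\Delta\not\equiv 0$, as then $\Delta(\Bu)\ne 0$ at algebraically independent $\Bu$. For this I would exhibit a specialization $\Bu\mapsto\Ba\in(\cs^{\times})^d$ for which $q$ is separable: choose $\Ba_{\hat i}$ so that all $P_k(\Ba_{\hat i})$ are nonzero, whence the univariate complex Laurent polynomial $\phi_{\Ba_{\hat i}}(X)$ has only finitely many critical values, then choose $a_i$ so that $\phi_{\Ba_{\hat i}}(a_i)$ avoids them. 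For such $\Ba$ the equation $\phi_{\Ba_{\hat i}}(X)=\phi_{\Ba_{\hat i}}(a_i)$ has $M_i+m_i$ distinct roots, so $\Delta(\Ba)\ne 0$. This completes the proof, and the small-step specialization $M_i=m_i=1$ recovers the stated count of a single $i$-adjacent element.
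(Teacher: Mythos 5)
Your proof is correct, and the overall structure is the same as the paper's: reduce to the one-variable Laurent polynomial $\phi(X)=S(u_1,\ldots,u_{i-1},X,u_{i+1},\ldots,u_d)$ and its fiber over $\phi(u_i)$. The one genuine difference is how you establish that the $M_i+m_i$ roots of $\phi(X)=\phi(u_i)$ are simple. You flag this as ``the main obstacle'' and handle it via the discriminant of $q$, showing it is a nonzero Laurent polynomial in $\Bu$ by exhibiting a complex specialization $\Ba$ at which the fiber avoids critical values. This works, but it is a detour: the paper instead \emph{bootstraps from the first part of the lemma}. Once you know that every $\Bv$ arising from a root $v$ has algebraically independent coordinates (trivially for $v=u_i$; by the first claim for $v\ne u_i$), the nonzero Laurent polynomial $S_{x_i}$ cannot vanish at $\Bv$, i.e.\ $\phi'(v)\neq 0$, so $v$ is a simple root — and this covers all roots at once. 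You actually already use exactly this argument to get $q(u_i)=u_i^{m_i}\phi'(u_i)\neq 0$; applying the same observation to the other roots (which is now legitimate, since you have just proved the independence claim for $i$-adjacent elements) eliminates the need for the discriminant, the degree count of the critical-value locus, and the specialization argument. In short: your proof is valid, but you can shorten it considerably by noticing that the first part of the lemma, applied to each root, gives simplicity for free.
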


\begin{proof} {Let $\Bv=(u_1, \ldots, u_{i-1},v, u_{i+1}, \ldots, u_d)$ be
$i$-adjacent to $\Bu=(u_1, \ldots, u_d)$, and let us prove that the
coordinates of $\Bv$ are independent over~$\qs$.} Assume that there exists a
non-trivial polynomial $P(\Ba)$ with rational coefficients such that
$P(\Bv)=0$. Since the $u_i$'s are algebraically independent, $P(\Ba)$ must
depend on $a_i$. Hence $v$ is algebraic over $\qs(u_1, \ldots, u_{i-1},
u_{i+1}, \ldots, u_d)$. The same holds for $S(\Bv)$, and hence for $S(\Bu)$.
Since $S(\Ba)$ actually depends on $a_i$, this means that~$u_i$ is algebraic
over $\qs(u_1, \ldots, u_{i-1}, u_{i+1}, \ldots, u_d)$, which contradicts the
algebraic independence of the $u_j$'s. {The first statement of the lemma
follows, by induction on the length.}

Then, by expanding $S(\Bv)$ in powers of
$v$, we have
\[
{S(\Bv):=} P_{M_i}(u_1, \ldots, u_{i-1},u_{i+1}, \ldots,u_d) v^{M_i} +\cdots
+P_{-m_i}(u_1, \ldots, u_{i-1},u_{i+1}, \ldots,u_d) v^{-m_i}= S(\Bu).
\]
As the coordinates of $\Bu$ are algebraically independent, this
equation has $M_i+m_i$ solutions in~$v$. One of them is the trivial
solution $v=u_i$. Each root $v$ gives rise to an element $\Bv$ whose coordinates
are algebraically independent. In particular, $S_{a_i}(\Bv)\not = 0$,
which means that $v$ is not a multiple root of 
$S(\Bv)-S(\Bu)$. Hence this polynomial (in $v$) has distinct
roots. Removing the trivial root $v=u_i$ gives  $m_i+M_i-1$ distinct elements
$\Bv$ that are $i$-adjacent to $\Bu$.
\end{proof}

 \medskip \noindent{\bf Example $\boldsymbol {(d=1)}$.}
In dimension 1 the orbit of $x$  consists of all
solutions $x'$ of the equation $S(x)=S(x')$. It is thus  finite.\qee

\medskip \noindent{\bf Example: small steps with $\boldsymbol {d=2}$.} Let us
get back to the example of Section~\ref{sec:basic}. Then it can be checked
that two elements are adjacent if and only of one is obtained from the other
by applying $\Phi$ or $\Psi$. This will be generalized to all small step
models {(in arbitrary dimension)} in the proposition below.

Note however that in dimension 2, and beyond, the orbit may be infinite. This
happens for 56 of the 79 small step quadrant models~\cite{BoMi10}, for
instance when $\cS=\{\uparrow, \rightarrow, \swarrow, \leftarrow\}$, {in which
case $S(x,y) = y + x + \bx \by + \bx$.}

{For models with small steps, the orbit of $\Bx$ is indeed its
  orbit under the action of a certain group, as in the example of Section~\ref{sec:basic}.}
\begin{Proposition}\label{prop:small-group}
Assume that $\cS$ consists of small steps, that is, $\cS\subset \{-1,
0,1\}^d$.
Define $d$ bi-rational transformations $\Phi_1, \ldots, \Phi_d$ by: 
\[
\Phi_i(a_1, \ldots, a_d)= 
\left(a_1, \ldots, a_{i-1}, \frac 1 {a_i} \ \frac{ S_{i}^-(\Ba)}{S_i^+(\Ba)}, a_{i+1}, \ldots, a_d\right),
\]
where $\Ba=(a_1, \ldots, a_d)$ and $S_{i}^-(\Ba)$ (resp. $S_i^+(\Ba)$)
is the coefficient of $1/a_i$ (resp. $a_i$) in $S(\Ba)$.
Then the $\Phi_i$'s are involutions. If the  $a_j$'s are
algebraically independent over {$\qs$},
 then $\Ba$ and $\Phi_i(\Ba)$ are $i$-adjacent.

{Conversely,} let $\Bx=(x_1, \ldots, x_d)$ and {let} $\Bu=(u_1, \ldots, u_d)$
be in the orbit of $\Bx$. An element $\Bv$ of $\left(
{\bGK}\setminus\{0\}\right)^d$ is {$i$-adjacent} to $\Bu$ if and only if
$\Bv=\Phi_i(\Bu)$. Consequently, the orbit of $\Bx$ is indeed its orbit under
the action of a group, namely the group generated by the involutions $\Phi_i$.

{Finally, the length of two adjacent elements in the orbit of $\Bx$ differ by
$\pm1$.} 
\end{Proposition}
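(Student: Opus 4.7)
I would handle the four assertions in turn, leaving the final length claim as the main point.

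That each $\Phi_i$ is an involution is immediate: since $S_i^{\pm}(\Ba)$ does not depend on~$a_i$, applying $\Phi_i$ twice fixes the other coordinates and sends the $i$th coordinate to $\frac{a_iS_i^+}{S_i^-}\cdot \frac{S_i^-}{S_i^+}=a_i$. For the adjacency of $\Ba$ and $\Phi_i(\Ba)$ when the $a_j$'s are algebraically independent, I would decompose $S(\Ba)=S_i^+(\Ba)\,a_i+R(\Ba)+S_i^-(\Ba)/a_i$, where $R(\Ba)$ collects the terms of $S$ free of $a_i$; the substitution $a_i\mapsto\tfrac{1}{a_i}\tfrac{S_i^-}{S_i^+}$ then swaps the first and third summands and leaves $S$ invariant. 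The inequality $\Phi_i(\Ba)\ne\Ba$ amounts to $a_i^2 S_i^+(\Ba)\ne S_i^-(\Ba)$, which holds by algebraic independence of the $a_j$'s, together with the standing hypothesis that $\cS$ contains both positive and negative steps in direction~$i$, so that $S_i^{\pm}$ are non-zero Laurent polynomials.

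For the converse I would invoke Lemma~\ref{lem:ind}: since the coordinates of $\Bx$ are algebraically independent over~$\qs$, so are those of any $\Bu$ in its orbit; and because $\cS$ is small, $M_i=m_i=1$, whence the lemma provides exactly one element $i$-adjacent to $\Bu$. Since $\Phi_i(\Bu)$ is such an element, any $i$-adjacent~$\Bv$ must equal $\Phi_i(\Bu)$. A straightforward induction on the $\Bx$-length then shows that the orbit of $\Bx$ coincides with the orbit of $\Bx$ under the group $G$ generated by the involutions $\Phi_1,\ldots,\Phi_d$.

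The main point is the last statement, that two adjacent orbit elements have $\Bx$-lengths differing by exactly $\pm 1$ and not by $0$. Since the triangle inequality forces the difference to be at most~$1$, it suffices to exhibit a parity function on the orbit that flips at every adjacency. I would first observe that the action of~$G$ on the orbit of~$\Bx$ is \emph{free}: any $g\in G$ fixing $\Bx$ fixes each indeterminate $x_i$ and is a field automorphism of $\GK=\cs(x_1,\ldots,x_d)$, hence is the identity. Thus for every $\Bu$ in the orbit there is a unique $g_{\Bu}\in G$ with $g_{\Bu}(\Bx)=\Bu$. To produce a sign I would use the logarithmic top form $\omega=\bigwedge_{j=1}^{d}\frac{da_j}{a_j}$: since $\Phi_i$ only moves $a_i$, a short calculation starting from $a_i'=\tfrac{1}{a_i}\tfrac{S_i^-}{S_i^+}$ gives $\tfrac{da_i'}{a_i'}=-\tfrac{da_i}{a_i}$ modulo forms involving only $da_j$ for $j\ne i$, so $\Phi_i^\star\omega=-\omega$. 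The rule $g^\star\omega=\epsilon(g)\,\omega$ therefore defines a homomorphism $\epsilon:G\to\{\pm 1\}$ sending each $\Phi_i$ to~$-1$. For $\Bu$ of $\Bx$-length~$\ell$, the unique $g_{\Bu}$ is a product of $\ell$ generators and so $\epsilon(g_{\Bu})=(-1)^{\ell}$; the relation $g_{\Phi_i(\Bu)}=\Phi_i\circ g_{\Bu}$ then gives $\epsilon(g_{\Phi_i(\Bu)})=-\epsilon(g_{\Bu})$, so adjacent elements have opposite length-parities. The main obstacle is constructing~$\epsilon$ cleanly: one cannot define a parity by induction on length until one knows the action is free, which is why the freeness step is set up first.
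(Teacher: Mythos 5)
Your argument is correct, and the first three parts coincide with the paper's treatment (you appeal to Lemma~\ref{lem:ind} where the paper instead solves $S(\Bu)=S(\Bv)$ directly for $v_i$, but both routes use the same facts). The interesting difference is in the last step, the parity of lengths, which is the genuine content of the proposition.

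The paper defines a sign \emph{intrinsically} on orbit elements via $\varepsilon(\Bu)=\bigl(\prod_i x_i\bigr)\det M(\Bu)$, with $M(\Bu)_{ij}=\frac{1}{u_i}\frac{\partial u_i}{\partial x_j}$, and then verifies $\varepsilon(\Phi_i(\Bu))=-\varepsilon(\Bu)$ by an explicit row operation on $M(\Bv)$. You instead package the same quantity as the logarithmic Jacobian of the group element via the top form $\omega=\bigwedge_j\frac{da_j}{a_j}$: since $\bigwedge_i\frac{du_i}{u_i}=\bigl(\prod_j x_j\bigr)\det M(\Bu)\cdot\omega$, your $\epsilon(g_{\Bu})$ is literally $\varepsilon(\Bu)$, so the two signs are identical. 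Your computation $\Phi_i^\star\omega=-\omega$ is cleaner than the paper's row reduction, and the observation that this yields a homomorphism $G\to\{\pm1\}$ is a nice conceptual framing. The price you pay is the extra freeness step: because you define the sign on $G$ rather than on orbit tuples, you must first show $g(\Bx)=\Bx\Rightarrow g=\mathrm{id}$ so that $g_{\Bu}$ is well-defined. The paper avoids this entirely since its $\varepsilon$ is a closed formula in $\Bu$; it only needs to prove the flip $\varepsilon(\Phi_i(\Bu))=-\varepsilon(\Bu)$ and then propagate along any shortest path. Both routes are sound; yours is arguably more transparent about \emph{why} the sign exists (it's the logarithmic Jacobian character of the group), while the paper's is more economical because the sign is manifestly well-defined from the start.
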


\begin{proof} To prove that $\Phi_i$ is an involution, we first observe that
  $S_i^+(\Ba)$ and $S_i^-(\Ba)$ are independent of~$a_i$. Hence,
  denoting $\Ba'=\Phi_i(\Ba)$, the $i$th coordinate of  $\Phi_i(\Ba')$ is
  \[
  \frac 1 {a'_i} \frac{ S_{i}^-(\Ba')}{S_i^+(\Ba')}
  = a_i \frac { S_{i}^+(\Ba)}{S_i^-(\Ba)}\frac{
    S_{i}^-(\Ba)}{S_i^+(\Ba)}=a_i.
  \]
  If the $a_j$'s are algebraically independent over {$\qs$}, then  $\Ba$ and $\Ba'$
  {are distinct}, differ  in their {$i$th}
  coordinate only, and, upon writing
\[
S(\Bx)= \frac 1 {x_i} S_i^-(\Bx) +S_i^0(\Bx)+ x_i S_i^+(\Bx),
\]
we can check that $S(\Ba)= S(\Ba')$. {Hence $\Ba$ and $\Phi_i(\Ba)$ are $i$-adjacent.}

  Now {let $\Bu$ be in}  the orbit of $\Bx$.
Write $S(\Bx)$ as above.
Note that $S_i^-(\Bx)$, $S_i^0(\Bx)$ and $ S_i^+(\Bx)$ are unchanged
if we only modify the $i$th coordinate of~$\Bx$. So if
{$\Bv\stackrel i \approx\Bu$},
{the fact that $S(\Bu)=S(\Bv)$ gives}
\[
 \frac 1 {u_i} S_i^-(\Bu) + u_i S_i^+(\Bu)
=
 \frac 1 {v_i} S_i^-(\Bu) + v_i S_i^+(\Bu), \quad \hbox{that is, } \quad
 S_i^-(\Bu) = u_i v_i S_i^+(\Bu).
\]
{By the above lemma,} the coordinates of $\Bu$ are algebraically
independent, hence
$S_i^+(\Bu)\not = 0$ and $\Bv$ must be $\Phi_i(\Bu)$. Conversely, we
have proved above that $\Phi_i(\Bu)$ is $i$-adjacent to $\Bu$. 
This concludes the {description of the orbit of $\Bx$}.

The proof of the final result was communicated to us by Andrew Elvey Price and
Michael Wallner, whom we thank for their great help. Clearly, if $\Bu$ and
$\Bv$ are two adjacent elements in the orbit of $\Bx$, their lengths differ by
$0, +1$ or $-1$. We want to exclude the value $0$, which amounts to saying
that in the graph whose vertices are the elements of the orbit, with edges
between adjacent elements, there is no odd cycle. Equivalently, this graph is
bipartite. In order to prove this, we define a sign $\vareps(\Bu) \in \{-1,
+1\}$ on elements $\Bu$ of the orbit of $\Bx$, which changes when an
involution $\Phi_i$ is applied.
{The sign is defined by
\[
\vareps(\Bu)= \left(\prod_{i=1}^d x_i\right) \det M(\Bu), \qquad
\hbox{where} \qquad M(\Bu)=\left( \frac 1 {u_i}
  \frac{\partial u_i}{\partial x_j}\right)_{1\le i,j \le d}.
\]
It is readily checked that $\vareps(\Bx)=1$, and 
this implies $\vareps(\Bu)=(-1)^{\hbox
{\scriptsize{length}}(\Bu)}$.}
Let us then take
$\Bv=\Phi_i(\Bu)$, and prove that $\vareps(\Bv)=-\vareps(\Bu)$. The
matrix $M(\Bv)$ only differs from $M(\Bu)$ in the $i$th row. Let us
denote
\[
\Phi_i(\Ba)= 
\left(a_1, \ldots, a_{i-1},\frac 1 {a_i} R_i(\Ba), a_{i+1}, \ldots, a_d\right),
\] 
where $R_i(\Ba)=S_i^-(\Ba)/S_i{^+}(\Ba)$ only depends on the variables
$a_1, \ldots, a_{i-1}, a_{i+1}, \ldots, a_d$. Then for $1\le j \le d$,
the $(i,j)$ entry of $M(\Bv)$ is
\begin{align*}
  \frac 1{v_i}  \frac{\partial v_i}{\partial x_j} &= \frac{u_i}{R_i(\Bu)}
\left( -\frac 1{u_i^2} R_i(\Bu) \frac{\partial u_i}{\partial x_j} +
  \sum_{k\not = i} \frac{\partial R_i}{\partial a_k}(\Bu)
  \frac{\partial u_k}{\partial x_j}\right)
\\
&= -\frac 1 {u_i} \frac{\partial u_i}{\partial x_j} + \frac{u_i}{R_i(\Bu)}
  \sum_{k\not = i} \frac{\partial R_i}{\partial a_k}(\Bu)
  \frac{\partial u_k}{\partial x_j}.
\end{align*}
Upon subtracting from the $i$th row of $M(\Bv)$ its $k$th row,
multiplied by ${u_i u_k} \partial R_i/\partial a_k(\Bu)/R_i(\Bu)$,
for $1\le k \not =i \le d$, we see that $\det M(\Bv)$ is also the
determinant of the matrix obtained
from $M(\Bu)$ by changing the sign of all elements of the $i$th
row, which concludes the proof.
\end{proof}

\medskip \noindent{\bf Example D (continued): large steps with $\boldsymbol{d=2}$.} 
Let us take $\cS=\{\bar 20,\bar 11, 02,1\bar 1\}$, so that
\[
S(x,y)= \bx^2+\bx y +y^2+x\by.
\]
We will incrementally construct the orbit of $(x,y)$. This example
should provide the intuition for the algorithm given in the next
subsection.

We start from  $(x,y)$  and want to determine
which elements $(X,y)$  are 1-adjacent to it; that is, to find the solutions to
$S(X,y)=S(x,y)$ with $X\not = x$. We have
\[
S(X,y)-S(x,y)=\frac {(X-x) \left(x^2X^2-y(1+xy)X-xy\right)}{x^2 y X^2}.
\]
Hence the two elements that are 1-adjacent to
 $(x,y)$ are $(x_1,y)$ and $(x_2,y)$, where $x_1$ and $x_2$ are the
 two roots of $P_1(X,x,y):=x^2X^2-y(1+xy)X-xy$ {(when solved for
   $X$)}. The  $x_i$'s can be taken
 as Laurent series 
 in $x$ with coefficients in $\qs[y,\by]$:
\[
x_1= y{x}^{-2}+{y}^{2}{x}^{-1} -x_2\qquad \hbox{and} \qquad 
x_2=-x+y{x}^{2}-{y}^{2}{x}^{3}+\left( {{y}^{3}+\by}\right){x}^{4}+ O({x}^5).
\]
Similarly, we find that the two elements that are 2-adjacent to
$(x,y)$ are $(x,y_1)$ and $(x,y_2)$, where $y_1$ and $y_2$ are the
roots of  $Q_1(Y,x,y):=xyY^2+y(1+xy)Y-x^2$. But $Q_1(Y,x,y)$ {coincides
  with $P_1(1/Y,x,y)$} (up to a factor of $Y^2$),
 thus we take $y_1=\bar x_1 := 1/x_1$ and $y_2=\bar
x_2 := 1/x_2$. We have now obtained five elements in the orbit of $(x,y)$ (one
can follow the construction on Figure~\ref{fig:orbit-quadrangulations}).

Now we want to find the elements $(x_1,Y)$ that are 2-adjacent to
$(x_1,y)$. In principle, we should thus solve $S(x_1,Y)=S(x_1,y)(=S(x,y))$, but
we prefer not to handle equations with algebraic coefficients (like
$x_1$). So instead, we consider the \emm {polynomial} system,
\[
P_1(X,x,y)=0, \qquad S(X,Y)=S(x,y),
\]
whose solutions $(X,Y)$ are the pairs $(x_i,Y)$ belonging to the orbit. Upon
eliminating $X$ between these two equations, we find that $Y$ is
necessarily  either $y$, or $\bx$, or one of the series $\bx_i$. Upon
checking that $S(x_1, \bx_1)\not = S(x,y)$, we conclude that the two
elements that are 2-adjacent to $(x_1,y)$ are $(x_1,\bx)$ and $(x_1,
\bx_2)$. Symmetrically, $(x_2,\bx)$ and $(x_2, \bx_1)$ are 2-adjacent
to $(x_2,y)$. We now have 9 elements in the orbit.

In order to find the elements that are 1-adjacent to $(x, \bx_i)$, for
$i=1,2$, we study  similarly the {polynomial} system
\[
Q_1(Y,x,y)=0, \qquad S(X,Y)=S(x,y)
\]
and conclude that $(\by, \bx_1)$ and $(x_2, \bx_1)$ are 1-adjacent to
$(x, \bx_1)$ while $(\by, \bx_2)$ and $(x_1, \bx_2)$ are 1-adjacent to
$(x, \bx_2)$. We have reached 11 elements.

At this stage, we still need one element that would be 1-adjacent to
$(x_1, \bx)$ and $(x_2, \bx)$, and one element that would be
2-adjacent to $(\by, \bx_1)$ and $(\by, \bx_2)$. We address the first
problem by solving $S(X,\bx)=S(x,y)$, and find that $(\by,\bx)$ in fact
solves both problems. The orbit is now complete, and contains 12 elements.

\begin{figure}[htb]
  \centering
  \scalebox{0.9}{\input{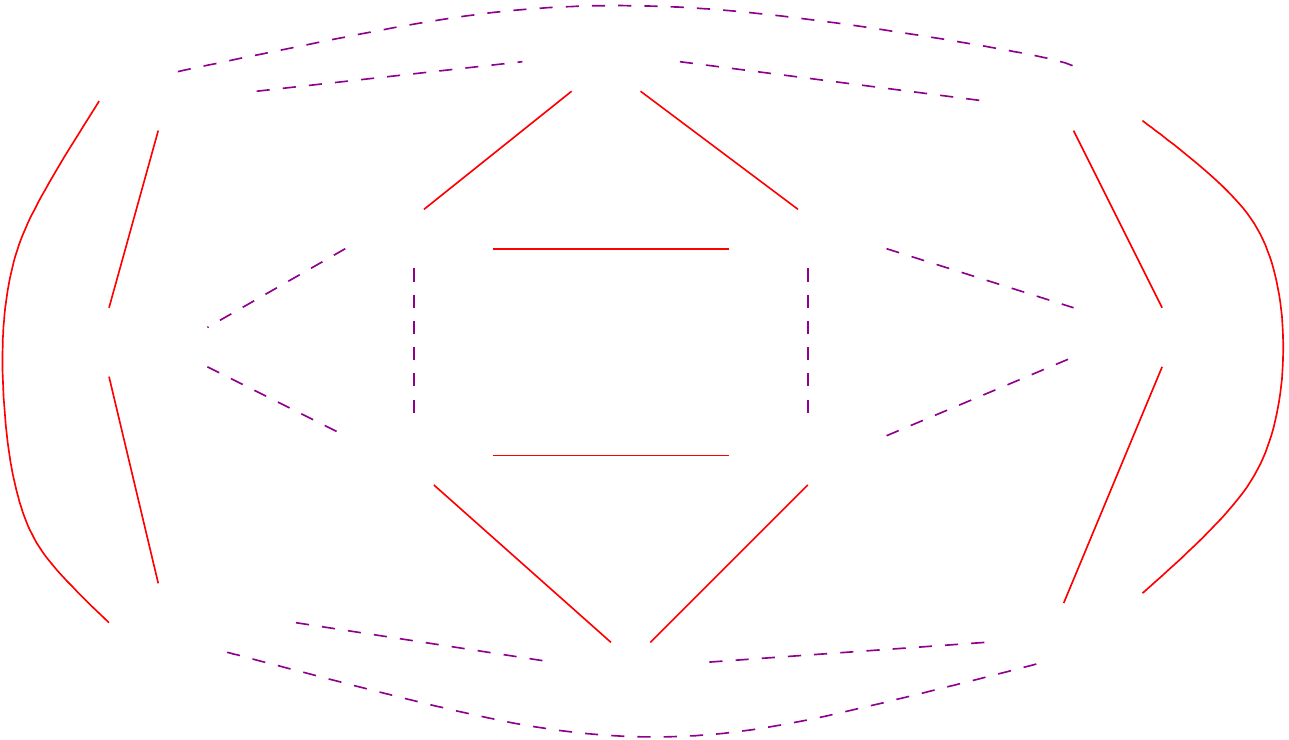_t}}

  \caption{The orbit of $\cS=\{\bar 20,\bar 11, 02,1\bar 1\}$.  The
    values $x_1$ and $x_2$ are the roots of
    $P_1(X,x,y)=x^2X^2-y(1+xy)X-xy$. The values $\bxun$ and $\bxde$ are
    their reciprocals.
The  dashed (resp. {solid}) edges join 1-adjacent (resp. 2-adjacent) elements.}
  \label{fig:orbit-quadrangulations}
\end{figure}

%==========================================
\subsection{An algorithm that detects finite orbits (case $\boldsymbol{d=2}$)}
\label{sec:algo}
%================================================

Given a model in dimension $d=2$ we now describe a (semi-)algorithm that stops
if and only if the orbit is finite. This algorithm constructs incrementally
two sets $\PP$ and $\cQ$ of irreducible polynomials in $X$ and $Y$,
respectively, with coefficients in $\qs(x,y)$. It starts with $\mathcal
P=\{X-x\}$ and $\cQ=\{Y-y\}$, and both polynomials are declared non-treated.
At each stage, the algorithm chooses a non-treated polynomial in $\PP \cup
\cQ$, say $Q\in \cQ$, and constructs a new polynomial $P'(X,x,y)$, which is
the resultant in $Y$ of $ Q(Y,x,y) $ and { the numerator of the Laurent
polynomial $S(x,y)-S(X,Y)$ (namely $(xX)^{m_1}(yY)^{m_2}(S(x,y)-S(X,Y))$,
where $-m_1$ is the smallest move in the $x$-direction and similarly for
$m_2$)}. Then the algorithm adds to $\mathcal P$ every irreducible factor of
$P'$, and the {new} factors are declared non-treated. The algorithm treats
symmetrically polynomials of $\PP$. These stages are repeated as long as there
are non-treated polynomials.

We recall that $\overline\GK$ denotes an algebraic closure of $\GK:=\qs(x,y)$.

\begin{Proposition}\label{prop:algo}
  The following two properties hold at each stage of the algorithm:
  \begin{enumerate}
    \item[$(i)$] the set $\PP$ contains no element of $\qs[X]$;
      moreover, for  $P \in \mathcal P$ and  $x' \in \overline \GK$ such that $P(x',x,y)=0$, there exists $Q\in \cQ$ and $y'\in \bGK$   such that $Q(y',x,y)=0$ and $(x',y')$ is in the orbit of $(x,y)$,
    \item [$(ii)$] symmetrically, the set $\cQ$ contains no element of
      $\qs[Y]$; moreover, for   $Q \in \mathcal Q$ and $y' \in \bGK$ such that $Q(y',x,y)=0$, there exists  $P\in \cQ$ and $x'\in \bGK$   such that $P(x',x,y)=0$ and $(x',y')$ is in the orbit of $(x,y)$.
  \end{enumerate}
  The algorithm stops if and only if the orbit of $(x,y)$ is finite. In this case, the converse of $(i)$ and $(ii)$ holds, that is:
  \begin{enumerate}
    \item [$(iii)$]for every $(x',y')$ in the orbit of $(x,y)$, the minimal polynomials of $x'$ and $y'$ over $\qs(x,y)$ belong respectively to $\mathcal P$ and $\cQ$.
  \end{enumerate}

\end{Proposition}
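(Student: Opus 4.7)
The plan is to prove $(i)$ and $(ii)$ as invariants of the algorithm (by induction on the processing steps), then to derive the termination equivalence from these invariants together with Lemma~\ref{lem:ind}, and finally to obtain $(iii)$ by a second induction on the $\Bx$-length of orbit elements.

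For the invariants, the base case is immediate since $\PP=\{X-x\}$ and $\cQ=\{Y-y\}$: neither polynomial lies in $\qs[X]$ or $\qs[Y]$, and their roots witness the orbit element $(x,y)$. For the inductive step, suppose the algorithm treats some $Q\in\cQ$ (the case of treating a $P\in\PP$ is symmetric). By $(ii)$, every root $y'\in\bGK$ of $Q$ is the second coordinate of some orbit element $(x^\star,y')$. The resultant $P'(X,x,y)$ has as its roots in $\bGK$ exactly those $X$ for which some $y'$ satisfies $Q(y',x,y)=0$ and $F(X,y')=0$, where $F(X,Y)=(xX)^{m_1}(yY)^{m_2}(S(x,y)-S(X,Y))$ is the cleared numerator. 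One rules out $X=0$: otherwise $a_{-m_1}(y')$ would vanish, where $a_{-m_1}\in\qs[Y]$ is the nonzero polynomial giving the coefficient of $X^{-m_1}$ in $S(X,Y)$, contradicting the transcendence of $y'$ over $\qs$ (Lemma~\ref{lem:ind}). Thus $S(X,y')=S(x,y)=S(x^\star,y')$ with $X\ne 0$, so either $X=x^\star$ or $(X,y')$ is $1$-adjacent to $(x^\star,y')$; in both cases $(X,y')$ lies in the orbit. Lemma~\ref{lem:ind} again makes $X$ transcendental over $\qs$, so no irreducible factor of $P'$ lies in $\qs[X]$, establishing $(i)$.

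The termination equivalence follows at once: each polynomial in $\PP$ (resp.\ $\cQ$) is, up to a scalar in $\qs(x,y)$, the minimal polynomial over $\qs(x,y)$ of any of its roots, which by the invariants are orbit coordinates. Hence $|\PP|$ and $|\cQ|$ are bounded by the number of distinct $x$- and $y$-coordinates appearing in the orbit, which is finite if and only if the orbit is finite. Assuming now that the algorithm has stopped, I would prove $(iii)$ by induction on the $\Bx$-length $\ell$ of $(x',y')$. The base case $\ell=0$ is handled by the initial polynomials. For $\ell\ge 1$, pick $(x'',y'')$ of length $\ell-1$ adjacent to $(x',y')$; in the $1$-adjacent case $y'=y''$, and by induction the minimal polynomial $Q$ of $y''$ is in $\cQ$ and has been treated. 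Since $S(x',y')=S(x,y)$ and $Q(y',x,y)=0$, $x'$ is a root of the corresponding resultant, so its minimal polynomial lies in $\PP$; the minimal polynomial of $y'=y''$ is in $\cQ$ by induction. The $2$-adjacent case is symmetric.

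The main technical obstacle will be the transcendence bookkeeping inside the invariants: one must carefully ensure that the resultant cannot develop spurious irreducible factors in $\qs[X]$, in particular that $X=0$ is not a root, which would violate the ``$\notin\qs[X]$'' clause of $(i)$ and undermine the identification of elements of $\PP$ with minimal polynomials. This reduces to the algebraic independence of orbit coordinates provided by Lemma~\ref{lem:ind}. Once this is handled, the remainder of the proof is a clean double induction, outer on algorithm stages and inner (for $(iii)$) on $\Bx$-length.
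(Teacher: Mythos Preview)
Your proof is correct and follows essentially the same approach as the paper: invariants $(i)$ and $(ii)$ by induction on algorithm stages (with the same transcendence argument via Lemma~\ref{lem:ind} to rule out $X=0$ and factors in $\qs[X]$), then $(iii)$ by induction on the $\Bx$-length using the resultant. One small presentational point: your sentence ``the termination equivalence follows at once'' only explicitly gives the direction \emph{orbit finite $\Rightarrow$ algorithm stops}; the converse requires $(iii)$ (together with finiteness of $\PP,\cQ$) to bound the number of orbit coordinates, which you do prove immediately afterwards---the paper makes this chaining explicit.
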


Note that the sets $\PP$ and $\cQ$ do not determine completely the orbit: one
still has to decide, for each $x'$ that solves a polynomial of $\PP$, which
$y'$ (taken from the roots of the polynomials of~$\cQ$) go with it in the
orbit, as was done in Example D above.

\begin{proof}
Let us first prove $(i)$ and $(ii)$, by induction on the number of stages
performed by the algorithm. Both properties obviously hold at the
initialization step, where $\mathcal P= \{X-x\}$ and $\cQ=\{Y-y\}$.

Now assume that they hold at some stage, and that we treat a
polynomial $Q\in \cQ$  as described at the beginning of
Section~\ref{sec:algo}. Let us prove that the extended collections  of
polynomials still satisfy  $(i)$ and $(ii)$. Clearly $(ii)$ still
holds, since we have not extended $\cQ$. So let us check~$(i)$. It
suffices to check it for the  factors of $P'$ that we have added to
$\mathcal P$. So let us take one of these factors, and let $x'$ be one
of its roots. Then $x'$ is a root of $P'(X,x,y)$.  The properties of
the resultant imply that there exists $y'$ such that $Q(y',x,y)=0$ and
\begin{equation}\label{S-equal}
x'^{m_1} y'^{m_2} \left(S(x,y)-S(x',y')\right)=0,
\end{equation}
 where $-m_1$ (resp. $-m_2$) is the smallest move along the $x$-axis
 (resp. the $y$-axis).
By Property~$(ii)$ applied to $Q$ and $y'$, there exists an element  $x'' \in
\overline \GK$  such that $(x'',y')$ is in the orbit. By
Lemma~\ref{lem:ind}, $x''$ and $y'$ are algebraically independent over
$\qs$, and in particular $y'$ is not an algebraic {number}. If $x'=0$,
then~\eqref{S-equal} tells us that the coefficient of $x^{-m_1}$ in
$S(x,y)$, evaluated at $y=y'$, vanishes, which would make $y'$
algebraic, a contradiction. Thus $x'\not =0$, {$y'\not = 0$,} and
$S(x,y)=S(x',y')$. Hence 
$S(x',y')=S(x'',y')$, which shows that $(x',y')$ is adjacent to
$(x'',y)$, and thus is in the
orbit of $(x,y)$. In particular, $x'$ and $y'$ are algebraically
independent {over $\qs$}, thus $x'\not \in {\overline \qs}$, which means that its minimal
polynomial $P$ is not in $\qs[X]$.

\medskip Now assume that the algorithm stops; that is, that there are no more
non-treated polynomials. Let us prove $(iii)$ by induction of the length of
$(x',y')$. If $\ell=0$, then $(x',y')=(x,y)$ and we have precisely initialized
$\PP$ and $\cQ$ with the minimal polynomials of $x$ and $y$. Now assume that
$(iii)$ holds for length $\ell-1$, and that $(x',y')$ has length $\ell$.
Without loss of generality, we can assume that $(x',y')\approx (x'',y')$,
where $(x'',y')$ has length $\ell-1$. By the induction hypothesis, the minimal
polynomial $Q$ of $y'$ belongs to $\cQ$, so we only need to consider~$x'$. The
polynomials (in $Y$) $Q(Y,x,y)$ and $X^{m_1}
Y^{m_2}\left(S(x,y)-S(X,Y)\right)$ have a common root (namely $y'$) when
$X=x'$. Hence their resultant $P'(X,x,y)$ must have $x'$ as a root. This
implies that one of the factors of $P'$ is the minimal polynomial of $x'$, and
this factor is added to $\PP$ when the algorithm treats the polynomial $Q$
(unless it was already in $\PP$).

We have thus established $(iii)$, assuming the algorithm stops. In this case
$\PP$ and $\cQ$ are finite so $(iii)$ implies that the orbit is finite.

Conversely, assume that the orbit is finite. By $(i)$, every $P\in\PP$ must be
the minimal polynomial of some $x' \in \bGK$ such that $(x',y')$ is in the
orbit for some $y'$. Hence $\PP$ cannot grow indefinitely. A similar argument
applies to $\cQ$, and the algorithm has to stop. 
\end{proof}

%===============================================================
\subsection{Infinite orbits and the excursion exponent}
\label{sec:infinite-orbit}
%==================================================================

We now describe an approach, of wide applicability, to prove that a model has
an infinite orbit. It generalizes a fixed point argument applied to quadrant
walks with small steps in~\cite[Thm.~3]{BoMi10} {(see also~\cite{Du} for an
application to 3D walks with small steps)}. It also constructs a group of
transformations which generates part of the orbit of $\Bx$. In the
2-dimensional case, it establishes a connection with the asymptotic proof of
non-D-finiteness developed in~\cite{BoRaSa14}. One outcome will be the
following convenient criterion for 2-dimensional models.

  \begin{Theorem} \label{thm:theta}
Let $\cS\subset \zs^2$ be a {step set} that is not contained in a
half-plane, and {contains an element of}~$\ns^2$. Then the step
polynomial $S(x,y)$ has a unique \emm critical
point, $(a,b)$ in $\rs_{>0}^2$ (that is, a solution of $S_x(a,b)=S_y(a,b)=0$), which satisfies $S_{xx}(a,b)>0$ and
$S_{yy}(a,b)>0$. Define
\[
c= \frac{
  S_{xy}(a,b)}{\sqrt{S_{xx}(a,b)S_{yy}(a,b)}}.
\] Then $c\in [-1,1]$ can be written as $\cos \theta$.
 If
$\theta$ is not a rational multiple of $\pi$, then the orbit of $\cS$
is infinite, and  the series $Q(x,y;t)$ is not D-finite.
  \end{Theorem}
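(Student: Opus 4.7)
My plan proceeds in three stages: establishing the critical point $(a,b)$ together with the positivity of $S_{xx},S_{yy}$ and the bound $|c|\le 1$; constructing local analytic involutions near $(a,b)$ whose composition linearizes to a rotation of angle~$2\theta$, from which infiniteness of the orbit follows when $\theta/\pi\notin\qs$; and deducing non-D-finiteness from an asymptotic analysis of excursions, in the spirit of~\cite{BoRaSa14}.

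For the first stage I would study $g(u,v):=\log S(e^u,e^v)$, a log-sum-exp of linear forms on~$\rs^2$, hence convex. The hypothesis that $\cS$ is not contained in any half-plane translates into $0\in\mathrm{int}(\mathrm{conv}(\cS))$, which makes $g$ both strictly convex and coercive; it therefore has a unique minimum, which corresponds to the unique critical point $(a,b)\in\rs_{>0}^2$ of $S$. The Hessian of~$g$ at this minimum equals
\[
\mathrm{Hess}\,g(\log a,\log b)=\frac{1}{S(a,b)}\begin{pmatrix}a^2 S_{xx}(a,b) & ab\,S_{xy}(a,b)\\ ab\,S_{xy}(a,b) & b^2 S_{yy}(a,b)\end{pmatrix},
\]
and its positive definiteness yields $S_{xx}(a,b),S_{yy}(a,b)>0$ together with $S_{xy}(a,b)^2\le S_{xx}(a,b)S_{yy}(a,b)$, so $|c|\le 1$ and $c=\cos\theta$ for some $\theta\in[0,\pi]$.

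For the infiniteness of the orbit, since $S_x(a,b)=0$, the equation $S(X,b)=S(a,b)$ has $X=a$ as a double root. For $(x,y)$ in a small punctured neighborhood of $(a,b)$, this double root splits into $X=x$ and a second root $\phi_1(x,y)\ne x$ analytic in $(x,y)$ by the implicit function theorem, with $\phi_1(a,b)=a$. The map $\Phi_1:(x,y)\mapsto(\phi_1(x,y),y)$ is then an analytic involution fixing $(a,b)$, and $\Phi_1(x,y)$ is $1$-adjacent to $(x,y)$ in the sense of Definition~\ref{def:equiv}; define $\Phi_2$ symmetrically. A second-order Taylor expansion of $S$ at $(a,b)$ combined with the rescaling $u=(x-a)\sqrt{S_{xx}(a,b)}$, $v=(y-b)\sqrt{S_{yy}(a,b)}$ gives
\[
d\Phi_1(a,b)=\begin{pmatrix}-1 & -2c\\ 0 & 1\end{pmatrix},\qquad d\Phi_2(a,b)=\begin{pmatrix}1 & 0\\ -2c & -1\end{pmatrix}.
\]
Their product has trace $2(2c^2-1)=2\cos(2\theta)$ and determinant~$1$, hence is conjugate to the rotation of angle $2\theta$. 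When $\theta/\pi\notin\qs$, $e^{2i\theta}$ is not a root of unity, so no nontrivial iterate $(\Phi_2\Phi_1)^n$ has identity differential at $(a,b)$; in particular the germs $(\Phi_2\Phi_1)^n(x,y)$, $n\ge 0$, are pairwise distinct, and yield infinitely many elements in the orbit of $(x,y)$.

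For non-D-finiteness, the linear change of coordinates that whitens the covariance $\mathrm{Hess}\,g(\log a,\log b)$ maps the quadrant onto an open planar wedge whose apex angle~$\alpha$ is a rational multiple of~$\pi$ if and only if $\theta/\pi\in\qs$ (a short Gaussian computation gives $\alpha=\pi-\theta$). The Denisov--Wachtel asymptotic for excursions in a cone then yields $q(0,0;n)\sim \kappa\rho^n n^{-1-\pi/\alpha}$ with $\rho=1/S(a,b)$, and the exponent is rational iff $\theta/\pi\in\qs$. If $Q(x,y;t)$ were D-finite, the univariate series $Q(0,0;t)=\sum_n q(0,0;n)t^n$ would be D-finite too, and the classical fact that a D-finite sequence whose coefficients admit a single-term asymptotic of the form $\kappa\rho^n n^\gamma$ must have $\gamma\in\qs$ would force rationality of the above exponent---contradiction. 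The main technical obstacle in this program is verifying that the analytic germs produced by $\Phi_1,\Phi_2$ really yield distinct elements of the formally defined orbit of Definition~\ref{def:equiv}: this is done by viewing $x,y$ as algebraically independent transcendentals that can be specialized in a common analytic neighborhood of~$(a,b)$, with Lemma~\ref{lem:isom} ensuring independence from the choice of algebraic closure. A subsidiary point is the precise computation identifying the Denisov--Wachtel apex angle with the angle~$\theta$ defined by~$c$.
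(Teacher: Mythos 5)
Your proposal is substantially correct in its first two stages and closely parallels the paper's own proof of the orbit statement, but the non-D-finiteness argument has a genuine gap.

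For the first stage, your route via the convexity of $g(u,v)=\log S(e^u,e^v)$ is a clean alternative to the paper's direct argument (the paper cites the proof of~\cite[Thm.~4]{BoRaSa14} for uniqueness of the positive critical point and gives a monomial-by-monomial argument for $S_{xx},S_{yy}>0$). Your Hessian identity, which expresses the scaled Hessian of $g$ in terms of $S_{xx},S_{xy},S_{yy}$, is correct and yields $|c|\le 1$ in one stroke. For the second stage you are essentially replicating Section~\ref{sec:group} and Proposition~\ref{prop:ab}: the involutions $\Phi_1,\Phi_2$, their linearizations, and the computation that $d(\Phi_2\Phi_1)$ has trace $2\cos 2\theta$ and determinant $1$ all match the paper after the cosmetic rescaling you introduce. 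The concern you flag — that the analytic germs really do produce distinct elements of the orbit of Definition~\ref{def:equiv} — is settled as in the paper by constructing the orbit inside an algebraic closure of $\qs(x,y)$ containing formal power series in $x-a,\,y-b$; distinct germs give distinct series.

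The gap is in the third stage. You apply the Denisov--Wachtel asymptotic to the excursion sequence $q(0,0;n)$, but the paper explicitly warns (Remark~2 after Theorem~\ref{thm:exponent}) that this asymptotic need \emph{not} hold for $(0,0)$: the point must be \emph{reachable from infinity}. The paper even gives a strongly aperiodic counterexample $\cS=\{10,01,1\bone,\bone 1,\bar 32,2\bar 3\}$ for which $q(0,0;n)=0$ for all $n>0$, so the asymptotic you invoke fails outright. The paper therefore works with $q(i,j;n)$ for a point $(i,j)\in\overline\Lambda$ with large coordinates, for which reachability is guaranteed. A second, related issue is periodicity: when the period $p>1$, Theorem~\ref{thm:exponent} only provides two-sided bounds $\kappa_1\mu^n n^\alpha\le q(i,j;n)\le\kappa_2\mu^n n^\alpha$ for $n=pm$, not a genuine asymptotic equivalent. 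The concluding contradiction with D-finiteness must then be drawn from these bounds via~\cite[Thm.~2]{BoKa09} and the Katz--Chudnovsky--Andr\'e structure theorem for G-functions, rather than from the ``single-term asymptotic'' you assume. Without the reachability hypothesis and without handling the periodic case, the argument as written does not close.
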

Note that this result is algorithmic: the quantities $a,b, c$ are algebraic
over $\qs$ and one can compute their minimal polynomials. Saying that $\theta$
is a rational multiple of $\pi$ amounts to saying that the solutions of
$z+1/z=2c$ are roots of unity, so that their minimal polynomials are
cyclotomic. This can be checked algorithmically. In Section~\ref{sec:m21} we
apply this theorem systematically to the 13 110 models having steps in $\{-2,
-1, 0, 1\}^2$ and at least one large step. Combined with the algorithm that
detects finite orbits, it determines the size of the orbit for all but 16
models. (These 16 models turn out to have an infinite orbit, see
Section~\ref{sec:embarassing}).

 The above theorem also shows that the calculations performed in~\cite{BoMi10}
to prove that 51 small step models have an infinite group are equivalent to
those performed in~\cite{BoRaSa14} to prove that these~51 models have a
non-D-finite \gf.

%======================================================
\subsubsection{A group acting on the orbit}
\label{sec:group}
%======================================================
We begin with the part of the above theorem that deals with the size
of the orbit. In fact, we have a more general result that holds  for  models in
$d$ dimensions. So let  $\cS \in \zs^d$, and assume that there exists
a point $\Ba:=(a_1,\ldots, a_d)$ such that $S_{x_1}(\Ba)
= \partial S/\partial {x_1}(a_1,
\ldots, a_d)=0$. {If $I(X,\Bx)$ denotes the Laurent polynomial 
\[ I(X,\Bx) = \frac{S(X,x_2,\dots,x_d)-S(x_1,\dots,x_d)}{X-x_1} \]
(after {normalizing the rational function}) then $I(a_1,\Ba)= S_{x_1}(\Ba)=0$. Assume now that
$S_{x_1 x_1}(\Ba)\not =0$, so that $I_X(a_1,\Ba)=S_{x_1x_1}(\Ba)/2 \neq 0$.} By the implicit function theorem (in its
analytic form), there exists a unique analytic function
$\X_1(x_1, \ldots, x_d)$ defined in a neighborhood of $\Ba$,
satisfying $\X_1(\Ba)={a_1}$ and 
\begin{equation}\label{X1-char}
I(\X_1(\Bx), \Bx)=\frac{S(\X_1(\Bx), x_2, \ldots, x_d)-S(\Bx)}{\X_1(\Bx)-x_1}=0.
\end{equation}
The expansion of $\X_1(\Bx)$ around $\Ba$ can be computed
inductively. Writing $\Bx=\Ba+\Bu$, we have
\begin{equation}\label{X1-exp}
\X_1(\Bx)= a_1 - u_1 - \frac 2 {S_{x_1x_1}(\Ba)} \sum_{i=2}^d S_{x_1x_i}(\Ba)
u_i + \cdots,
\end{equation}
the missing terms being of degree at least 2 in the $u_i$'s. We define the
transformation $\Phi_1$ by $\Phi_1(\Bx)= (\X_1(\Bx), x_2, \ldots,
x_d)$. Clearly,  {$\Phi_1(\Bx)$ is 1-adjacent to $\Bx$} and thus lies in the orbit of $\Bx$ (which we construct
 in an algebraic closure of $\qs(\Bx)$ containing power series in
the $u_i$'s). {Since $\Phi_1(\Ba) = \Ba$}, we can
iterate $\Phi_1$. In particular,
\[
\Phi_1\circ \Phi_1(\Bx)= ( \X_1(\X_1(\Bx), x_2, \ldots,x_d), x_2,
\ldots, x_d)
\]
satisfies
\[
S( \Phi_1\circ \Phi_1(\Bx))= S(\Phi_1(\Bx))=S(\Bx),
\]
by~\eqref{X1-char}. Hence  either $\Phi_1\circ \Phi_1$ is the identity, or 
\[
I(\Phi_1\circ \Phi_1(\Bx)) = \frac{S(\Phi_1\circ \Phi_1(\Bx))-S(\Bx)}{\X_1(\X_1(\Bx), x_2,
  \ldots,x_d)-x_1}=0,
\]
which means that the function $\tilde \X_1:\Bx\mapsto \X_1(\X_1(\Bx), x_2,
\ldots,x_d)$ satisfies the same conditions as~$\X_1$. By uniqueness of $\X_1$,
this would imply that $\tilde \X_1=\X_1$: but this is impossible as
$\X_1(\Bx)$ has linear part $a_1-u_1+ \cdots$ while $\tilde \X_1$ has linear
part $a_1+u_1+ \cdots$ (by~\eqref{X1-exp}). Hence $\Phi_1$ is an involution.

Assume now that $\Ba$ is a critical point of $S$, that is, $S_{x_i}(\Ba)=0$
for $i=1, \ldots, d$. {Assume moreover that $S_{x_ix_i}(\Ba)\not = 0$ for all
$i$.} We then define similarly the transformations $\Phi_i$ for $i=1, \ldots,
d$. Still writing $\Bx=\Ba+\Bu$, each $\Phi_i$ leaves the constant term of
$\Bx$ unchanged, so we can compose them and they form a group $G$. For any
$\Theta$ in this group, $\Theta(\Bx)$ lies in the orbit of $\Bx$. If the orbit
of $\Bx$ is finite, $G$ is finite as well, and every $\Theta\in G$ has finite
order. The expansion of $\Theta$ around $\Ba$ reads:
\[
\Theta(\Ba+\Bu)= \Ba+ \Bu\, J(\Ba) + \hbox{quadratic terms in the } u_i,
\]
hence the Jacobian matrix $J(\Ba)$ must have finite order. This means that its
eigenvalues are roots of unity, which, once again, can be checked
algorithmically.

We now restrict the discussion to the 2-dimensional case, in order to
lighten notation.  
We denote $\Phi:=\Phi_1$ and $\Psi:=\Phi_2$,
$\Bx=(x,y)$, $\Ba=(a,b)$ and $\Bu=(u,v)$.  For
$\Theta:=\Psi\circ \Phi$, we have  
\[
J(a,b):=\left(
  \begin{array}{cc}
    -1& -\eta \\
\nu & \eta \nu-1 
  \end{array}
\right)\]
where
\[
\eta= \frac{2 S_{xy}(a,b)}{S_{xx}(a,b)} \qquad \hbox{and } \qquad 
\nu= \frac{2 S_{xy}(a,b)}{S_{yy}(a,b)}.
\]
 The eigenvalues of $J$ are  the roots of
\[
\lambda^2-(\eta\nu-2)\lambda+1
\]
and, as the orbit is finite, they must equal $e^{\pm 2i \theta}$ for $\theta$
a rational multiple of $\pi$. That is, 
\[
\lambda^2-(\eta\nu-2)\lambda+1=
(\lambda-e^{2i\theta})(\lambda-e^{-2i\theta}).
\]
Extracting the coefficient of $\lambda$ gives the following
proposition.
\begin{Proposition}\label{prop:ab}
Consider a two-dimensional model $\cS$, and a critical point $(a,b)$
of $S(x,y)$ such that $S_{xx}(a,b)S_{yy}(a,b) \not = 0$. Then one can
define involutions $\Phi$ and $\Psi$ as described above. If the orbit
is finite, then $\Theta:=\Psi\circ \Phi$ has finite order. In
particular,  there exists a rational multiple of~$\pi$, denoted
$\theta$, such that
\[
\frac{ S_{xy}(a,b)^2}{S_{xx}(a,b)S_{yy}(a,b)} =\cos^2 \theta .
\]
\end{Proposition}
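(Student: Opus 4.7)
The proposition is essentially the summary of the analysis already performed just before its statement, so my plan is to verify the three missing links: (i) that the transformations $\Phi$ and $\Psi$ are well-defined involutions under the given hypotheses, (ii) that finiteness of the orbit forces $\Theta$ to have finite order, and (iii) that extracting coefficients from the characteristic polynomial of $J(a,b)$ yields the claimed trigonometric identity.

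For (i), I would reuse the construction of Section~\ref{sec:group} verbatim. The implicit function theorem requires $I_X(a,\Ba)\neq 0$, and this quantity equals $S_{xx}(a,b)/2$; symmetrically for $\Psi$ we need $S_{yy}(a,b)/2\neq 0$. Both conditions are guaranteed by the hypothesis $S_{xx}(a,b)S_{yy}(a,b)\neq 0$. The involutiveness then follows from the argument already given, namely uniqueness of $\X_1$ (resp.\ $\X_2$) as the non-trivial analytic solution near $\Ba$ of the equation~\eqref{X1-char}, combined with the fact that the linear term of $\Phi\circ \Phi$ differs in sign from that of $\Phi$.

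For (ii), the key point is that the group $G=\langle \Phi,\Psi\rangle$ acts faithfully on the orbit of $\Bx=(x,y)$. Indeed, each generator sends $\Bx$ to an adjacent element in the orbit, so every $\Theta\in G$ maps $\Bx$ to an orbit element $\Theta(\Bx)$; and if two elements of $G$ had the same image, their ratio would be an analytic germ near $\Ba$ fixing $(x,y)$ coordinatewise, hence the identity. This embeds $G$ into the symmetric group of the orbit, so $|G|\leq|\text{orbit}|<\infty$ by hypothesis, and in particular $\Theta$ has finite order $n$. Differentiating the relation $\Theta^n=\id$ at the fixed point $(a,b)$ gives $J(a,b)^n=I$, so the matrix $J(a,b)$ has finite order.

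For (iii), I would compute directly that $\det J(a,b)=-(\eta\nu-1)+\eta\nu=1$ and $\mathrm{tr}\, J(a,b)=\eta\nu-2$, so the characteristic polynomial is $\lambda^2-(\eta\nu-2)\lambda+1$. Since $J(a,b)$ has finite order its eigenvalues are roots of unity, and since their product is $1$ they form a complex-conjugate pair $e^{\pm 2i\theta}$ with $\theta$ a rational multiple of~$\pi$. Equating
\[
\lambda^2-(\eta\nu-2)\lambda+1=\lambda^2-2\cos(2\theta)\lambda+1
\]
gives $\eta\nu=2+2\cos(2\theta)=4\cos^2\theta$. Substituting the definitions of $\eta$ and $\nu$ yields $4S_{xy}(a,b)^2/[S_{xx}(a,b)S_{yy}(a,b)]=4\cos^2\theta$, which is the desired identity. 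The only delicate step is the faithfulness argument underlying (ii); everything else is a mechanical extraction from the spectral data of $J(a,b)$ already computed in the excerpt.
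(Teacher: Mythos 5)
Your proof is correct and follows the same route as the paper's: construct $\Phi$ and $\Psi$ via the implicit function theorem (the hypothesis $S_{xx}(a,b)S_{yy}(a,b)\neq 0$ being exactly what is needed), observe that $\Theta=\Psi\circ\Phi$ fixes $(a,b)$ and therefore has a Jacobian $J(a,b)$ whose eigenvalues are forced to be roots of unity, and read off $\cos^2\theta$ from the trace of $J(a,b)$ using $\det J(a,b)=1$. The one step you make explicit that the paper asserts without comment is the faithfulness of the action $\Theta\mapsto\Theta(x,y)$ on the orbit, needed to conclude that a finite orbit implies a finite group; your observation that a germ fixing the indeterminate point $(x,y)$ coordinatewise must be the identity is precisely the right justification, and slightly sharpens the exposition without changing the argument.
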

We can now prove the part of Theorem~\ref{thm:theta} that deals with the orbit
size. Since $\cS$ is not contained in a half-plane, there exists a unique \emm
positive, critical point $(a,b)$ (an argument is given in the proof
of~\cite[Thm.~4]{BoRaSa14}). The derivatives $S_{xx}$ and $S_{yy}$ are
positive at this point (because every monomial $x^i y^j$ gives a non-negative
contribution, and one of them at least gives a positive contribution), and
thus the above proposition applies. \qed

%==================================================
\subsubsection{The excursion exponent}
%==================================================
We will now show that in the 2-dimensional case, the above criterion is
closely related to an asymptotic result that has been used as a criterion for
the non-D-finiteness of $Q(x,y;t)$ in~\cite{BoRaSa14}. This result originally
applies to \emm strongly aperiodic models, only, and it will take us a bit of
work to obtain a version that is valid for periodic models as well. Given a
model $\cS$, we denote by $\Lambda$ the lattice of $\zs^2$ spanned by its
steps. Then $\cS$ is \emm strongly aperiodic, if for any point $x\in\Lambda$,
the lattice $\Lambda_x$ spanned by the points $x+s$ for $s\in \cS$, which is
clearly a sublattice of $\Lambda$, coincides with $\Lambda$. For instance,
Kreweras' model $\{\nearrow, \leftarrow, \downarrow\}$ is \emm not, strongly
aperiodic: one has $\Lambda=\zs^2$, but for $x=(1,0)$, the lattice $\Lambda_x$
only contains points $(i,j)$ such that $i+j$ is a multiple of $3$.

Given a model $\cS$, and a point $(i,j)$ in $\zs^2$, we denote by $w(i,j;n)$
the number of $n$-step walks going from $(0,0)$ to $(i,j)$ consisting of steps
taken in $\cS$ \emm without the quadrant condition,. We call any walk starting
and ending at the same point an \emm excursion,.

\begin{Proposition}\label{prop:lattice}
 Let $\cS\subset \zs^2$ be a model that is not contained in a
 half-plane, and denote by $\Lambda$ the lattice of $\zs^2$ generated
 by $\cS$. Then there exists an integer
$p$, called the \emm period, of $\cS$, such that for any $(i,j)\in \Lambda$, there exists $r\in \llbracket0, p-1\rrbracket$ with
$
w(i,j;n) =0$ if  $n \not \equiv r \!\mod p$ and $w(i,j;n) >0$ if
$n=mp+r$ and $m$ is large enough.

The model $\cS$ is strongly aperiodic if and only if $p=1$.
\end{Proposition}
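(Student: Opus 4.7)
The plan is to define $p$ as the positive generator of the subgroup $H := \{m \in \zs : (0,0,m) \in \tilde\Lambda\}$ of $\zs$, where $\tilde\Lambda \subset \zs^2 \times \zs$ is the lattice generated by the triples $(s, 1)$ for $s \in \cS$. I would first check that $H$ is nontrivial: the hypothesis that $\cS$ is not contained in a half-plane places the origin in the interior of the convex hull of $\cS$, so there exist positive integers $(N_s)$ with $\sum_s N_s s = 0$, giving $(0, 0, \sum_s N_s) \in \tilde\Lambda$ with $\sum_s N_s > 0$. Hence $p \geq 1$ is well-defined.

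The congruence part is then immediate: if $w(i,j;n)$ and $w(i,j;n')$ are both positive then $(i,j,n)$ and $(i,j,n')$ lie in $\tilde\Lambda$, so $(0,0,n-n') \in \tilde\Lambda$ and $p \mid (n-n')$. This defines $r(i,j) \in \{0,\ldots,p-1\}$ unambiguously for every $(i,j) \in \Lambda$, such an $n$ existing because adding enough copies of the positive cycle $(N_s)$ to any signed representation of $(i,j)$ makes all coefficients nonnegative. For the positivity at large $n$, the key step is the case $(i,j)=(0,0)$: the set $S_0 := \{n \in \ns : w(0,0;n) > 0\}$ is a subsemigroup of $\ns$ whose group of differences is exactly $H = p\zs$, as one sees by decomposing any signed representative of $(0,0,p)$ into its positive and negative parts (two walks from $0$ to a common endpoint $u$) and closing each into a genuine cycle by appending a nonnegative realization of $-u$ to both halves. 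The classical Schur--Sylvester theorem on numerical semigroups then yields $S_0 \supset p\ns \cap [N,\infty)$ for some $N$. For general $(i,j)$, fix any $n_0$ with $w(i,j;n_0)>0$ and prepend a cycle at the origin of length $mp$ (valid for $m$ large) to the chosen walk of length $n_0$.

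For the last equivalence, I would pass to the finite quotient $G := (\Lambda \times \zs)/\tilde\Lambda$, in which the class $\gamma := [(0,1)]$ has order $p$. A direct computation gives $[(x,0)] = -r(x)\gamma$ in $G$ for every $x \in \Lambda$, after extending $r$ additively to a homomorphism $r : \Lambda \to \zs/p\zs$. The relation $y \in \Lambda_x$ translates into the existence of $k \in \zs$ with $r(y) \equiv k(r(x)+1) \pmod p$, so strong aperiodicity (which demands this for every $(x,y) \in \Lambda^2$) becomes the requirement $\gcd(r(x)+1, p) = 1$ for every $x \in \Lambda$. Since $r(s)=1$ for any step $s$, the map $r$ is surjective onto $\zs/p\zs$, and choosing $x$ with $r(x)\equiv -1 \pmod p$ forces $p=1$. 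Conversely, if $p=1$ then $(0,0,1) \in \tilde\Lambda$ and any $y \in \Lambda$ may be rewritten as a balanced integer combination of the steps, placing $y$ in $\Lambda_x$ with $k=0$ for every $x$. I expect this final equivalence---especially the bookkeeping in $G$ and the identification of the obstruction as $\gcd(r(x)+1,p)$---to be the main conceptual hurdle.
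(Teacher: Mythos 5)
Your proof is correct, and it takes a genuinely different and more algebraic route than the paper's. The paper defines $p$ combinatorially as the gcd of the lengths of excursions, and establishes the nontriviality of the excursion set by an explicit step-polygon construction (picking a step $s$, a companion $s'$ maximizing the wedge angle, and a third step $s''$ with negative coordinates in the basis $(s,s')$). For the equivalence ``strongly aperiodic $\Leftrightarrow p=1$'' the paper does not give a proof at all: it refers the reader to a proof in Spitzer's book. Your approach instead encodes everything into the rank-$3$ lattice $\tilde\Lambda$ generated by the vectors $(s,1)$, defines $p$ as the generator of $\{m : (0,0,m)\in\tilde\Lambda\}$, and then shows via the ``split into positive and negative parts, close both halves by a common return to the origin'' trick that this agrees with the gcd definition. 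The real gain is the last equivalence: by passing to the finite quotient $G = (\Lambda\times\zs)/\tilde\Lambda$, observing that $G$ is cyclic generated by $\gamma = [(0,1)]$ of order $p$, that $[(x,0)] = -r(x)\gamma$, and that $y\in\Lambda_x$ unwinds to $(y,0)\in\tilde\Lambda + \zs(x,-1)$, you reduce strong aperiodicity to $\gcd(r(x)+1,p)=1$ for all $x$; surjectivity of $r$ (forced by $r(s)=1$) then closes the argument in both directions. This makes the proof self-contained where the paper outsources it, and the lattice framing also streamlines the congruence step (membership of $(i,j,n)$ in $\tilde\Lambda$ is an immediate necessary condition). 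The only blemish is cosmetic: what you call the ``Schur--Sylvester theorem'' is the standard structure theorem for numerical semigroups (a nontrivial subsemigroup of $\ns$ with gcd $p$ contains all sufficiently large multiples of $p$), usually attributed to Sylvester/Frobenius; the paper just calls it ``well-understood''.
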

\begin{proof}
  Several ingredients of the proof are borrowed  from
  Spitzer~\cite[Sec.~I.5]{spitzer}, who deals with recurrent random
  walks and only considers the case $\Lambda=\zs^2$. The fact that all
  points {$(i,j)$} of $\Lambda$ can be reached {from
    $(0,0)$} is closely related to Farkas'
  Lemma~\cite[Sec.~7.3]{schrijver}. 

Let $\cN= \{n \ge 0: w(0,0;n)\not = 0\}$. Since one can concatenate two walks
starting and ending at the origin, $\cN$ is an additive semi-group of $\ns$.
Our first objective is to prove that it is not reduced to $\{0\}$, that is,
that there exist non-empty excursions.

Let $s$ be a non-zero vector of $\cS$. Since $\cS$ is not contained in
a half-plane, there exists another non-zero vector of $\cS$, say $s'$,
such that the  wedge formed by the pair $s,s'$ forms  an angle
$\phi\in (0, \pi)$. 
Let us choose  $s'$
so as to  maximize $\phi$ in this interval (Figure~\ref{fig:wedge}). Since $s$ and $s'$ form a basis of~$\rs^2$, any other vector $s''$ of $\cS$ can be written as $s''=\alpha s+\beta s'$ for  a unique pair  $(\alpha,\beta) \in \qs^2$. Since  $\cS$ is not contained in a half-plane,
 there must exist a  vector $s''$ in~$\cS$ such
that $\alpha$ is negative. By  maximality
of $\phi$, this vector is such  that $\beta \le 0$.  Writing $\alpha=-a/d$ and
$\beta=-b/d$ with $a, d$ positive integers and $b$ a non-negative integer, we conclude that
$as+bs'+ds''=0$, which shows that the walk starting at the origin and
formed of $a$ copies of
$s$, $b$ copies of $s'$ and $d$ copies of $s''$ ends at the origin as
well. Thus there exist non-empty excursions. Moreover, we have
\[
-s= (a-1)s+bs'+ds''.
\]
Since $a$ is a positive integer, {and $s$ is an arbitrary element of $\cS$,} this proves that the set of endpoints of walks starting at the origin
is not only a semi-group of $\zs^2$ (again, by a concatenation
argument), but in fact the entire lattice~$\Lambda$. 

\begin{figure}
  \centering
  \scalebox{0.9}{\input{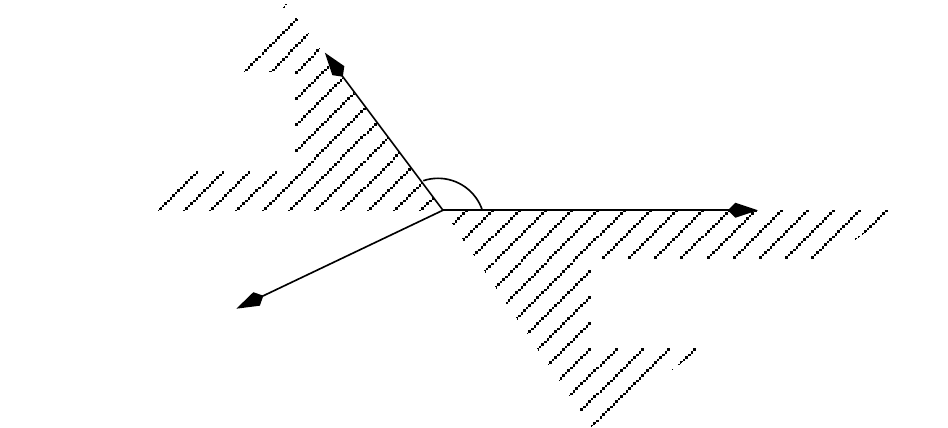_t}}
  \caption{On the existence of excursions.}
\label{fig:wedge}
\end{figure}

We have established that $\cN\not = \{0\}$. Let $p$ be the greatest common
divisor of the elements of~$\cN$. The structure of semi-groups of $\ns$ are
well-understood: $\cN \subset p \ns$, and $pm \in \cN$ for any large enough
$m$. We have thus proved the first statement of the proposition for
$(i,j)=(0,0)$, {with $r=0$}.

Now let $(i,j)\in \Lambda$. We have proved above that there exists a walk
going from $(0,0)$ to $(i,j)$. Assume that there are two such walks $w$ and
$w'$, and choose a walk $w''$ from $(i,j)$ to $(0,0)$. Then both $ww''$ and
$w'w''$ are excursions, hence they must have length $0$ modulo~$p$.
Consequently, $w$ and $w'$ must have the same length modulo $p$, say $r$.
Finally, by concatenating a large excursion to a walk ending at $(i,j)$, we
see that for $m$ large enough, there is a walk of length $pm+r$ from the
origin to $(i,j)$.

The equivalence between strong aperiodicity and $p=1$ can be proved by
mimicking the corresponding part of the proof of Proposition P1
in~\cite[Sec.~I.5]{spitzer}. 
\end{proof}

In the following theorem, we  assume that each step $s$ of $\cS$ is weighted
by a positive weight~$\om_s$. This means that the ``number''
$q(i,j;n)$ is actually the sum of the weights of all quadrant walks
from $(0,0)$ to $(i,j)$, the weight of a walk being the product of
the weights of its steps. In this context, the step polynomial is
\[
S(x,y)=\sum_{s=(s_1, s_2)\in \cS} \om_s x^{s_1} y^{s_2}.
\]
\begin{Definition}
   Given a model $\cS\subset \zs^2$, a point $(i,j)\in \ns^2$ is  \emm reachable from infinity, if there exists
a quadrant walk that starts from 
a point $(k, \ell) \in (i,j)+ \zs_{>0}^2$ and ends at $(i,j)$. 
\end{Definition} Note
that in this case, $(k, \ell)$ itself is reachable from
infinity. Moreover, upon
concatenating several copies of the walk, we can find a starting
point $(k', \ell')$ with arbitrarily large coordinates,
and a quadrant walk from this point to $(i,j)$. Finally,
Proposition~\ref{prop:lattice} implies that if $\cS$ is not contained in a
half-plane, then any point with large enough coordinates is reachable
from infinity.

We can now complete the asymptotic result of Denisov and
Wachtel~\cite{denisov-wachtel} with a statement that holds in the periodic
case.

\begin{Theorem}\label{thm:exponent}
Let $\cS\subset \zs^2$ be a model that is not contained in a
half-plane and {contains an element of}~$\ns^2$. Then the step polynomial $S(x,y)$ has a unique critical
point $(a,b)$ in $\rs_{>0}^2$, which satisfies $S_{xx}(a,b)>0$ and
$S_{yy}(a,b)>0$. Define
\[
\mu= S(a,b), \qquad   \qquad c= \frac{
  S_{xy}(a,b)}{\sqrt{S_{xx}(a,b)S_{yy}(a,b)}}\qquad \hbox{and } \qquad
\alpha=-1 -\pi/\arccos(-c).\]
Assume first that $\cS$ is strongly aperiodic. Then if  $(i,j)$ is reachable from infinity, there exists a positive constant $\kappa $
such that, as $n$ goes to infinity,
\[
q(i,j;n)\sim \kappa\,  \mu^n n^{\alpha}.
\]
If   $\cS$ is not strongly aperiodic and has period
$p>1$, 
define 
\[
\bcS= \{ s_1+\cdots + s_p, \ (s_1, \ldots, s_p)\in \cS ^p \},
\]
and let $\overline \Lambda$ be the lattice spanned by the vectors
of $\bcS$. Then if $(i,j)\in \overline \Lambda$  is  reachable from
infinity {for $\cS$},  there exist positive constants $\kappa _1$ and $\kappa _2$  such that for $n=pm$ and $m$ large enough,
\begin{equation}\label{bounds}
\kappa _1 \, \mu^n n^{\alpha} \le q({i,j};n)\le \kappa_2\, \mu^n n^{\alpha}.
\end{equation}
We call $\alpha$ the \emm excursion exponent., 
\end{Theorem}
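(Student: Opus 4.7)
\medskip

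\noindent\textbf{Critical point.} The existence and uniqueness of a positive critical point $(a,b)$ comes from a standard convexity argument: the function $(u,v)\mapsto \log S(e^u,e^v)$ is convex on $\rs^2$ by Cauchy--Schwarz, and strictly so because the support of $\cS$, not being contained in a half-plane, affinely spans $\rs^2$. Coercivity of $S$ on $\rs_{>0}^2$ follows from the same hypothesis, yielding a unique interior minimum. Positivity of $S_{xx}(a,b)$ and $S_{yy}(a,b)$ follows from the strict log-convexity of $x\mapsto S(x,b)$ at its minimum $a$ (and symmetrically in $y$).

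\medskip

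\noindent\textbf{Strongly aperiodic case.} This is a direct application of the local limit theorem of Denisov and Wachtel~\cite{denisov-wachtel}. The exponential tilt $\tilde\om_s=\om_s\,a^{s_1}b^{s_2}/\mu$ turns $\cS$ into a centered probability distribution on $\zs^2$, since $aS_x(a,b)=bS_y(a,b)=0$, with covariance matrix
\[
\Sigma=\frac{1}{\mu}\begin{pmatrix} a^2 S_{xx}(a,b) & ab\,S_{xy}(a,b)\\ ab\,S_{xy}(a,b) & b^2 S_{yy}(a,b)\end{pmatrix},
\]
whose normalized off-diagonal coefficient is precisely $c$. After a linear change of coordinates diagonalizing $\Sigma$, the quadrant becomes a cone whose opening angle is $\theta_0=\arccos(-c)$, and~\cite{denisov-wachtel} then gives $\tilde q(i,j;n)\sim \tilde\kappa\,n^{-1-\pi/\theta_0}$ as soon as $(i,j)$ is reachable from infinity. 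Untilting through $q(i,j;n)=\mu^n a^{-i}b^{-j}\tilde q(i,j;n)$ produces the stated asymptotic with $\kappa=\tilde\kappa/(a^ib^j)$.

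\medskip

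\noindent\textbf{Periodic case $p>1$.} The reduction is to the aperiodic case via the block step set $\bcS$, whose weighted step polynomial is~$S^p$. Differentiating $S^p$ at $(a,b)$ gives $\bar\mu:=S^p(a,b)=\mu^p$, together with $(S^p)_{xx}(a,b)=p\mu^{p-1}S_{xx}(a,b)$ and analogous identities for the $yy$- and $xy$-derivatives, since the terms involving $S_x$ or $S_y$ vanish. Hence the normalized correlation of $\bcS$ still equals $c$ and its excursion exponent is still $\alpha$. By Proposition~\ref{prop:lattice}, the lattice $\overline\Lambda$ consists precisely of the endpoints of $\cS$-excursions, and a direct Spitzer-type argument shows that $\bcS$ is strongly aperiodic on $\overline\Lambda$. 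The strongly aperiodic case thus applies to $\bcS$ on $\overline\Lambda$, yielding a count that I denote $\bar q(i,j;m)$ satisfying $\bar q(i,j;m)\sim \bar\kappa\,\mu^{pm}m^\alpha$.

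\medskip

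\noindent\textbf{Bounds and main obstacle.} For the \emph{upper bound}, every $\cS$-quadrant walk of length~$pm$ ending at $(i,j)\in\overline\Lambda$, viewed on positions at multiples of $p$, is a $\bcS$-quadrant walk of length~$m$ ending at $(i,j)$; summing the weights of the $\cS$-realizations of a fixed block walk produces exactly the $\bcS$-weight of that walk, so $q(i,j;pm)\le \bar q(i,j;m)\le \kappa_2\mu^n n^\alpha$ for large~$m$. For the \emph{lower bound}, let $M$ bound the coordinates of $\cS$-steps and fix an $\cS$-quadrant walk $w_0$ from some reachable $(i_0,j_0)$ with arbitrarily large coordinates to $(i,j)$; count $\bcS$-quadrant walks of length $m-O(1)$ ending at $(i_0,j_0)$ all of whose block endpoints lie at distance at least~$pM$ from both axes, and concatenate each with $w_0$. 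Any $\cS$-realization of such a block walk is then automatically an $\cS$-quadrant walk. The main obstacle is that this produces a \emph{lower} bound of the desired order $\mu^n n^\alpha$ only if the restricted count is still $\Theta(\mu^{pm}m^\alpha)$, i.e.\ if block walks whose intermediate endpoints avoid a fixed-width tube along the axes form a positive proportion of all $\bcS$-quadrant walks. This should follow from the uniform Gaussian estimates in~\cite{denisov-wachtel} but requires careful extraction; by comparison the verifications of strong aperiodicity of $\bcS$ on $\overline\Lambda$ and of the invariance of the exponent under blocking are routine.
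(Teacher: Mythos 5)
Your treatment of the critical point and of the strongly aperiodic case matches the paper's (which in turn delegates the aperiodic case to the proof of Theorem~4 in~\cite{BoRaSa14} and to the local limit theorem of Denisov and Wachtel). Your upper-bound argument in the periodic case is also the same as the paper's: viewing a length-$pm$ $\cS$-quadrant walk as an $m$-step $\bcS$-quadrant walk gives $q(i,j;pm)\le \bar q(i,j;m)$.

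For the lower bound you correctly flag the gap in your own proposal: you would need $\bcS$-quadrant walks whose intermediate block endpoints keep a distance $\ge pM$ from both axes to form a positive proportion of all $\bcS$-quadrant walks, and you concede that extracting this from the uniform estimates of~\cite{denisov-wachtel} is not routine. The paper avoids the need for any such refined estimate by a translation trick that you should have looked for. Take an \emph{arbitrary} $\bcS$-quadrant walk $w$ of length $m$ from $(0,0)$ to some point $z$, and let $\tilde w$ be any $\cS$-realization of $w$. Between consecutive block endpoints (both in $\ns^2$), each coordinate of $\tilde w$ can drop by at most $pM$, so $\tilde w$ stays in the enlarged quadrant $[-pM,\infty)^2$ --- no restriction on where the block endpoints lie. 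Now fix once and for all a quadrant $\cS$-walk $w_1$ of length $\equiv 0 \pmod p$ from $(0,0)$ to a point $x$ with both coordinates $> pM$, and a quadrant $\cS$-walk $w_2$ of length $\equiv 0\pmod p$ from $x+z$ to $(i,j)$ (for a suitable reachable $z$). Translating $\tilde w$ to start at $x$ keeps it in $\ns^2$, and concatenating $w_1$, translated $\tilde w$, $w_2$ yields a quadrant $\cS$-walk from $(0,0)$ to $(i,j)$. Summing the weights of all $\cS$-realizations of a fixed $w$ reproduces exactly the $\bcS$-weight of $w$, and distinct $w$'s give disjoint families, so
$q(i,j;p(m_1+m+m_2)) \ge \om(w_1)\,\om(w_2)\,\bar q(z;m)$, which is the desired lower bound of order $\mu^n n^\alpha$. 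This single observation replaces the ``tube-avoidance'' estimate you were missing and makes the periodic reduction completely elementary.
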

\noindent{\bf Remarks}\\
{\bf 1.} It is very likely that an asymptotic estimate holds as well in the
periodic case (see~\cite[p.~3/4]{duraj-wachtel}), but the proof does not seem
to be written down, and we will content ourselves with the above
bounds. \\
{\bf 2.} The reachability condition, which is somewhat
implicit in~\cite{denisov-wachtel}, is important. Consider for
instance the (strongly aperiodic) model $\cS=\{10,01, 1  \bone, \bone 1, \bar 3 2, 2 \bar
3\}$. Then for $n>0$,
\[
q(0,0;n)= 0 \quad \hbox{and} \quad q(1,0;n)=1,
\]
while 
\[
q(1,1;n) \sim \kappa \, 6^n n^\alpha
\]
with $\alpha =-1-\pi/\arccos(7/8)$. The reason for these different
asymptotic behaviours is that the points $(0,0)$ and $(1,0)$ are not reachable
from infinity, while $(1,1)$ is.
Similarly, any asymptotic result for quadrant walks starting from a
given point $(k,\ell)$ should require that there exists a quadrant
walk  that starts from $(k, \ell)$ and ends in $(k, \ell)
+\zs_{>0}^2$ (we say that $(k, \ell)$ \emm reaches infinity,). Given that we have assumed that $\cS$ contains a point
of $\ns^2$ and is not included in a half-plane, this condition holds
here for any $(k, \ell)$.
  \begin{proof} [{Proof of Theorem~\ref{thm:exponent}}]
In the aperiodic case, the proof can be copied verbatim from the proof
of Theorem~4 in~\cite{BoRaSa14}. One considers  an
underlying random walk  and  normalizes it  into a walk whose
projections on the $x$- and $y$-axes are centered, reduced, and of
covariance $0$. The key result is then a local limit theorem of
Denisov and Wachtel 
 that applies to such walks~\cite[Thm.~6]{denisov-wachtel} (note that
 one should assume in that theorem that $V(x)>0$ and $V'(y)>0$, which
 holds if $x$ reaches infinity and $y$ is reached from infinity). 

We  thus focus on the periodic case. The idea is to consider $p$
consecutive steps of a walk as a single generalized step to obtain a
strongly aperiodic walk. More precisely, let us define $\bcS$ as above,
and define the weight  of a step $s$ of $\bcS$ to be
\[
\bar \om_s= \sum_{\atopfix{(s_1, \ldots, s_p)\in \cS^p}{ s_1+\cdots + s_p=s}}
\om_{s_1} \cdots \om_{s_p}.
\]
We denote with bars all quantities that deal with the model
$\bcS$. For instance, $\bar w(i,j;n)$ is the (weighted)
number of walks going from $(0,0)$ to $(i,j)$  in $n$ steps taken
from $\bcS$. By the definition of~$p$, we have $w(0,0;pn)=\bar w(0,0;n) >0$
for $n$ large enough, hence $\bcS$ is strongly aperiodic (on the lattice
$\overline \Lambda$ that it generates). Observe that
\[
\bar S(x,y)=S(x,y)^p, \qquad (\bar a, \bar b)=(a,b), \qquad \bar \mu=
\mu^p, \qquad   \bar c=c \qquad \hbox{and} \qquad {\bar \alpha= \alpha}.
\]
Note that if $(i,j) \in \overline \Lambda$ is reachable from infinity in the
model $\cS$, then it is also reachable from infinity in the model $\bcS$.
Since we will consider both models $\cS$ and $\bcS$ at the same time, we will
often refer to a walk with steps in $\cS$ as an \emm $\cS$-walk.,

\medskip
\noindent {\bf Upper bound.}
   A quadrant walk from $(0,0)$ to $(i,j)$
consisting of $n=pm$ steps of $\cS$ can be seen as a quadrant walk
from $(0,0)$ to $(i,j)$  consisting of $m$ steps of $\bcS$ (the
converse is not true in general: for instance, taking a step $(1,0)$
in $\bcS$ may correspond to a sequence $(-1, 0),( 2,0)$ of steps of
$\cS$ and involve crossing the $y$-axis). Hence
\[
q({i,j};pm) \le \bar q ({i,j};m).
\]
Since $\bcS$ is strongly aperiodic,  and $(i,j)$ reachable from
infinity in $\bcS$, the right hand-side is asymptotic to
$\kappa\, {(\mu^p)^m} m^\alpha$ for some positive $\kappa$, which gives the
desired upper bound on $q({i,j};n)$. 

\medskip
\noindent{\bf Lower bound.} Since $(0,0)$ reaches infinity, and
$(i,j)$ is reachable from infinity, we can pick two quadrant walks
$w_1$ and $w_2$ satisfying the following conditions:
\begin{itemize}
\item $w_1$ goes from $(0,0)$ to a point $x=(i_1,j_1)$, whose
  coordinates are larger than $pM$,  where $M$ is the
maximal norm of a step of $\cS$. Moreover, $w_1$ has length $pm_1$;
\item $w_2$ goes from some point $y=(i_2,j_2)$ to $(i,j)$,
  and the coordinates of $y$ are large enough for $y-x$ to be
  reachable from infinity in the model $\bcS$ {(in particular,
    $i_2\ge i_1$ and $j_2\ge j_1$)}. Moreover, $w_2$ has length $pm_2$.
\end{itemize}
  Now take a quadrant walk $w$ from $(0,0)$ to $y-x$ consisting of $m$ elements of $\bcS$: if we replace every step
$\sigma=s_1+\cdots +s_p$ (with each $s_k \in \cS$), by the sequence $s_1, \ldots, s_p$, the
resulting walk $\tilde w$ may exit the
quadrant. But it will remain in the translated quadrant $[-pM,
\infty)^2$. Thus, if we translate $\tilde w$ so that it starts at $x$, it
will remain in the quadrant $\ns^2$, and end at $y$. Adding $w_1$ as a prefix
and $w_2$ as a suffix  gives a quadrant walk of length $n=p(m_1+m_2+m)$
ending at $(i,j)$.  Consequently,
\[
q(i,j;n) \ge c\, \bar q (i_2-i_1, j_2-j_1;m)
\]
for some positive constant $c$ that depends on the weights of  $w_1$ and $w_2$. Since $\bcS$ is strongly aperiodic, and $y-x$ is reachable from
infinity in this model, the right-hand side is asymptotic to some
$\kappa\, 
{(\mu^{p})^m}
m^\alpha$, which gives the desired lower bound on $q(i,j;n)$.
  \end{proof}

We can now conclude the proof of Theorem~\ref{thm:theta}. \begin{proof}[Proof
of Theorem~\ref{thm:theta}] We have already established the part that deals
with the orbit size, so we focus on the nature of the series $Q(x,y;t)$. We
assign weight $\om_s=1$ to every step of $\cS$. Let {$(i,j) \in \overline
\Lambda$, with $i$ and~$j$ large enough} for $(i,j)$ to be reachable from
infinity. Then the bounds~\eqref{bounds} on $q(i,j;n)$ hold (whether the model
is periodic or not) with $\alpha$ irrational. The \gf\ $\sum_{n} q(i,j;n) t^n$
is the coefficient of $x^i y^j$ in $Q(x,y;t)$, and it is D-finite if
$Q(x,y;t)$ is D-finite. In this case it must be a
G-function~\cite[Sec.~2]{BoKa09}. But the properties of these functions are
incompatible with the existence of such bounds~\cite[Thm.~2]{BoKa09}, and thus
$Q(x,y;t)$ cannot be D-finite. Indeed, it follows from the
Katz-Chudnovsky-Andr\'e theorem~\cite{Andre00,FiRi14} on the local structure
of G-functions, combined with classical transfer theorems, that $q(i,j;n)$
needs to be asymptotically equivalent to a sum of terms of the form $\kappa
\rho^n n^a (\log n)^b$ with only \emph{rational} exponents~$a$, and our
exponent $\alpha$ must be one of these $a$'s. 
\end{proof}

%==================================================================
\subsubsection{Examples}
\label{sec:ex}
%==================================================================
We now illustrate the above results with {five} examples.

\smallskip
\noindent{\bf Example D (continued): a model with rational exponent and
  finite orbit.} Let us take $S(x,y)= \bx^2 +\bx
y+y ^2+x\by$. The unique positive critical pair is $(a,b)= (3^{1/4},
3^{-1/4})$. We have seen that the orbit of $\cS$ is finite {(Figure~\ref{fig:orbit-quadrangulations})}, and
indeed,
\[c:= \frac{
  S_{12}(a,b)}{\sqrt{S_{11}(a,b)S_{22}(a,b)}}= -\frac 1 2 = \cos \frac
{2\pi} 3.
\]
{With the notation of Theorem~\ref{thm:theta}}, we have $\theta=2\pi/3$, and by
Theorem~\ref{thm:exponent} the
excursion exponent is $\alpha=-4$.
The involutions $\Phi$ and $\Psi$ defined in
Section~\ref{sec:group} satisfy
\[
\Phi(a+u,b+v)= (a-u+\sqrt 3 v + \cdots, b+v)\qquad 
\Psi(a+u,b+v)= (a+u,b+u/\sqrt 3 -v + \cdots), \qquad 
\]
so that
\[
\Theta(a+u,b+v)= (a-u+\sqrt 3 v+\cdots ,b-u/\sqrt 3 +\cdots).
\]
The matrix $J$ is 
\[
J=\left(
  \begin{array}{cc}
    -1 & \sqrt 3\\
-1/\sqrt 3 & 0
  \end{array}\right),
\]
its eigenvalues are $e^{\pm 2 i \pi/3}$, and $J^3$ is the identity
matrix. In fact, it can be checked that $\Theta^3=\id$. This is
reflected in Figure~\ref{fig:orbit-quadrangulations} by the existence of bicoloured hexagons. \qee

\medskip
\noindent{\bf Example: a model with irrational exponent and
  infinite orbit.} Now take $S(x,y)=\bx^2+y+x\by$. The unique
positive critical pair is $(a,b)=(2^{2/5}, 2^{1/5})$. 
We have 
\[c:= \frac{
  S_{12}(a,b)}{\sqrt{S_{11}(a,b)S_{22}(a,b)}}= -\frac 1 {\sqrt 6}.
\]
Let us prove that this is not the cosine of a rational multiple
$\theta$ of
$\pi$. With $z=e^{i\theta}$, this would mean that $z+1/z=-\sqrt{2/3}$,
so that the minimal polynomial of $z$ (and $1/z$) would be
$z^4+4z^2/3+1$. This is not a cyclotomic polynomial, hence $c$ is not
of the requested form. We  conclude from Theorem~\ref{thm:theta} that the orbit is infinite, and
the series $Q(x,y;t)$ not D-finite. The excursion exponent is
$\alpha=-1-\pi/\arccos(1/\sqrt6)\sim
{-3.73 }\ldots$, and it is an
irrational number. 

The involutions $\Phi$ and $\Psi$ satisfy
\[
\Phi(a+u,b+v)= (a-u+ 2^{6/5} v/3 + \cdots,b+v) \qquad 
\Psi(a+u,b+v)= (a+u,b+u/2^{1/5} -v + \cdots), \qquad 
\]
so that
\[
\Theta(a+u,b+v)= (a-u +2^{6/5} v/3+\cdots ,b-u/2^{1/5}-v/3 +\cdots).
\]
The matrix $J$ is 
\[
J=\left(
  \begin{array}{cc}
    -1 &2^{6/5} /3\\
-1/2^{1/5} & -1/3
  \end{array}\right).
\]
Its eigenvalues are the roots of $\lambda^2+4\lambda/3+1$, and thus
are not roots of unity.  In particular, the group generated by $\Phi$
and $\Psi$ is infinite. \qee

\medskip

{The same argument proves that the walks of
  Figure~\ref{fig:slope2} have an irrational excursion exponent
  $\alpha=-1-\pi/\arccos(1/\sqrt 5)$, and thus a
  non-D-finite \gf.}

\medskip We will now consider {three} models that have a rational excursion
exponent, but still an infinite orbit. We will prove this using the approach
of Section~\ref{sec:group}, either by taking for $(a,b)$ the positive critical
point and pushing further the expansion of $\Theta$, or by considering another
critical point.

\medskip
\noindent{\bf Example: a model with rational exponent but
  infinite orbit.}  Take $S(x,y)=x+y+\bx +\by+x\by^2+\bx^2y$. This
is model \#13 in Table~\ref{tab:embarassing} {(Section~\ref{sec:embarassing})}. The unique
positive critical pair is $(a,b)=(\sqrt 2, \sqrt 2)$. 
We have 
\[c:= \frac{
  S_{12}(a,b)}{\sqrt{S_{11}(a,b)S_{22}(a,b)}}= -\frac 1 2 = \cos \frac
{2\pi} 3.
\]
{With the notation of Theorem~\ref{thm:theta}},
we have $\theta=2\pi/3$. The excursion exponent is $\alpha=-4$.

If we start from the positive critical point $(a,b)=(\sqrt 2, \sqrt 2)$ to define the involutions
$\Phi$ and $\Psi$, we find
\[
\Phi(a+u,b+v)=( a-u+ v+\cdots,b+v) \qquad 
\Psi(a+u,b+v)= (a+u,b +u-v + \cdots), \qquad 
\]
so that
\[
\Theta(a+u,b+v)= (a-u+v +\cdots ,b-u+\cdots).
\]
The matrix $J$ is 
\[
J=\left(
  \begin{array}{cc}
    -1 &1\\
 -1&0
  \end{array}\right).
\]
Its eigenvalues are $e^{\pm2i\pi/3}$, and $J^3=\id$, so
we cannot use the criterion of Theorem~\ref{thm:theta} to prove that
the orbit is infinite. But let us push further the expansion of
$\Theta$. We have
\[
\Phi(a+u,b+v)=\left(a-u+v+\frac5 8\,a{u}^{2}-\frac 5 8\,a uv-\frac 1 8\,a{v}
^{2}-{\frac {25\,}{32}}{u}^{3}+{\frac {7\,}{8}}{u}^{2}v+\frac 1{16}\,u{v}^{2}
+\frac 1{32}\,{v}^{3}+ \cdots, b+v\right),
\]
where the missing terms are of order 4 or more. A symmetric formula
holds for $\Psi$. Hence
\begin{multline*}
  \Theta(a+u,b+v) = \left( a-u+v+\frac5 8\,a{u}^{2}-\frac 5 8\,a uv-\frac 1 8\,a{v}
^{2}-{\frac {25\,}{32}}{u}^{3}+{\frac {7\,}{8}}{u}^{2}v+\frac 1{16}\,u{v}^{2}
+\frac 1{32}\,{v}^{3}+ \cdots, \right. \\
\left. b -u+\frac 1 2\,a{u}^{2}+\frac 1 4\,auv-\frac 1 4\,a{v}^
{2}-\frac 1 2\,{u}^{3}-\frac 3 8\,{u}^{2}v+{\frac {7\,}{16}}{v}^{3}+\cdots
\right).
\end{multline*}
We have already seen that $\Theta^3$ comes close to being the identity
-- at least, it is the identity at first order. But in fact,
\[
 \Theta^3(a+u,b+v) = \left(a+u + \frac 1 8 (u-2v)(u^2-uv+v^2)+\cdots,
b+v 
{-}  \frac 1 8 (v-2u)(u^2-uv+v^2)+\cdots
\right),
\]
so that 
\[
 \Theta^{3k}(a+u,b+v) = \left(a+u + \frac k 8 (u-2v)(u^2-uv+v^2)+\cdots,
b+v {-}  \frac k 8 (v-2u)(u^2-uv+v^2)+\cdots
\right).
\]
Thus $\Theta$ has infinite order, and by Proposition~\ref{prop:ab} the orbit is infinite. The nature of
$Q(x,y;t)$ remains unknown. 

An alternative way to prove infiniteness of the orbit for this model
is to start from another critical point and use a first order argument
rather than the above longer expansion. Let us take
$(a,b)=(e^{5i\pi/6}, e^{i\pi/6})$. Then the involutions $\Phi$ and
$\Psi$ satisfy
\begin{multline*}
{\Phi(a+u,b+v)= \left(a-u - \frac {2-6i \sqrt 3}7v+\cdots,
    b+v\right),}
\\
{\Psi(a+u,b+v)=\left(a+u, b-v- \frac  {2+6i\sqrt 3}7 u+\cdots\right),
}
\end{multline*}
so that 
\[
{\Theta(a+u,b+v)=\left( a-u -\frac {2-6i\sqrt 3}7 v+\cdots, b
  +\frac  {2+6i\sqrt 3}7 u+\frac 9 7 v+\cdots\right).
}\]
The characteristic polynomial of the corresponding matrix $J$ is
{$\lambda^2-2\lambda/7+1$}, and its roots are not roots of
unity. By Proposition~\ref{prop:ab}, the orbit is infinite.
\qee

\medskip

In the next example, the excursion exponent is again rational, and only the
first method above (expanding $\Theta$ to higher order) works to prove
infiniteness of the orbit.

\medskip
\noindent{\bf Example: one more model with rational exponent but
  infinite orbit.} Take $S(x,y)=xy+x\by^2+\bx^2y$. This is model \#2
in Table~\ref{tab:embarassing}. The positive critical point is
$(a,b)=(1,1)$, and
$c= -1/2= \cos(2\pi/3)$. The excursion exponent is again $-4$. Note
that there is no quadrant excursion from $(0,0)$ to $(0,0)$, because
this point is not reachable from infinity. But the asymptotic
bounds~\eqref{bounds} apply for instance for $(i,j)=(1,1)$ (with
period $p=3$).

We can prove that the orbit is infinite by
expanding $\Theta:=\Psi\circ \Phi$ up to order 3:
\begin{multline*}
  \Theta(1+u,1+v)= \left( 1-u+v+\frac 4 3\,{u}^{2}-\frac 4 3\,uv-\frac 2
  3\,{v}^{2}-{\frac {16\,}{9}}{u}^{3}+2\,{u
}^{2}v+\frac 2 3\,u{v}^{2}-\frac 1 9\,{v}^{3}+\cdots,\right. \\
\left. 1-u+\frac 2 3\,{u}^{2}+\frac 4 3\,uv-\frac 4 3\,{v}^{2}
+\frac 1 9\,{u}^{3}-3\,{u}^{2}v+\frac 1 3\,u{v}^{2}+{\frac {22\,}{9}}{v}^{3}+\cdots
\right)
\end{multline*}
which gives 
\[
\Theta^3(1+u,1+v)= \left( 1+u + \frac 2 3 (u-2v)(u^2-uv+v^2)+\cdots,
1+v {-}  \frac 2 3 (v-2u)(u^2-uv+v^2)+\cdots
\right).
\]
We conclude as above that all series $\Theta^{3k}(1+u,1+v)$ are
distinct, and that the orbit is infinite.

Starting from another critical pair $(a,b)$ does not make the argument
shorter: for all possible choices, the transformation $\Theta^3$ is the
identity at linear order. \qee

\medskip

We conclude with a third model with a rational exponent but an infinite orbit.
This one is symmetric in both coordinate axes. Recall that highly symmetric
models \emm with small steps, behave nicely in any dimension: they have a
finite orbit, a D-finite \gf, and explicit asymptotic enumeration is
known~\cite{MelczerMishna2016}. But the large step, highly symmetric model of
the next example has an infinite orbit. This cannot be proved starting from
the positive critical point, because the corresponding involutions $\Phi$ and
$\Psi$ \emm do, generate a finite group. But taking another critical point
works.

\medskip
\noindent{\bf Example: a highly symmetric model with an infinite
  orbit.}
Take 
\[
S(x,y)=(x+\bx)(y+\by)+(x^2+\bx^2)(y^2+\by^2).
\]
The positive critical point is $(a,b)=(1,1)$, and 
 $c=0$. The excursion exponent is $-3$. The
transformations $\Phi$ and $\Psi$ {defined from $a=b=1$} are respectively $(x,y)\mapsto
(\bx,y)$ and $(x,y)\mapsto (x, \by)$ and they \emm do, generate a
finite group, {of order $4$}. But let us consider instead the critical point $(a,b)=(i,i)$. Then 
\[
\Theta(a+u,b+b)=(a-u+v/2+ \cdots, b+u/2-v+\cdots),
\]
and the Jacobian matrix $J$ has characteristic polynomial
$\lambda^2+7\lambda/4+1$, which is not cyclotomic. Hence the orbit is infinite.

 We do not know about the nature of the associated \gf, but {the first 70\,000
terms of the series $Q(0,0)$ (modulo a prime)} did not allow us to guess any
recurrence relation for its coefficients. \qee

%%%%%%%%%%%%%%%%%%%%%%%%%%%%%%%%%%%%%%%%%%%%%%%%%%%%%%%%%%%%%%
\section{Section-free functional equations}
\label{sec:free}
%%%%%%%%%%%%%%%%%%%%%%%%%%%%%%%%%%%%%%%%%%%%%%%%%%%%%%%%%%%%%%

In this section we consider step sets $\cS \subset \zs^d$ such that the orbit
of $\Bx=(x_1, \ldots, x_d)$ is finite. For every element $\Bx'$ of this orbit
we can replace $\Bx$ by $\Bx'$ in the main functional equation defining
$Q(\Bx)$, as {we did} in~\eqref{6-eqs}. The resulting equation will be called
an \emm orbit equation,\footnote{In other papers, like~\cite{BoBoKaMe16}, the
\emm orbit equation, is what we call here the \emm section-free, equation. We
hope that this change in the terminology will not cause any trouble.}. As the
left-hand side of the original functional equation is $K(\Bx)Q(\Bx)$, where
$K(\Bx)=1-tS(\Bx)$ is the kernel, the orbit equation associated with $\Bx'$
has left-hand side $K(\Bx)Q(\Bx')$, because the kernel takes the same value
for all elements in the orbit. On the right-hand side of the orbit equations
are several specializations of the \gf \ $Q$, which we call \emm sections,.
Due to the construction of the orbit, every section occurs at least in two
orbit equations.

The next step in our approach is to form a linear combination of the orbit
equations that is free from sections, if one exists, as was the case
for~\eqref{alt-tandem}. Once the main functional equation is written, and the
(finite) orbit determined, section-free equations can be found by solving a
linear system with coefficients in the algebraic closure of $\qs(x_1, \ldots,
x_d)$. {In all cases that we have examined, we find that a section-free
equation exists (and sometimes several). However,} we have not been able to
find a generic form for section-free equations. Let us examine two simple
examples; the first one shows that there can be multiple section-free
combinations.

\medskip
\noindent{\bf Example A (continued).} 
We return to the one-dimensional step set $\cS=\{\bar 1,2\}$. The step polynomial is $S(x)= \bx+x^2$, and the elements $x'$ of the orbit of $x$ are the solutions of $S(x)=S(x')$. Hence the orbit  is $\{x, x_1, x_2\}$, with
\[
x_{1,2}=\frac {-{x}^{2}\pm\sqrt {x \left( {x}^{3}+4 \right) }}{2x}.
\]
Substituting the three orbit elements into {the functional equation}~\eqref{eqf-1D-bar} gives three
orbit equations, each involving  only one section  (namely, $Q(0)$). There are several section-free linear combinations of the orbit equations. One of them is
\begin{equation}\label{exA:eq1}
K(x)\left( x Q(x)-\xp Q(\xp )\right)= x-\xp ,
\end{equation}
another one is
\begin{equation}\label{exA:eq2}
K(x)\left( x Q(x)-\xm Q(\xm )\right)= x-\xm ,
\end{equation}
and in fact any section-free equation is a linear combination of these two. \qee

\medskip
\noindent{\bf Example B (continued).} 
We now reverse the steps of the previous example and consider $\cS=\{\bar 2,1\}$. The orbit of $x$ consists of $x$, $\xp$ and $\xm$ with
\[
x_{1,2}= {\frac {1\pm\sqrt {4\,{x}^{3}+1}}{2{x}^{2}}}.
\]
Substituting the orbit elements into~\eqref{eqf-1D} gives three orbit equations containing two sections, $Q_0$ and $Q_1$. There is, up to a multiplicative factor, a \emm unique, section-free linear combination of these three equations:
\begin{multline*}
  K(x) \left(
\frac{x^2}{(x-\xp )(x-\xm )}Q(x)
+\frac{\xp ^2}{(\xp -x)(\xp -\xm )}Q(\xp )
+\frac{\xm ^2}{(\xm -x)(\xm -\xp )}Q(\xm )\right)
\\= \frac{x^2}{(x-\xp )(x-\xm )}
+\frac{\xp ^2}{(\xp -x)(\xp -\xm )}
+\frac{\xm ^2}{(\xm -x)(\xm -\xp )}= 1.
\end{multline*}
\qee

\medskip
The above two examples  are instances of a more  general result that applies
to any 1-dimensional model.

\begin{Proposition}\label{prop:section-free1D}
  Assume $d=1$. Let $-m$ (resp. $M$) be the smallest (resp. largest)
  element in~$\cS$, and assume that $m\ge 0$ and $M> 0$. Then the
  orbit of $x$ has cardinality  $m+M$. The vector space of section-free equations consists of all linear combinations of 
\[
    K(x) \sum_{i=0}^m \frac{u_i^m}{\prod_{j\not = i} (u_i-u_j)}Q(u_i) =1,
  \]
  where $u_0, u_1, \ldots, u_m$ are any  distinct elements in the orbit of $x$.
\end{Proposition}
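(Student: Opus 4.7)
Since $-m$ is the smallest and $M$ the largest step of $\cS$, Lemma~\ref{lem:ind} applied in dimension $d=1$ (so $m_1=m$, $M_1=M$) gives exactly $m+M-1$ elements $1$-adjacent to $x$, hence distinct roots $u_0=x,u_1,\dots,u_{m+M-1}$ of $S(X)=S(x)$ filling out the orbit, and $m+M$ is its cardinality. The first step is to rewrite the functional equation in a form that exposes the sections cleanly. Gathering the forbidden-move contributions according to the power of $x^{-1}$, one can write
\[
K(x)Q(x)=1-t\sum_{k=1}^m A_k\,x^{-k},
\]
where each $A_k=\sum_{s\in\cS,\,s\le -k}Q_{-k-s}$ is a linear combination of the sections $Q_0,\dots,Q_{m-k}$; in particular the $m$ quantities $A_1,\dots,A_m$ are exactly the ``section unknowns'' appearing on the right-hand side. (When $m=0$ there are no sections and the claim is trivial, so we may assume $m\ge 1$, which also forces every $u_i\ne 0$, as $0$ cannot satisfy $S(X)=S(x)$ once $X^{-m}$ is present.)

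Substituting each $u_i$ for $x$ and using that $K(x)=K(u_i)$ on the orbit, an arbitrary linear combination $\sum_i\alpha_i\cdot(\text{$i$-th orbit equation})$ reads
\[
K(x)\sum_{i=0}^{m+M-1}\alpha_i\,Q(u_i)=\sum_{i=0}^{m+M-1}\alpha_i\;-\;t\sum_{k=1}^{m}A_k\sum_{i=0}^{m+M-1}\alpha_i\,u_i^{-k},
\]
so it is section-free if and only if $\sum_i\alpha_i u_i^{-k}=0$ for $k=1,\dots,m$, equivalently (after multiplying by $u_i^m$) $\sum_i\alpha_i u_i^j=0$ for $j=0,\dots,m-1$. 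This is a homogeneous system whose matrix is a Vandermonde block of size $m\times(m+M)$ in the distinct entries $u_0,\dots,u_{m+M-1}$, hence of rank $m$. The space of section-free combinations is therefore a vector space of dimension exactly~$M$.

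The second step is to verify that the equations stated in the proposition are section-free and span this $M$-dimensional space. Given $m+1$ distinct orbit elements $u_0,\dots,u_m$, set $\alpha_i=u_i^m/\prod_{j\ne i}(u_i-u_j)$ and $\alpha_i=0$ on the remaining orbit elements. Lagrange interpolation of the polynomial $X^r$ at the nodes $u_0,\dots,u_m$, combined with extraction of the coefficient of $X^m$ in the interpolating polynomial, gives the identities
\[
\sum_{i=0}^{m}\frac{u_i^{r}}{\prod_{j\ne i}(u_i-u_j)}=\begin{cases}0,& 0\le r\le m-1,\\ 1,& r=m,\end{cases}
\]
which say precisely that $\sum_i\alpha_i u_i^j=0$ for $0\le j\le m-1$ (the section-freeness condition) while $\sum_i\alpha_i=1$ (the constant $1$ appearing on the right of the displayed identity in the proposition). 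This proves the displayed equation is indeed section-free. Finally, to show these equations span the $M$-dimensional kernel, fix any $m$ orbit elements $v_1,\dots,v_m$ and, for each of the remaining $M$ elements $w_1,\dots,w_M$, consider the equation built from $(v_1,\dots,v_m,w_k)$: its coefficient on $Q(w_k)$ equals $w_k^m/\prod_{\ell}(w_k-v_\ell)\ne 0$, while its coefficients on $Q(w_{k'})$ for $k'\ne k$ vanish. The resulting $M$ section-free equations are thus linearly independent, hence form a basis. The main technical obstacle is really the Lagrange identity above; everything else is linear algebra and bookkeeping.
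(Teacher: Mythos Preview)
Your overall strategy matches the paper's: write the functional equation with $m$ section unknowns, use the Lagrange identity (the paper's Lemma~\ref{lem:Lagrange}) to verify the displayed combination is section-free, and then argue completeness via a Vandermonde rank argument. The paper organizes completeness slightly differently (subtracting known section-free equations to reduce a generic one to at most $m$ orbit elements, then using an $m\times m$ Vandermonde), while you count dimensions and exhibit an explicit basis; both are fine.

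There is, however, a genuine bookkeeping slip. The section-free condition you correctly derive is $\sum_i\alpha_i u_i^{-k}=0$ for $k=1,\dots,m$. Your parenthetical ``equivalently (after multiplying by $u_i^m$) $\sum_i\alpha_i u_i^j=0$ for $j=0,\dots,m-1$'' is \emph{not} equivalent: multiplying column $i$ by $u_i^m$ preserves the rank of the matrix but changes which vectors $(\alpha_i)$ lie in its kernel. And when you then say the Lagrange identities ``say precisely that $\sum_i\alpha_i u_i^j=0$ for $0\le j\le m-1$'', this is false for your $\alpha_i=u_i^m/\prod_{j\ne i}(u_i-u_j)$: already at $j=0$ one gets $\sum_i\alpha_i=1$, not $0$. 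What the Lagrange identity with exponent $r\in\{0,\dots,m-1\}$ actually gives is
\[
0=\sum_{i}\frac{u_i^{r}}{\prod_{j\ne i}(u_i-u_j)}=\sum_i\alpha_i\,u_i^{r-m}=\sum_i\alpha_i\,u_i^{-k}\qquad(k=m-r\in\{1,\dots,m\}),
\]
which is exactly the correct section-free condition. So the argument is sound once you drop the spurious ``equivalently'' reformulation and connect the Lagrange identity directly to the negative-power conditions. The rank-$m$ claim survives either way, since the matrix $(u_i^{-k})_{1\le k\le m}$ is column-equivalent to a Vandermonde in the distinct values $u_i^{-1}$.
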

This proposition will proved in Section~\ref{sec:1D}. The number of ways of
choosing the $u_i$'s is $\binom{m+M}{m+1}$. These section-free equations are
not always linearly independent (in Example A, the third equation of this
type, which involves $x_1$ and $x_2$, is the difference of~\eqref{exA:eq1}
and~\eqref{exA:eq2}). However, if the largest step is~$1$ (that is, $M=1$),
then Proposition~\ref{prop:section-free1D} tells that there is a unique
section-free equation (up to a multiplicative factor). This was observed in
Example B, and seems to generalize to dimension 2.

\begin{Conjecture}\label{conj:section-free}
When $d=2$ and the orbit is finite, there always exist non-trivial section-free linear combinations of the orbit equations. Moreover, if there is no large forward step,
then there is a unique section-free combination, up to a multiplicative factor.
\end{Conjecture}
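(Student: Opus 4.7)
\medskip
\noindent\textbf{Proof proposal.} The plan is to recast Conjecture~\ref{conj:section-free} as a linear-algebra problem on the orbit, use Proposition~\ref{prop:section-free1D} fiber-by-fiber, and then reduce the remaining content to a cocycle identity on a bipartite graph built from the orbit.

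\textit{Setup.} Let $\mathcal O$ denote the (finite) orbit of $(x,y)$ and let $V=\bGK^{\mathcal O}$ be the space of formal linear combinations of orbit elements. Writing each orbit equation~\eqref{eq:quadrant} schematically as $K(x,y)Q(x',y')=1-\text{(y-sections)}-\text{(x-sections)}+\text{(constants)}$, with y-sections $Q_{i,-}(y')$ for $0\le i<m_1$, x-sections $Q_{-,j}(x')$ for $0\le j<m_2$, and constants $Q_{i,j}$, introduce the subspaces $A,B,C\subset V$ of combinations killing respectively all y-sections, all x-sections, and all constants. A section-free equation is a nonzero element of $A\cap B\cap C$.

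\textit{Fiber analysis.} For each second coordinate $y'$ appearing in $\mathcal O$, set $F_{y'}=\{(x'',y')\in\mathcal O\}$; Lemma~\ref{lem:ind} gives $|F_{y'}|=m_1+M_1$. Only the y-sections at $y'$ appear in the equations indexed by $F_{y'}$, with coefficients identical to those of the 1D equation for the Laurent polynomial $S(X,y')$, so Proposition~\ref{prop:section-free1D} yields a fiber kernel of dimension $M_1$. A key observation is that, writing $T_k^{(y')}(x)=x^k S^{\le -(k+1)}(x,y')$ for $k=0,\ldots,m_1-1$, the Laurent polynomials $T_k^{(y')}$ have pairwise distinct leading exponents $k-m_1$, hence span the $m_1$-dimensional space of Laurent polynomials in $x$ of degree in $[-m_1,-1]$; so killing all y-sections at $y'$ is equivalent to annihilating every such Laurent polynomial. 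Since the coefficient of any constant $Q_{i,j}$ at $(x',y')$ is (as a function of $x'$) a Laurent polynomial of degree in $[-m_1+i,-1]\subseteq[-m_1,-1]$, we conclude $A\subseteq C$. Summing over fibers, $\dim A=rM_1$, where $r$ is the number of distinct second coordinates; symmetrically $\dim B=cM_2$. The conjecture reduces to showing $A\cap B\ne\{0\}$, and $\dim(A\cap B)=1$ when $M_1=M_2=1$.

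\textit{The cocycle reduction.} Specialise to $M_1=M_2=1$. Each fiber and each column kernel is one-dimensional, with explicit Lagrange weights $L^{(y')}_{x'}=(x')^{m_1}/\!\prod_{x''\ne x'}(x'-x'')$ and $L'^{(x')}_{y'}=(y')^{m_2}/\!\prod_{y''\ne y'}(y'-y'')$. Every element of $A$ has the form $\sum_{y'}\mu_{y'}\,\mathrm{FC}_{y'}$ and every element of $B$ the form $\sum_{x'}\nu_{x'}\,\mathrm{CC}_{x'}$, where $\mathrm{FC}$ and $\mathrm{CC}$ are the canonical fiber/column combinations. Intersecting, $A\cap B$ is in bijection with the solutions $(\mu,\nu)$ of the pointwise system
\[
\mu_{y'}\,L^{(y')}_{x'}=\nu_{x'}\,L'^{(x')}_{y'}\qquad\text{for every }(x',y')\in\mathcal O.
\]
Form the bipartite graph $\Gamma$ whose vertex sets are the distinct first and second coordinates of orbit elements, with an edge for each orbit element. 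Then $\Gamma$ is connected because $\mathcal O$ is generated from $(x,y)$ by $1$- and $2$-adjacencies, so the system admits a nonzero solution, unique up to a global scalar, precisely when the multiplicative $1$-cocycle $(x',y')\mapsto L'^{(x')}_{y'}/L^{(y')}_{x'}$ is a coboundary on $\Gamma$. Since $\Gamma$ is bipartite, this reduces to a four-term Lagrange identity on every minimal cycle---typically the rectangular cycles $(x_1',y_1')\text{--}(x_2',y_1')\text{--}(x_2',y_2')\text{--}(x_1',y_2')$ of four orbit elements.

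\textit{Main obstacle and the general case.} The heart of the argument is this rectangular Lagrange identity. I would attack it by writing each Lagrange weight as a residue of $1/(S(X,y')-S(x',y'))$ at $X=x'$ (and the dual for columns), and exploiting the symmetry of the bivariate polynomial $\mathrm{num}\bigl(S(x_1',y_1')-S(X,Y)\bigr)$ in its two pairs of rectangle-roots $(x_1',x_2')$ and $(y_1',y_2')$ to collapse the product of four residues around a rectangle to $1$; any longer minimal cycles in $\Gamma$ should be reducible to the rectangular case by splitting along a diagonal chord that itself corresponds to an orbit element. Granting this residue identity, existence and uniqueness follow simultaneously under $M_1=M_2=1$. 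The general existence claim ($M_i>1$) is handled by the same cocycle framework, except that each fiber kernel now has dimension $M_i$ instead of $1$: this provides the extra freedom needed to satisfy a less-rigid system (naturally losing uniqueness), and I would reduce it to the $M_i=1$ case by choosing a coherent basis of fiber Lagrange combinations across the orbit.
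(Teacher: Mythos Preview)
The statement you are addressing is a \emph{conjecture} in the paper, not a theorem: the authors do not prove it. They establish only the small-step special case (Proposition~\ref{prop:sec-free-small}, via a sign/parity argument on the group orbit), and otherwise support the conjecture with examples (Example~F, Example~D, and the computations of Section~\ref{sec:m21}). So there is no ``paper's own proof'' to compare against.

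Your proposal is a plausible strategy, and the fiber-by-fiber analysis together with the inclusion $A\subseteq C$ is a nice observation. However, the proof is genuinely incomplete at the point you yourself flag, and the gap is worse than you suggest. The reduction to a ``rectangular Lagrange identity'' relies on the minimal cycles of the bipartite graph $\Gamma$ being $4$-cycles, or at least on longer cycles being reducible to $4$-cycles via a diagonal chord \emph{that corresponds to an orbit element}. This fails already for the orbit type $O_{12}$ of Example~F (see~\eqref{orbit:hard1}): the six distinct first coordinates $\{x,x_1,x_2,-\bx,-\bxun,-\bxde\}$ each have degree~$2$ in~$\Gamma$, the four second coordinates each have degree~$3$, and one checks that $\Gamma$ contains no $4$-cycle at all---its minimal cycles have length~$6$, for instance $x\text{--}y\text{--}x_1\text{--}x_1^2\by\text{--}(-\bxde)\text{--}x^2\by\text{--}x$. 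The would-be diagonal chords, such as $(x_1,x^2\by)$, are \emph{not} orbit elements, so your splitting argument does not apply. The cocycle identity you need is therefore a genuinely $6$-term (or longer) identity among Lagrange weights, and your residue sketch does not obviously extend to that setting. Until this identity is established for arbitrary minimal cycles of $\Gamma$, the argument remains a heuristic rather than a proof---which is consistent with the conjecture's status in the paper.
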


\noindent{\bf Example.} In some $x/y$-symmetric quadrant models, like  Kreweras' model
$\cS=\{\nearrow, \leftarrow, \downarrow\}$, the orbit of $(x,y)$
contains $(y,x)$, and we want to clarify what we mean  with the
uniqueness of the section-free equation. The  functional equation reads
\[
  K(x,y)Q(x,y)=\\ 1- t\bx Q(0,y) -t\by Q(x,0).
\]
The orbit of $(x,y)$ consists of 6 pairs:
\[
(x,y), \quad  (\bx \by, y), \quad  (\bx\by, x),  \quad (y,x), \quad  (y,
\bx\by), \quad  (x, \bx\by).
\]
A linear combination of the 6 orbit equations, with indeterminate
weights $\alpha_1, \ldots, \alpha_{6}$, involves 6 sections: three
specializations of $Q(x,0)$, and three of $Q(0,y)$. If we
require the contribution of each to vanish, we find (up to a
multiplicative factor) a unique solution for the $\alpha_i$'s, and
thus a unique section-free equation:
\begin{equation}\label{sec-free-K}
K(x,y) \big(xyQ(x,y) -\bx Q(\bx \by, y) +\by Q(\bx\by, x) -xyQ(y,x)+ \bx Q(y,
\bx\by)-\by Q(x, \bx\by)\big)=0.
\end{equation}
Note that the right-hand side (the so-called \emm orbit sum,)
vanishes. The $x/y$-symmetry makes this equation trivial.

However, it makes sense to exploit the symmetry of the model in the functional equation, and to write:
\[
K(x,y)Q(x,y)= 1- t\bx Q(y,0) -t\by Q(x,0).
\]
Now a linear combination of the 6 orbit equations involves only 3
sections. If we want the contribution of each to vanish, we find a
vector space of dimension 3 of solutions, generated by all equations
of the form 
\[
K(x,y) \left( Q(x',y')-Q(y',x')\right)=0,
\]
for $(x',y')$ in the orbit. Again,  these equations are trivial. \qee

\medskip
We now prove that {Conjecture~\ref{conj:section-free}}
 holds  in the case of small steps --- and in fact, in arbitrary dimension.
\begin{Proposition}\label{prop:sec-free-small}
  If $\cS\subset \{-1,0,1\}^d$ has positive and negative steps in
  every direction, and the associated orbit is finite, then there is a
  unique section-free linear combination of orbit equations, up to a
  multiplicative factor. {It reads
\begin{equation}\label{sec-free}
\sum_{\Bu} (-1)^{\ell(\Bu)}  K(\Bu) Q(\Bu) \prod _{i=1}^d u_i =
\sum_{\Bu} (-1)^{\ell(\Bu)} \prod _{i=1}^d u_i,
\end{equation}
where the sum runs over all elements $\Bu=(u_1, \ldots, u_d)$ of the orbit and $\ell(\Bu)$ is
the length of $\Bu$.}
\end{Proposition}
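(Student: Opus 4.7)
The plan is to establish both existence and uniqueness by working with the orbit equations pre-multiplied by $\prod_i u_i$, in the spirit of the example in Section~\ref{sec:basic}. Specializing~\eqref{eqfunc:small} at $\Bu$ and multiplying by $\prod_i u_i$ yields
\[
K(\Bu)\, Q(\Bu) \prod_i u_i \;=\; \prod_i u_i \;+\; t \sum_{\emptyset \ne I\subset \llbracket 1,d\rrbracket} (-1)^{|I|} Q_I(\Bu)\, T_I(\Bu),
\]
where $\cS_I := \{\Bs\in \cS : s_i=-1 \text{ for all }i\in I\}$ and $T_I(\Bu) := \sum_{\Bs\in\cS_I} \prod_j u_j^{1+s_j}$. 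The point of this normalization is that for $i \in I$ the exponent $1+s_i$ vanishes, so $T_I(\Bu)$ and $Q_I(\Bu)$ both depend only on $\pi_I(\Bu) := (u_j)_{j\notin I}$. Moreover, $T_I$ is a nontrivial Laurent polynomial by the hypothesis that $\cS$ has negative steps in each direction, and hence is nonzero at any point with algebraically independent coordinates (Lemma~\ref{lem:ind}).

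For existence, I form $\Sigma := \sum_{\Bu\in O} (-1)^{\ell(\Bu)} \cdot (\text{equation above at }\Bu)$, where $O$ denotes the orbit. Using that $K$ is constant on $O$, the left-hand side and the ``$\prod_i u_i$'' contribution reproduce the two sides of~\eqref{sec-free}; it remains to cancel each section term. Grouping by $\bar{\Bu} \in \pi_I(O)$, the coefficient of $Q_I(\bar{\Bu})T_I(\bar{\Bu})$ in $\Sigma$ is $(-1)^{|I|}\, t \sum_{\Bu \in O \cap \pi_I^{-1}(\bar{\Bu})} (-1)^{\ell(\Bu)}$. The fiber $O \cap \pi_I^{-1}(\bar{\Bu})$ is invariant under $G_I := \langle \Phi_i : i\in I\rangle$, hence is a union of $G_I$-orbits. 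On one such orbit $G_I \cdot \Bu_0$, writing $\Bu = g \cdot \Bu_0$ gives $(-1)^{\ell(\Bu)} = (-1)^{\ell(\Bu_0)} \varepsilon_I(g)$, where $\varepsilon_I : G_I \to \{\pm 1\}$ is the sign homomorphism $\Phi_i \mapsto -1$. This homomorphism is well-defined because the full orbit is bipartite (Proposition~\ref{prop:small-group}), and the same bipartiteness forces $\mathrm{Stab}_{G_I}(\Bu_0) \subset \ker \varepsilon_I$. Hence $\varepsilon_I$ descends to a nontrivial character on $G_I / \mathrm{Stab}_{G_I}(\Bu_0)$, whose values sum to zero, giving the required cancellation.

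For uniqueness, consider an arbitrary section-free combination $\sum_{\Bu} \gamma_{\Bu} \cdot (\text{equation at }\Bu)$. Viewing the sections $Q_I(\bar{\Bu})$ at distinct $(I,\bar{\Bu})$ as linearly independent, the cancellation constraints reduce to $\sum_{\Bu \in O\cap\pi_I^{-1}(\bar{\Bu})} \gamma_{\Bu} = 0$ for each $(I,\bar{\Bu})$. I apply only the $|I|=1$ constraints. For $I=\{i\}$, the equation $S(\Bv) = S(\Bx)$ with $v_j = u_j$ ($j\ne i$) is quadratic in $v_i$ with exactly two roots, $u_i$ and $\Phi_i(\Bu)_i$, so the fiber equals $\{\Bu, \Phi_i(\Bu)\}$, forcing $\gamma_{\Phi_i(\Bu)} = -\gamma_{\Bu}$. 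As $O$ is connected as a $G$-orbit, this recurrence determines $\gamma_{\Bu} = (-1)^{\ell(\Bu)} \gamma_{\Bx}$, giving~\eqref{sec-free} up to the global scalar $\gamma_{\Bx}$. The main obstacle is the fiber analysis when $|I| \ge 2$: a fiber is not in general a single $G_I$-orbit, only a union of them, and the vanishing of the signed sum on each $G_I$-orbit hinges on the bipartiteness of $O$ from Proposition~\ref{prop:small-group}.
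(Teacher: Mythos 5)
Your proof is correct. The uniqueness half mirrors the paper's: the $|I|=1$ constraints, combined with connectedness of the orbit and the parity statement in Proposition~\ref{prop:small-group}, force $\gamma_{\Bu}=(-1)^{\ell(\Bu)}\gamma_{\Bx}$. For existence you take a more roundabout route: you split each fiber into $G_I$-orbits and cancel each by a coset-sum of a sign character $\varepsilon_I$ of $G_I$ containing $\mathrm{Stab}_{G_I}(\Bu_0)$ in its kernel. The paper obtains the same cancellation in one step by noting that $\Phi_{\min(I)}$ already acts on the \emph{entire} fiber as a fixed-point-free, parity-flipping involution, so fiber elements pair off with opposite signs --- no decomposition into $G_I$-orbits and no character sum are needed. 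Your route is sound but carries extra group-theoretic overhead for the same end. Two small imprecisions worth tidying: (i) the remark that $T_I$ is always a nontrivial Laurent polynomial is true (and needed) only for $|I|=1$; for larger $I$ the set $\cS_I$ may be empty, so $T_I=0$, which is harmless since a vanishing coefficient imposes no constraint; (ii) $G_I/\mathrm{Stab}_{G_I}(\Bu_0)$ is just a coset space, not generally a group, so ``character on'' it is a mild abuse --- the coset sum still vanishes because $\varepsilon_I$ takes $+1$ and $-1$ equally often on cosets of a finite-index subgroup of its kernel, the index being finite because the orbit is.
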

\begin{proof}
We consider the result of multiplying the functional
equation~\eqref{eqfunc:small} by the product of all variables
$\prod_i x_i$: 
\begin{equation}
\label{eqfunc:small-mult}
K(\Bx)Q(\Bx) \prod_i x_i  =
\prod_i x_i+t \sum_{\emptyset \not = I \subset \llbracket 1, d\rrbracket} 
\left( (-1)^{|I|} Q_I(\Bx) \sum_{\Bs \in \cS: s_i =-1 \forall i \in I} \Bx^{\Bs}\prod_i x_i\right).
\end{equation}
Note that, since the last sum is over all $\Bs$ such that $s_i=-1$ for $i\in
I$, the monomial $\Bx^{\Bs}\prod_i x_i$ does not involve any of the $x_i$'s
for $i\in I$. The same holds for $Q_I(\Bx)$. We now call any version
of~\eqref{eqfunc:small-mult} instantiated at an orbit element an \emm orbit
equation,.

Take $I=\{i\}$, with $1\le i\le d$. For $\Bu$ in the orbit of $\Bx$, the
section $Q_I(\Bu)$ occurs in exactly two orbit equations: the equation
obtained from $\Bu$, and the one obtained from $\Bv:=\Phi_i(\Bu)$,
with~$\Phi_i$ defined as in Proposition~\ref{prop:small-group}. Moreover, the
coefficient of $Q_I(\Bu)$ is the same in both equations (it does not depend on
the $i$th coordinate of $\Bu$). Hence in a section-free linear combination of
orbit equations, the weights of the equations associated with $\Bu$ and
$\Phi_i(\Bu)$ must be opposite. {By transitivity, there cannot be more than
one section-free equation. Moreover, in the small step case, the lengths of
two adjacent elements differ by $\pm 1$ (Proposition~\ref{prop:small-group}),
and thus the only possible section-free equation is~\eqref{sec-free}.}

{So let us form the linear combination of orbit equations having the same
left-hand side as~\eqref{sec-free}.} For $\Bu$ in the orbit and $I\subset
\llbracket 1, d\rrbracket$, the section $Q_I(\Bu)$ occurs (with the same
weight) in all orbit equations obtained from elements $\Bv$ that only differ
from $\Bu$ at positions of $I$. We can define on these elements an involution
that changes the parity of the length (for instance $\Phi_{\min (I)}$). This
implies that the coefficient of $Q_I(\Bu)$ in the signed sum vanishes, and
that we have indeed constructed a section-free equation. 
\end{proof}

Of course, all the examples of this paper support
Conjecture~\ref{conj:section-free}. The next example shows that the number of
sections occurring in the orbit equations can be larger than the number of
orbit equations, which makes the existence of section-free equations more
surprising.

\medskip \noindent{\bf Example F: a model with small forward steps.}
Take $\cS=\{10, \bar 1 0, \bar 2 1, 0 \bar 1\}$.  Then the orbit of $(x,y)$ consists of the following pairs:
\begin{equation}\label{orbit:hard1}
\begin{array}{ccc}
(x, y) &(x_1 , y)  & (x_2 , y) \\
(x, x^2\by)  &(-\bxun , x^2\by)  &(-\bxde  , x^2\by)  \\
(x_1 , x_1 ^2\by)  &(-\bx, x_1 ^2\by) &(-\bxde  , x_1 ^2\by) \\
(x_2 , x_2 ^2\by)  &(-\bx, x_2 ^2\by) &(-\bxun , x_2 ^2\by) 
\end{array}
\end{equation}
where
\[
x_{1,2}= \frac{x+y\pm\sqrt{(x+y)^2+4x^3y}}{2x^2}
\]
and $\bx_i=1/x_i$. The structure of this orbit is the first
shown in Figure~\ref{fig:orbits} {(Section~\ref{sec:m21})}. The functional equation reads
\begin{equation}\label{eq:hard1}
K(x,y)Q(x,y)=1-t\bx (1+\bx y)Q_{0,-}(y)-t\bx y  Q_{1,-}(y)-t\by Q(x,0).
\end{equation}
The 12 orbit equations involve in total $6+4+4=14$ distinct sections: 6
specializations of $Q(x,0)$, 4 specializations of $Q_{0,-}(y)$ and  4
specializations of $Q_{1,-}(y)$. Hence in order to find a section-free
equation, we need to solve a linear system with 14 equations but only
12 unknowns. Still,  we find a solution (and only one, up to a
multiplicative factor).  The weight of the orbit equation associated with the pair $(x',y')$ is
\[
\pm x'^2(x'_1 -x'_2 )\sqrt {yy'},
\]
where $(x'_i,y')\approx (x',y')$ for $i=1,2$, and $x'_1\not = x'_2$. More precisely, the weights associated with the above 12 orbit elements are
\begin{equation}\label{weights:hard1}
\begin{array}{rrr}
x^2(x_1-x_2)y &x_1 ^2(x_2-x) y  & -x_2^2 (x_1-x) y \\
x^2( \bxde -\bxun) x  &-\bxun^2(x+\bxde )x   &\bxde ^2(x+\bxun)x   \\
x_1^2(\bx-\bxde )x_1  &\bx^2(x_1+\bxde ) x_1 &-\bxde ^2(x_1+\bx) x_1  \\
-x_2 ^2(\bx-\bxun) x_2   &-\bx^2(x_2+\bxun)x_2  &\bxun^2(x_2+\bx) x_2 .
\end{array}
\end{equation}\qee

\medskip \noindent{\bf Example D (continued): a model with  large
  forward and backward step.} 
Let us take $\cS=\{ \bar 20,\bar 11, 02,1\bar 1\}$. 
Recall that the orbit of $(x,y)$ is shown in
Figure~\ref{fig:orbit-quadrangulations}, with
\[x_{1,2}= {\frac {x{y}^{2}+y\pm\sqrt {y \left(
        {x}^{2}{y}^{3}+4\,{x}^{3}+2\,x{y}^{2}+y \right) }}{2{x}^{2}}}.
\]
The functional equation for this model is given by~\eqref{eqfunc:quadrangulations}, and the 12 orbit equations involve $4+4+4=12$ sections. The vector space of section-free linear combinations has dimension 2; it is generated by two linear combinations of 9 orbit equations:
\begin{multline*}
x^2y\,Q ( x,y ) 
-{\frac {{x_1}^{2}y \left( x-x_2 \right) Q ( x_1,y ) }{
     x_1-x_2 }}
+{\frac {{x_2}^{2}y \left( x-x_1 \right) Q ( x_2,y ) }{
  x_1-x_2  }}
-{{x^2\bxun \, Q  ( x,\bxun  ) }}
\\
+{\frac {{x_2}^{2} \left( xy-1 \right) Q
 ( x_2,\bxun  ) }{x_1 \left( x_2
\,y-1 \right) }}
-{\frac { \left( x-x_2 \right) Q ( \by,\bxun  ) 
}{yx_1 \left( x_2\,y-1 \right) }}
+{\frac {{x_1}^{2} \left( x-x_2 \right) Q ( x_1,\bx 
 ) }{x \left( x_1-x_2 \right) }}
\\
-{\frac {{x_2}^{2}  \left( x_1\,y-1
 \right) \left( x-x_2 \right) Q ( x_2,\bx )
}{{x} \left( x_1-x_2 \right)  \left( x_2\,y-1 \right) }}
+{\frac { \left( x-x_2 \right) Q ( \by,
\bx  ) }{xy \left( x_2\,y-1 \right) }}
={\frac {
\left(1- xy\right)  \left( 1-x_1y \right)   \left( x-x_1   
 \right)  \left( x-x_2 \right) }{x{y}x_1K(x,y)}},
\end{multline*}
and the same equation with $x_1$ and $x_2$ exchanged. We refer to~\cite{BoFuRa17} for the solution of a family of models with arbitrarily large steps which generalizes this one.

%%%%%%%%%%%%%%%%%%%%%%%%%%%%%%%%%%%%%%%%%%%%%%%%%%%%%%%%%%%%%%
\section{Extracting the main \gf}
\label{sec:extract}
%%%%%%%%%%%%%%%%%%%%%%%%%%%%%%%%%%%%%%%%%%%%%%%%%%%%%%%%%%%%%%

We now assume that, for a step set $\cS$ with a finite orbit, we have obtained
one (or several) section-free functional equations. Can we extract from these
equations the main \gf\ $Q(x_1, \ldots, x_d)$, as we did in
Section~\ref{sec:basic}? Not systematically, as we already learnt from some
small step models.

\medskip
\noindent{\bf Example C: Gessel's walks (continued).} 
The orbit of $(x,y)$ consists of 8 elements. The steps are small,
hence the unique section-free equation is the {alternating sum~\eqref{sec-free}.}
Remarkably, its right-hand side vanishes:
\begin{multline*}
  xyQ(x, y)-\bx Q (\bar x \bar y, y) +xQ (\bar x \bar y, x^2y)-xyQ (\bar x, x^2y)\\
+\bx\by Q (\bar x, \bar y)-xQ (xy, \bar y)+\bx Q(xy, {\bar x}^2 \bar
y)-\bx\by Q (x, \bar y {\bar x}^2)=0.
\end{multline*}
This homogeneous equation does not characterize $Q(x,y)$. For
instance, 1,  $x$,  $xy$, and  $y-x^2$ are solutions. The space of
solutions is actually infinite dimensional, as it clearly contains all
monomials $x^iy^i$. \qee

\medskip
Among the 23 quadrant models with small steps that have a finite
orbit, exactly 4 have a section-free equation that does not
characterize $Q(x,y)$:
Gessel's model, as just shown, and the three Kreweras like
models: $\cS=\{\nearrow, \leftarrow, \downarrow\}$, its reverse
$\overline\cS=\{\swarrow, \rightarrow, \uparrow\}$ and the union $\cS
\cup \overline\cS$~\cite{BoMi10}. For those three, the orbit of $(x,y)$ contains
$(y,x)$, and the section-free equation is~\eqref{sec-free-K}.
Clearly, any symmetric series in $x$ and $y$ satisfies this
equation. 

For these four models, the \emm orbit sum,, that is, the right-hand side of
the section-free equation, vanishes. However, there exist as well (weighted)
models with a non-vanishing orbit sum, for which the section-free equation
does not characterize $Q(x,y)$. Let us recall an example taken
from~\cite[Sec.~8.2]{BoBoKaMe16}.

\medskip
\noindent{\bf Example.} 
Take $\cS=\{\bar 1 \bar 1 , \bar 1 1 , \bar 1 0, \bar 1 0, 1 0,1 1\}$ (note the repeated West step). The step polynomial is
\[
S(x,y)=  (1+y)\left(\bx(1+\by)+x\right).
\]
The orbit of $(x,y)$ contains 6 elements, and the unique section-free equation reads:
 \begin{align*}
   xyQ(x,y) &- \bx(1+y)Q(\bx(1+\by),y )
 + \frac{x(1+y)}{(1+y)^2+x^2y^2}\, Q\left(\bx(1+\by),
   \frac{x^2y}{(1+y)^2+x^2y^2}\right) 
 \\
 &- \frac{xy(1+y+x^2y)}{(1+y)^2+x^2y^2}\, Q\left(\bx(1+y)+xy,
   \frac{x^2y}{(1+y)^2+x^2y^2}\right)\\
 &+\frac{\bx\by(1+y+x^2y)}{1+x^2} Q\left(\bx(1+y)+xy,
   \frac{\by}{1+x^2}\right)
 -\frac{x\by}{1+x^2}\, Q\left(x,
   \frac{\by}{1+x^2}\right)
 \\
 &=\frac{\left(1+y(1-x^2)\right) \left(1-y^2(1+x^2)\right) \left( 1-x^2 +y(1+x^2)\right)}{xy(1+x^2)K(x,y)\left((1+y)^2+x^2y^2\right)}.
 \end{align*}
The right-hand side is non-zero, but this equation does not define $Q(x,y)$ uniquely in the ring $\qs[x,y][[t]]$. In fact, the associated  homogeneous equation (in $Q(x,y)$) seems to have an infinite dimensional space of solutions. It includes at least the following polynomials in $x$ and $y$: 
\[
x, \quad  2xy+{x}^{3}y , \quad {x}^{2}y+{x}^{2}+y+2 , \quad {x}^{3}{y}^{2}-{x}^{3}y+{x}^{3}+2x{y}^{2}. 
\]
\qee\medskip

We now consider examples where the series $Q(\Bx)$ is {indeed} characterized
by a section-free equation, but for which the extraction is not as simple as
in Section~\ref{sec:basic}. Our first example is one-dimensional.

\medskip
\medskip\noindent{\bf Example A (continued).} 
It can be seen that~\eqref{exA:eq1} (or~\eqref{exA:eq2}) characterizes $Q(x)$,
but how can we extract it effectively? Here is {one solution}.

Take the first of these two linear combinations, written as
\[
Q(x)- \bx x_1  Q(x_1)= \frac{1- \bx x_1}{K(x)}
\]
with $K(x)=1-t(\bx+x^2)$, and choose for the algebraic closure of
$\qs(x)$ the set of Puiseux series in $\bx$ (not in $x$!). Then
\[
x_1={\frac {\sqrt {4\,{\bx}^{3}+1}-1}{2\bx}}=\bx^2-\bx^5+O(\bx^8)
\]
is a \fps\ in $\bx$. Now both sides of the above section-free equation are
series in $t$ whose coefficients are Laurent series in $\bx$.
Extracting the non-negative part in $x$  gives:
\[
Q(x)= [x^{\ge}]  \frac{1- \bx x_1}{K(x)},
\]
where the right-hand side is first expanded in $t$, then in $\bx$. This will
be generalized to arbitrary one-dimensional models in Section~\ref{sec:1D}
(Proposition~\ref{prop:dim1-alg}). \qee

\medskip
In our next example, one simply has to extract the positive part of a rational
series to obtain $Q(x,y)$, but justifying why is a bit delicate.

\medskip\noindent{\bf Example F (continued).} 
Let $\cS=\{10, \bar 1 0, \bar 2 1, 0 \bar 1\}$. The functional equation is given by~\eqref{eq:hard1}, the orbit by~\eqref{orbit:hard1} and the weights in the section-free linear combination by~\eqref{weights:hard1}. Let us divide  this linear combination by $x^2y(x_1-x_2)K(x,y)$,  so as to isolate $Q(x,y)$. The resulting equation reads
\begin{equation}\label{eqA}
Q(x,y)+ x\bxun\bxde\by\,  Q(x,x^2\by) 
+ A_1 +A_2 +A_3+A_4+A_5=R(x,y)
\end{equation}
with
\begin{align}
A_1&= \bx^2 \, \frac{x_1^2(x_2-x) Q(x_1,y)-x_2^2(x_1-x) Q(x_2,y)}{x_1-x_2} \notag \\
A_2&=-\bx^2\by \, \frac{ \bxun^2 (x+\bxde ) x Q(-\bxun,x^2\by)-\bxde ^2 (x+\bxun) x Q(-\bxde ,x^2\by)}{x_1-x_2} \notag\\
A_3&=\bx^2 \by \, \frac{x_1^3(\bx-\bxde )Q(x_1,x_1^2y)-x_2^3(\bx-\bxun)Q(x_2,x_2^2y)}{x_1-x_2} \notag\\
A_4&=\bx^2 \by\, \frac{\bx^2  (x_1+\bxde)x_1 Q(-\bx, x_1^2\by) -\bx^2 (x_2+\bxun) x_2 Q(-\bx, x_2^2\by)}{x_1-x_2} \notag\\
A_5&=-\bx^2\by\, \frac{\bxde^2 (x_1+\bx )x_1Q(-\bxde , x_1^2\by)-\bxun^2 (x_2+\bx)x_2Q(-\bxun, x_2^2\by)}{x_1-x_2} \label{A5}
\end{align}
and
\[
R(x,y)= {\frac { \left( {x}^{2}+1 \right) \left( x+y \right)  \left( y-x \right)
\left( {x}^{2}y-2\,x-y \right) \left( {x}^{3}-x-2\,y \right) }{{x}^{7}{y}^{3} \left(1-t(x+\bx+\bx^2 y +\by) \right)}}.
\]
Each term in~\eqref{eqA} is written as a power series in $t$ whose
coefficients are Laurent polynomials in $x$, $y$, $x_1$ and $x_2$, symmetric
in $x_1$ and $x_2$ (because the numerators of the series $A_i$ are \emm
anti-symmetric, in $x_1$ and $x_2$). Observe that the symmetric functions of
$x_1$ and $x_2$ are Laurent polynomials in $x$ and $y$, and more precisely,
polynomials in $\bx\qs[\bx,y,\by]$ (we say that they are \emm $x$-negative,):
\begin{equation}
\label{neg-x}
x_1 +x_2 = \bx(1+\bx y) \quad \hbox{and} \quad x_1 x_2 =-\bx y.
\end{equation}
 The symmetric functions of their reciprocals are Laurent polynomials
 in $x$ and $y$, and more precisely, polynomials in $\qs[x, \bx,\by]$
 (we say that they are \emm $y$-non-positive,):
\begin{equation}\label{neg-y}
\bxun +\bxde  = -\bx-\by \quad \hbox{and} \quad \bxun \bxde  =-x \by.
\end{equation} 
Hence every term of~\eqref{eqA} is a series in $t$ whose coefficients are Laurent polynomials in $x$ and $y$. We claim that extracting from the left-hand side of~\eqref{eqA} the non-negative part in $x$ and $y$  gives $Q(x,y)$. First, the second term of~\eqref{eqA} is $y$-negative, and hence does not contribute.  Then
\[
A_1=\bx\, \frac{x_1^2(\bx x_2-1) Q(x_1,y)-x_2^2(\bx x_1-1) Q(x_2,y)}{x_1-x_2},
\]
and is $x$-negative by~\eqref{neg-x}. Using $x x_1 x_2=-y$, we see that the same holds for
\[
A_3=\bx\, \by \, \frac{x_1^3(\bx^2+x_1\by)Q(x_1,x_1^2y)-x_2^3(\bx^2+x_2\by)Q(x_2,x_2^2y)}{x_1-x_2},
\]
and for
\[
A_4=\bx^2 \by\, \frac{\bx x_1^2 (\bx-\by) Q(-\bx, x_1^2\by) -\bx x_2^2 (\bx-\by) Q(-\bx, x_2^2\by)}{x_1-x_2}.
\] 
We are left with two terms. One is
\[
A_2= -\bx\,\by^2 \, \frac{ \bxun^2 (x+\bxde ) x Q(-\bxun,x^2\by)-\bxde ^2 (x+\bxun) x Q(-\bxde ,x^2\by)}{\bxun-\bxde },
\]
which is $y$-negative by~\eqref{neg-y}. The other is $A_5$, which looks more challenging because the variables in the series $Q$ mix positive and negative powers of the $x_i$'s.  Its analysis requires the following lemma.
\begin{Lemma}
\label{lem:extr}
For $a\ge 0$, the expression 
\begin{equation}
E_a:=\frac{x_1^{a+1}-x_2^{a+1}}{x_1-x_2}
\label{eq:Ea}
\end{equation}
is a polynomial in $\bx$ and $y$. Every monomial $\bx^e y^f$ that
occurs in it satisfies $f\le e$.
\end{Lemma}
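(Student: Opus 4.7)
The plan is to recognize $E_a$ as a symmetric polynomial in $x_1$ and $x_2$ and then exploit the explicit expressions for their elementary symmetric functions given in~\eqref{neg-x}. Concretely, $E_a$ is nothing but the $a$-th complete homogeneous symmetric function $h_a(x_1,x_2)$, which can also be read off the generating function
\[
\sum_{a\ge 0} E_a\, z^a = \frac{1}{(1-x_1 z)(1-x_2 z)} = \frac{1}{1-e_1 z + e_2 z^2},
\]
where $e_1 = x_1+x_2$ and $e_2 = x_1 x_2$. From this identity I would extract the three-term recursion
\[
E_0 = 1,\qquad E_1 = e_1,\qquad E_a = e_1\, E_{a-1} - e_2\, E_{a-2} \quad (a\ge 2).
\]

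Now I would invoke~\eqref{neg-x}, which states
\[
e_1 = \bx + \bx^{2}y, \qquad -e_2 = \bx y.
\]
Both of these are polynomials in $\bx$ and $y$, so the recursion immediately shows, by induction on $a$, that $E_a \in \qs[\bx,y]$. This takes care of the first assertion.

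For the refined assertion, I would introduce the property $(\star)$: a Laurent polynomial in $x$ and $y$ has property $(\star)$ if it belongs to $\qs[\bx,y]$ and every monomial $\bx^e y^f$ it contains satisfies $f \le e$. One checks by inspection that $e_1 = \bx + \bx^2 y$ and $-e_2 = \bx y$ both have $(\star)$ (the exponents are $(1,0),(2,1),(1,1)$ respectively). Since $(\star)$ is manifestly stable under sums and products (exponents of both $\bx$ and $y$ add under multiplication, and the inequality $f\le e$ is preserved), and since $E_0=1$ trivially has $(\star)$, the recursion $E_a = e_1 E_{a-1} - e_2 E_{a-2}$ propagates $(\star)$ from $E_{a-1},E_{a-2}$ to $E_a$, completing the induction.

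There is no serious obstacle here: the only mild point is to make sure that the elementary symmetric functions of $x_1,x_2$ really do lie in $\qs[\bx,y]$ (and not merely in some larger ring of Laurent polynomials), which is exactly the content of~\eqref{neg-x}.
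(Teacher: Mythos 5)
Your proof is correct and is essentially the paper's argument: the paper's (one-line) proof also proceeds by induction on $a$ using $E_{-1}=0$, $E_0=1$, the three-term recursion $E_a=(x_1+x_2)E_{a-1}-x_1x_2E_{a-2}$, and the identities~\eqref{neg-x}. You merely spell out the details — in particular the observation that the property ``$f\le e$ in every monomial'' is preserved under sums and products — which the paper leaves implicit.
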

\begin{proof}
  By induction on $a\ge 0$, using $E_{-1}=0$, $E_0=1$, $E_a=(x_1+x_2)E_{a-1}-x_1x_2E_{a-2}$ and~\eqref{neg-x}.
\end{proof}
Let us return to the expression~\eqref{A5} of $A_5$. Since $Q(x,y)$ is
a series in $t$ with polynomial coefficients in $x$ and $y$, it
suffices to prove that, for $i,j\ge 0$, the term obtained by replacing
$Q(x,y)$ by $x^i y^j$, namely
\[
\pm\bx^2\by\, \frac{\bx_2^2 (x_1+\bx )x_1 \bxde ^i  x_1^{2j}\by^j-\bx_1^2 (x_2+\bx)x_2\bxun^i x_2^{2j}\by^j }{x_1-x_2},
\]
has no non-negative part in $x$ and $y$. By splitting the sum and using $xx_1 x_2=-y$, it suffices to prove this for
\begin{equation}
\label{sum1}
\bx^2 \by^{j+1}\frac{ \bxde ^{2+i} x_1^{2+2j} -\bxun^{2+i} x_2^{2+2j}}{x_1-x_2} =(-1)^i x^{i} \by^{i+j+3} \frac{  x_1^{4+i+2j} - x_2^{4+i+2j}}{x_1-x_2}
\end{equation}
and for
\begin{equation}
\label{sum2}
\bx^3\by^{j+1}\, \frac{ \bxde ^{i+2} x_1^{1+2j} -\bxun^{i+2} x_2^{1+2j}}{x_1-x_2}
=
(-1)^{i+1} x^{i-1}\by^{i+j+3}\, \frac{ x_1^{3+i+2j} - x_2^{3+i+2j}}{x_1-x_2}.
\end{equation}
By Lemma~\ref{lem:extr}, any monomial $x^ay^b$ that occurs in~\eqref{sum1} satisfies
\[
a=i-e, \qquad b=f-i-j-3,
\]
with $f\le e$. 
Saying that $a$ and $b$ are both non-negative means that
$e\le i$ and $f \ge i+j+3$, so that
\[
e+j+3\le f \le e,
\]
which is impossible for $j \ge 0$. A similar argument proves
that~\eqref{sum2} contains no monomial that would be non-negative in
$x$ and in $y$.  So  the non-negative part of the left-hand side
of~\eqref{eqA} is indeed $Q(x,y)$. This tricky extraction deserves a
proposition. 
\begin{Proposition}\label{prop:F}
  The \gf\ $Q(x,y)$ of quadrant walks with steps in $\cS=\{10, \bar 1 0, 0 \bar
  1, \bar 2 1\}$ is the non-negative part (in $x$ and $y$) of the
  rational series
\[
R(x,y)= {\frac { \left( {x}^{2}+1 \right) \left( x+y \right)  \left( y-x \right)
\left( {x}^{2}y-2\,x-y \right) \left( {x}^{3}-x-2\,y \right)
}{{x}^{7}{y}^{3} \left(1-t(x+\bx+\bx^2 y +\by) \right)}},
\]
seen as a power series in $t$ with coefficients in $\qs[x,\bx,y,\by]$.
\end{Proposition}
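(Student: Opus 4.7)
The plan is to start from the section-free equation constructed in Example F, divide through by $x^2y(x_1-x_2)K(x,y)$ so as to isolate $Q(x,y)$, obtaining~\eqref{eqA}, and then apply the non-negative-part operator $[x^{\ge}y^{\ge}]$ to both sides. Since $Q(x,y)\in \qs[x,y][[t]]$ is already non-negative in $x$ and $y$, it is left fixed by this operator. The claim will then reduce to showing that every other term on the left-hand side of~\eqref{eqA}---namely $x\bxun\bxde\by\,Q(x,x^2\by)$ together with $A_1,\ldots,A_5$---contributes nothing.

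First I would check that each of these six ``parasitic'' terms is a well-defined power series in $t$ with coefficients in $\qs[x,\bx,y,\by]$. Although written in terms of the algebraic quantities $x_1,x_2$, each $A_i$ has the shape $(F(x_1)-F(x_2))/(x_1-x_2)$ for a suitable rational $F$, hence is symmetric in $(x_1,x_2)$; the formulas~\eqref{neg-x}--\eqref{neg-y} for the elementary symmetric functions of $x_1,x_2$ and of their reciprocals then express the coefficients as Laurent polynomials in $x$ and $y$.

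Next I would dispose of the easy terms. The term $x\bxun\bxde\by\,Q(x,x^2\by)$ equals $-x^2\by^2\,Q(x,x^2\by)$ and uses only non-positive powers of $y$, so it is $y$-negative. After reduction via~\eqref{neg-x} (and the identity $xx_1x_2=-y$ where needed), the terms $A_1$, $A_3$, $A_4$ become visibly $x$-negative, while $A_2$, reduced via~\eqref{neg-y}, is $y$-negative. In each case an overall factor of $\bx$ or $\by$ survives the symmetric-function reduction.

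The main obstacle is $A_5$, because there $Q$ is evaluated at arguments of the form $(-\bxun,\cdot)$ or $(-\bxde,\cdot)$, which mix positive and negative powers of the $x_i$'s and so resist direct simplification. Here I would expand $Q$ monomial by monomial and reduce to the statement that, for each $i,j\ge 0$, the substitution $Q(x,y)\mapsto x^iy^j$ inside $A_5$ produces the two expressions~\eqref{sum1} and~\eqref{sum2}, whose monomial expansions contain no term $x^ay^b$ with $a,b\ge 0$. After clearing negative powers of the $x_k$'s via $xx_1x_2=-y$, each such expression becomes a monomial $x^a\by^b$ times a polynomial $E_c=(x_1^{c+1}-x_2^{c+1})/(x_1-x_2)$; Lemma~\ref{lem:extr} forces every monomial $\bx^e y^f$ of $E_c$ to satisfy $f\le e$, and a short inequality chase then shows that the exponents of $x$ and $y$ cannot be simultaneously non-negative. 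Combining these six vanishing statements with~\eqref{eqA} yields $Q(x,y)=[x^{\ge}y^{\ge}]R(x,y)$.
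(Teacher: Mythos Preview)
Your proposal is correct and follows essentially the same approach as the paper: divide the section-free equation by $x^2y(x_1-x_2)K(x,y)$ to reach~\eqref{eqA}, observe via the symmetric-function identities~\eqref{neg-x}--\eqref{neg-y} that the second term and $A_2$ are $y$-negative while $A_1,A_3,A_4$ are $x$-negative, and handle the delicate term $A_5$ monomial by monomial using Lemma~\ref{lem:extr} and the reductions~\eqref{sum1}--\eqref{sum2}. The paper's argument is identical in structure and in the key technical step.
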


{From this, one can derive interesting results for the specialization $Q(0,0)$
counting excursions.}

  \begin{Corollary}\label{cor:explicit}
For $\cS=\{10, \bar 1 0, 0 \bar
  1, \bar 2 1\}$,   the sequence $e_n:=q(0,0;2n)$ counting  excursions 
 satisfies a linear recurrence relation of order $2$:
\[
( n+3
 )  ( n+2 )  ( n+1 ) e_n =
 12 ( 2\,n-1 )  ( 2\,n-3 )  ( n-1
 ) e _{ n-2 } +4 ( 2\,n-1 )  ( n+2
 )  ( n+1 ) e_ { n-1 },
\]
with $e_0=e_1=1$. It is not hypergeometric.

The associated \gf\ admits an
expression in terms of hypergeometric series: 
\begin{equation*}
Q(0,0) = 
\frac{3}{4t}
+
\frac{{9t-2}}{2t^2} 
\int
\frac{(1+4t)^{3/2}}{(9t-2)^2}
\left(
\twoFone{-\frac32}{\frac32}{2}{\frac{16\,t}{1+4t}}
+ 
2\times \twoFone{-\frac12}{\frac32}{3}{\frac{16\,t}{1+4t}}
\right).
\end{equation*}
 \end{Corollary}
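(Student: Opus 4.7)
The starting point is Proposition~\ref{prop:F}, which gives $Q(x,y;t)$ as the non-negative part in $x$ and $y$ of the explicit rational series $R(x,y;t)$. Since the constant term of a series in $\qs[x,\bx,y,\by][[t]]$ is the same as the constant term of its non-negative part in $x,y$, we obtain
\[
Q(0,0;t) = [x^0 y^0]\, R(x,y;t).
\]
Expanding the denominator of $R$ as the geometric series $\sum_{n\ge 0} t^n\, S(x,y)^n$ with $S(x,y)=x+\bx+\bx^2y+\by$, and applying the multinomial theorem to $S^n$, the constant-term extraction collapses to a finite sum indexed by the multiplicities of each step. After accounting for the factor $1/(x^7y^3)$ and the numerator polynomial, this yields an explicit closed-form summand for $e_n = q(0,0;2n) = [t^{2n}]Q(0,0;t)$; odd coefficients vanish by a parity argument on $\cS$.

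Once a summand for $e_n$ is available, I would run creative telescoping (e.g.\ Zeilberger's algorithm) on the resulting multi-sum to produce a linear recurrence with polynomial coefficients in $n$, which is then routinely checked against the stated order-$2$ recurrence. To establish non-hypergeometricity, I would verify that the second-order recurrence operator has no first-order right factor over $\qs(n)$, either by running Petkov\v{s}ek's Hyper algorithm, or concretely by observing that $e_{n+1}/e_n$ is not a rational function of $n$: the finitely many candidates for such a rational function are determined by the leading and trailing factors of the recurrence coefficients, and each can be excluded using the first few values of the sequence.

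For the closed form of $Q(0,0;t)$, I would translate the recurrence for $(e_n)$ into a second-order linear ODE for $Q(0,0;t)$, then perform the change of variable $u = 16t/(1+4t)$, whose relevance is signalled both by the factor $(1+4t)$ appearing as a discriminantal singularity of the kernel and by the explicit $(1+4t)^{3/2}$ in the integrand. Under this substitution the homogeneous part of the ODE should identify with (a contiguous variant of) the Gauss hypergeometric equation whose solution space contains the pair
\[
\twoFone{-3/2}{3/2}{2}{u}\quad\text{and}\quad \twoFone{-1/2}{3/2}{3}{u}.
\]
The rational term $3/(4t)$ is then pinned down by the initial conditions $e_0=e_1=1$, and the factor $(9t-2)/(2t^2)$ together with the outer integration recovers the correct primitive.

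The main obstacle is not the recurrence, which is mechanical, but rather the explicit integral-hypergeometric formula. It requires identifying the correct substitution $u=16t/(1+4t)$, factoring the associated differential operator over the algebraic extension that admits $(1+4t)^{3/2}$, and recognizing the precise linear combination of two contiguous ${}_2F_1$ series that, once integrated and corrected by $3/(4t)$, reproduces $Q(0,0;t)$. In practice, one would first \emph{guess} these ingredients from the symbolic solution of the ODE via algorithms \`a la Kovacic or van~Hoeij, and then verify the identity rigorously by substituting the candidate expression into the ODE and matching initial conditions.
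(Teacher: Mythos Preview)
Your approach is essentially the same as the paper's: write $Q(0,0)$ as the constant term of the rational function $R(x,y;t)$ from Proposition~\ref{prop:F}, apply creative telescoping to obtain the recurrence, invoke Petkov\v{s}ek's algorithm for non-hypergeometricity, and use $_2F_1$ solving algorithms for the closed form. The paper differs only in the order of operations --- it applies creative telescoping directly to the trivariate rational function (following~\cite{BoChHoKaPe17}) to obtain first an ODE of order~4 for $Q(0,0)$, and then deduces the order-2 recurrence from it --- whereas you go through the multinomial expansion of $S(x,y)^n$ to obtain a single-sum representation for $e_n$ and run Zeilberger. Both routes are standard computer-algebra implementations of the same idea.

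One small slip: the order-2 recurrence for $(e_n)$ with degree-3 polynomial coefficients does \emph{not} translate into a second-order ODE for the generating function; the order of the ODE is governed by the degree in~$n$, not by the order of the recurrence. The paper obtains an order-4 ODE in~$t$; starting from your recurrence one would get an order-3 ODE in the variable $t^2$. This does not affect the rest of your plan, since the $_2F_1$ solving algorithms you cite (Kovacic/van~Hoeij style) handle operators of these orders without difficulty.
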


\begin{proof} (sketched)
{The recurrence relation is easily guessed from the first few values of $e_n$.
It can be proved using computer algebra and the approach
of~\cite{BoChHoKaPe17}. The idea is to write $Q(0,0)$ as the constant
coefficient (w.r.t. $x$ and $y$) of the rational function {$R(x,y)$}, then to
apply creative telescoping techniques. This proves that $Q(0,0)$ satisfies an
explicit linear differential equation of order 4, from which the validity of
the above linear recurrence relation for $e_n$ is easily deduced. The fact
that the sequence $(e_n)$ is not hypergeometric follows from Petkov\v sek's
algorithm~\cite{AB}.}

 The use of $_2F_1$ solving algorithms~\cite{BoChHoPe11,ImHo17,BoChHoKaPe17}
then provides a closed-form expression of $Q(0,0)$.
\end{proof}

%%%%%%%%%%%%%%%%%%%%%%%%%%%%%%%%%%%%%%%%%%%%%%%%%%%%%%%%%%%%%%
\section{The one-dimensional case revisited}
\label{sec:1D}
%%%%%%%%%%%%%%%%%%%%%%%%%%%%%%%%%%%%%%%%%%%%%%%%%%%%%%%%%%%%%%

So far we have only studied sporadic models. We now consider a family of
models, namely general one-dimensional models. We take $\cS\subset \zs$ and
denote by $-m$ (resp. $M$) the smallest (resp. largest) step of $\cS$; to
avoid trivial cases we assume $m\ge 0$ and $M>0$. Finally, we allow step
weights taken in some algebraically closed field $\GF$ of characteristic zero.
The indeterminates $t$ and $x$ are algebraically independent over $\GF$. The
step polynomial is then
\[
S(x)=\sum_{i\in \cS} w_i x^i,
\]
where $w_i$ is the weight of the step $i$. The weight of a walk is the product of the weights of its steps.

Let us first recall the standard solution, originally obtained by
Gessel~\cite{gessel-factorization} (see
also~\cite[Ex.~3]{bousquet-petkovsek-recurrences}
and~\cite{banderier-flajolet}). It involves auxiliary series $X_i$, which are
fractional series in the length variable $t$, algebraic over $\GF(t)$.

\begin{Proposition}\label{prop:dim1-standard}
The kernel $K(x)=1-tS(x)$, when solved for $x$,  admits $m+M$ roots, which are Puiseux series in $t$ with coefficients in $\GF$. Exactly $m$ of these roots, denoted $X_1, \ldots, X_m$, are finite at $t=0$ (and in fact, vanish at $t=0$). Let us denote by $X_{m+1}, \ldots, X_{m+M}$ the other ones.

The \gf\ $Q(x;t)\equiv Q(x)$ is 
\begin{equation}
\label{F-dim1-standard}
Q(x)= \frac  {  \prod_{i=1} ^m (1-\bx X_i) }  {K(x)} = -\frac 1  {tw_M}  \prod_{i=m+1} ^{m+M} \frac 1 {x-X_i}.
\end{equation}
\end{Proposition}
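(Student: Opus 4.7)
I would apply the classical \emph{kernel method}: extract from the main functional equation a polynomial identity of low $x$-degree, then substitute the small branches $X_1,\ldots,X_m$ into it.

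\textbf{Root count.} Multiplying $K(x)$ by $x^m$ gives
\[
x^m K(x) = x^m - t \sum_{i \in \cS} w_i\, x^{m+i} \in \GF[t][x],
\]
a polynomial of degree $m+M$ in $x$ with leading coefficient $-tw_M$. Over the algebraically closed field of Puiseux series in $t$ with coefficients in $\GF$, it splits into $m+M$ roots counted with multiplicity. The Newton polygon, drawn in coordinates (degree in $x$, $t$-valuation of the coefficient), has vertices $(0,1)$, $(m,0)$, $(m+M,1)$ with slopes $-1/m$ and $1/M$ of horizontal lengths $m$ and $M$. This yields $m$ roots of $t$-valuation $1/m>0$, which vanish at $t=0$ (the small branches $X_1,\ldots,X_m$), and $M$ roots of $t$-valuation $-1/M<0$, which blow up at $t=0$.

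\textbf{Polynomial identity and kernel substitution.} A step-by-step construction of walks yields
\[
K(x)Q(x) = 1 - t\sum_{j=-m}^{-1} w_j \sum_{i=0}^{-j-1} Q_i\, x^{i+j},
\]
where $Q_i(t) := [x^i]Q(x)$. The $x$-exponents on the right-hand side lie in $\{-m,\ldots,-1\}$, so multiplication by $x^m$ produces a monic polynomial identity
\[
P(x) := x^m K(x) Q(x) = x^m - t\sum_{j=-m}^{-1} w_j \sum_{i=0}^{-j-1} Q_i\, x^{i+j+m}
\]
of degree $m$ in $x$ with coefficients in $\GF[[t]]$. Each $X_k$ is a Puiseux series of positive $t$-valuation, hence $P(X_k)$ and $Q(X_k)$ are well-defined Puiseux series; since $K(X_k)=0$, we obtain $P(X_k)=0$ for $k=1,\ldots,m$. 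The roots of $x^m K(x)$ are pairwise distinct because its discriminant is a nonzero element of $\GF(t)$ (equivalently, for a generic value of $t$, the equation $S(x)=1/t$ has $m+M$ distinct solutions). Consequently the monic degree-$m$ polynomial $P$ equals $\prod_{k=1}^m(x-X_k)$, and dividing through by $x^m K(x)$ gives the first formula of~\eqref{F-dim1-standard}. The second formula follows from the complete factorization $x^m K(x) = -tw_M\prod_{i=1}^{m+M}(x-X_i)$ after cancellation of the small-branch factors.

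The one genuine subtlety is the Newton polygon step isolating exactly $m$ small branches — this is where the hypotheses $m\ge 0$ and $M>0$ enter essentially — together with the separability of $x^m K(x)$ needed to treat the $X_k$ as distinct. Everything else is routine bookkeeping.
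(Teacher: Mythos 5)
Your proof is correct and follows essentially the same route as the paper's: the same functional equation with section unknowns, substitution of the $m$ small kernel roots, a degree count on the resulting section polynomial, and the factorization of $x^mK(x)$ to pass between the two forms of~\eqref{F-dim1-standard}. The only differences are cosmetic — a Newton-polygon computation where the paper cites a standard reference for the count of small roots, clearing the denominator $x^m$ instead of arguing directly on the degree-$m$ polynomial in $\bx$, and an explicit (if terse) separability remark that the paper leaves implicit.
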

We recall the proof given in~\cite[Ex.~3]{bousquet-petkovsek-recurrences}
or~\cite{banderier-flajolet}, for comparison with the approach of this paper.
Roughly speaking, the standard solution is obtained by \emm canceling the
kernel, by appropriate specializations of $x$, while the approach of this
paper is more algebraic and consists in playing with certain invariance
properties of the kernel.

 \begin{proof} 
  The  statements of the proposition dealing with the roots of the
  kernel come from the fact that the equation $K(x)=0$, once written
  as a polynomial equation in $x$ (that is, as $x^m K(x)=0$), has
  degree $m+M$ in $x$, reducing to $m$  when $t=0$ (see~\cite[Prop.~6.1.8]{stanley-vol2}).

  Let us write $Q(x)=\sum_{i\ge 0 } x^i Q_i,$ where $Q_i$ counts walks ending at abscissa $i$. The functional equation reads
  \begin{equation}\label{eq-func-1D}
  K(x)Q(x)=1- \sum_{k=-m}^{-1}x^k G_k,
  \end{equation}
  where 
  \[
  G_k=t \sum_{i\in \cS, i\le k} w_i Q_{k-i}.
  \]
  So we have $m$ unknown series $G_{-1}, \ldots, G_{-m}$ (or
  equivalently, $Q_0, \ldots, Q_{m-1}$) on the
  right-hand side of the functional equation. When we replace $x$ by
  $X_i$ in~\eqref{eq-func-1D}, for $1\le i\le m$, both the left and
  right-hand sides vanish (we only use the ``small'' roots $X_1,
  \ldots, X_m$, because the substitution by a root involving
  negative powers of $t$ may be undefined). But the right-hand side is a polynomial in
  $\bx$, of degree $m$ and constant term $1$. Hence it must be equal
  to $\prod_{i=1}^m (1-\bx X_i)$, and this gives the first expression
  of $Q(x)$. The second one follows by factoring  $K(x)$  as 
  \begin{equation}\label{K-fact-1D}
  K(x)= -t w_M \prod_{i=1}^m (1-\bx X_i) \prod_{i=m+1}^{m+M} (x-X_i).
  \end{equation}
  (The factor $-tw_M$ is obtained by extracting the coefficient of $x^M$ in $K(x)$.)
\end{proof}

We now present the expression provided by the method of this paper. Rather
than algebraic series in $t$ (the $X_i$'s), it involves algebraic series in
$\bx$ (denoted by $x_i$), and then the extraction of a non-negative part.
Admittedly, it is not as attractive as the standard solution. In particular,
it does not make the algebraicity of $Q(x)$ clear, unless the largest step is
1. But we show later how to recover the standard solution from it. One
surprising feature of this solution is that, as foreseen in Example A, it
involves expansions in $\bx$ rather than $x$.

\begin{Proposition}\label{prop:dim1-alg} 
The equation  $S(X)=S(x)$  (when solved for $X$) admits $m+M$ roots, which can be taken in the field of Puiseux series in $\bx:=1/x$ with coefficients in $\GF$. Exactly $m$ of these roots, denoted $x_1, \ldots, x_m$, contain no positive power of $x$ (and, in fact, have no constant term either).

The \gf\ $Q(x;t)\equiv Q(x)$ is
\begin{equation}\label{1D-gen}
Q(x)= [x^{\ge}] \frac{\prod_{j=1}^m (1-\bx x_j)}{K(x)},
\end{equation}
where the right-hand side is expanded first in $t$, then in $\bx$.

If the largest step of $\cS$ is $M=1$ the right-hand side of~\eqref{1D-gen} is rational, and
\begin{equation}\label{1D-M1}
Q(x)= [x^{\ge}]  \frac{ S'(x)}{w_1 K(x)}.
\end{equation}
\end{Proposition}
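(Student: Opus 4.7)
The plan is to adapt the strategy of Example~A to arbitrary one-dimensional models, in three stages: locate the roots of $S(X)=S(x)$ as Puiseux series in $\bx$; derive a section-free identity by eliminating the unknown sections from the functional equation via divided differences; and extract $Q(x)$ by taking the non-negative part in $x$.

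I would first analyze the orbit. Multiplying by $X^m$ turns $S(X)=S(x)$ into a polynomial equation in $X$ of degree $m+M$, with leading coefficient $w_M$ and constant coefficient $w_{-m}$ (both nonzero), so it has $m+M$ roots in the algebraic closure of $\GF(x)$. Since $\GF$ is algebraically closed, Puiseux series in $\bx$ over $\GF$ form such an algebraic closure. With the valuation $v$ normalized by $v(\bx)=1$, the Newton polygon of $X^m(S(X)-S(x))$ as a polynomial in $X$ consists of two segments joining $(0,0)$, $(m,-M)$ and $(m+M,0)$, the vertex $(m,-M)$ coming from the coefficient $w_0-S(x)$ of $X^m$, whose valuation is $-M$. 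Thus the roots split into $m$ of valuation $M/m$ (call them $x_1,\dots,x_m$, each a Puiseux series with leading term of order $\bx^{M/m}$, so having no non-negative power of $x$), and $M$ of valuation $-1$, one being $x$ itself. Moreover, the leading coefficients of the $x_i$ are the $m$ distinct $m$th roots of $w_{-m}/w_M$, so the $x_i$ are pairwise distinct and $v(x_i-x_j)=M/m$ for $i\ne j$.

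Next, I would rewrite the functional equation~\eqref{eq-func-1D} as
\[ x^m K(x)Q(x) = x^m - P(x), \qquad P(x) := \sum_{j=0}^{m-1} x^j G_{j-m}, \]
where $P$ is a polynomial of degree less than $m$ whose coefficients are the $m$ unknown sections. Substituting $x\mapsto x_i$ is legitimate since each $x_i$ is a Puiseux series in $\bx$ independent of $t$, and produces, using $K(x_i)=K(x)$, a system of $m+1$ equations at the nodes $u_0=x$, $u_i=x_i$ for $1\le i\le m$. The weights $\lambda_i := 1/\prod_{j\ne i}(u_i-u_j)$ realize the $m$th divided difference at these nodes, annihilating $P$ while returning $1$ on $X^m$; the corresponding linear combination eliminates $P$ and yields the section-free identity
\[ K(x) \sum_{i=0}^m \lambda_i u_i^m Q(u_i) = 1, \]
which also proves Proposition~\ref{prop:section-free1D} (the same argument applies to any $m+1$ orbit elements).

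The final step is to solve for $Q(x)$ and extract $[x^{\ge}]$. Using $\lambda_0 x^m = 1/\prod_{j=1}^m(1-\bx x_j)$ and rearranging, the identity becomes
\[ Q(x) = \frac{\prod_{j=1}^m(1-\bx x_j)}{K(x)} + \sum_{i=1}^m A_i, \]
for explicit $A_i$ involving $Q(x_i)$. The main obstacle is to verify that each $A_i$, viewed as a power series in $t$ with Puiseux series coefficients in $\bx$, has only strictly negative powers of $x$; this will ensure $[x^{\ge}]A_i=0$ and hence, since $Q(x)$ itself has only non-negative powers of $x$, that $Q(x)=[x^{\ge}]\prod_{j=1}^m(1-\bx x_j)/K(x)$. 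The verification reduces to the valuation estimate $v(A_i)\ge 1+M/m>0$, obtained by combining $v(x_i^m)=M$, the denominator contributions $v(1/x)=1$ and $v(1/\prod_{j\ne i,\,j\ge 1}(x_i-x_j))=-(m-1)M/m$ from the first step, $v(1-\bx x_j)=0$ for $j\ne i$ (leading term $1$), and the obvious $v(Q(x_i))\ge 0$ since each $t$-coefficient is a polynomial in the small series $x_i$. For the case $M=1$, the factorization $X^m(S(X)-S(x))=w_1(X-x)\prod_{j=1}^m(X-x_j)$, differentiated in $X$ and evaluated at $X=x$, gives $x^m S'(x)=w_1\prod_{j=1}^m(x-x_j)$, so $\prod_{j=1}^m(1-\bx x_j)=S'(x)/w_1$, yielding~\eqref{1D-M1}.
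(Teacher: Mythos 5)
Your proof is correct and reaches the same conclusion as the paper, with the same Lagrange-interpolation step generating the section-free identity, but two surrounding steps are handled by genuinely different means. For the count of small roots, the paper simply reads off the degree of the defining polynomial at $\bx=0$ (citing Stanley, Prop.~6.1.8); you run the Newton--Puiseux algorithm explicitly, which yields not just the count but also the exact valuations $v(x_i)=M/m$, the distinct leading coefficients (the $m$-th roots of $w_{-m}/w_M$), and hence $v(x_i-x_j)=M/m$ for $i\ne j$. That finer information is exactly what powers your version of the extraction step: the paper assembles the terms $A_i$ into a single antisymmetric expression in $x_1,\dots,x_m$, divides by the Vandermonde determinant, and concludes the sum is $\bx$ times a polynomial in $\bx$ and the $x_j$'s, giving $v\ge 1$; you instead bound each $A_i$ separately by tallying valuations, arriving at the sharper estimate $v(A_i)\ge 1+M/m$. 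Both are sound. Your route is more elementary and quantitative, at the cost of a precise Newton-polygon analysis up front; the paper's Vandermonde argument is slicker and is structurally insensitive to the exact value of $v(x_i-x_j)$, whereas your argument depends on it (though the scenario of coalescing leading coefficients is ruled out here anyway, since $c^m=w_{-m}/w_M$ is separable in characteristic zero). The $M=1$ simplification is handled equivalently in both.
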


We will use the following lemma, {which is a simple application
  of the Lagrange interpolation formula}~\cite[Lemma~13]{mbm-chapuy-preville}.

\begin{Lemma}\label{lem:Lagrange}
Let $u_0, u_1, \dots , u_m$ be $m+1$  variables. Then
\[
\sum_{i=0}^m \frac{{u_i}^d}{\prod_{j\neq i}{(u_i-u_j)}} =
\begin{cases}
  1 & \hbox{if } d=m,\\
  0 & \hbox{if } 0\le d <m.
  \end{cases}
\]
\end{Lemma}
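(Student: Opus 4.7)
The plan is to recognize the lemma as a direct consequence of Lagrange interpolation, so there is no serious obstacle; the only task is to present the argument cleanly. The key observation is that, for each integer $d$ with $0 \le d \le m$, the monomial $u^d$ is itself a polynomial of degree at most $m$ taking the value $u_i^d$ at the point $u_i$. By uniqueness of polynomial interpolation in degree at most $m$, the Lagrange formula therefore yields
\[
u^d = \sum_{i=0}^m \frac{u_i^d}{\prod_{j \ne i}(u_i - u_j)} \prod_{j \ne i}(u - u_j),
\]
viewed as an equality of polynomials in $u$ over the field of rational functions in $u_0, \ldots, u_m$.

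Next I would extract the coefficient of $u^m$ from both sides. Each Lagrange basis polynomial $\prod_{j \ne i}(u - u_j)$ is monic of degree $m$, so the right-hand side contributes precisely $\sum_{i=0}^m u_i^d / \prod_{j \ne i}(u_i - u_j)$, which is exactly the sum appearing in the statement. On the left, the coefficient of $u^m$ in $u^d$ equals $1$ if $d=m$ and $0$ if $0 \le d < m$. Equating these two values gives the announced identity.

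One small point is worth addressing, although I do not consider it a real difficulty: Lagrange interpolation formally needs the $u_i$ to be pairwise distinct, whereas the lemma treats them as variables. This causes no trouble, since after clearing the common denominator $\prod_{i<j}(u_i - u_j)$ both sides of the identity become polynomials in $u_0, \ldots, u_m$, and they agree on the Zariski-dense set of specializations to pairwise distinct scalar values, hence agree identically. Should one prefer to avoid interpolation entirely, the partial-fraction expansion of $u^d/\prod_{i=0}^m(u-u_i)$ provides an equivalent route: its residue at $u_i$ equals $u_i^d/\prod_{j\ne i}(u_i-u_j)$, and comparing the coefficient of $1/u$ in the Laurent expansion at $u=\infty$ (which is $1$ when $d=m$ and $0$ when $d<m$) with the sum of these residues recovers the same identity.
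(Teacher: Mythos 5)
Your argument is correct and is exactly the one the paper has in mind: the paper does not prove the lemma itself but simply remarks that it is ``a simple application of the Lagrange interpolation formula'' (citing~\cite[Lemma~13]{mbm-chapuy-preville}), and your coefficient-of-$u^m$ extraction from the Lagrange expansion of $u^d$ fills in that reference cleanly. Your remark about clearing denominators to justify the identity for indeterminates, and the alternative partial-fraction/residue formulation, are both sound and consistent with the intended proof.
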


\begin{proof}[Proof of Propositions~\ref{prop:section-free1D} and~\ref{prop:dim1-alg}]
We first establish the section-free equation of
Proposition~\ref{prop:section-free1D}. The equation $S(X)=S(x)$ has
$m+M$ solutions (counted with multiplicity), including $X=x$, which
form the orbit of $x$. These
solutions  are in fact distinct: a solution of $S'(X)=0$ belongs to
the ground field $\GF$, and cannot satisfy $S(X)=S(x)$ since $x$ is an indeterminate.

Let
$u_0, \ldots, u_m$ be $m+1$ {distinct} orbit elements. For $0\le i \le m$, the
functional equation~\eqref{eq-func-1D} specializes into
\[
K(x)Q(u_i)=1- \sum_{k=-m}^{-1}u_i^k G_k.
\]
Note that $K(u_i)=K(x)$ since $K(x)=1-tS(x)$. We can eliminate the $m$ series $G_k$ by taking an appropriate linear combination of our $m+1$
equations, namely:
\begin{align}
  K(x) \sum_{i=0}^m \frac{u_ i^m}{\prod_{j\not = i} (u_i-u_j)}Q(u_i)
&=\sum_{i=0}^m \frac{u_i^m}{\prod_{j\not = i} (u_i-u_j)}
-  \sum_{k=-m}^{-1}G_k \sum_{i=0}^m \frac{u_i^{k+m}}{\prod_{j\not =
    i} (u_i-u_j)}\nonumber\\
&=
1\label{1D-lc-bis} 
\end{align}
by Lemma~\ref{lem:Lagrange}.

We have thus exhibited $\binom{m+M}{m+1}$ section-free equations, each
involving $m+1$ orbit equations, but we still need to prove that they generate
all section-free equations. So let us take a generic section-free equation,
say
\[
  \sum_{i=0}^{m+M-1} \alpha_i K(x) Q(u_i)= \sum_{i=0}^{m+M-1} \alpha_i
\left(1-\sum_{k=-m}^{-1}u_i^k G_k\right)
=\sum_{i=0}^{m+M-1} \alpha_i ,
\]
where $u_0, u_1, \ldots, u_{m+M-1}$ are now all orbit elements.
By subtracting a number of versions of~\eqref{1D-lc-bis} (with well
chosen $u_i$'s and well chosen weights), we can assume that this
equation only involves (at most) $m$ of the $u_i$'s, say $u_1, \ldots,
u_m$. Then saying that this equation is section-free means that for
all $k$ in $\llbracket -m, -1\rrbracket$,
\[
\sum_{i=1}^m \alpha_i u_i^k=0.
\]
But  the determinant of this system
is not zero (since the $u_i$'s are distinct), and thus all
$\alpha_i$'s must be zero.

\medskip
We now go on with the proof of Proposition~\ref{prop:dim1-alg}.
The equation $S(X)=S(x)$, written as a polynomial in $\bx$ and $X$, reads
\[
\bx^M \sum_{i\in \cS} w_i X^{m+i}=X^m \sum_{i\in \cS}w_i \bx^{M-i}.
\]
The number of solutions $X$ that are fractional power series in $\bx$  is the degree in $X$ of the above polynomial, once evaluated at $\bx=0$ (see again~\cite[Prop.~6.1.8]{stanley-vol2}), hence $m$. From now on we denote these roots by $x_1, \ldots, x_m$, and it is clear that $x$ is not among them so we denote $x_0=x$.

\medskip
We now write the section-free equation~\eqref{1D-lc-bis} with
$u_i=x_i$, and isolate $Q(x_0)=Q(x)$:
\begin{equation}\label{F-iso}
Q(x)+ \prod_{j=1}^m (1-\bx x_j) \sum_{i=1}^m
\frac{x_i^m}{\prod_{0\le j\not = i \le m} (x_i-x_j)}Q(x_i)
=
\frac{\prod_{j=1}^m (1-\bx x_j)}{K(x)}.
\end{equation}
{Comparing with~\eqref{1D-gen} shows {that} we have to prove
  that the second term in the left-hand side, once expanded as a
  series in $t$, only contains  negative
  powers of $x$.}
In the coefficient of $Q(x_i)$, the term $(1-\bx x_i)$ coming from the
numerator gets simplified with the term $(x_i-x_0)=(x_i-x)=-x(1-\bx
x_i)$ coming from the denominator. Hence the least common denominator
of the coefficients of all $Q(x_i)$ is the Vandermonde determinant in
$x_1, \ldots, x_m$. We can thus rewrite the second term as follows: 
\begin{multline}
  \prod_{j=1}^m (1-\bx x_j) \sum_{i=1}^m
\frac{x_i^m}{\prod_{0\le j\not = i \le m} (x_i-x_j)}Q(x_i)
=- \bx \sum_{i=1}^m \left(x_i^m Q(x_i)
\prod_{1\le j\not = i \le m} \frac{ 1-\bx x_j}
{x_i- x_j}\right)\label{vdm}
\\ 
=\frac \bx{\prod_{1\le k <\ell \le m} (x_k-x_\ell)}\sum_{i=1}^m\left( (-1)^i x_i^m Q(x_i) \prod_{1\le
  j\not = i \le m} (1-\bx x_j) \prod_{
\substack{1\le k < \ell\le m \\ k,
  \ell\not = i}} (x_k-x_\ell)\right).
\end{multline}
The sum over $i$ is easily checked to be an antisymmetric expression
in $x_1, \ldots, x_m$. More precisely, if we exchange in this sum $x_a$ and
$x_{a+1}$, the summands involving $Q(x_a)$ and $Q(x_{a+1})$ are
exchanged, and their  signs change (because of the factor $(-1)^i$), and for $i \not \in\{a, a+1\}$ the
sign of the summand involving $Q(x_i)$ changes (because of the factor
$(x_a-x_{a+1})$ occurring in the rightmost product).  Thus,  dividing
the sum over $i$ by the Vandermonde {determinant} in $x_1, \ldots, x_m$ gives a
series in $t$ with \emm polynomial, coefficients in $\bx, x_1, \ldots,
x_m$.
Hence, once  expanded in $t$ and~$\bx$, the right-hand side of~\eqref{vdm} contains
only negative powers of $x$ (because the $x_i$'s contain no positive
power of $x$ and there is a factor $\bx$). We now return
to~\eqref{F-iso}, which we expand in powers of $t$ and $\bx$. 
The expression~\eqref{1D-gen} of $Q(x)$ follows. 

\medskip
Now assume $M=1$. Then $x_1, \ldots, x_m$ are \emm all, roots of $S(X)=S(x)$ except $X=x$. That is,
\[
\frac {S(X)-S(x)}{X-x}= w_1 \prod_{j=1}^m (1-x_j/X).
\]
Taking the limit as $X\rightarrow x$ gives
\begin{equation}\label{S-prime}
S'(x)=w_1\prod_{j=1}^m (1-\bx x_j).
\end{equation}
Substituting into~\eqref{1D-gen} gives~\eqref{1D-M1}.
\end{proof}

\noindent{\bf Why  Proposition~\ref{prop:dim1-alg}  implies Proposition~\ref{prop:dim1-standard}.}
We now derive from~\eqref{1D-gen} the standard
expression~\eqref{F-dim1-standard}. We start from the
factorization~\eqref{K-fact-1D} of the kernel. It gives the following
partial fraction decomposition {in $x$}:
\[
\frac{1}{K(x)}= -\frac 1  {tw_M} \sum_{i=1}^m \frac{\bx X_i^m }{(1-\bx X_i)
\prod_{j\not = i}(X_i-X_j)}
+\frac 1  {tw_M}  \sum_{i=m+1}^{m+M}\frac {X_i^{m-1}} {(1-x/X_i)\prod_{j\not = i}(X_i-X_j)}.
\]
The expansion in $\bx$ of the term $A(\bx):=\prod_{j=1}^m (1-\bx x_j)$ only involves non-positive powers of $x$, hence~\eqref{1D-gen} implies
\begin{equation}\label{QA}
Q(x)= \frac 1  {tw_M} [x^{\ge}]A(\bx) \sum_{i=m+1}^{m+M}\frac {X_i^{m-1}}
{(1-x/X_i)\prod_{j\not = i}(X_i-X_j)}.
\end{equation}
Recall that, as $x_1, \ldots, x_m$ themselves,  $A(\bx)$ is a
fractional power series in $\bx$ with  coefficients in~$\GF$, say
$A(\bx)=\sum_{n\ge 0} a_n \bx^{n/p}$, for a positive integer $p$. In
fact we can take $p=1$. Indeed, by~\cite[Prop.~6.1.6]{stanley-vol2},
for $1\le i\le m$, every conjugate of $x_i$  over the field
$\GF((\bx))$ of Laurent series in $\bx$ is one of the $x_j$'s, with
$1\le j\le m$; hence $\prod_{i=1}^m (u-x_i)$ is a product of minimal
polynomials over $\GF((\bx))$, and thus only involves integer powers
of $\bx$ in its expansion.

{Now let us return to~\eqref{QA}, and focus on the term $A(
\bx)/(1-x/X_i)$.} Recall that for $i>m$,   $X_i$ is  a
Puiseux series in $t$, infinite at $t=0$. Thus  $1/X_i$ is a
fractional power series in $t$, vanishing at $t=0$, and hence
$A(1/X_i)$ is also a fractional series in~$t$. Moreover, {in the ring
of fractional series in $t$ with coefficients in $\GF[[\bx]]$, we have}
\begin{align*}
  [x^{\ge}]\frac{A(\bx)}{1-x/X_i}& = [x^{\ge}]\left( \sum_{m\ge 0}
  \frac {x^m}{X_i^m} \sum_{n\ge 0} a_n
  \bx^{n}\right) \\
&=\sum_{n\ge 0} a_n\sum_{m\ge n} \frac {x^{m-n}}{X_i^m} 
  \\ &=\sum_{n\ge 0} a_n \frac{1}{X_i^n(1-x/X_i)}
\\ &=\frac{A(1/X_i)}{1-x/X_i}.
\end{align*}
Returning to~\eqref{QA}, this gives:
\[
Q(x)= \frac 1  {tw_M} \sum_{i=m+1}^{m+M}\frac {X_i^{m-1} A(1/X_i)}
{(1-x/X_i)\prod_{j\not = i}(X_i-X_j)}.
\]
Thus it remains to determine $A(1/X_i)$ when $X_i$ is one of the roots
of the kernel that diverges at $t=0$. That is, we have to know the
values of $x_1, \ldots, x_m$ when $x$ is $X_i$. Recall the definition
of these $x_j$: they are power series in (a rational power of) $\bx$,
satisfying $S(x_j)=S(x)$. Specializing this at $\bx=1/X_i$ shows that  when $x=X_i$, the series $x_1, \ldots, x_m$ are power series in (a fractional power of) $t$, satisfying $S(X_i)=S(x_j)$. But $X_i$ cancels the kernel $1-tS$, hence the $x_j$ are also roots of the kernel, and since they must be finite at $t=0$, they are $X_1, \ldots, X_m$. This holds for any $X_i$ with $i>m$.

Hence,
\begin{align*}
  Q(x)&= \frac 1  {tw_M} \sum_{i=m+1}^{m+M}\frac {X_i^{m-1}}
{(1-x/X_i)\prod_{j\not = i}(X_i-X_j)} \prod_{j=1}^m (1-X_j/X_i)
\\ &=
\displaystyle \frac 1 {tw_M}  \sum_{i=m+1}^{m+M}\frac {1}
{(X_i-x)\prod_ {j>m, j\not = i}(X_i-X_j)}
\end{align*}
{where we recognize the partial fraction expansion of 
\[
-\frac 1 {tw_M} \prod_{i=m+1}^{m+M}\frac{1}{x-X_i}.
\]
This gives} the second expression in~\eqref{F-dim1-standard}.

%%%%%%%%%%%%%%%%%%%%%%%%%%%%%%%%%%%%%%%%%%%%%%%%%%%%%%%%%%%%%%
\section{Two-dimensional Hadamard walks}
\label{sec:hadamard}
%%%%%%%%%%%%%%%%%%%%%%%%%%%%%%%%%%%%%%%%%%%%%%%%%%%%%%%%%%%%%%

Following~\cite{BoBoKaMe16}, we say that a  2-dimensional  model $\cS$ is \emm Hadamard, if its step polynomial can be written as:
\begin{equation}\label{S-Hadamard}
S(x,y)=U(x)+V(x)T(y),
\end{equation}
for some Laurent polynomials $U$, $V$ and $T$. {Some examples
  are shown in Figure~\ref{fig:hadamard}.}
When $T(y)=y+\by$, the model has small variations along the
$y$-axis and is symmetric with respect to the $x$-axis. It was proved
 in~\cite{bousquet-versailles,bousquet-petkovsek-knight} that the
 associated \gf\ $Q(x,y)$ is always D-finite. This holds in fact for \emm all,
 two-dimensional Hadamard models, whatever $T(y)$ is. We provide two
 proofs, one based on a simple projection argument, the other on the
 method of this paper. 

\begin{figure}[t!]
  \centering
   \includegraphics[scale=0.8]{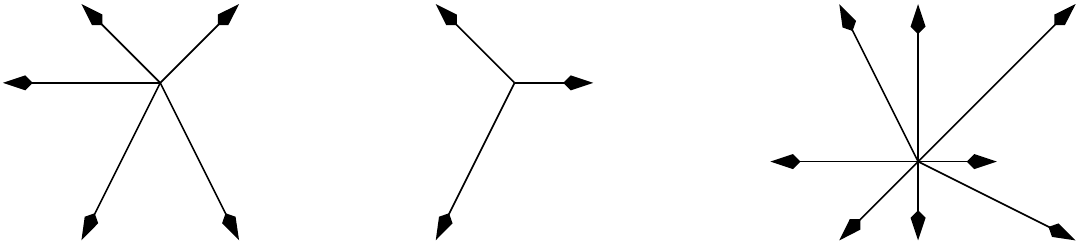}
  \caption{Some Hadamard models. The series $Q(x,y)$ is D-finite for
    all of them, and given as the non-negative part of a rational
    function for those that have small forward steps (the leftmost two).}
  \label{fig:hadamard}
\end{figure}

 \begin{Proposition}\label{prop:Hadamard-product}
   Consider a Hadamard model with step polynomial
   given by~\eqref{S-Hadamard}, and let $\cU$, $\cV$ and $\cT$ be the subsets
   of $\zs$ with generating polynomials $U(x)$, $V(x)$ and
   $T(y)$ respectively. Let $C_1(x,v;t)$ be the \gf\ of walks on $\ns$, starting
   from $0$ and taking steps in {the multiset} $\cU \cup \cV$
   {(steps in $\cU \cap   \cV$ occur twice)}, counted by the length
   (variable $t$), the position of the endpoint ($x$), and 
{the number of steps in $\cV$ ($v$)}.
Let $C_2(y;v)$ be the \gf\ of walks on $\ns$, starting from
   $0$ and taking steps in $\cT$, counted by the length ($v$) and the endpoint ($y$). Then
   $C_1(x,v;t)$ and $C_2(y;v)$ are algebraic, and the \gf\ of quadrant
   walks with steps in $\cS$ is
\[
Q(x,y;t) =\left. C_1(x,v;t) \odot_v C_2(y;v)\right|_{v=1},
\]
where $\odot_v$ denotes the Hadamard product in $v$, defined by $\sum a_n v^n\odot_v
  \sum b_n v^n = \sum a_n b_n v^n$. In particular, $Q(x,y;t)$ is D-finite.
 \end{Proposition}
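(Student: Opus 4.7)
The identity is essentially a bijective statement, and the proof I have in mind separates cleanly into a combinatorial step and a closure step.

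The combinatorial step. The Hadamard form $S(x,y) = U(x) + V(x) T(y)$ says that every step of $\cS$ is of one of two types: a ``pure'' step $(u,0)$ with $u \in \cU$, or a ``product'' step $(v,\tau)$ with $v \in \cV$ and $\tau \in \cT$. I will associate to each quadrant walk $w = s_1 \cdots s_n$ a pair $(w_1, w_2)$, where $w_1$ is the sequence of $x$-components of the $s_i$, each flagged by its type (this is exactly why $C_1$ is defined on the multiset $\cU \cup \cV$, so that a horizontal move shared by $\cU$ and $\cV$ remembers whether it came from a pure or a product step), and $w_2$ is the subsequence of $y$-components coming from the product steps. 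The key observation is that $w$ stays in $\ns^2$ if and only if $w_1$ stays in $\ns$ \emph{and} the prefixes of $w_2$ stay in $\ns$: the $x$-coordinate of $w$ at time $k$ is the endpoint of the $k$-prefix of $w_1$, while the $y$-coordinate at time $k$ is the endpoint of the prefix of $w_2$ consisting of the $y$-components of the product steps among $s_1, \dots, s_k$. The inverse map scans $w_1$ left-to-right and reads off successive letters of $w_2$ to complete each $\cV$-flagged step into a product step.

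Translation to generating functions. Let $\nu_\cV(w_1)$ denote the number of $\cV$-flagged steps in $w_1$. Writing $x(\cdot)$ and $y(\cdot)$ for endpoints, the bijection gives
\begin{equation*}
Q(x,y;t) \;=\; \sum_{w_1} x^{x(w_1)} t^{|w_1|} \sum_{\substack{w_2 \text{ on } \ns \\ |w_2| = \nu_\cV(w_1)}} y^{y(w_2)}.
\end{equation*}
The inner sum is exactly $[v^{\nu_\cV(w_1)}]\, C_2(y;v)$, while by definition $C_1(x,v;t) = \sum_{w_1} x^{x(w_1)} v^{\nu_\cV(w_1)} t^{|w_1|}$. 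Summing over $w_1$ and recognizing the Hadamard product in $v$ yields $Q(x,y;t) = (C_1 \odot_v C_2)\big|_{v=1}$. The specialization $v=1$ is harmless since $[v^j] C_1$ is divisible by $t^j$, so only finitely many $v$-coefficients contribute to any given power of $t$.

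Closure step. The series $C_1(x,v;t)$ is a one-dimensional walk generating function with step polynomial $U(x) + v\, V(x)$, the $\cV$-weights lying in the field $\cs(v)$. By Proposition~\ref{prop:dim1-standard} it is algebraic, and so D-finite in each of $x$, $v$, $t$. Similarly $C_2(y;v)$ is algebraic, hence D-finite in $y$ and $v$. D-finiteness is preserved under Hadamard product in one variable (Lipshitz~\cite{lipshitz-df}) and under specialization of $v$ at a regular point, giving the D-finiteness of $Q(x,y;t)$. The main point needing care is the bijection in the edge case $\cU \cap \cV \neq \emptyset$, which is precisely why the multiset convention is hard-wired into the definition of $C_1$: without it, a horizontal step of $w_1$ would fail to determine whether it consumed a letter of $w_2$, and the Hadamard product would overcount.
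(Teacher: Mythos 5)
Your proof is correct and takes the same route the paper does—project the quadrant walk onto the $x$-axis to obtain $w_1$, decorate each $\cV$-flagged step with the next letter of a vertical $\cT$-walk $w_2$, translate the bijection into a Hadamard product in $v$, and close with algebraicity of the one-dimensional factors and Lipshitz's theorem—though you spell out the details the paper delegates to the reference on Hadamard models with small steps. Your handling of the multiset $\cU\cup\cV$ and of the $v=1$ specialization fills in exactly the points the paper leaves implicit.
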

 \begin{proof}
 The proof is the same as
   in~\cite[Sec.~5]{BoBoKaMe16}, but generalized (in a harmless
   fashion) to walks with arbitrary steps.  It goes by projecting quadrant walks along the $x$-axis, and
   ``decorating''  steps {of $\cV$} in this 1D walk with steps of a
   ``vertical'' walk with steps in $\cT$; we omit the details. The
   Hadamard product of algebraic (and in fact, of D-finite) series is
   known to be D-finite~\cite{lipshitz-diag}.
 \end{proof}

 The approach of this paper works systematically in the Hadamard case, and
provides the solution as the positive part of an algebraic (sometimes
rational) series, often more explicitly than the above solution. In the case
of small steps, 16 of the 19 models solvable by the method of this paper (the
leftmost branch in Figure~\ref{fig:class-2D}) are Hadamard. The three
remaining ones are shown below.

\smallskip
\begin{center}
  \begin{tabular}{ccc}
     $ \diagr{N,SE,W}$  & \hskip 4mm $  \diagr{SE,N,E,S,W,NW}$
  & \hskip 4mm$\diagr{E,W,NW,SE} $
  \end{tabular}
\end{center}
\smallskip

Consider a Hadamard model $\cS$. Let $-m$ (resp. $M$) be the valuation (resp.
degree) of $S(x,y)$ in $x$, and write similarly $-m'$ and $M'$ for the
valuation and degree in $y$. In other words, $-m$ (resp. $M$) is the smallest
(resp. largest) move in the $x$-direction, and similarly for $m'$ and $M'$. We
assume $m, m'\ge 0$ and $M, M' >0$. The solution given below has strong
analogies with the 1-dimensional case of Proposition~\ref{prop:dim1-alg}.

\begin{Proposition}\label{prop:hadamard} 
The equation $S(x,y)=S(X,y)$, solved for $X$, admits $m+M$ solutions
(including $x$ itself), which can be seen as Puiseux series in $\bx$
with coefficients in an algebraic closure of $\qs(y)$ (below we 
take Puiseux series in {$\by$}). We denote them
by $x_0(y), \ldots, x_{m+M-1}(y)$, with $x_0(y)=x$. Exactly $m$ of
them, say  $x_1(y), \ldots, x_m(y)$, do not involve positive powers of
$x$. 

The equation $S(x,y)=S(x,Y)$, now solved for $Y$, reads $T(y)=T(Y)$. It admits $m'+M'$ solutions (including $y$ itself), which can be seen as Puiseux series in $\by$ with coefficients in  $\cs$. We denote them by $y_0, \ldots, y_{m'+M'-1}$, with $y_0=y$. Exactly $m'$ of them, say $y_1, \ldots, y_{m'}$, do not involve positive powers of $y$.

The orbit of $(x,y)$ consists of all pairs $(x_i,y_j)$, for $i\in\llbracket 0, m+M-1\rrbracket$ and $j\in\llbracket 0, m'+M'-1\rrbracket$. 

The series $Q(x,y)$ reads
\begin{equation} \label{Q-Hadamard1}
Q(x,y)= [x^\ge y^{\ge}]\frac{\prod_{i=1}^m(1-\bx x_i(y))\prod_{j=1}^{m'}(1-\by y_j)}{K(x,y)},
\end{equation}
where the right-hand side is expanded first in powers of $t$, then
$\bx$, and finally $ \by$. The extraction of the non-negative part in
$x$ can be done explicitly, {and yields}:
\begin{equation} \label{Q-Hadamard2}
Q(x,y)= [ y^{\ge}]\frac{\prod_{i=1}^m(1-\bx
  X_i(y))\prod_{j=1}^{m'}(1-\by y_j)}{K(x,y)} = -[ y^{\ge}]
\frac{\prod_{j=1}^{m'}(1-\by y_j)}{tB_M(y) \prod_{i=m+1}^{m+M}(x-X_i)},
\end{equation}
where $X_1(y), \ldots, X_m(y)$ are the roots (in $x$) of $1-tS(x,y)$,
seen as Puiseux series in $t$ {with coefficients in the
  algebraic closure of $\qs(y)$}, that are finite at $t=0$, and $X_{m+1},
\ldots, X_{m+M}$ are the other ones. The polynomial $B_M(y)$ is the
coefficient of $x^M$ in $S(x,y)$.

If $M=1$, then the derivative of $S(x,y)$ with respect to $x$ factors as
\[
S_x(x,y)= B_1(y) \prod_{i=1}^m (1-\bx x_i(y)).
\]
Similarly, if
$M'=1$, then 
\[
T_y(y)= \prod_{j=1}^{m'} (1-\by y_j).
\]
This simplifies the above expressions. In particular, when all forward steps are small ($M=M'=1$), we can write $Q(x,y)$ as the non-negative part of a simple rational function:
\begin{equation}\label{M=Mp=1}
Q(x,y) =   [x^{\ge}y^{\ge}]
\frac{S_x(x,y)T_y(y)}{B_1(y)K(x,y)}.
\end{equation}
\end{Proposition}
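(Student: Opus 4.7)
First, I would establish the orbit. The Hadamard form makes both orbit equations one-dimensional: the equation $S(X,y)=S(x,y)$ in $X$, after clearing by $X^m$, becomes a polynomial of degree $m+M$ over $\overline{\qs(y)}[\bx]$, whose $m$ roots without positive power of $x$ (the $x_i(y)$'s) are produced by the Newton-polygon argument from the proof of Proposition~\ref{prop:dim1-alg}. The equation $S(x,Y)=S(x,y)$ collapses to $T(Y)=T(y)$, independent of $x$, giving the $y_j$'s. Since $S(x_i(y),y)$ depends on $y$ only through $T(y)$, and since $T(y_j)=T(y)$, one has $S(x_i(y),y_j)=S(x,y)$, and therefore $K(x_i(y),y_j)=K(x,y)$ at every such point. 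Moreover a flip in either coordinate sends $(x_i(y),y_j)$ to another pair of the same form, so the orbit is exactly the rectangle $\{(x_i(y),y_j):0\le i<m+M,\,0\le j<m'+M'\}$.

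Next, I would form a section-free equation by a tensor-product Lagrange combination. Set $\alpha_i(y)=\frac{x_i(y)^m}{\prod_{0\le k\le m,\,k\neq i}(x_i(y)-x_k(y))}$ and define $\beta_j$ analogously from the $y_k$'s. Substituting the functional equation~\eqref{eq:quadrant} at each orbit point $(x_i(y),y_j)$, pulling out the common kernel using the invariance just noted, and forming $K(x,y)\sum_{i=0}^m\sum_{j=0}^{m'}\alpha_i(y)\beta_j\,Q(x_i(y),y_j)$, the constant $1$ appears with weight $(\sum_i\alpha_i)(\sum_j\beta_j)=1$ by Lemma~\ref{lem:Lagrange}. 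Each section $Q_{i',-}(y_j)$ enters the $i$-sum weighted by $x_i(y)^{k+i'}$ for some $k+i'\in[-m,-1]$, and Lemma~\ref{lem:Lagrange} forces $\sum_i\alpha_i(y)x_i(y)^d=0$ for $d\in[-m,-1]$, killing these terms; the sections $Q_{-,j'}(x_i(y))$ and the corner sections $Q_{i',j'}$ vanish analogously in the $j$-sum. This yields the section-free identity
\[
K(x,y)\sum_{i=0}^m\sum_{j=0}^{m'}\alpha_i(y)\beta_j\,Q(x_i(y),y_j)=1.
\]

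Finally, I would extract $Q(x,y)$ and deduce the remaining forms. Simplifying $\alpha_0(y)\beta_0=[\prod_{i=1}^m(1-\bx x_i(y))\prod_{j=1}^{m'}(1-\by y_j)]^{-1}$ and dividing through, the identity becomes $Q(x,y)+E(x,y)=R(x,y)$, where $R$ is the rational function of~\eqref{Q-Hadamard1} and $E$ collects the $(i,j)\neq(0,0)$ correction terms. To close the argument I would show $[x^\ge y^\ge]E=0$ by splitting $E$ into the three packets $(i\ge1,\,j=0)$, $(i=0,\,j\ge1)$, and $(i\ge1,\,j\ge1)$, and running the Vandermonde/antisymmetry manipulation of~\eqref{vdm} in the appropriate variable(s): the $x_i(y)$'s for $i\ge 1$ contribute no positive power of $x$, the $y_j$'s for $j\ge 1$ contribute no positive power of $y$, so each packet produces only strictly negative powers in at least one of the two variables. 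The form~\eqref{Q-Hadamard2} then follows by performing $[x^\ge]$ first and invoking Proposition~\ref{prop:dim1-standard} (applied to the walk on $\ns$ with step polynomial $S(x,y)$, treating $y$ as a parameter) to rewrite the inner extraction via the kernel roots $X_i(y)$. The special-case factorizations of $S_x$ (when $M=1$) and $T_y$ (when $M'=1$) are direct instances of~\eqref{S-prime} applied to $S$ and to $T$; combining them yields~\eqref{M=Mp=1}. The hardest part will be the two-dimensional extraction step, since the Vandermonde bookkeeping from~\eqref{vdm} has to be run in both variables on a tensor-product structure, which is notationally heavier but introduces no new ideas.
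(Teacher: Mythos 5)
Your proposal is correct and follows essentially the same route as the paper's proof: the tensor-product Lagrange combination (your $\alpha_i\beta_j$ weights are exactly the paper's $\tfrac{x_i^m y_j^{m'}}{\prod_{k\neq i}(x_i-x_k)\prod_{\ell\neq j}(y_j-y_\ell)}$), the use of Lemma~\ref{lem:Lagrange} to kill the sections, and the Vandermonde/antisymmetry argument of~\eqref{vdm} to see that the correction terms are $x$-negative or $y$-negative. The paper carries out the extraction in two explicit stages (first $[x^\ge]$, eliminating the $i\ge 1$ terms, then $[y^\ge]$, eliminating the remaining $j\ge 1$ terms), which amounts to the same bookkeeping as your three-packet split; and it verifies orbit completeness by counting adjacencies via Lemma~\ref{lem:ind} rather than the informal ``flip'' argument, but the substance is identical.
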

\begin{proof}
  The statements dealing with roots are in essence one-dimensional, and follow from Proposition~\ref{prop:dim1-alg} since we allowed weights in the previous section.

  We next want to build the orbit of $(x,y)$. By definition of the $x_i$'s and $y_j$'s we have $(x_i,y) \approx (x,y) \approx (x,y_j)$ for all $i$ and $j$. Now
  \begin{align*}
  S(x_i,y_j)&=U(x_i)+V(x_i)T(y_j) 
  \\
  &= U(x_i)+V(x_i) T(y) \ \  \hbox{ by definition of } y_j
  \\
  &= S(x_i,y). 
  \end{align*}
  Thus $(x_i,y_j)\approx (x_i,y)$, and all pairs $(x_i, y_j)$ are in
  the orbit. In this collection, every element $(x_i,y_j)$ is
  1-adjacent to $m+M-1$ other elements, and 2-adjacent to $m'+M'-1$
  other elements, hence the orbit is complete (Lemma~\ref{lem:ind}).

  The functional equation has the following general form (see~\eqref{eq:quadrant}):
  \[
  K(x,y)Q(x,y)=1- \sum_{k=1}^m \bx^k R_k(y)- \sum_{\ell=1}^{m'}
{ \by^\ell }S_\ell(x),
  \]
  for some series $R_k(y)$ and $S_\ell(x)$. A similar equation holds with $(x,y)$ replaced by any element $(x_i,y_j)$ of the orbit. The fact that the orbit is a Cartesian product allows us to construct a section-free equation by mimicking the argument that led to~\eqref{1D-lc-bis}:
  \[
  K(x,y) \left( \sum_{i=0}^m \sum_{j=0}^{m'}\frac{x_i^m
    y_j^{m'} \, Q(x_i, y_j)}{\prod_{0\le k \not= i \le m} (x_i-x_k)
  \prod_{0\le \ell \not= j \le m'} (y_j-y_\ell)} \right) =1.
  \]
  Equivalently, after isolating  $Q(x_0,y_0)=Q(x,y)$:
\begin{multline*}
  Q(x,y)- \by \sum_{j=1}^{m'} y_j^{m'}Q(x,y_j) \prod_{1\le \ell \not =
    j \le m'} \frac{1-\by y_\ell}{y_j-y_\ell}
-\bx \sum_{i=1}^m x_i^m Q(x_i,y) \prod_{1\le k\not = i \le m} \frac{1-\bx x_k}{x_i-x_k}
\\
+\bx \by \sum_{i=1}^m\sum_{j=1}^{m'} x_i^m y_j^{m'}Q(x_i,y_j) 
\prod_{1\le k\not = i \le m} \frac{1-\bx x_k}{x_i-x_k}
\prod_{1\le \ell\not = j\le m'} \frac{1-\by
  y_\ell}{y_j-y_\ell}
\\
= \frac{\prod_{i=1}^m (1-\bx x_i)\prod_{j=1}^{m'} (1-\by y_j)}{K(x,y)}.
\end{multline*}
We now expand the coefficient of $t^n$ in this identity in powers of
$\bx$ (with coefficients in the field of Puiseux series in 
{$\by$}), and extract the non-negative powers of
$x$.  The coefficients of the first two terms on the first line
{(those involving $Q(x,y)$ and $Q(x,y_j)$)} are clearly
non-negative in $x$. By recycling our analysis of~\eqref{vdm}, we see
that the coefficient of $t^n$ in the third term {(involving
  $Q(x_i,y)$)} 
is a \emm polynomial, in $y$, $\bx$, $x_1, \ldots, x_m$, multiplied by $\bx$, and thus only involves negative powers of $x$ and  does not contribute. A similar argument shows that the second line does not contribute either. We are thus left with
\[  
Q(x,y)- \by \sum_{j=1}^{m'} y_j^{m'}Q(x,y_j) \prod_{\ell \not = j \in \llbracket1, m'\rrbracket} \frac{1-\by y_\ell}{y_j-y_\ell}
=
[x^\ge] \frac{\prod_{i=1}^m (1-\bx x_i)\prod_{j=1}^{m'} (1-\by y_j)}{K(x,y)}.
\]
 The symmetry argument applied earlier to~\eqref{vdm} shows that the
 sum over $j$ is a series in $t$ whose coefficients are polynomials in
 $x$, $\by, y_1, \ldots, y_m$. Hence a final expansion  in powers of
 $\by$, followed by the extraction of  non-negative powers of $y$
 gives 
 the
 {first expression~\eqref{Q-Hadamard1} of $Q(x,y)$}. The second
 one, {that is~\eqref{Q-Hadamard2}}, follows by combining the one-dimensional results of Propositions~\ref{prop:dim1-standard} and~\ref{prop:dim1-alg}. Indeed, {Proposition~\ref{prop:dim1-alg}} shows that
\[
[x^\ge ]\frac{\prod_{i=1}^m(1-\bx x_i(y))}{K(x,y)}
\]
counts walks with steps in $\cS$ confined to the half-plane $\{(i,j): i\ge 0\}$, and {Proposition~\ref{prop:dim1-standard}} gives an alternative expression for this series.

The rest of the proof follows the same lines as the end of the proof of
Proposition~\ref{prop:dim1-alg} (see in particular~\eqref{S-prime}).
\end{proof}

\medskip\noindent
{\bf Example: a Hadamard model with small forward steps.} 
Take $\cS=\{10,\bar11, \bar1\bar2\}$. The step polynomial is 
\[
S(x,y)= x+\bx (y+\by^2)=U(x)+V(x)T(y)
\]
with $U(x)=x$, $V(x)=\bx$ and $T(y)=y+\by^2$, so this is a Hadamard model. Moreover, the forward steps are small, so that the simple formula~\eqref{M=Mp=1} holds:
\[
Q(x,y)
= [x^{\ge} y^{\ge}]\frac{(1-\bx^2 (y+\by^2))(1-2\by^3)}{1-t(x+\bx (y+\by^2))}.
\]
The number of walks of length $n$ ending at $(i,j)$ is non-zero if and only if $n=i+2j+6m$ for some $m$,  in which case 
\begin{equation}\label{hadamard-sol} 
q(i,j;n)=\frac{(i+1)(j+1) n!}{m! (2m+j+1)! (3m+i+j+1)!}.
\end{equation}
\qee

\medskip
\noindent{\bf Example: a Hadamard model with a large forward step.} 
Let us now  reverse  the above  steps. The step polynomial becomes
\[
S(x,y)=\bx +x(\by+y^ 2)
\] and
is of course still Hadamard. With the notation of Proposition~\ref{prop:hadamard},
$m=m'=1$, 
\[
x_1(y)=\frac{\bx}{\by+y^2} \qquad \hbox{and} \qquad y_{1}=
 \frac{-1+  \sqrt{4\by^3+1}}{2\by}.
\]
Indeed $y_1$ is a power series in $\by$, while {its conjugate root $y_2$} contains a term
$-y$ in its expansion. {The two expressions of  Proposition}~\ref{prop:hadamard} read:
\[
Q(x,y)= 
[x^\ge y^\ge ]\frac{(1-\bx x_1(y))(1-\by y_1)}{K(x,y)} =- [ y^\ge ]
\frac{\bx(1-\by y_1)}{t(\by + y^2) (1-\bx X_2)},
\]
with 
\[
X_2= \frac{1+\sqrt{1-4t^2(\by+y^2)}}{2t(\by+y^2)}.
\]
As before,  we expand the right-hand side first in $t$, then $\bx$, then  $\by$.

%%%%%%%%%%%%%%%%%%%%%%%%%%%%%%%%%%%%%%%%%%%%%%%%%%%%%%%%%%%%%%
\section{Quadrant walks with steps in \texorpdfstring{$\boldsymbol{\{-2,-1,0, 1\}^2}$}{\{-2,-1,0,1\}} }
\label{sec:m21}
%%%%%%%%%%%%%%%%%%%%%%%%%%%%%%%%%%%%%%%%%%%%%%%%%%%%%%%%%%%%%%
In this section, we explore systematically all models obtained by taking $\cS$
in $\{-2,-1,0, 1\}^2 \setminus{(0,0)}$, with the (ultimate) objective of
reaching a classification similar to that of quadrant walks with small steps
(Figure~\ref{fig:class-2D}). Our results are summarized in
Figure~\ref{classificationm2}. In Section~\ref{sec:final} we discuss the
classification of orbits (not of \gfs!) for models in $\{-1,0, 1,2\}^2
\setminus{(0,0)}$.

\begin{figure}[htb]
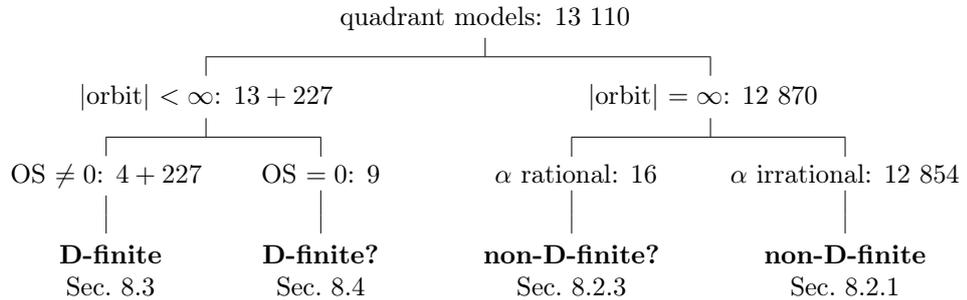

\begin{center}\edgeheight=7pt\nodeskip=2.23em\leavevmode
 \tree{quadrant models: 13 110}
{
\tree{|orbit| $<\infty$: $13 + {227}$}
{
\tree{OS $\not = 0$: ${4 + {227}}$}{\tree{
\begin{tabular}{c} \textbf{D-finite}\\   
Sec.~\ref{sec:works}\end{tabular}}{}}
\tree{OS $=0$: {9}}{\tree{
\begin{tabular}{c} 
\textbf{D-finite?}\\
Sec.~\ref{sec:interesting}
\end{tabular}
}{}}}
\tree{\hskip -2mm|orbit| $=\infty$:
{12 870}}{
\tree{ $\alpha$ rational: 16}{\tree{
\begin{tabular}{c}\textbf{non-D-finite?}\\
Sec.~\ref{sec:embarassing}\end{tabular}
}{}}
\tree{$\alpha$ irrational: 12 854}
{\tree{
\begin{tabular}{c}\textbf{non-D-finite}\\
Sec.~\ref{subsec:exponent}
\end{tabular}
}{}}
}}

\end{center}
\caption{Partial classification of quadrant walks with steps in $\{-2,-1,0,1\}^2$, when at least one large backward step is allowed. The approach of this paper solves the 231 models on the leftmost branch, 
{including} 227 Hadamard models.}
\label{classificationm2}
\end{figure}

%==========================================
\subsection{The number of relevant models}
%==========================================
We first proceed as in~\cite[Sec.~2]{BoMi10} in order to count, among the
$2^{15}$ possible models {(Figure~\ref{fig:15})}, those that are really
distinct and relevant. Clearly, we do not want to consider separately two
models that only differ by an $x/y$-symmetry, as such models are isomorphic.
Moreover, for certain models, forcing walks to lie in some half-plane
automatically forces them to {remain in} the first quadrant. This happens, for
instance, for $\cS=\{\nearrow, \uparrow, \swarrow\}$ and the right half-plane.
Half-plane models are essentially 1-dimensional and thus have an algebraic
\gf, which can be determined in an automatic fashion
(Proposition~\ref{prop:dim1-standard}).

\begin{figure}[htb]
  \centering
 \includegraphics{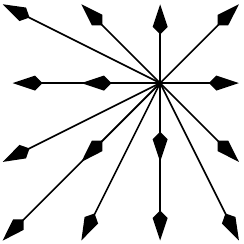}
  \caption{The 15 allowed steps.}
  \label{fig:15}
\end{figure}

Using the same arguments as in~\cite[Sec.~2]{BoMi10}, we first determine the
number of step sets $\cS$ that contain at least an $x$-positive, an
$x$-negative, a $y$-positive and a $y$-negative step. More precisely, we count
such sets by their cardinality. An inclusion-exclusion argument gives their
generating polynomial as:
\[
P_1(z)=
(1+z)^{15}-2(1+z)^{11}-2(1+z)^7+2(1+z)^3+(1+z)^8+2(1+z)^5+(1+z)^3-2(1+z)^2-2(1+z)+1.
\]
The term $(1+z)^{15}$ counts all step sets, while $(1+z)^{11}$
  counts those that contain no $x$-positive step, $(1+z)^7$ those that
  contain no $x$-negative step, $(1+z)^3$ those that contain no
  $x$-positive nor $x$-negative step, and so on. We refer
  to~\cite[Sec.~2]{BoMi10} for a detailed argument.
Then, we must exclude sets in which no step belongs to $\ns^2$. This leaves fewer step sets, counted by:
\[
P_2(z)=P_1(z) - \big((1+z)^{12} -2(1+z)^{10} +(1+z)^8\big).
\]
We also do not wish to consider step sets such that all walks confined to the
right half plane $x \ge 0$ are automatically quadrant walks. As in the case of
small steps, this means that all steps $(i,j)$ of $\cS$ satisfy $j\ge i$. That
is, we have an upper diagonal model. The generating polynomial of such sets,
satisfying the above conditions (steps in all directions, at least one step in
$\ns^2$) is
\[
z\left( (1+z)^8-(1+z) ^5\right),
\]
where the factor $z$ accounts for the step $(1,1)$, which is necessarily in
such a set. Symmetrically, we need to exclude lower diagonal models, and avoid
excluding twice the models that are both upper and lower diagonal. We are left
with a collection of step sets counted by
\[
P_3(z)=P_2(z)-2z \left( (1+z)^8-(1+z) ^5\right) + z(2z+z^2).
\]

Finally, if two models differ only by a diagonal symmetry, we do not
want to consider them both. We thus have to count separately the
models counted by $P_3$ that have an $x/y$ symmetry. {Mimicking 
 the above argument, and including the symmetry constraint,} gives:
\[
P_1^\sym(z)= (1+z)^3(1+z^2)^6 -(1+z)^2(1+z^2)^3 -(1+z)(1+z^2) +1,
\]
\[
P_2^\sym(z)= P_1^\sym(z)- \big( (1+z) ^2(1+z^2)^5-(1+z)^2(1+z^2)^3\big),
\]
and 
\[P_3^\sym= P_2^\sym- z(2z+z^2).
\]
We have thus restricted the collection of models that we have to study to 13 189 models, with generating polynomial
\begin{multline*}
  \frac 1 2 \left(P_3(z)+P_3^\sym(z)\right)=
z^{15}+9 z^{14}+57 z^{13}+236 z^{12}+691 z^{11}+1481 z^{10}+2374
z^9+2872 z^8\\
+2610 z^7+1749 z^6+826 z^5+248 z^4+35 z^3.
\end{multline*}
Among these, we know from~\cite{BoMi10} that those with small steps are counted by
\[
7\,{z}^{3}+23\,{z}^{4}+27\,{z}^{5}+16\,{z}^{6}+5\,{z}^{7}+{z}^{8},
\]
and we are thus left with 13 110 models with at least one large backward step,
counted by 
\[
z^{15}+9 z^{14}+57 z^{13}+236 z^{12}+691 z^{11}+1481 z^{10}+2374 z^9+2871 z^8+2605 z^7+1733 z^6+799 z^5+225 z^4+28 z^3.
\]
Note that no model in our  collection is included in a
half-plane. {This will allow us to apply Theorem~\ref{thm:theta} systematically.}

%====================================================
\subsection{The size of the orbit}
\label{sec:size}
%======================================================

%========================================================
\subsubsection{The excursion exponent}
\label{subsec:exponent}
%========================================================
Consider a model $\cS$ in our collection. Recall that
if the quantity $c$ defined in  Theorem~\ref{thm:theta} cannot be written as $\cos \theta$ with $\theta\in \pi
\qs$, then the orbit of $\cS$ is infinite and $Q(x,y;t)$ is not D-finite.
In order to decide if $c$ is of the requested form, we apply the
following procedure, borrowed from~\cite{BoRaSa14}.
\begin{enumerate}\itemsep=1em
\item  Compute a polynomial $P(C)$ that admits  $c$ as a root. This is done by eliminating the variables $x,y$ and $u$ from the polynomial system comprised of (the numerators of):
\[
  S_x(x,y), \quad S_y(x,y), \quad C^2-\frac{S_{xy}(x,y)}{S_{xx}(x,y)^2S_{yy}(x,y)^2}, \quad 1-uxy .
\]
The final equation 
forces $x,y \neq 0$.  This is done via a Gr{\"o}bner basis computation.
\item Identify the irreducible factor $I(C)$ of $P(C)$ which admits
  $c$ as a root.  To do this it is sufficient to determine the
  critical pair $(a,b)$, and thus $c$, to sufficient numerical precision.
\item Decide whether $c$ can be written as $\cos \theta$, with $\theta
  \in \pi \qs$. Equivalently, decide if the solutions of $2c=z+1/z$
  are roots of unity. 
To do this it is sufficient to 
{examine whether} the
  polynomial $R(z) := z^{\deg I} I(\frac{z+1/z}{2})$ has
  cyclotomic factors.

\end{enumerate}
The polynomials $R(z)$ which are constructed by running this algorithm on the
13\,110 step sets {in our collection} are all irreducible and have degree less
than 72. Thus, as the degree of the {$k$th} cyclotomic polynomial is \[
\phi(k) > \frac{k}{e^{\gamma}\log\log k + \frac{3}{\log \log k}},\] where
$\gamma \approx 0.577$ is Euler's constant~\cite[Thm.~8.8.7]{bach-shallit},
to prove that the excursion exponent is irrational it is sufficient to show
that $R({z})$ is not divisible by any of the first 349 cyclotomic polynomials;
{constructing cyclotomic polynomials is a routine task in computer
algebra~\cite{ArMo11}}.

After performing this filtering step we conclude that 12\,854 models have an
irrational excursion exponent, and thus an infinite orbit and a non-D-finite
\gf. They form the rightmost branch in Figure~\ref{classificationm2}.

%========================================================
\subsubsection{Detecting finite orbits}
%========================================================
We are thus left with 256 step sets, each of which having a rational exponent
$\alpha$. Among them we find 227 Hadamard models.
Proposition~\ref{prop:hadamard} tells us that they have a finite orbit, of
cardinality 6 or 9 depending on the sizes of the steps
{(Figure~\ref{fig:hadamardOrbits})}. For each of them the excursion exponent
is found to be $\alpha=-3$.

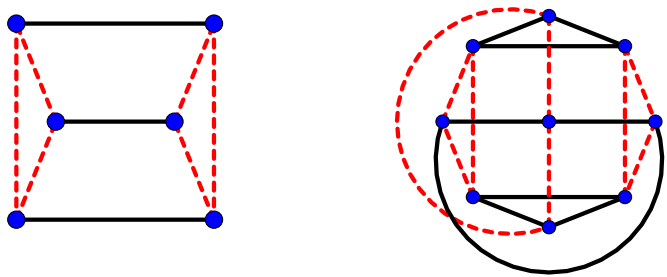
\begin{figure}[htb]
\centering
\hspace{0.9in}
\begin{minipage}{0.45\linewidth}
\begin{tikzpicture}[line cap=round,line join=round,>=triangle 45,x=1cm,y=1cm, scale=1.3]
\clip(-1.5,-1.5) rectangle (1.5,1.5);
\draw [line width=1.6pt,color=red, dashed] (-1,1)-- (-1,-1);
\draw [line width=1.6pt,color=red, dashed] (-1,-1)-- (-0.6,0);
\draw [line width=1.6pt,color=red, dashed] (-0.6,0)-- (-1,1);
\draw [line width=1.6pt] (-0.6,0)-- (0.6,0);
\draw [line width=1.6pt,color=red, dashed] (0.6,0)-- (1,1);
\draw [line width=1.6pt,color=red, dashed] (1,1)-- (1,-1);
\draw [line width=1.6pt,color=red, dashed] (1,-1)-- (0.6,0);
\draw [line width=1.6pt] (-1,-1)-- (1,-1);
\draw [line width=1.6pt] (1,1)-- (-1,1);
\begin{scriptsize}\draw [fill=blue] (-1,1) circle (2.5pt);
\draw [fill=blue] (1,1) circle (2.5pt);
\draw [fill=blue] (1,-1) circle (2.5pt);
\draw [fill=blue] (-1,-1) circle (2.5pt);
\draw [fill=blue] (-0.6,0) circle (2.5pt);
\draw [fill=blue] (0.6,0) circle (2.5pt);
\end{scriptsize}

\end{tikzpicture}
\end{minipage}
\hspace{-0.5in}
\begin{minipage}{0.45\linewidth}
\begin{tikzpicture}[line cap=round,line join=round,>=triangle 45,x=1cm,y=1cm]
\clip(-2.1,-2.1) rectangle (2.1,2.1);
\draw [line width=1.6pt] (-1,1)-- (0,1.4); 
\draw [line width=1.6pt] (0,1.4)-- (1,1);
\draw [line width=1.6pt] (1,1)-- (-1,1);
\draw [line width=1.6pt] (-1,-1)-- (0,-1.4);
\draw [line width=1.6pt] (0,-1.4)-- (1,-1);
\draw [line width=1.6pt] (1,-1)-- (-1,-1);
\draw [line width=1.6pt,color=red, dashed] (-1,-1)-- (-1.4,0);
\draw [line width=1.6pt,color=red, dashed] (-1.4,0)-- (-1,1);
\draw [line width=1.6pt,color=red, dashed] (-1,1)-- (-1,-1);
\draw [line width=1.6pt,color=red, dashed] (1,-1)-- (1,1);
\draw [line width=1.6pt,color=red, dashed] (1,1)-- (1.4,0);
\draw [line width=1.6pt,color=red, dashed] (1.4,0)-- (1,-1);
\draw [line width=1.6pt,color=red, dashed] (0,-1.4)-- (0,0);
\draw [line width=1.6pt] (-1.4,0)-- (0,0);
\draw [line width=1.6pt,color=red, dashed] (0,1.4)-- (0,0);
\draw [line width=1.6pt] (1.4,0)-- (0,0);
\draw [shift={(-0.51,0)},line width=1.6pt,color=red, dashed]  plot[domain=1.2214519287784171:5.061733378401169,variable=\t]({1*1.49*cos(\t r)+0*1.49*sin(\t r)},{0*1.49*cos(\t r)+1*1.49*sin(\t r)});
\draw [shift={(0,-0.51)},line width=1.6pt]  plot[domain=-3.4909370516062723:0.34934439801647943,variable=\t]({1*1.49*cos(\t r)+0*1.49*sin(\t r)},{0*1.49*cos(\t r)+1*1.49*sin(\t r)});
\begin{scriptsize}
\draw [fill=blue] (0,1.4) circle (2.5pt);
\draw [fill=blue] (0,-1.4) circle (2.5pt);
\draw [fill=blue] (1.4,0) circle (2.5pt);
\draw [fill=blue] (-1.4,0) circle (2.5pt);
\draw [fill=blue] (-1,1) circle (2.5pt);
\draw [fill=blue] (0,0) circle (2.5pt);
\draw [fill=blue] (1,1) circle (2.5pt);
\draw [fill=blue] (1,-1) circle (2.5pt);
\draw [fill=blue] (-1,-1) circle (2.5pt);
\end{scriptsize}\end{tikzpicture}
\end{minipage}

\caption{The possible orbits for two-dimensional Hadamard models with
  long steps in $\{-2,-1,0,1\}^2$, depending on whether there are
  steps with $-2$ in only one coordinate (left) or both coordinates
  (right). {The convention for dashed and solid edges is the
    same as in Figure~\ref{fig:orbit-quadrangulations}.}
}
\label{fig:hadamardOrbits}
\end{figure}

There are 29 models remaining.
We apply to them   the semi-algorithm of Section~\ref{sec:algo}, 
{which detects} 13 more  models with a finite orbit,  of cardinality
12 or 18. They are listed in Table~\ref{tab:finite}.
 Three distinct orbit structures arise, shown in
Figure~\ref{fig:orbits}.

\newcolumntype{C}{ >{\centering\arraybackslash} m{2cm} }
\begin{table}[htb]
  \centering
\begin{tabular}{c@{}C@{}cc|c@{}C@{}cc|c@{}C@{}cc}
$g$&steps & orbit &  $\alpha$&$g$& steps & orbit &  $\alpha$
& $g$&steps & orbit & $\alpha$  \\
&&&&&&&\\
1&$\diag{-21,-10,0-1,10}$&$O_{12}$ &$-4$ &
2&$\diag{-2-1,-1-2,01,10}$  &$\tilde O_{12}$  & $-5/2$ 
&1& $\diag{-2-1,-10,01,10}$ & $O_{12}$ & $-5/2$
\\
2&$\diag{-21,-1-1,01,1-1}$&$O_{12}$& $-4$ & 
3& $\diag{-20,-1-1,0-2,11}$  &$\tilde O_{12}$ &  $-5/2$
&2&$\diag{-2-1,-11,0-1,11}$ & $ O_{12}$  &  $-5/2$
\\
2&  $\diag{-21,-1-1,-10,01,1-1}$&$O_{12}$& $-4$ & 
2&$\diag{-2-1,-1-2,-1-1,01,10} $  &$\tilde O_{12}$  &  $-5/2$
&2&$\diag{-2-1,-10,-11,0-1,11}$ &  $ O_{12}$  & $-5/2$ 
\\
2&$\diag{-20,-21,1-1,10}$&$O_{18}$& -4 &
3& $ \diag{-20,-1-1,-10,0-2,0-1,11} $  &$\tilde O_{12}$  & $-5/2$
&2& $\diag{-2-1,-20,10,11}$ &  $ O_{18}$  & $-7/3$
\\
&&&& 4& $  \diag{-2-1,-20,-1-2,-1-1,0-2,01,10,11}$ &$\tilde O_{12}$  & $-5/2$
\\
\end{tabular}
\vskip 4mm
   \caption{The 13 non-Hadamard models with a finite orbit. Our method
   solves the ones on the left, proving that their \gf\ is D-finite
   (and transcendental). We conjecture that the 9 others are D-finite
   too, two of them being  possibly algebraic {(the second and
     third in the last column)}. {We also give the excursion exponent
     $\alpha$, and the genus $g$ of
     the curve $K(x,y)$, which is $0$ or $1$ for small step models.}
  }
    \label{tab:finite}
  \end{table}

\begin{figure}[htb]
\centering 
\begin{minipage}{0.28\linewidth}
\begin{tikzpicture}[line cap=round,line join=round,>=triangle 45,x=1.5cm,y=1.3cm, scale=0.8]
\clip(-1.6,-1.1) rectangle (1.6,2.1);
\draw [line width=1.5pt,dashed,color=red] (-0.5,1)-- (0.5,1);
\draw [line width=1.5pt,dashed,color=red] (0.5,1)-- (0,0);
\draw [line width=1.5pt,dashed,color=red] (0,0)-- (-0.5,1);
\draw [line width=1.5pt] (-0.5,1)-- (-1,1.5);
\draw [line width=1.5pt] (0.5,1)-- (1,1.5);
\draw [line width=1.5pt] (0,0)-- (0,-0.5);
\draw [line width=1.5pt,dashed,color=red] (-1,1.5)-- (-1,2);
\draw [line width=1.5pt,dashed,color=red] (-1,2)-- (-1.5,1);
\draw [line width=1.5pt,dashed,color=red] (-1.5,1)-- (-1,1.5);
\draw [line width=1.5pt,dashed,color=red] (1,1.5)-- (1,2);
\draw [line width=1.5pt,dashed,color=red] (1,2)-- (1.5,1);
\draw [line width=1.5pt,dashed,color=red] (1.5,1)-- (1,1.5);
\draw [line width=1.5pt,dashed,color=red] (0,-0.5)-- (-0.5,-1);
\draw [line width=1.5pt,dashed,color=red] (-0.5,-1)-- (0.5,-1);
\draw [line width=1.5pt,dashed,color=red] (0.5,-1)-- (0,-0.5);
\draw [line width=1.5pt] (-1.5,1)-- (-0.5,-1);
\draw [line width=1.5pt] (0.5,-1)-- (1.5,1);
\draw [line width=1.5pt] (1,2)-- (-1,2);
\begin{scriptsize}
\draw [fill=blue] (-0.5,1) circle (2.5pt);
\draw [fill=blue] (0.5,1) circle (2.5pt);
\draw [fill=blue] (0,0) circle (2.5pt);
\draw [fill=blue] (-1,1.5) circle (2.5pt);
\draw [fill=blue] (1,1.5) circle (2.5pt);
\draw [fill=blue] (0,-0.5) circle (2.5pt);
\draw [fill=blue] (-1,2) circle (2.5pt);
\draw [fill=blue] (-1.5,1) circle (2.5pt);
\draw [fill=blue] (1,2) circle (2.5pt);
\draw [fill=blue] (1.5,1) circle (2.5pt);
\draw [fill=blue] (-0.5,-1) circle (2.5pt);
\draw [fill=blue] (0.5,-1) circle (2.5pt);
\end{scriptsize}
\end{tikzpicture}
\end{minipage}
\qquad
\begin{minipage}{0.25\linewidth}
\begin{tikzpicture}[line cap=round,line join=round,>=triangle 45,x=0.65cm,y=0.65cm, scale=0.6]
\clip(-4.2,-4.2) rectangle (4.2,4.2);
\draw [line width=1.6pt] (-4,4)-- (-4,-4);
\draw [line width=1.6pt,color=red, dashed] (-4,-4)-- (4,-4);
\draw [line width=1.6pt] (4,-4)-- (4,4);
\draw [line width=1.6pt,color=red, dashed] (4,4)-- (-4,4);
\draw [line width=1.6pt,color=red, dashed] (-1,1)-- (-1,-1);
\draw [line width=1.6pt] (-1,-1)-- (1,-1);
\draw [line width=1.6pt,color=red, dashed] (1,-1)-- (1,1);
\draw [line width=1.6pt] (1,1)-- (-1,1);
\draw [line width=1.6pt] (-1,1)-- (0,3);
\draw [line width=1.6pt] (0,3)-- (1,1);
\draw [line width=1.6pt,color=red, dashed] (-1,1)-- (-3,0);
\draw [line width=1.6pt,color=red, dashed] (-3,0)-- (-1,-1);
\draw [line width=1.6pt,color=red, dashed] (1,1)-- (3,0);
\draw [line width=1.6pt,color=red, dashed] (3,0)-- (1,-1);
\draw [line width=1.6pt] (-1,-1)-- (0,-3);
\draw [line width=1.6pt] (0,-3)-- (1,-1);
\draw [line width=1.6pt] (-4,4)-- (-3,0);
\draw [line width=1.6pt] (-3,0)-- (-4,-4);
\draw [line width=1.6pt,color=red, dashed] (0,-3)-- (4,-4);
\draw [line width=1.6pt] (3,0)-- (4,-4);
\draw [line width=1.6pt] (3,0)-- (4,4);
\draw [line width=1.6pt,color=red, dashed] (0,3)-- (-4,4);
\draw [line width=1.6pt,color=red, dashed] (0,3)-- (4,4);
\draw [line width=1.6pt,color=red, dashed] (-4,-4)-- (0,-3);
\begin{scriptsize}\draw [fill=blue] (-4,4) circle (2.5pt);
\draw [fill=blue] (-4,-4) circle (2.5pt);
\draw [fill=blue] (4,-4) circle (2.5pt);
\draw [fill=blue] (4,4) circle (2.5pt);
\draw [fill=blue] (-1,1) circle (2.5pt);
\draw [fill=blue] (-1,-1) circle (2.5pt);
\draw [fill=blue] (1,-1) circle (2.5pt);
\draw [fill=blue] (1,1) circle (2.5pt);
\draw [fill=blue] (0,3) circle (2.5pt);
\draw [fill=blue] (-3,0) circle (2.5pt);
\draw [fill=blue] (3,0) circle (2.5pt);
\draw [fill=blue] (0,-3) circle (2.5pt);
\end{scriptsize}
\end{tikzpicture}
\end{minipage}
\qquad     
\begin{minipage}{0.25\linewidth}
\begin{tikzpicture}[line cap=round,line join=round,>=triangle 45,x=1cm,y=1cm, scale=0.6]
\clip(-3.2,-2.1) rectangle (2.7,3.8);

\draw [line width=1.6pt,color=red, dashed] (-1,-2)-- (0.5,-2);
\draw [line width=1.6pt] (0.5,-2)-- (1.799038105676658,-1.25);
\draw [line width=1.6pt,color=red, dashed] (1.799038105676658,-1.25)-- (2.549038105676658,0.049038105676657784);
\draw [line width=1.6pt] (2.549038105676658,0.049038105676657784)-- (2.549038105676658,1.5490381056766573);
\draw [line width=1.6pt,color=red, dashed] (2.549038105676658,1.5490381056766573)-- (1.799038105676659,2.848076211353315);
\draw [line width=1.6pt] (1.799038105676659,2.848076211353315)-- (0.5,3.598076211353316);
\draw [line width=1.6pt,color=red, dashed] (0.5,3.598076211353316)-- (-1,3.598076211353316);
\draw [line width=1.6pt] (-1,3.598076211353316)-- (-2.299038105676657,2.8480762113533173);
\draw [line width=1.6pt,color=red, dashed] (-2.299038105676657,2.8480762113533173)-- (-3.049038105676658,1.5490381056766584);
\draw [line width=1.6pt] (-3.049038105676658,1.5490381056766584)-- (-3.0490381056766584,0.049038105676660004);
\draw [line width=1.6pt,color=red, dashed] (-3.0490381056766584,0.049038105676660004)-- (-2.299038105676659,-1.25);
\draw [line width=1.6pt] (-2.299038105676659,-1.25)-- (-1,-2);
\draw [color=red, dashed] (-1,-2)-- (0.5,-2);
\draw [line width=1.6pt,color=red, dashed] (0.5,-2)-- (-0.25,-0.7009618943233418);
\draw [line width=1.6pt,color=red, dashed] (-0.25,-0.7009618943233418)-- (-1,-2);
\draw [color=red, dashed] (-3.0490381056766584,0.049038105676660004)-- (-2.299038105676659,-1.25);
\draw [line width=1.6pt,color=red, dashed] (-2.299038105676659,-1.25)-- (-1.5490381056766578,0.049038105676659116);
\draw [line width=1.6pt,color=red, dashed] (-1.5490381056766578,0.049038105676659116)-- (-3.0490381056766584,0.049038105676660004);
\draw [color=red, dashed] (-2.299038105676657,2.8480762113533173)-- (-3.049038105676658,1.5490381056766584);
\draw [line width=1.6pt,color=red, dashed] (-3.049038105676658,1.5490381056766584)-- (-1.5490381056766567,1.5490381056766578);
\draw [line width=1.6pt,color=red, dashed] (-1.5490381056766567,1.5490381056766578)-- (-2.299038105676657,2.8480762113533173);
\draw [color=red, dashed] (0.5,3.598076211353316)-- (-1,3.598076211353316);
\draw [line width=1.6pt,color=red, dashed] (-1,3.598076211353316)-- (-0.25,2.2990381056766584);
\draw [line width=1.6pt,color=red, dashed] (-0.25,2.2990381056766584)-- (0.5,3.598076211353316);
\draw [color=red, dashed] (2.549038105676658,1.5490381056766573)-- (1.799038105676659,2.848076211353315);
\draw [line width=1.6pt,color=red, dashed] (1.799038105676659,2.848076211353315)-- (1.0490381056766584,1.5490381056766582);
\draw [line width=1.6pt,color=red, dashed] (1.0490381056766584,1.5490381056766582)-- (2.549038105676658,1.5490381056766573);
\draw [color=red, dashed] (1.799038105676658,-1.25)-- (2.549038105676658,0.049038105676657784);
\draw [line width=1.6pt,color=red, dashed] (2.549038105676658,0.049038105676657784)-- (1.0490381056766582,0.04903810567665828);
\draw [line width=1.6pt,color=red, dashed] (1.0490381056766582,0.04903810567665828)-- (1.799038105676658,-1.25);
\draw [line width=1.6pt] (-1.5490381056766578,0.049038105676659116)-- (1.0490381056766584,1.5490381056766582);
\draw [line width=1.6pt] (-1.5490381056766567,1.5490381056766578)-- (1.0490381056766582,0.04903810567665828);
\draw [line width=1.6pt] (-0.25,-0.7009618943233418)-- (-0.25,2.2990381056766584);
\begin{scriptsize}\draw [fill=blue] (-1,-2) circle (2.5pt);
\draw [fill=blue] (0.5,-2) circle (2.5pt);
\draw [fill=blue] (1.799038105676658,-1.25) circle (2.5pt);
\draw [fill=blue] (2.549038105676658,0.049038105676657784) circle (2.5pt);
\draw [fill=blue] (2.549038105676658,1.5490381056766573) circle (2.5pt);
\draw [fill=blue] (1.799038105676659,2.848076211353315) circle (2.5pt);
\draw [fill=blue] (0.5,3.598076211353316) circle (2.5pt);
\draw [fill=blue] (-1,3.598076211353316) circle (2.5pt);
\draw [fill=blue] (-2.299038105676657,2.8480762113533173) circle (2.5pt);
\draw [fill=blue] (-3.049038105676658,1.5490381056766584) circle (2.5pt);
\draw [fill=blue] (-3.0490381056766584,0.049038105676660004) circle (2.5pt);
\draw [fill=blue] (-2.299038105676659,-1.25) circle (2.5pt);
\draw [fill=blue] (-0.25,-0.7009618943233418) circle (2.5pt);
\draw [fill=blue] (-1.5490381056766578,0.049038105676659116) circle (2.5pt);
\draw [fill=blue] (-1.5490381056766567,1.5490381056766578) circle (2.5pt);
\draw [fill=blue] (-0.25,2.2990381056766584) circle (2.5pt);
\draw [fill=blue] (1.0490381056766584,1.5490381056766582) circle (2.5pt);
\draw [fill=blue] (1.0490381056766582,0.04903810567665828) circle (2.5pt);
\end{scriptsize}\end{tikzpicture} 
\end{minipage}
\caption{The three finite orbit types which arise from non-Hadamard models in
  $\{-2,-1,0,1\}^2$. Two have cardinality 12, the third one {has}
  cardinality 18.  
We call these orbit structures,
{from} left to right, $O_{12}$, $\tilde O_{12}$ and $O_{18}$.}
\label{fig:orbits}
\end{figure}
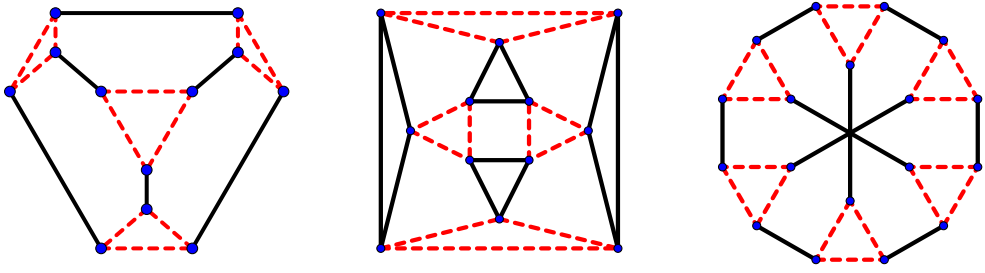

%=========================================================
\subsubsection{Sixteen models with a rational exponent but an infinite
  orbit}
\label{sec:embarassing}
%=========================================================

For each of the remaining 16 models, listed in Table~\ref{tab:embarassing}, we
ran our semi-algorithm by specializing $x=1$ and $y=2$ until we found at least
200 distinct orbit elements (the sum of the degrees of the polynomials in
$\PP$ --- or $\cQ$ --- gives a lower bound on the size of the orbit). We found
in each case minimal polynomials of degree over 100. The following proposition
explains why.

\begin{table}[htb]
\centering
\begin{tabular}{cCc|cCc|cCc}
&steps & $\alpha$ && steps & $ \alpha$ && steps &$ \alpha$ \\
&&&&&\\
\#1 &$\diag{-21, 01, 1-2}$ &   -5 & \#7&$\diag{-21, -11, 01, 1-1}$ &   -7&\#13& $\diag{-21, -10, 0-1, 01, 1-2, 10}$ &   -4\\
\#2&$\diag{-21, 1-2, 11}$ &   -4&\#8&$\diag{-2-1, -1-1, 0-1, 11}$ &   -11/5&\#14&$\diag{-21, -10, 01, 1-2, 1-1, 11}$ &   -4\\
\#3&$\diag{-21, 1-1, 10}$ &   -7&\#9& $\diag{-2-2, -20, 10, 11}$ &   -7/3 &\#15& $\diag{-2-1, -11, 01, 1-2, 10, 11}$ &-3\\
\#4&$\diag{-21, 1-1, 11}$ &   -5&\#10&$\diag{-2-2, -20, -1-1, -10, 10, 11}$& -7/3 &\#16&$\diag{-21, -10, -11, 0-1, 01, 1-2, 1-1, 10, 11}$ &   -4 \\
\#5&$\diag{-2-1, 1-1, 11}$&   -7/3&\#11&$\diag{-2-1, -1-1, 0-1, 01, 1-1, 11}$ &   -5/2\\
\#6 &$\diag{-2-1, 10, 11}$ &  -11/5&\#12& $\diag{-21, -11,  0-1, 01, 1-1, 11}$ &   -4\\
\end{tabular}
\vskip 4mm
\caption{Sixteen models with a rational excursion  exponent $\alpha$ and an infinite
  orbit.}
\label{tab:embarassing}
\end{table}

\begin{Proposition}
  The $16$ models of Table~\ref{tab:embarassing} have an infinite orbit.
\end{Proposition}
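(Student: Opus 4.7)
The plan is to proceed case by case, leveraging the machinery developed in Section~\ref{sec:infinite-orbit}. The key tool is Proposition~\ref{prop:ab}: at any critical point $(a,b)$ of $S$ with $S_{xx}(a,b)S_{yy}(a,b)\neq 0$, one can define involutions $\Phi$ and $\Psi$, and if the orbit is finite then $\Theta:=\Psi\circ\Phi$ must have finite order. So it suffices, for each of the 16 models, to exhibit a critical point at which $\Theta$ has infinite order.

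Since the excursion exponent $\alpha$ is rational for every model in Table~\ref{tab:embarassing}, the Jacobian of $\Theta$ at the positive critical point has cyclotomic characteristic polynomial, and Theorem~\ref{thm:theta} is inconclusive. I would therefore apply one of the two strategies illustrated on the examples of Section~\ref{sec:ex}:

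\textbf{Strategy A (alternative critical point).} Search for a (possibly complex) critical point $(a,b)\in(\bGK^{*})^2$ of $S$ with $S_{xx}(a,b)S_{yy}(a,b)\neq 0$ at which the Jacobian of $\Theta$ has a \emph{non-cyclotomic} characteristic polynomial. By Proposition~\ref{prop:ab}, this immediately forces the orbit to be infinite. Deciding whether a given quadratic characteristic polynomial is cyclotomic is a finite check (as in Section~\ref{subsec:exponent}, comparison with the first few cyclotomic polynomials suffices). This strategy was successful for model \#13 via $(a,b)=(e^{5i\pi/6},e^{i\pi/6})$ and for the highly symmetric example.

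\textbf{Strategy B (higher-order expansion).} If every critical point yields a cyclotomic linearization, expand $\Theta$ to order $3$ (or higher) around a convenient critical point, compute the iterate $\Theta^{k}$ where $k$ is the order of the linearization of $\Theta$, and check that $\Theta^{k}-\mathrm{id}$ has a nonzero quadratic or cubic term. Then $\Theta^{mk}$ produces infinitely many distinct elements in the orbit of $(x,y)$, as in the treatment of model \#2 and the first treatment of model \#13 in Section~\ref{sec:ex}. The key observation is that the difference $\Theta^k(a+u,b+v)-(a+u,b+v)$, being a polynomial in $u,v$ over the ground field, can only vanish identically if its finitely many first coefficients do, so the check is finite.

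For each of the 16 models, one of the two strategies succeeds; in practice Strategy A handles most cases, and Strategy B is reserved for a few highly symmetric models (like \#2, where no candidate critical point gives a non-cyclotomic Jacobian). Both strategies reduce to symbolic computations in $\overline{\mathbb{Q}}[[u,v]]$ of modest size, carried out by computer algebra. The main obstacle is bookkeeping: one must identify, for each model, \emph{which} critical point and \emph{which} order of expansion suffice; but since both choices are bounded (there are finitely many critical points, and the degrees of the involved algebraic numbers are small), this is a routine, though tedious, verification.
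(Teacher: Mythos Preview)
Your proposal is correct and follows the same idea as the paper's proof, which is likewise based on Proposition~\ref{prop:ab}. The only difference is one of emphasis: whereas you suggest trying Strategy~A (an alternative critical point with non-cyclotomic Jacobian) first and falling back on Strategy~B, the paper proceeds uniformly with Strategy~B, expanding $\Theta$ to cubic order at the \emph{positive} critical point for every model, finding in each case that $\Theta^m$ is the identity at linear and quadratic order but not at cubic order (and using Proposition~\ref{prop:sym} to reduce the models that are reflections of others). Your claim that Strategy~A handles most of the 16 cases is not verified in the paper, but since Strategy~B succeeds for all of them this does not affect the validity of your outline.
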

\begin{proof}
The proof is based on Proposition~\ref{prop:ab}, and mimics the  proof
used in the third example of Section~\ref{sec:ex}. For each model, we start from the
positive critical point $(a,b)$, define $\Phi$ and $\Psi$ as in
Section~\ref{sec:group}, and compute the expansion of
$\Theta:=\Psi\circ \Phi$ to cubic order.  There exists some
integer $m>0$ such that $\Theta^m$ is the identity at first order
(otherwise the excursion exponent would be irrational). Moreover, we
observe that  the
quadratic term in $\Theta^m$ vanishes, but  there
is a non-zero cubic term. This implies that all elements $\Theta^{km}$
are distinct, so that the orbit is infinite.

  We give below  the
 values of $a$, $b$, and $m$ 
(for model \#10 the value of $a$ is the positive root of $a^3=a+2$).
Since models \#5, \#6, \#8 and \#12 are obtained from another model in
the table by a reflection in the $x$-axis, we omit them (their
orbits are infinite by Proposition~\ref{prop:sym}).  However, the method works as
well for them (with $b$ replaced by $1/b$, and
$a$ and $m$ unchanged).
\newcommand\Tstrut{\rule{0pt}{3.0ex}}         
\newcommand\Bstrut{\rule[-1.9ex]{0pt}{2.0ex}}

\[\begin{array}{c||c|c|c|c|c|}
  \hbox{model} &  \#1 & \#2 & \#3 &\#4 &\#7 \\
  \hline
(a,b)&  ( 3^{1/2},  3^{1/2}/ 2^{1/3}) & (1,1) &(1,1) & (2^{-1/3},
                                                       3^{-1/2})& (1,
                                                                  3^{-1/2}) \Tstrut
\\
m &4 & 3 & 6 & 4&6 
\end{array}\]

\[\begin{array}{c||c|c|c|c|c|c|c|}
  \hbox{model}  &\#9& \# 10 & \#11  &\#13 &\#14 &\#15 &\#16 \\
  \hline
(a,b)& (2^{1/3},1)&   (a,1) & (1,\sqrt 2) &(\sqrt 2 , \sqrt 2)& (1,1) &
  (1,1) & (1,1)\Tstrut\\
m & 4& 4 & 3 & 3& 3& 2& 3
\end{array}\]
\end{proof}

For each model, we have tested D-finiteness experimentally, by generating
$10\, 000$ coefficients of the series $Q(0,0)$, and trying to guess from them
a linear recurrence relation for the coefficients or a {linear} differential
equation for the {generating function}. The guessing procedure is detailed in
Section~\ref{sec:interesting}. We could not find any recurrence nor
{differential} equation, and are tempted to believe that $Q(x,y;t)$ is not
D-finite for these 16 models. However, it must be noted that, for some models
of Section~\ref{sec:interesting}, it takes more than $10\,000$ coefficients to
guess a differential equation.

%=================================================
\subsection{Solving models with a finite orbit}
\label{sec:works}
%===================================================

As written above, 227 of the 240 models that have a finite orbit are Hadamard.
Our method applies systematically to them, as proved in
Section~\ref{sec:hadamard}. In particular, all these models have a D-finite
\gf, and, because they make small forward moves, $Q(x,y)$ is expressed as the
non-negative part of a simple rational function (see~\eqref{M=Mp=1}). {The
excursion exponent being~$-3$ in all cases, these series are
transcendental~\cite{flajolet-context-free}.}

We are left with the 13 models shown in Table~\ref{tab:finite}. For each of
them, there exists a unique section-free equation, {in agreement with
Conjecture~\ref{conj:section-free}} (up to a multiplicative factor, as usual).
For the 4 models shown in the first column, this equation defines $Q(x,y)$
uniquely and we are able to extract it as the positive part of a rational
series. In particular, these four series are D-finite (but transcendental,
{because of the exponent $-4$}). {Details are given below, and we} work out
detailed asymptotic behaviour of their coefficients in
Section~\ref{sec:asympt}. For the remaining 9 models, the right-hand side of
the section-free equation vanishes, so that this equation does not
characterize $Q(x,y)$ (for a start, any constant is a solution). These models
are the counterparts of the 4 algebraic models from the small step case, shown
in the second branch of Figure~\ref{fig:class-2D}. Clearly they deserve a
specific study, and we state conjectures regarding the nature of their \gfs\
in Section~\ref{sec:interesting}.

%============================================================
\subsubsection{Case $\boldsymbol{\cS=\{10, \bar 1 0, 0 \bar 1, \bar 2
    1\}}$}
\label{sec:solved1}
%============================================================
This is model F, which we have studied as one of our examples in this paper. Our main result is stated in Proposition~\ref{prop:F}:
\[
Q(x,y)= [x^{\ge} y^{\ge} ]{\frac { \left( {x}^{2}+1 \right) \left( x+y \right)  \left( y-x
    \right) 
 \left( {x}^{2}y-2\,x-y \right) 
  \left( {x}^{3}-x-2\,y \right) }{{x}^{7}{y}^{3} \left(
    1-t(x+\bx+\bx^2 y +\by) \right)}}.
\]
The excursion exponent ${\alpha\equiv}\alpha_e$, given by
Theorem~\ref{thm:exponent}, is $-4$. The exponent of {all
  quadrant walks} -- that is,
the exponent associated with the coefficients of $Q(1,1)$  -- can be
determined using multivariate singularity analysis, and is 
 found to be $\alpha_w=-4$. This is detailed for all four models shown on the
left of
Table~\ref{tab:finite} in Section~\ref{sec:asympt}.

For comparison with the next cases, we recall that the orbit, given by~\eqref{orbit:hard1},  has type $O_{12}$.

%============================================================
\subsubsection{Case $\boldsymbol{\cS=\{01, 1\bar 1 , \bar 1 \bar 1, \bar 2 1\}}$}
%============================================================
\begin{Proposition}\label{prop:12-a}
  For $\cS=\{01, 1\bar 1 , \bar 1 \bar 1, \bar 2 1\}$, we have
\[
Q(x,y)=[x^{\ge } y^{\ge }]{\frac { \left( {x}^{3}-2\,{y}^{2}-x \right)  
\left( {y}^{2}-x \right)  \left( {x}^{2}{y}^{2}-{y}^{2}-2\,x \right) }{{x}^{5}{y}^{4}
\left(1-t ( y+x\by +\bx\by +\bx^2 y)\right)}
},
\]
where the right-hand side is seen as a power series in $t$ with coefficients
in $\qs[x, \bx, y, \by]$.
The coefficients are hypergeometric: $q(i,j;n)$ is zero unless $n$ is of the form $n=2i+j+4m$, in which case
\[
q(i,j;n)= \frac{(i+1)(j+1)(i+j+2)n!(n+2)!}{m!(3m+2i+j+2)!(2m+i+1)!(2m+i+j+2)!}.
\]
The excursion exponent $\alpha_e$ and the walk exponent $\alpha_w$ are both $-4$.
\end{Proposition}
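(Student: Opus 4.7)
The plan is to follow the four-step scheme of Section~\ref{sec:basic}, extended to large-step models as in Sections~\ref{sec:eqfunc}--\ref{sec:extract}. First I would write the functional equation obtained by a step-by-step construction, correcting for the three forbidden moves from the axes ($(1,-1)$ from $y=0$, $(-1,-1)$ from either axis, $(-2,1)$ from $x\in\{0,1\}$):
\[
K(x,y)Q(x,y) = 1 - t\by(x+\bx)Q(x,0) - t\bx(\by+xy)Q(0,y) + t\bx\by\, Q(0,0) - t\bx y\, Q_{1,-}(y),
\]
where $K=1-tS$ and $xQ_{1,-}(y)$ is the $x^1$-slice of $Q$. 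Since $S(x,y)=y+x\by+\bx\by+\bx^2y$ has $M_1=1,\ m_1=2,\ M_2=m_2=1$, each orbit element has two 1-adjacent and one 2-adjacent partners by Lemma~\ref{lem:ind}. Solving $S(X,y)=S(x,y)$ yields the 1-adjacent $(x_1,y),(x_2,y)$, with $x_{1,2}$ the roots of $x^2X^2-(x+y^2)X-xy^2$, and solving $S(x,Y)=S(x,y)$ yields the 2-adjacent $(x,x\by)$; unfolding these adjacencies (or running the algorithm of Section~\ref{sec:algo}) produces the 12-element orbit of type $O_{12}$ depicted in Figure~\ref{fig:orbits}.

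Next, in line with Conjecture~\ref{conj:section-free} (which applies because no forward step is large), the twelve orbit equations in the four section families admit, up to a scalar, a unique section-free linear combination, obtained by direct linear algebra. Isolating $Q(x,y)$ and dividing by $K$ puts the identity in the form $Q(x,y)+(\text{eleven parasitic terms})=R(x,y)$, where $R$ is the rational function of the statement. The weights in front of the parasitic terms are antisymmetric in $(x_1,x_2)$; dividing by the Vandermonde $(x_1-x_2)$ turns each one into a polynomial in the elementary symmetric functions of $x_1,x_2$ (which, by Vi\`ete, $x_1+x_2=\bx+y^2\bx^2$ and $x_1x_2=-y^2\bx$, lie in $\bx\qs[\bx,y,\by]$) or of $1/x_1,1/x_2$ (which lie in $\by\qs[x,\bx,\by]$). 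Combined with a Newton-polytope argument patterned on Lemma~\ref{lem:extr} and the proof of Proposition~\ref{prop:F}, this shows that every parasitic term contributes only monomials $x^ay^b$ with $a<0$ or $b<0$, so extracting the non-negative part in $(x,y)$ yields the claimed expression for $Q(x,y)$. The main obstacle sits here: verifying that the terms mixing positive powers of one $x_i$ with negative powers of the other $x_j$ stay in the correct Newton cone requires the right inequality, adapted to the weight pattern of $O_{12}$.

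Finally, for the hypergeometric formula I would expand $R(x,y)=x^{-5}y^{-4}P(x,y)\sum_{n\ge0}t^n S(x,y)^n$ by the multinomial theorem; matching exponents in $[x^iy^j]$ forces $n-2i-j\in 4\zs_{\ge0}$, and writing $n=2i+j+4m$ the three factors of $P$ combine with the multinomial coefficient to telescope into a single factorial product, which reduces after standard manipulations to the stated expression. The exponent follows from Theorem~\ref{thm:exponent}: the unique positive critical point of $S$ is $(a,b)=(\sqrt 3,3^{1/4})$, obtained from $S_y=0\Leftrightarrow y^2=x$ together with $S_x=0\Leftrightarrow x^3=x+2y^2=3x$. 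At that point $S_{xy}=-4/3^{3/2}$ and $S_{xx}S_{yy}=64/27$, so $c=S_{xy}/\sqrt{S_{xx}S_{yy}}=-1/2$, $\arccos(-c)=\pi/3$, and $\alpha_e=-1-\pi/(\pi/3)=-4$. The walk exponent $\alpha_w=-4$ then comes from the multivariate singularity analysis of the explicit rational representation, carried out in Section~\ref{sec:asympt}.
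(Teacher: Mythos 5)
Your proposal tracks the paper's own argument step by step and is essentially correct, but a few details deserve correction. First, the functional equation has a typo: the coefficient of $Q(0,y)$ should be $-t\bx(\by+\bx y)$, since it arises from the inclusion–exclusion terms $-t\bx\by\,Q(0,y)$ and $-t\bx^2y\,Q(0,y)$; you wrote $-t\bx(\by+xy)$. Second, the inequality governing the delicate mixed term (the analogue of $A_5$) is not determined by the orbit type $O_{12}$: for this model one needs $f\le 2e$ for every monomial $\bx^e y^f$ occurring in $E_a=(x_1^{a+1}-x_2^{a+1})/(x_1-x_2)$, whereas Example F has the same orbit type but needs $f\le e$. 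The inequality is driven by the Vieta identities $x_1+x_2=\bx+\bx^2y^2$ and $x_1x_2=-\bx y^2$, not by the adjacency graph. Relatedly, the symmetric functions of $\bxun,\bxde$ are $-\bx-\by^2$ and $-x\by^2$, which are non-positive (not strictly negative) in $y$, so they do not lie in $\by\qs[x,\bx,\by]$ as you claim; the paper's "non-positive in $y$" is the correct form, and it suffices. Finally, the hypergeometric closed form is made tractable in the paper by the key factorization $S(x,y)^n=(1+\bx^2)^n(y+x\by)^n$; your "multinomial theorem plus standard manipulations" would be very laborious without observing this. Your derivation of the excursion exponent via Theorem~\ref{thm:theta} is a valid alternative to the paper's shortcut through the explicit formula for $q(0,0;4m)$, and your computations of the critical point $(\sqrt 3,3^{1/4})$ and $c=-1/2$ check out.
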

\begin{proof} 
The proof is very similar to the solution of Example F. The step polynomial is $S(x,y)= y+ x\by +\bx \by +\bx^2 y$. All elements of the orbit belong to the extension of $\qs(x,y)$ generated by $\sqrt{ (x+y^2)^2 +4x^3y^2}$. More precisely, denoting
\[
x_{1,2} = \frac{x+y^2 \pm \sqrt{ (x+y^2)^2 +4x^3y^2}}{2x^2},
\]
the orbit has type $O_{12}$ and consists of the following 12 pairs:
\begin{equation}\label{orbit-hard2}
\begin{array}{ccc}
  (x,y) & (x_1, y) & (x_2, y) \\
(x,x\by) & (-\bxun,x\by) & (-\bxde,x\by) \\
(x_1, x_1 \by) &(-\bx, x_1 \by) &(-\bxde, x_1 \by) \\
(x_2, x_2 \by) &(-\bx, x_2 \by) &(-\bxun, x_2 \by) .
\end{array}
\end{equation}
Note the similarities with the orbit~\eqref{orbit:hard1} obtained for model F. The functional equation reads:
\[
(1-tS(x,y)) Q(x,y)=1  -t x\by Q(x,0) -
t\bx\by (Q_{0,-}(y)+Q(x,0)-Q(0,0)) -t\bx^2 y \left( Q_{0,-}(y)+xQ_{1,-}(y)\right),
\]
where as before $x^iQ_{i,-}(y)$ is the \gf\ of walks ending at
abscissa $i$.
There is a unique section-free equation. To form it, the orbit
equation associated with $(x',y')$ must be weighted by $\pm
x'^2(x'_1-x'_2)$, where $(x'_1,y') \approx (x',y') \approx (x'_2, y')$
and $x_1'\not = x_2'$. More precisely, the weights associated with the 12 above orbit elements are:
\begin{equation}\label{weights-hard2}
\begin{array}{rrr}
  x^2(x_1-x_2) & x_1^2 (x_2-x) & -x_2^2(x_1-x) \\
x^2(\bxde -\bxun) & -\bxun^2( x+\bxde) & \bxde^2 (x+\bxun) \\
x_1^2(\bx-\bxde) &\bx^2(x_1+\bxde) &-\bxde^2(\bx+x_1) \\
-x_2^2(\bx-\bxun) &-\bx^2(x_2+\bxun) &\bxun^2(\bx+x_2) .
\end{array}
\end{equation}
Again, note the similarities with~\eqref{weights:hard1}.
We now divide the section-free equation by $x^2(x_1-x_2)$, so as to isolate $Q(x,y)$. This gives an equation similar to the one obtained with Example F (see~\eqref{eqA}):
\begin{equation}\label{eqA-bis}
Q(x,y)+\bxun \bxde Q(x,x\by) +A_1 +A_2 +A_3 +A_4 +A_5 = R(x,y),
\end{equation}
where $R(x,y)$ is the rational function occurring in Proposition~\ref{prop:12-a}, and each $A_i$ involves two of the series $Q(x',y')$ (again, as in Example F), chosen so that the expression of $A_i$ is symmetric in $x_1$ and $x_2$. More precisely, the orbit elements occurring in $A_1$ (resp. $A_2, A_3, A_4, A_4, A_5$) are $(x_1,y)$ { and } $(x_2,y)$ $\big($resp.   $(-\bxun, x\by)$  and $ (-\bxde, x\by)$,  $(x_1, x_1 \by)$  and  $(x_2, x_2\by)$,  $(-\bx, x_1\by)$  and $ (-\bx, x_2 \by)$, $(-\bxun, x_2 \by)$ and $(-\bxde, x_1 \by)\big)$. We now examine the symmetric functions of the $x_i$'s and of their reciprocals. They are Laurent polynomials in $x$ and $y$, which are, respectively, negative in $x$ and non-positive in $y$:
\[
x_1+x_2=\bx+ \bx^2y^2, \quad x_1 x_2 = -\bx y^2,
\]
while
\[
\bxun+\bxde= -\bx-\by^2 \quad \hbox{and} \quad \bxun\bxde= -x\by^2.
\]
With this, we conclude that the series $A_i$ also have coefficients in
$\qs[x,\by,y,\by]$,  that $A_1$, $A_3$ and $A_4$ are negative in $x$,
and that $A_2$ is negative in $y$. As in Example F, the case of $A_5$
is a bit trickier, due to the mixture of positive and negative powers
of the $x_i$'s. Following the same lines as in Example
F, one can prove that $A_5$ contains no monomial that would be
non-negative in $x$ and $y$.  The counterpart of Lemma~\ref{lem:extr} is that every
monomial $\bx^e y^f$ occurring in {the expression $E_a$ defined by~\eqref{eq:Ea}}, for the values of $x_1$ and $x_2$ here, satisfies $f\le 2e$. 

The simplicity of the coefficients $q(i,j;n)$ {comes}
 from the fact that
the expansion of $S(x,y)^n=(1+\bx^2)^n (y+x\by)^n$ in $x$ and $y$ has
simple coefficients.

The excursion exponent can be  determined using Theorem~\ref{thm:exponent}, but it is  more natural to start
from the explicit expression of $q(0,0;4m)$, for which we derive:
\[
q(0,0;4m)\sim \frac {4\sqrt 3}{27  \pi m^4} \left( \frac {16}
  3\right)^{3m}.
\]
 The asymptotic behaviour of the number of quadrant walks  is determined in Section~\ref{sec:asympt}.
\end{proof}

%============================================================
\subsubsection{Case $\boldsymbol{\cS=\{01, 1\bar 1 , \bar 1 \bar 1, \bar 2 1, \bar 1 0\}}$}
%============================================================
\begin{Proposition}  \label{prop:model3}
For $\cS=\{01, 1\bar 1 , \bar 1 \bar 1, \bar 2 1, \bar 1 0\}$, we have:
 \[
Q(x,y)=[x^{\ge } y^{\ge }]
{\frac { \left( {y}^{2}-x \right)  \left( {x}^{2}{y}^{2}-xy-{y}^{2}-2
\,x \right)  \left( {x}^{3}-xy-2\,{y}^{2}-x \right) }{ 
 {x}^{5}{y}^{4}\left(1-t(y+x \by +\bx \by + \bx^2y + \bx )\right)}},
\]
where the right-hand side is seen as a power series in $t$ with
coefficients in $\qs[x, \bx, y, \by]$.

The excursion exponent $\alpha_e$ and the walk exponent $\alpha_w$ are both $-4$.
\end{Proposition}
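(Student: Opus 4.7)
The plan is to follow the proof of Proposition~\ref{prop:12-a} almost line for line; adding the west step $\bar 10$ to its step set changes the functional equation (creating the extra constant section $Q(0,0)$ and enlarging the $Q(0,y)$ coefficient) but preserves both the orbit structure and the overall extraction mechanism. First I would write the functional equation by an inclusion--exclusion over the forbidden moves of $\bar 1\bar 1,\bar 21,\bar 10,1\bar 1$, obtaining
\[
K(x,y)Q(x,y)=1-t(\bx+\bx\by+\bx^2y)Q(0,y)-t\by(x+\bx)Q(x,0)-t\bx y\,Q_{1,-}(y)+t\bx\by\,Q(0,0).
\]
Since $y$ appears in $S$ only through small moves, $S(x,Y)=S(x,y)$ solved for $Y$ gives the sole nontrivial root $Y=x\by$, while $S(X,y)=S(x,y)$ is a quadratic in $X$ with the two new roots $x_{1,2}$ satisfying $x_1+x_2=\bx(1+y)+\bx^2y^2$ and $x_1x_2=-\bx y^2$. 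Iterating these adjacencies produces an orbit of 12 elements of type $O_{12}$ with the same graph structure as in Proposition~\ref{prop:12-a}, with $x$-coordinates in $\{x,x_1,x_2,-\bx,-\bxun,-\bxde\}$ and $y$-coordinates in $\{y,x\by,x_1\by,x_2\by\}$.

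The 12 orbit equations involve $4+6+4+1=15$ distinct sections. Consistent with Conjecture~\ref{conj:section-free}, there is (up to scalar) a unique section-free linear combination: I would assign weight $\pm x'^2(x'_1-x'_2)$ to the equation indexed by $(x',y')$, as in~\eqref{weights-hard2}, and verify using the Vieta relations above and the symmetries of the orbit that all 15 sections vanish (the cancellation of the new constant $Q(0,0)$ reduces to showing $\sum \pm x'^2(x'_1-x'_2)\bx'\by'=0$, which follows directly from the symmetric layout of the orbit). Dividing the resulting identity by $x^2(x_1-x_2)K(x,y)$ isolates $Q(x,y)$ and produces an equation of the form
\[
Q(x,y)+\bxun\bxde\,Q(x,x\by)+A_1+A_2+A_3+A_4+A_5=R(x,y),
\]
with $R(x,y)$ the rational function of the proposition. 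Each $A_i$ is built antisymmetrically in $x_1,x_2$, so its coefficients lie in $\qs[x,\bx,y,\by]$ (using $x_1+x_2,\,x_1x_2\in\bx\qs[\bx,y]$ and $\bxun+\bxde,\,\bxun\bxde\in\qs[x,\bx,\by]$). The second term equals $-x\by^2Q(x,x\by)$ and is $y$-negative; $A_1,A_3,A_4$ are $x$-negative and $A_2$ is $y$-negative, so none contributes to $[x^{\ge}y^{\ge}]$.

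The one delicate term is $A_5$, which contains $Q(-\bxde,x_1\by)$ and $Q(-\bxun,x_2\by)$ and mixes positive and negative powers of $x_1,x_2$. Substituting $Q(x,y)\to x^iy^j$ and using $xx_1x_2=-y^2$ to remove inverse powers of the $x_k$'s reduces $A_5$ to a multiple of $E_a:=(x_1^{a+1}-x_2^{a+1})/(x_1-x_2)$. The key lemma, proved by induction from the recurrence $E_a=(x_1+x_2)E_{a-1}-x_1x_2E_{a-2}$, is that every monomial $\bx^e y^f$ appearing in $E_a$ satisfies $f\le 2e$: this holds because $x_1+x_2=\bx+\bx y+\bx^2y^2$ has each monomial with $f\le e$ and $-x_1x_2=\bx y^2$ saturates the bound at $f=2e$. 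The same inequality-chasing as in the final paragraph of the proof of Example F then shows that no monomial of $A_5$ is simultaneously non-negative in $x$ and in $y$, which yields the claimed rational expression for $Q(x,y)$.

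For the asymptotics, Theorem~\ref{thm:theta} applies: the positive critical point $(a,b)$ of $S$ satisfies $b^2=a$ and $b=a^2-3$, so $a$ is the real root of $x^4-6x^2-x+9$ lying in $(2,3)$, and a clean computation yields $c^2=S_{xy}^2/(S_{xx}S_{yy})=(a^2+1)/\bigl(4(4+b)\bigr)=1/4$ (using $4+b=a^2+1$); hence $c=-1/2=\cos(2\pi/3)$ and $\alpha_e=-4$. The walk exponent $\alpha_w=-4$ will then follow from the multivariate saddle-point analysis of Section~\ref{sec:asympt}. The main obstacle is the $A_5$ step: because the relevant bound is $f\le 2e$ rather than $f\le e$ as in Example~F, the inequality chain on the monomial exponents is tighter and the bookkeeping more delicate, although entirely parallel to the one behind Proposition~\ref{prop:12-a}.
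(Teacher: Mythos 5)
Your proof follows the paper's route, which is itself extremely terse (it defers almost entirely to Proposition~\ref{prop:12-a}); your derivations of the functional equation, the Vieta relations, the orbit, the $f\le 2e$ lemma for $E_a$, and especially the explicit critical-point computation (with $b^2=a$ and $b=a^2-3$, whence $4+b=a^2+1$ and $c=-1/2$) are all correct and supply details the paper omits.

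Two small slips in the prose, neither fatal. First, adding the west step $\bar 10$ does \emph{not} create the section $Q(0,0)$: that term was already present in the functional equation of Proposition~\ref{prop:12-a}, arising from inclusion--exclusion around the step $\bar 1\bar 1$, so its cancellation condition is unchanged. What the new step genuinely changes is the coefficient of $Q(0,y)$, adding the extra term $-t\bx\,Q(0,y)$; for the weights~\eqref{weights-hard2} to still give a section-free combination, one needs the further identity $\sum_{x'} w(x',y')/x' = 0$ for each of the four $y$-coordinates in the orbit, which indeed holds by a short Vieta check (and is covered by your stated plan to verify that all fifteen sections vanish). Second, the bound $f\le 2e$ is \emph{weaker}, not tighter, than the $f\le e$ of Example~F --- that is exactly why the exponent bookkeeping in the $A_5$ analysis needs a bit more care; your conclusion is right, only the adjective is reversed.
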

\begin{proof}
This example is very close to the previous one, from which it only differs by one West step. The orbit is still given by~\eqref{orbit-hard2}, but with different values of $x_1$ and $x_2$:
\[
x_{1,2}= \frac{x+xy+y^2\pm\sqrt{(x+xy+y^2)^2+4x^3y^2}}{2x^2}.
\] 
The symmetric functions of the $x_i$'s, and of their reciprocals, have
promising non-negativity properties 
(the same as in the previous example):
\[
x_1+x_2=\bx+\bx y +\bx^2y^2 , \quad x_1x_2=-\bx y^2 , \quad \bxun
+\bxde=-\bx-\by-\by^2  \quad \hbox{and}  \quad \bxun \bxde= -x\by^2.
\]
The functional equation differs from the previous one by the new term $-t\bx
Q_{0,-}(y)$ in the right-hand side. There is a unique section-free equation,
with weights again given by~\eqref{weights-hard2}. Thus this equation is
again~\eqref{eqA-bis}, with the same expressions of the series $A_i$. The
series $Q(x,y)$ is extracted in the same way as in the previous example. In
particular, only one series, $A_5$, raises difficulties in the extraction
procedure. They are solved as before by proving that $f\le 2e$ for every
monomial $\bx^e y^f$ occurring in {the expression $E_a$ defined
by~\eqref{eq:Ea}}.

The excursion exponent is determined from Theorem~\ref{thm:exponent}, and the
walk exponent in Section~\ref{sec:asympt}. 
\end{proof}

%============================================================
\subsubsection{Case $\boldsymbol{\cS=\{10,  1 \bar 1, \bar 2 1, \bar 2
    0\}}$}
\label{sec:solved4}
%============================================================

\begin{Proposition}\label{prop:hard-18}
For $\cS=\{10,  1 \bar 1, \bar 2 1, \bar 2 0\}$, we have:
\[
Q(x,y)=[x^{\ge } y^{\ge }]
\frac{(2-y)(x^3-y^2)(x^6y-3x^3y-x^3-y^2)(x^3-2y)}{x^9y^4(1-t(\bx^2+\bx^2y+x\by+x))},
\] 
where the right-hand side is seen as a power series in $t$ with
coefficients in $\qs[x, \bx, y, \by]$.
The coefficients are  hypergeometric: $q(i,j;n)$ is zero unless $n$ is of the form $n=i+3j+3m$, in which case
\[
q(i,j;n)=\frac{(i+1)(j+1)(i+3j+4)\big((i+2j+2)(i+2j+3)+m(2i+3j+4)\big) n!(n+3)!}{m!(m+j+1)!(2m+i+2j+3)!(2m+i+3j+4)!}.
\]
The excursion exponent $\alpha_e$ and the walk exponent $\alpha_w$ are both $-5$.
\end{Proposition}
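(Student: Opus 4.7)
The proof would follow the template of Example~F and Propositions~\ref{prop:12-a} and~\ref{prop:model3}. By~\eqref{eq:quadrant}, the functional equation reads
\[
K(x,y)Q(x,y)=1-tx\by\, Q(x,0) - t\bx^2(1+y)Q_{0,-}(y)- t\bx (1+y) Q_{1,-}(y),
\]
so the three sections to eliminate are $Q(x,0)$, $Q_{0,-}(y)$ and $Q_{1,-}(y)$. Solving $S(X,y)=S(x,y)$ for $X\ne x$ reduces to the quadratic $x^2X^2-yX-xy=0$ with roots
\[
x_{1,2}=\frac{y\pm\sqrt{y^2+4x^3y}}{2x^2},
\]
whose symmetric functions $x_1+x_2=\bx^2y$ and $x_1x_2=-\bx y$ lie in $\bx\,\qs[\bx,y]$. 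Meanwhile $S(x,Y)=S(x,y)$ for $Y\ne y$ is linear, producing the single companion $y_1=x^3\by$. Running the algorithm of Section~\ref{sec:algo} would then yield the 18-element orbit $O_{18}$ of Figure~\ref{fig:orbits}.

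Conjecture~\ref{conj:section-free} predicts a unique section-free combination of the eighteen orbit equations (up to a scalar), since the largest forward step has size one in each direction. A direct computation of the weights, following the pattern of~\eqref{weights:hard1} and~\eqref{weights-hard2}, should produce an identity whose right-hand side is the rational function in the statement, divided by $x^9y^4K(x,y)$. The main obstacle is then the extraction of the non-negative part in $x$ and $y$, as in Example~F. Once the equation is normalized so that $Q(x,y)$ has coefficient $1$, the orbit equations split into blocks: those pairing $Q(x_1,\cdot)$ with $Q(x_2,\cdot)$ are $x$-negative by the Vieta relations above; the block pairing $(x,y)$ with $(x,y_1)$ is manifestly $y$-negative; and the delicate ``cross-level'' blocks, which mix one of the $x_i$ with the $y$-partner mechanism, are handled by an analogue of Lemma~\ref{lem:extr}. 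Here the content of that analogue is that every monomial $\bx^ey^f$ appearing in $E_a=(x_1^{a+1}-x_2^{a+1})/(x_1-x_2)$ satisfies $f\le e$ (an easy induction via $E_a=\bx^2y\,E_{a-1}+\bx y\,E_{a-2}$, using $E_0=1$ and $E_1=\bx^2y$), which forces the corresponding term-by-term contribution to carry a strictly negative power of $x$ or $y$.

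From the resulting rational expression, the hypergeometric formula for $q(i,j;n)$ follows by expanding
\[
S(x,y)^n=\bigl(x(1+\by)+\bx^2(1+y)\bigr)^n
\]
by the binomial theorem and reading off the coefficient of $x^iy^j$ (which forces $n=i+3j+3m$); combining the shifted contributions coming from the polynomial numerator $(2-y)(x^3-y^2)(x^6y-3x^3y-x^3-y^2)(x^3-2y)$ produces the closed form in the proposition, the factor $(i+2j+2)(i+2j+3)+m(2i+3j+4)$ arising from that combination. Finally, the excursion exponent follows from Theorems~\ref{thm:theta} and~\ref{thm:exponent}: the unique positive critical point of $S$ is $(a,b)=(4^{1/3},2)$, at which
\[
S_{xx}(a,b)=\frac{18}{4^{4/3}},\qquad S_{yy}(a,b)=4^{-2/3},\qquad S_{xy}(a,b)=-\frac 34,
\]
so $c=-1/\sqrt 2=\cos(3\pi/4)$ and $\alpha_e=-1-\pi/(\pi/4)=-5$, with growth rate $\mu=S(a,b)=9/2^{4/3}$. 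The walk exponent $\alpha_w=-5$ is obtained by the multivariate singularity analysis of Section~\ref{sec:asympt}.
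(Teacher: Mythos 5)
Your overall outline—functional equation, finite orbit, unique section-free combination, positive-part extraction, hypergeometric expansion, and the excursion-exponent computation via Theorem~\ref{thm:theta}/\ref{thm:exponent}—matches the paper's strategy, and your computation of the critical point $(4^{1/3},2)$, of $c=-1/\sqrt 2$, of $\mu=9/2^{4/3}$ and of $\alpha_e=-5$ is correct. But the heart of the proof is the positive-part extraction, and your account of it does not match what is actually required for this model, which has orbit type $O_{18}$ rather than $O_{12}$.

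The 18-element orbit is not a product of $\{x,x_1,x_2\}$ and $\{y,x^3\by\}$: applying 1-adjacency to $(x,x^3\by)$ introduces a fresh conjugate pair $(xu_3,x^3\by)$, $(xu_4,x^3\by)$, where $u_{3,4}=\bigl(1\pm\sqrt{1+4y}\bigr)/(2y)$ have symmetric functions $u_3+u_4=\by$, $u_3u_4=-\by$, and the rest of the orbit then fans out through products $x_iu_j$. Once $Q(x,y)$ is isolated, the remaining orbit contributions group into the six blocks $A_1,\dots,A_6$ of the paper's proof: $A_1,A_2,A_5,A_6$ are $x$-negative by the $x_1,x_2$ Vieta relations; $A_3$ (pairing $(xu_3,x^3\by)$ with $(xu_4,x^3\by)$) and the $Q(x,x^3\by)$ term are $y$-negative by the $u_3,u_4$ Vieta relations; and the one genuinely delicate block is
\[
A_4=\frac{-u_3^{5}(1-u_4)\,Q(xu_3,u_3^3y)+u_4^{5}(1-u_3)\,Q(xu_4,u_4^3y)}{u_3-u_4},
\]
a $u_3\leftrightarrow u_4$ pairing. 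Its treatment needs a lemma about the $u$'s, not the $x_i$'s: with $E_a=(u_3^{a+1}-u_4^{a+1})/(u_3-u_4)$ one shows, from $E_a=\by\,(E_{a-1}+E_{a-2})$, that $E_a$ is a polynomial in $\by$ of valuation $\lceil a/2\rceil$, and this forces a strictly negative power of $y$ in every monomial of $A_4$. Your proposed lemma — that every monomial $\bx^{e}y^{f}$ of $(x_1^{a+1}-x_2^{a+1})/(x_1-x_2)$ satisfies $f\le e$ — is true (and your recursion $E_a=\bx^2y\,E_{a-1}+\bx y\,E_{a-2}$ is correct), but it plays no role here: it is the analogue used for the $O_{12}$ models of Propositions~\ref{prop:12-a} and~\ref{prop:model3}, whose delicate term involves the reciprocals $\bx_1,\bx_2$. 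Your sketch does not mention $u_3,u_4$ at all, so as written the argument would not account for half the orbit elements nor close off the one block that actually resists the Vieta arguments.
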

\begin{proof}
The step polynomial is $S(x,y)=x+x\by+ \bx^2+\bx^2y$. All elements of the orbit belong to the extension of $Q(x,y)$ generated by $\sqrt{y(y+4x^3)} $ and $\sqrt{1+4y}$. More precisely, let us define
\begin{equation}\label{xu}
x_{1,2}= \frac{y\pm \sqrt{y(y+4x^3)}}{2x^2} \quad \hbox{and} \quad 
u_{3,4}= \frac{1\pm\sqrt{1+4y}}{2y}.
\end{equation}
Then the orbit consists of the following 18 pairs:
\[
\begin{array}{ccc}
(x,y) & (x_1,y) & (x_2, y) \\
(x, x^3\by) & (xu_3, x^3\by) & (xu_4, x^3\by) \\
(x_1, x_1^3\by) &(x_1u_3, x_1^3\by) &(x_1u_4, x_1^3\by) \\
(x_2, x_2^3\by) &(x_2u_3, x_2^3\by) &(x_2u_4, x_2^3\by) \\
(xu_3,u_3^3y) &(x_1u_3,u_3^3y) &(x_2u_3,u_3^3y) \\
(xu_4,u_4^3y)&(x_1u_4,u_4^3y)&(x_2u_4,u_4^3y)
\end{array}
\]
and its structure is shown in Figure~\ref{fig:orbit18}.
\begin{figure}[htb]
  \centering
  \scalebox{0.9}{\input{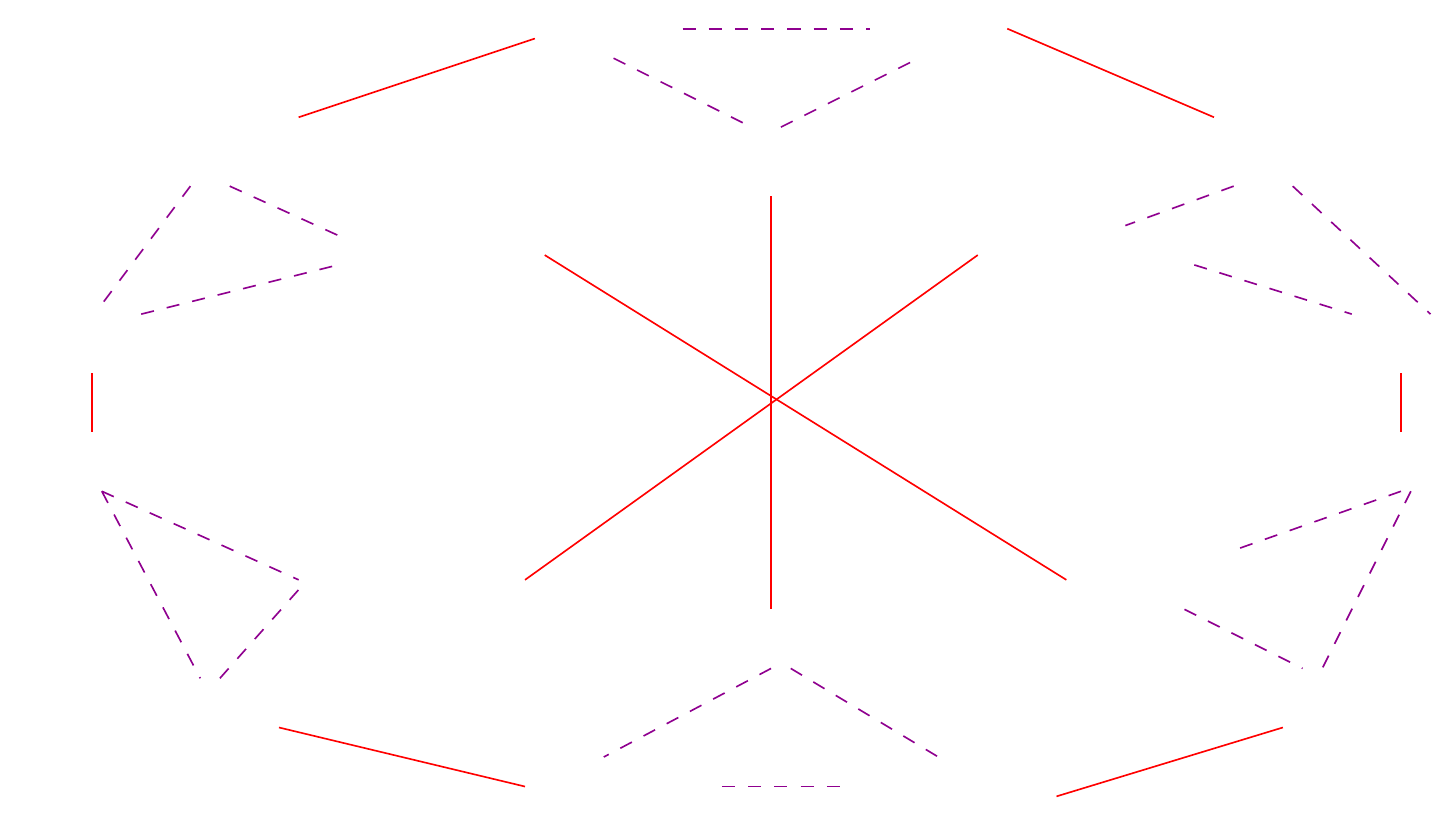_t}}
  
  \caption{The orbit of $\cS=\{10,  1 \bar 1, \bar 2 1, \bar 2
    0\}$.  The
    values $x_i$ and $u_i$ are given by~\eqref{xu}. }
  \label{fig:orbit18}
\end{figure}

The functional equation reads
\[
(1-tS(x,y)) Q(x,y)=1 -tx\by
Q(x,0) -t\bx^2(1+y) \left( Q_{0,-}(y)+xQ_{1,-}(y)\right),
\]
and there is a unique section-free equation. To form it, the orbit
equation associated with $(x',y')$ must be weighted by $\pm x'y'
(x'_1-x'_2)(x'_3-x'_4)$, where
\[
\left. {\atopfix{(x'_1,y'')}{(x'_2,y'')}} \right\}
\approx (x',y') \approx (x',y'') \approx 
\left\{ {\atopfix{(x'_3,y'')}{(x'_4,y'')}} \right.
\]
and both $x'_1\not = x'_2$ and $x'_3 \not = x'_4$.
More precisely, the weights associated with the 18 above pairs are
\begin{equation}\label{weights-18}
\begin{array}{rrr}
x^2y(x_1-x_2)(u_3-u_4) & - x_1^2y(x-x_2)(u_3-u_4) & x_2^2 y (x-x_1)(u_3-u_4) \\
 -x^5\by(x_1-x_2)(u_3-u_4) & x^5\by u_3^2 (1-u_4)(x_1-x_2)& 
 -x^5\by u_4 ^2(1-u_3)(x_1-x_2) \\
 x_1^5\by(x-x_2)(u_3-u_4) & - x_1^5\by u_3^2(1-u_4)(x-x_2) &
 x_1^5\by u_4^2(1-u_3)(x-x_2)\\
-x_2^5\by (x-x_1)(u_3-u_4) & x_2^5\by u_3 ^2(x-x_1)(1-u_4) &
-x_2^5\by u_4^2( x-x_1)(1-u_3) \\
-x^2u_3^5y(1-u_4)(x_1-x_2) &x_1^2u_3^5y (x-x_2)(1-u_4) &-x_2^2u_3^5y(x-x_1) (1-u_4)\\
x^2u_4^5y (1-u_3)(x_1-x_2)&-x_1^2u_4^5y(1-u_3)(x-x_2)&x_2^2u_4^5y(1-u_3)(x-x_1).
\end{array}
\end{equation}
We now divide the section-free equation by $x^2y(x_1-x_2)(u_3-u_4)$,
so as to isolate $Q(x,y)$. This gives:
\begin{equation}\label{Qeq18}
Q(x,y)-x^3\by^2 Q(x,x^3\by)+ A_1+A_2+A_3+A_4+A_5+A_6
= R(x,y),
\end{equation}
where $R(x,y)$ is the rational function occurring in
Proposition~\ref{prop:hard-18} and each $A_i$ involves two or four
instances of the series $Q$, as described below:
\newcommand\Tstrut{\rule{0pt}{3.0ex}}         
\newcommand\Bstrut{\rule[-1.9ex]{0pt}{2.0ex}}
\[
\begin{array}{cccccc}
  A_1 & A_2 &A_3 & A_4 &A_5 &A_6 \\ \hline
(x_1,y)&(x_1, x_1^3\by)&(xu_3, x^3\by)&(xu_3,u_3^3y) &(x_1u_3, x_1^3\by)&(x_1u_3,u_3^3y)\Tstrut\\
(x_2,y)&(x_2, x_2^3\by)&(xu_4, x^3\by)&(xu_4,u_4^3y)&(x_2u_3, x_2^3\by)&(x_2u_3,u_3^3y)\\
&&&&(x_1u_4, x_1^3\by)&(x_1u_4,u_4^3y)\\
&&&&(x_2u_4, x_2^3\by)&(x_2u_4,u_4^3y)
\end{array}
\]
Then each $A_i$ is a series in $t$ whose coefficients are 
polynomials in $x, \bx, y, \by, x_1, x_2, u_3, u_4$. 
The symmetric functions of the $x_i$'s (resp. $u_i$'s) are Laurent
polynomials in $x$ and $y$, negative in $x$ (resp. in $y$):
\[
x_1+x_2= \bx^2y, \qquad x_1 x_2= -\bx y, \qquad u_3+u_4=\by, \qquad
u_3u_4=-\by.
\]
Hence each $A_i$ is a series in $t$ whose coefficients are Laurent
polynomials in $x$ and $y$. We now want to extract the non-negative
part, in $x$ and $y$, of~\eqref{Qeq18}. Clearly the second term
is $y$-negative. Then the above properties, and the
form~\eqref{weights-18} of the weights, imply that
\begin{itemize}
\item $A_1$, $A_2$, $A_5$, and $A_6$ are $x$-negative;
\item $A_3$ is $y$-negative.
\end{itemize}
There remains to examine
\[
A_4= \frac{-u_3^5(1-u_4)Q(xu_3, u_3^3y)+u_4^5(1-u_3)Q(xu_4,
  u_4^3y)}{u_3-u_4}.
\]
Since $Q(x,y)$ has polynomial coefficients in $x$ and $y$, it suffices
to prove that for any $i,j  \ge 0$, the expression
\[
\frac{-u_3^5(1-u_4)u_3^i u_3^{3j}y^j+u_4^5(1-u_3)u_4^i
  u_4^{3j}y^j}{u_3-u_4},
\]
which is a Laurent polynomial in $y$, is in fact $y$-negative. This is
readily checked, using the fact that $u_3u_4=-\by$ and
\[
E_a:=\frac {u_3^{a+1}- u_4^{a+1}}{u_3-u_4}
\]
is a polynomial in $\by$ of valuation $\lceil a/2\rceil$ (this is
proved by  induction on $a$, as $E_a=\by (E_{a-1}+E_{a-2})$).
The expression of $Q(x,y)$ follows by extracting the non-negative
part, in $x$ and $y$, of~\eqref{Qeq18}.

The simplicity of the coefficients $q(i,j;n)$ 
{comes} from the fact that
the expansion of $S(x,y)^n=(1+y)^n (x\by+x^2)^n$ in $x$ and $y$ has
simple coefficients.

The excursion exponent can be computed from Theorem~\ref{thm:exponent}, but it is  more natural to start
from the explicit expression of $q(0,0;3m)$, for which we derive:
\[
q(0,0;3m)\sim \frac {81}{32\, \pi m^5} \left( \frac {27}
  4\right)^{2m}.
\]
The asymptotic behaviour of the number of quadrant walks  is
determined in Section~\ref{sec:asympt}.
\end{proof}

%=====================================================
\subsection{Nine interesting models with a finite orbit}
\label{sec:interesting}
%===========================================================

For nine models, shown in the second and third {columns} of
Table~\ref{tab:finite}, an {interesting} phenomenon occurs: the orbit is
finite and the right-hand side of the unique section-free equation vanishes.
These models come in two types, depending on whether they have an
$x/y$-symmetry or not. {They} cannot be solved using the method of {this
paper}, and we explore them \emph{experimentally}.

\smallskip\noindent{\bf Questions.} For each of the nine models, we focus on
two important univariate specializations of $Q(x,y) = Q(x,y;t)$, namely the
generating function of excursions $Q(0,0) = \sum_n e_n t^n$ and the generating
function of {all quadrant walks} $Q(1,1) = \sum_n q_n t^n$. For these 18 power
series we address, as before, three types of questions: \emph{qualitative}
(are they algebraic? are they D-finite transcendental? are they
non-D-finite?), \emph{quantitative} (do they admit closed-form expressions?)
and \emph{asymptotic} (what is the growth of the sequences $(e_n)$ and
$(q_n)$?).

\smallskip\noindent{\bf Answers.} In this section, most answers to these
questions are \emph{conjectural}, although with a high degree of confidence.
They are obtained by performing computer calculations that take as input a
finite amount of information on $Q(0,0)$ and $Q(1,1)$, namely the first
terms\footnote{{Precisely, $20\, 000$ integer coefficients, and even $100\,
000$ coefficients modulo the prime $p=2147483647$. For this time- and
memory-consuming step, we have appealed to highly efficient implementations
due to Axel Bacher.}} of the sequences $(e_n)$ and~$(q_n)$. The main technique
that we use is \emph{automated guessing}, a classical tool in experimental
mathematics~{\cite{BoKa09}}. In principle, the guessing part could be
complemented by an \emph{automated proof} part, which would make the
(algebraicity/D-finiteness) results fully rigorous, as in~\cite{BoKa10} and
\cite[\S8]{BoBoKaMe16}. This would require, among other things, to consider
more general {series} such as $Q(x,0)$ and $Q(0,y)$. Given that the equations
conjectured for $Q(0,0)$ and $Q(1,1)$ are already {quite} big (see
Tables~\ref{tab:kreweras-models} and \ref{tab:gessel-models}), we have decided
to conduct the guessing part only.

\smallskip\noindent{\bf Approach.} For each model, we have first tried to
guess linear recurrence relations with coefficients in $\zs[n]$ satisfied by
the sequences $(e_n)$ and $(q_n)$, starting from the integer values of their
first terms. When the available terms were not enough to recognize such a
recurrence, we have used more terms modulo {the prime~$p=2147483647$}, and
tried to recover recurrences with coefficients in $\zs/p\zs[n]$. In both
cases, we used the guessed recurrence relations to produce even more terms, on
which we repeated guessing procedures in order to get (hopefully)
minimal-order linear differential equations with polynomial coefficients
(in~$\zs[t]$, resp. in $\zs/p\zs[t]$) {for the associated series}. {On the one
hand, such minimal-order equations are hard to guess because they tend to have
many apparent singularities and {thus} coefficients of very large degrees};
sometimes, it is necessary to produce them indirectly, e.g., by taking (right)
gcd's of equations with higher orders but smaller degrees. On the other hand,
{they} are interesting because they contain a lot of information on their
solutions. For instance, minimal-order differential equations with
coefficients in $\zs[t]$ are helpful in proving transcendence of their
solutions. {This is detailed below in Section~\ref{sec:Kreweras}.}

Even when one can only guess differential equations with coefficients in
$\zs/p\zs [t]$, for {a sufficiently large prime such as~$p=2147483647$},
rational reconstruction allows one to predict {the small factors of the
leading coefficients of plausible differential operators over $\qs[t]$, and
thus} the growth constant in the asymptotics of $(e_n)$ and $(q_n)$. A similar
procedure applied to recurrences instead of differential equations allows one
to guess the critical exponents of these sequences. They {can} also give, via
$p$-curvature computations~\cite{BoCaSc15,BoCaSc16}, some insight on the
algebraic/transcendental nature of the power series in $\zs[[t]]$ (modulo
classical conjectures in the arithmetic theory of G-operators~\cite{Andre04}).
{Examples are provided in~\cite{BoKa09,BoKa10} and \cite[\S2.3.3]{Bostan2017}.
However, given the size of our conjectured equations, and especially of the
prime number~$p$, we have not applied these algorithms here.}

We refer to~\cite{BoKa09,Bostan2017} for more details on guessing techniques,
and now describe the results that we have obtained on the nine models.

\medskip
%==============================================================
\subsubsection{Five  models  of the Kreweras type.} 
\label{sec:Kreweras}
%===================================================
These models are symmetric in the first diagonal, and are shown in the central
column of Table~\ref{tab:finite} {and in Table~\ref{tab:kreweras-models}
below}. Their orbits are all of the same form: they consist of all pairs
$(x_i,x_j)$, with $0\le i \not = j \le 3$, where $x_3=y$ and $x_0=x$, and
$x_1$ and $x_2$ are the three roots of the equation $S(X,y)=S(x,y)$. In
particular, for a pair $(x',y')$ in the orbit, the symmetric pair $(y',x')$
{also lies} in the orbit. The orbit structure is $\tilde O_{12}$, as shown in
Figure~\ref{fig:orbits}.

Theorem~\ref{thm:exponent} gives for each model the growth constant $\mu$ of
excursions and the associated exponent ${\alpha\equiv}\alpha_e$, which happens
to be $-5/2$ in all cases. The first two models are not strongly aperiodic,
but it appears (numerically) that an asymptotic estimate $e_n \sim \kappa\,
\mu^n n^{-5/2}$ holds in all cases (provided $n$ is a multiple of 4 in the
first case, and of 2 in the second case). The growth constant of the total
number $q_n$ of quadrant walks of length $n$ can be determined using the
results of~\cite{garbit-raschel,JoMiYe18}: in all five cases, it coincides
with the excursion constant $\mu$. Observe that the \emm drift, $(S_x(1,1),
S_y(1,1))$ is always negative. When the model is, in addition, aperiodic (last
three models), we can apply the result of~\cite[Ex.~7]{duraj}: there exists a
constant $K$ such that
\[
q_n \sim K \mu^n n^{-5/2}.
\]
{Numerical computations (of two different types: floating point and
modulo~$p$)} suggest that this also holds for the first two (periodic) models,
with a constant~$K$ that depends on $n\!\mod 4$ (first model) and on $n\!\mod
2$ (second model). Such periodicity phenomena will be established in
Section~\ref{sec:asympt} for the four solved models of Section~\ref{sec:works}
(see for instance~\eqref{eq:asm1}).

\medskip

\begin{table}[htb]
\begin{tabular}{|@{}C@{}c|cccc|cccc|}
  model & $m$ &  $e_n$ &  $Q(0,0)$  &alg.& $\alpha_e$ &$q_n$ &  $Q(1,1)$  &alg.& $\alpha_w$ 
  \\
  \hline
  $\diag{-2-1,-1-2,01,10}$ & 4 & $[2,12]$
  & \begin{tabular}[t]{c}$[8, 13]$\\
    irred. \end{tabular}& no & $-5/2$ & $[32,76]$& \begin{tabular}[t]{c}$[17,296]$
      \\ red. min.
    \end{tabular}& no  &
  $-5/2$ ?
  \\
  $\diag{-20,-1-1,0-2,11}$ & 2&  $[4,5]$ & \begin{tabular}[t]{c}$[5,8]$
      \\ red. min.
    \end{tabular}& no & $-5/2$ & $[19,59]$& $ \begin{tabular}[t]{c}$[9,83]$
      \\ red. min.
    \end{tabular} $ & no & $-5/2$ ?
  \\
  $\diag{-2-1,-1-2,-1-1,01,10} $& 1 &$[12,37]$ & \begin{tabular}[t]{c}$[9,52]$
      \\ red. min.
    \end{tabular}& no & $-5/2$ & $[33,266]$& \begin{tabular}[t]{c}$[17,309]$ \\ red. min.
    \end{tabular}& no & $-5/2$
  \\
  $ \diag{-20,-1-1,-10,0-2,0-1,11} $& 1& $[20,75]$ & \begin{tabular}[t]{c}$[13,94]$
      \\ red. min.
    \end{tabular}& no  & $-5/2$ & $[60,118]^\star$& $[25,663]^\star$ & ? & $-5/2$
  \\
  $  \diag{-2-1,-20,-1-2,-1-1,0-2,01,10,11}$& 1 &$[36,520]^\star$ & \begin{tabular}[t]{c}$[26,573]^\star$
      \\
  \end{tabular}& ? & $-5/2$ & $[99,204]^\star$& $[44,652]^\star$ & ? & $-5/2$  
\end{tabular}
\vskip 4mm
   \caption{The five Kreweras-like models, with their periods~$m$. For each
     sequence $(e_{mn})$ and $(q_n)$, 
	resp. for the associated series $Q(0,0;t^{1/m})$ and $Q(1,1;t)$, a pair
	$[r,d]$ indicates
	the order~$r$ and the coefficients degree~$d$ of a (conjectural)
     recurrence relation, resp. of a differential equation.
     A star indicates that we have {only guessed} recurrences or
     differential equations modulo $p=2147483647$.  
    }
    \label{tab:kreweras-models}
  \end{table}

Algorithmic guessing has succeeded for all 10 sequences in
Table~\ref{tab:kreweras-models}, {but only modulo {$p=2147483647$} for 3 of
them}.
We are extremely confident that the guessed recurrences and
differential operators are correct. In particular, they pass with success the
filters described in~\cite[Sec.~2.4]{BoKa09}. For instance, the leading
coefficients of the differential operators that (conjecturally) annihilate
$Q(0,0)$ and $Q(1,1)$, or their rational reconstruction when operators are
available modulo~$p$ only, vanish at $t=1/\mu$. Also, the occurrence of $3/2$
among the local exponents of the operators around $t=1/\mu$ is in agreement
with the exponents $\alpha_e=\alpha_w=-5/2$.

Assuming these recurrences and equations correct, we can use them to derive
some properties of the sequences $(e_n)$ and $(q_n)$. For instance, guessing
already strongly indicates that there is no hypergeometric sequence among the
10 sequences. In cases where recurrences are guessed over the integers (not
only modulo~$p$), we have applied Petkov\v sek's algorithm~\cite{Petkovsek92}
to them, and obtained a proof that these sequences are indeed not
hypergeometric.

Guessing also strongly indicates that there is no algebraic generating
function for any of the 10 sequences. In cases where differential equations
are guessed over the integers (not only modulo~$p$), we have a proof for this
fact, based on the following strategy. Linear differential operators can be
factored algorithmically~\cite{Hoeij97}. Those that are irreducible in
$\qs(t)\langle \partial_t \rangle$ are necessarily minimal. We have proved
minimality of the others using the argument of~\cite[Prop.~8.4]{BoBoKaMe16}.
Next, we computed the first terms of a local basis of solutions at $t=0$. At
least one basis element contains logarithms, which, combined with minimality,
implies that the solution is transcendental~\cite[\S2]{CoSiTrUl02}. Note that
this cannot be directly deduced from estimates of the form $c\, \mu^n
n^{-5/2}$, which \emm are, compatible with
algebraicity~\cite{flajolet-context-free}. For the excursions of the second
model, we were even able to solve the differential equation, thus obtaining a
conjectural closed form expression of $Q(0,0;\sqrt t)$
(Conjecture~\ref{conj:K2-ex}). {When we have only guessed differential
equations modulo {$p=2147483647$}, we still conjecture that the corresponding
operators have minimal order.}

\medskip

We now review briefly the five Kreweras-like models and add a few
details completing Table~\ref{tab:kreweras-models}.

\medskip
\noindent\paragraph{$\bullet$ \bf Case $\boldsymbol{\cK_1=\{ \bar 2 \bar 1, \bar 1 
\bar 2, 0 1, 1 0 \}}$}

The excursion generating function $Q(0,0) = \sum_n e_n t^n$  starts
\[Q(0,0) = 1+6\, \, t^4+236\, t^8+14988\, t^{12}+ 1193748\, t^{16} + O(t^{20})\]
and the walk  generating function $Q(1,1) = \sum_n q_n t^n$ starts
\[Q(1,1) = 1+2\, t+4\, t^2+8\, t^3+22\, t^4+64\, t^5+178\, t^6+O(t^7).\]
The growth constant is $\mu= 8/(3^{3/4})$ for both sequences.

The model has period $m=4$, and $e_n=0$ if $n$ is not a multiple of
$4$. For  the subsequence $(u_n) = (e_{4n})$ we have guessed  that
\begin{multline*}
	(4608\, n^4+37504\, n^3+114144\, n^2+153992 \, n+77715)\times \\
	(2\, n+3)\, (2\, n+1)\, (4\, n+5)\, (4\, n+1)\,  (n+1)^2\, (4\, n+3)^2\, u_{n} - \\	
\left( 62208\, n^{12}+1159488\, n^{11}+9826272\, n^{10}+50056248\, n^9+\frac{341349339}{2}\, n^8+410259762\, n^7 + \right. \\
\frac{22807094283}{32}\, n^6 +\frac{28845939249}{32}\, n^5+\frac{421694744175}{512}\, n^4
+\frac{1085550761145}{2048}\, n^3+ \\
\frac{1868027110233}{8192}\, 
\left. n^2+\frac{1929023165205}{32768}\, n+\frac{1807811742825}{262144} \right)
\, u_{n+1}
+ \\	(n+3)\, (n+2)\, (2\, n+5)^2\, (6\, n+13)^2\, (6\, n+11)^2\, \times \hfill \\
	\left(\frac{81}{2048}\, n^4+ \frac{1341}{8192}\, n^3+\frac{8235}{32768}\, n^2+\frac{22257}{131072}\, n+\frac{44739}{1048576} \right)\ 
	 u_{n+2} 
= 0.
\end{multline*}
 The leading coefficient of the minimal differential operator 
$L_e$ annihilating $Q(0,0;t^{1/4})$ is 
\[t^7\, (27-4096\, t)^2\, \left(t^4-\frac{47}{640} \,
t^3-\frac{374489}{125829120} \, t^2-\frac{23644531}{2319282339840} \,
t+\frac{29645}{281474976710656} \right),
\] where the factor $27-4096\, t$ vanishes when  $t=27/4096=1/\mu^4$.
 
For walks ending anywhere in the quadrant, the leading coefficient of the operator $L_w$ is
\[t^{13} \, (4t-1) \, (16 \, t^3+8 \, t^2+11 \, t-4)^4 \, (4096 \, t^4-27)^4 
\, \times \left( \text{irreducible poly. of degree 254} \right),\]
which is again compatible with the value of $\mu$.

%========================================================
\medskip
\noindent\paragraph{$\bullet$ \bf Case $\boldsymbol{\cK_2=\{ \bar 2 0, \bar 1 \bar 
1, 0 \bar 2, 1 1 \}}$}

The excursion generating function $Q(0,0) = \sum_n e_n t^n$ starts \[Q(0,0) =
1+ t^2+4\, t^4+21\, t^6+138\, t^8+1012\, t^{10} +8064\, t^{12}+O(t^{14}) \]
while \[ Q(1,1) = 1+ t+2\, t^2+5\, t^3+12\, t^4+32\, t^5+86\, t^6+O(t^7).\]
The growth constant is $\mu=2\sqrt{3}$ for both sequences.

The model has period $m=2$, and $e_n=0$ if $n$ is odd. For  the nontrivial
subsequence $(u_n) = (e_{2n})$ we have guessed that
\begin{multline*}
	(4\, n+9)\, (n+5)^2\, (n+4)^2\, u_{n+4} - 
	4\, (n+2)\, (16\, n^2+100\, n+153)\, (n+4)^2\, u_{n+3} - \\
	4\, (32\, n^5+584\, n^4+4096\, n^3+13909\, n^2+22947\, n+14742)\, u_{n+2} + \\
	96\, (2\, n+3)\, (n+2)\, (16\, n^3+108\, n^2+239\, n+183)\, u_{n+1} + \\
	(9216\, n^5+76032\, n^4+230400\, n^3+319680\, n^2+201024\, n+44928)\, u_n=0.
\end{multline*}
The differential operator $L_e$ found for  $ Q(0,0; t^{1/2}) = \sum_n
e_{2n}  t^{n}$   has  leading coefficient  \[ t^3 \, (1 + 4 \, t)^2 \, (1 - 12 \, t)^3 \] where the factor $(1-12\, t)$ is
compatible with the value of $\mu$. Furthermore, $L_e$ is reducible in $\qs(t)\langle \partial_t
\rangle$; one can write $L= L_2^{(1)} L_2^{(2)} L_1$, where $L_1$ has order 1 and
$L_2^{(1)}$ and 
$L_2^{(2)}$ have order~2. More importantly, $L$ can be written as the
{least common left multiple} of the three following operators:
\begin{multline*}
\partial_t+\frac{1}{t}, \qquad
\partial_t^2+ \frac{120\, t^2+2\, t-3}{(-1+12\, t)t(4\, t+1)}\, \partial_t+ \frac{288\, t^3-48\, t^2+14\, t+1}{(4\, t+1) t^2 (-1+12\, t)^2}, \\
\partial_t^2+ \frac{120\, t^2+2\, t-3}{(-1+12\, t)t(4\, 
t+1)}\, \partial_t+ \frac{24\, t^2-8\, t-1}{t^2(4\, t+1)(-1+12\, t)}.
\end{multline*}
The use of $_2F_1$ solving algorithms~\cite{BoChHoPe11,ImHo17,BoChHoKaPe17}
leads us to the following conjectural expression. 
 \begin{Conjecture}\label{conj:K2-ex}
For the model 
${\cS=\{ \bar 2 0, \bar 1 \bar 1, 0 \bar 2, 1 1 \}}$,
the excursion generating function $Q(0,0; t^{1/2})$
is equal to
\[
\frac{1}{3t}
-
\frac{\sqrt{1-12t}}{6t}
\left(
\twoFone{\frac16}{\frac13}{1}{\frac{-108\,t(1+4t)^2}{(1-12t)^3}}
+
\twoFone{-\frac16}{\frac23}{1}{\frac{-108\,t(1+4t)^2}{(1-12t)^3}}
\right).
\]
\end{Conjecture}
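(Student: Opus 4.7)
The plan is to upgrade the conjecture to a theorem by the standard two-step D-finite identity strategy: verify that both sides of the proposed identity satisfy the same linear ODE, then match enough initial Taylor coefficients. The guessed minimal operator $L_e$ annihilating $F(t):=Q(0,0;t^{1/2})$ has order~5 and, as explicitly displayed in the paragraph above the conjecture, factors as the least common left multiple of three operators $L_1,L_2^{(1)},L_2^{(2)}$ of orders $1,2,2$. Since the solution space of a LCLM is the sum of the solution spaces of its factors, it suffices to exhibit one basis solution of each factor and check that the right-hand side of the conjectured formula is the specific linear combination of these solutions prescribed by the initial values of $F$.

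Solving $L_1=\partial_t+1/t$ is immediate: the general solution is $u=c/t$, explaining the first term $1/(3t)$ in the conjectured formula. For each of the two order-two factors, I would run the hypergeometric-identification algorithms of~\cite{BoChHoPe11,ImHo17,BoChHoKaPe17}. These algorithms look for a rational pullback $z=\phi(t)$ and a projective change of unknown $u=\rho(t)\,v$ conjugating $L_2^{(i)}$ to a Gauss hypergeometric operator ${}_2F_1(a,b;c;\cdot)$. The singular structure of $L_e$ at $t=0,-1/4,1/12$ strongly constrains the candidate pullback: the natural guess is
\[
\phi(t)=\frac{-108\,t(1+4t)^2}{(1-12t)^3}, \qquad \rho(t)=\frac{\sqrt{1-12t}}{t},
\]
because $\phi$ ramifies at the relevant singularities with the correct orders and the local exponents of $L_e$ match the Schwarz parameters of the $(6,3,2)$-triangle group attached to ${}_2F_1(1/6,1/3;1;\cdot)$. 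A direct symbolic check then verifies that $\rho(t)\cdot\twoFone{1/6}{1/3}{1}{\phi(t)}$ spans the solution space of one of the $L_2^{(i)}$ while $\rho(t)\cdot\twoFone{-1/6}{2/3}{1}{\phi(t)}$ spans the other; this is the most mechanical part, reducing to the contiguous relations for ${}_2F_1$.

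Once both sides are known to satisfy $L_e$, equality as formal power series follows by comparing the first 5 Taylor coefficients at $t=0$, which is a routine expansion since each ${}_2F_1$ equals $1+O(t)$ there. The scalar coefficients $1/(3t)$ and $-1/(6t)$ are then forced by this matching.

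The genuine obstacle is the first step of the whole argument: rigorously proving that $F(t)$ is annihilated by the guessed operator $L_e$ in the first place. For the four solved models of Section~\ref{sec:works}, one expresses the specialization at $(0,0)$ as the constant term (in $x,y$) of an explicit rational function extracted from the section-free equation, and then creative telescoping~\cite{BoChHoKaPe17} certifies a linear ODE (this is how Corollary~\ref{cor:explicit} is proved). The present model, however, has vanishing orbit sum, so its section-free equation is homogeneous and does not produce $Q(x,y)$ as the non-negative part of any explicit rational series. A plausible route around this is to first guess and certify equations for a richer object, such as one of the sections $Q(x,0)$ or $Q(0,y)$, and then use the functional equation together with the vanishing of the orbit sum to determine $Q(x,y)$ uniquely before specializing to $(0,0)$, in the spirit of the proof of Gessel's conjecture in~\cite{BoKa10}. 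Carrying this out is the main difficulty and the reason the statement currently remains a conjecture.
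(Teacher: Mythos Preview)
Your analysis is accurate, but note that the paper does not \emph{prove} this statement either: it is explicitly labeled a Conjecture, and the paper arrives at the formula by exactly the route you describe. They guess the order-5 operator $L_e$ for $Q(0,0;t^{1/2})$, observe that it factors as a product $L_2^{(1)}L_2^{(2)}L_1$ and, more usefully, as the LCLM of the three explicit operators of orders $1,2,2$ displayed just before the conjecture, then apply the $_2F_1$-solving algorithms of \cite{BoChHoPe11,ImHo17,BoChHoKaPe17} to the second-order factors. This produces the hypergeometric expression, and matching initial terms fixes the constants $1/(3t)$ and $-1/(6t)$.

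You have correctly identified both the strategy and the genuine obstruction. The paper agrees with you that the missing ingredient is a rigorous proof that $L_e$ annihilates $Q(0,0;t^{1/2})$: since the orbit sum vanishes for this model, there is no rational-function diagonal representation of $Q(x,y)$ to feed into creative telescoping, in contrast to Corollary~\ref{cor:explicit}. Your suggested workaround (guess-and-certify for a section such as $Q(x,0)$, then propagate via the functional equation, as in the computer-algebra proof of Gessel's conjecture) is indeed the natural candidate, and the paper alludes to this possibility in its discussion of the nine exceptional models but does not carry it out. So your proposal is not a proof, but it is a faithful account of why the statement is conjectural and what a proof would require.
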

\noindent{\bf Remark.} The first hypergeometric term above can be
rewritten with a simpler argument, as
\[
\frac 1 {\sqrt{1-12t}}\ \twoFone{\frac16}{\frac13}{1}{\frac{-108\,t(1+4t)^2}{(1-12t)^3}}=
\twoFone{\frac16}{\frac13}{1}{108t^2(1+4t)}.
\]
Moreover, the square of this power series is known to count excursions of
the face centered cubic lattice~{\cite[Appendix~A]{BoBoChHaMa13}, see also~\cite[\S4]{Joyce01}.}
This is entry A002899 in the on-line
encyclopedia of integer sequences~\cite{oeis}.
 {The guessed operator $L_e$ is the minimal-order operator canceling the
    conjectured series.}
 The leading coefficient of the operator $L_w$  contains the factor
\[
t^5\,(4\,t-1)\,(4\,t^2+1)^2\,(16\,t^3+8\,t^2+11\,t-4)^2\,(12\,t^2-1)^4
,\]
which is compatible with $\mu=2\sqrt{3}$.

\medskip
\noindent\paragraph{$\bullet$ \bf Case $\boldsymbol{\cK_3=\{ 
\bar 2 \bar 1, \bar 1 \bar 2, \bar 1 \bar 1, 0 1, 1 0 \}}$}	

  The excursion generating function   starts
\[Q(0,0) = 1+2\, t^3+6\, t^4+16\, t^6+ 122\, t^7+236\, t^8+ O(t^9)\]
while
\[Q(1,1) = 1+2\, t+4\, t^2+10\, t^3+32\, t^4+98\, t^5+292\, t^6+O(t^7).\]
The model is strongly aperiodic, with growth constant $\mu \sim 4.03$
for both sequences, where $\mu$ is the  unique positive root
of $4069 +768\, u-6\, u^2+u^3-27u^4$. 

The leading coefficient of
$L_e$ is 
\[ t^8 \, (1 + t^2)^2 \, (4069 \, t^4+768\, t^3-6\, t^2+t-27)^2 
 \times \left( \text{irreducible poly. of degree 32} \right),\]
and vanishes at $t=1/\mu$. 
Similarly, the leading coefficient of $L_w$  is
\[
t^{13} \,  (1- 5\, t) \,  (t^2+1)^2 \, 
(4069 \, t^4+768\, t^3-6\, t^2+ t-27)^4 \,  
(23\, t^3 + 32\, t^2 + 8\, t -4)^4 \,
 \times \left( \text{irreducible poly. of degree 263} \right).\]

\medskip
\noindent\paragraph{$\bullet$ \bf Case $\boldsymbol{\cK_4=\{
\bar 2 0, \bar 1\bar 1, \bar 1 0, 0 \bar 2, 0\bar 1, 11 \}}$}	

The excursion generating function   starts
\[Q(0,0) = 1+ t^2+2\, t^3+4 \, t^4+24\, t^5+37\, t^6+276\, t^7+O(t^8)\]
while
\[Q(1,1) = 1+t+4\, t^2+11\, t^3+42\, t^4+148\, t^5+576\, t^6+O(t^7).\]
The model is strongly aperiodic, with growth constant $\mu \sim 4.91$
for both sequences, where  $\mu $ is the largest  positive root of
$ 405-108\, u-72\, u^2+u^3+3u^4$.

 The leading coefficient of
$L_e$ is 
\[ t^8 \, (65\, t^2+8\, t+16)^2 \, (405\, t^4-108\, t^3-72\, t^2+t+3)^4 
 \times \left( \text{irreducible poly. of degree 66} \right),\]
and vanishes at  $t=1/\mu$.
Similarly, the leading coefficient of $L_w$  contains the factor
\[
t^{17} \,
(1-6t) \, 
(405\, t^4-108\, t^3-72\, t^2+t+3)^8 \, 
(3\, t^3+4\, t^2+20\, t-4)^6 \, 
(65\, t^2+8\, t+16)^2
.\]

\medskip
\noindent\paragraph{$\bullet$ \bf Case $\boldsymbol{\cK_5=\{
\bar 2\bar 1, \bar 2 0, \bar 1\bar 2, \bar 1\bar 1, 0\bar 2,
01, 10, 11 \}}$}	

The excursion generating function starts
\[Q(0,0) = 1+ t^2+8\, t^3+10\, t^4+106\, t^5+467\, t^6+1850\, t^7+O(t^8)\]
while
\[Q(1,1) =1+3\, t+10\, t^2+51\, t^3+260\, t^4+1350\, t^5+7568\, t^6+O(t^7).\]
The model is strongly aperiodic, with growth constant $\mu=
2\,\sqrt{3} +8/3^{3/4}\approx 6.97$ for both sequences. The
value $\mu   $ is
the unique positive  root of $208+4608 \,u+648\, u^2-27\,u^4$.

In this case we only have conjectures modulo $p=2147483647$ for both $e_n$
  and $q_n$.
For excursions, rational reconstruction shows that the leading coefficient of the operator $L_e$ contains the factor
\[
t^{23} \,
(4\, t^2 + 1)^4 \,
(208\, t^4+4608\, t^3+648\, t^2-27)^7
\]
which vanishes at $t=1/\mu$.
{Similarly, the  leading coefficient of $L_w$} contains the factor
\[
t^{35} \,
(8\, t - 1) \,
(4\, t^2+1)^4 \,
(64\, t^3+16\, t^2+11\, t-2)^{10} \,
(208\, t^4+4608\, t^3+648\, t^2-27)^{12} \,
\]
which vanishes again at $t=1/\mu$.

%===============================================================
\subsubsection{Four models  of the Gessel type.}
%=======================================================

The remaining 4 models, shown on the right of Table~\ref{tab:finite} {and in
Table~\ref{tab:gessel-models}}, do not have a symmetry property. They are
obtained from the models shown on the left of {Table~\ref{tab:finite}} (solved
in Section~\ref{sec:works}) by a reflection in a horizontal line. By
Proposition~\ref{prop:sym}, their orbit type is $O_{12}$ for the first three,
and $O_{18}$ for the last one. More precisely, in the first three cases the
orbit consists of
\[
\begin{array}{ccc}
(x, y) &(x_1 , y)  & (x_2 , y) \\
(x, \bx^e\by)  &(-\bxun , \bx^e\by)  &(-\bxde  , \bx^e\by)  \\
(x_1 , \bx_1 ^e\by)  &(-\bx, \bx_1 ^e\by) &(-\bxde  , \bx_1 ^e\by) \\
(x_2 , \bx_2 ^e\by)  &(-\bx, \bx_2 ^e\by) &(-\bxun , \bx_2 ^e\by) 
\end{array}
\]
where $x_1, x_2$ are the two solutions of $S(X,y)=S(x,y)$ (different
from $x$), $e=2$ for the first model and $e=1$ for the next two. In the
fourth case, the orbit consists of
\[
\begin{array}{ccc}
(x,y) & (x_1,y) & (x_2, y) \\
(x, \bx ^3\by) & (xu_3, \bx ^3\by) & (xu_4, \bx ^3\by) \\
(x_1, \bx _1^3\by) &(x_1u_3, \bx _1^3\by) &(x_1u_4, \bx _1^3\by) \\
(x_2, \bx _2^3\by) &(x_2u_3, \bx _2^3\by) &(x_2u_4, \bx _2^3\by) \\
(xu_3,\bu _3^3y) &(x_1u_3,\bu _3^3y) &(x_2u_3,\bu _3^3y) \\
(xu_4,\bu _4^3y)&(x_1u_4,\bu _4^3y)&(x_2u_4,\bu _4^3y)
\end{array}
\]
where
\[
x_{1,2}= \frac{1\pm\sqrt{1+4x^3y}}{2x^2y} \qquad \hbox{and} \qquad
u_{3,4}= \frac{y \pm\sqrt{y^2+4y}}{2}.
\]
For each of these four models, there exists a unique section-free
equation, and its right-hand side vanishes.

Theorem~\ref{thm:exponent} gives for each model the excursion constant $\mu$
and the corresponding exponent, which is $\alpha_e=-5/2$ for the first three
models, and $\alpha_e=-7/3$ for the last one. Only the third model is strongly
aperiodic, {the other models having respectively period $m=2$
  (first model), $m=4$ (second model) and $m=3$ (last model)}. But it appears {numerically} that an asymptotic estimate
$q(0,0;n)\sim \kappa\, \mu^n n^{\alpha_e}$ holds in all cases (provided $n$ is
a multiple of $m$ in the periodic cases).
The growth constant $\bar \mu$ of the sequence $(q_n)$ can be
determined using the results of~\cite{garbit-raschel,JoMiYe18}: in all four
cases, it is larger than the excursion constant~$\mu$. Observe that the \emm
drift, $(S_x(1,1), S_y(1,1))$ is always of the form $(-\delta, 0)$ with
$\delta$ positive. The second component being $0$, we cannot apply the result
of~\cite[Ex.~7]{duraj}, and indeed, the walk exponent $\alpha_w$, {that we
conjecture numerically}, turns out to differ from $\alpha_e$. In fact, we
believe that for each of the four models,
\[
q_n \sim K \bar \mu^n n^{-3/2}.
\]
with a constant $K$ that depends on $n\!\mod m$ {in the periodic cases.}

\medskip

\noindent{\bf What we have done.} We have applied to these four models the
same guessing procedures as for the Kreweras-like models. Remarkably, we
discovered two {possibly} algebraic models among them. More precisely, for the
second and third models, the series $Q(0,0)$ seems to be algebraic of
degree~$32$. But it must be noted that in contrast with Kreweras-like models,
for three of the four models we could not guess any recurrence for the
sequence $(q_n)$, even modulo the prime $p=2147483647$.

\begin{table}[htb]
\begin{tabular}{|@{}C@{}c|c@{}c@{}cc|cccc|}
  model & $m$ &  $e_n$ &  $Q(0,0)$  &alg.& $\alpha_e$ &$q_n$ &  $Q(1,1)$  &alg.& $\alpha_w$ 
  \\
  \hline
 $\diag{-2-1,-10,01,10}$     & 2 & $[8,5]$ & \begin{tabular}[t]{c}$[9,18]$
      \\ red. min.
  \end{tabular}& no & $-5/2$ & $[46,176]$& \begin{tabular}[t]{c}$[17,400]$
      \\ red. min.
    \end{tabular}& no & $-3/2$ ?
  \\ 
 $\diag{-2-1,-11,0-1,11}$     & 4  &$[2,12]$ & \begin{tabular}[t]{c}$[8,13]$
      \\ irred.
  \end{tabular}& $[32,14]$ & $-5/2$ & $?$& $?$ & ? & $-3/2$ ?
  \\ 
  $\diag{-2-1,-10,-11,0-1,11}$    & 1  &$[12,37]$ & \begin{tabular}[t]{c}$[9,52]$
      \\ irred.
  \end{tabular}& $[32,57]$ & $-5/2$ & $?$& $?$ &?  & $-3/2$ ?
  \\ 
   $\diag{-2-1,-20,10,11}$    & 3  &$[23,572]^\star$ & \begin{tabular}[t]{c}$[48,589]^\star$
      \\
  \end{tabular}& ? & $-7/3$ & $?$& $?$ & ? & $-3/2$ ?  
  \end{tabular}
\vskip 4mm
   \caption{The four Gessel-like models, with their periods~$m$. 
     The table gives, for each
     sequence $(e_{mn})$ and $(q_n)$, and for the associated series $Q(0,0;t^{1/m})$
     and $Q(1,1;t)$, the order and degree of the guessed recurrence
     relation or differential  equation (in the two algebraic cases,
     the first/second value is the degree in the series/variable). A star indicates that we have {only guessed} recurrences or differential equations 
modulo $p=2147483647$.
    }
    \label{tab:gessel-models}
  \end{table}

Our results are summarized in Table~\ref{tab:gessel-models}, and completed
with a few details below.

\medskip
\noindent\paragraph{$\bullet$ \bf Case $\boldsymbol{\cG_1=\{
\bar 2\bar 1, \bar 1 0, 01, 10  \}}$}	

The excursion generating function starts
\[ Q(0,0) = 1+t^2+5\, t^4+27\, t^6+188\, t^8+1414\, t^{10}+O(t^{12})\]
while
\[ Q(1,1) = 1+2\, t+5\, t^2+13\, t^3+38\, t^4+112\, t^5+346\, t^6+1071\, t^7+O(t^8).\]
The model has period $m=2$, and $e_n=0$ if $n$ is odd. The growth
constant is $\mu=2\sqrt{3}$ for the excursion  sequence, and 
\begin{equation}
  \label{bar-mu-G1}
\bar \mu= \frac{\sqrt[3]{6371+624 \, \sqrt{78}}}{12} 
+ \frac{217}{12 \, \sqrt[3]{6371+624 \, \sqrt{78}}}
+\frac{11}{12}
\, \sim 3.61
\end{equation}
for all quadrant walks. The value $\bar \mu$ is the unique (positive)
real root of $16+8u+11u^2 -4u^3$.

The leading coefficient of the operator
$L_e$ annihilating $Q(0,0;t^{1/2})$ is 
\[ t^5 \, (1 + 4\, t)^3 \, (1-12\, t)^5 
\, (279936\, t^5-62208\, t^4+13608\, t^3-5796\, t^2+675\, t-20)
\]
where the factor $(1-12\, t)$ is
compatible with the growth constant $\mu$.
 The leading coefficient of~$L_w$ is
\[
t^{10} \,  
(1 + 4\, t) \,  
(1 - 4\, t)^4 \,  
(1 + 4 \, t^2)^5 \,  
(1 - 12 \, t^2)^9 \,  
\, (16\, t^3+8\, t^2+11\, t-4)^4 \,
\times \left( \text{irreducible poly. of degree 345} \right),\]
which is compatible with the value of $\bar \mu$.

%====================================================================
\medskip
\noindent\paragraph{$\bullet$ \bf Case $\boldsymbol{\cG_2=\{
\bar 2\bar 1, \bar 1 1, 0\bar 1, 11 \}}$}	

The excursion generating function starts
\[ Q(0,0) = 1+5\, t^4+190\, t^8+11892\, t^{12} +939572\, t^{16}+O(t^{20})\]
while
\[ Q(1,1) = 1+ t+3\, t^2+8\, t^3+24\, t^4+65\, t^5+211\, t^6+649\, t^7+O(t^8).\]
The model has period $m=4$, and $e_n=0$ if $n$ is not a multiple of
$4$. The growth constant of excursions is $\mu=8/3^{3/4}$, while the
constant for all quadrant walks is again given by~\eqref{bar-mu-G1}.

For the nontrivial subsequence $(u_n) = (e_{4n})$ we have guessed  that
\begin{multline*}
3\, (6\, n+11)\, (18\, n+41)\, (2\, n+5)\, (3\, n+7)\, (18\, n+35)\, (6\, n+13)\, (n+2)\, (18\, n+29)\, \\
(41472\, n^4+150144\, n^3+200864\, n^2+117704\, n+25491)\, u_{n+2}-\\
(47552535724032\, n^{12}+\
798266178404352\, n^{11}+
6092888790269952\, n^{10}+27954969361514496\, n^9 \\
+
85850716160655360\, n^8 + 185860480394330112\, n^7 + 290753615920332800\, n^6
+ \\
331020927507759104\, n^5 + 272073153165252608\, n^4 + 157356059182977536\, n^3+ \\
60749526504280448\, n^2 + 14046784950077600\, n + 1470033929525700)\, u_{n+1} +\\
1048576\, (12\, n+5
)\, (4\, n+1)\, (3\, n+2)\, (2\, n+1)\, (12\, n+11)\, (4\, n+3)\, (6\, n+7)\, (n+1)\, \\ 
(41472\, n^4+316032\, n^3+900128\, n^2+1135752\, n+535675) u_n = 0.
\end{multline*}

Remarkably, the series $E(t):=Q(0,0; t^{1/4})$ appears to be algebraic,
of degree 32. More precisely, $E(t)^2$ seems to have degree 16 and to
satisfy an equation $P(t,E(t)^2)=0$ with coefficients of degree at
most 14 in $t$.
The guessed polynomial $P(t,z)$ seems plausible because: it
has a small bitsize compared to the bitsize of the {expansion of
  $E(t)$ that we used to 
produce it; we have then checked, using more terms of $E(t)$, that} it annihilates $E(t)^2$ to much higher
orders; its discriminant factors as
\[
t^{418} \,
(268435456 \, t^3+57671680 \, t^2-69632 \, t-27)^2 \,
(4096 \,t - 27)^{48}\,
 \times \left( \text{irreducible poly. of degree 31} \right)^4,
\]
which is compatible with the value of $\mu$.
Moreover,  $P(t,z)$ defines a rational curve,
parametrized by
\[
t = \frac{U\, (1-2\,U)^3\,(1-3\,U)^3\,(1-6\,U)^9}{(1-4\,U)^4}, \qquad
z = \frac{(1-4\,U)^2\,(1-24\,U+120\,U^2-144\,U^3)^2}{(1-3\,U)^2\,(1-2\,U)^3\,(1-6\,U)^9}.
\]
This leads to the following conjectural statement.
\begin{Conjecture}
For the model
$\cS=\{\bar 2\bar 1, \bar 1 1, 0\bar 1, 11 \}$,
the excursion generating function
$Q(0,0;t)$ is equal to
\[
\frac{(1-4\,U)\,(1-24\,U+120\,U^2-144\,U^3)}{(1-3\,U)\, (1-2\,U)^{3/2}\,(1-6\,U)^{9/2}},
\]
where $U = t^4 +53\, t^8+4363\, t^{12} + \cdots$
is the unique power series in $\qs[[t]]$ satisfying
\[
{U\, (1-2\,U)^3\,(1-3\,U)^3\,(1-6\,U)^9} = t^4 \, {(1-4\,U)^4}.
\]
\end{Conjecture}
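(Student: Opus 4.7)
The plan is to elevate the conjecture from $Q(0,0;t)$ to the bivariate series $Q(x,y;t)$ and verify the resulting algebraic identity against the full functional equation of the model. Since the orbit sum vanishes for this model, the section-free equation alone cannot characterize $Q(x,y;t)$. However, the functional equation defining $Q(x,y;t)$ from a step-by-step construction characterizes it uniquely in $\qs[x,y][[t]]$. So the first step would be to perform analogous automated guessing to propose algebraic equations not only for $Q(0,0;t)$ but also for the three boundary sections appearing in the functional equation, namely $Q(x,0;t)$, $Q(0,y;t)$, and $Q_{1,-}(y;t)$, and ideally for $Q(x,y;t)$ itself.

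Once such candidates are available, I would parametrize them by the auxiliary variable $U$ determined by
\[
U\,(1-2U)^3\,(1-3U)^3\,(1-6U)^9 \;=\; t^4\,(1-4U)^4,
\]
possibly augmented by further algebraic variables needed to express the $x$- and $y$-dependence (the Puiseux roots $x_1,x_2$ of $S(X,y)=S(x,y)$ are natural candidates, since they already organize the orbit). The candidates would then be substituted into the functional equation and the resulting identity reduced modulo the defining ideal of the parametrization using resultants or Gr\"obner bases. Vanishing after reduction, combined with the uniqueness of the power series solution of the functional equation, would prove the conjecture; specializing at $x=y=0$ then yields the stated formula for $Q(0,0;t)$.

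The principal obstacle is the guessing step for the bivariate series or its sections. The univariate algebraic equation for $Q(0,0;t^{1/4})$ already has degree $32$ with coefficients of degree~$14$, and bivariate analogues are likely to be substantially larger. Even modulo a large prime, producing enough Taylor coefficients of $Q(x,y;t)$ to rigorously recover such equations may be prohibitive, and the rational parametrization by $U$ may not extend cleanly to the sections without introducing further nested radicals tied to the orbit elements. A more tractable alternative would be to aim for D-finite verification rather than algebraic substitution: confirm that the proposed expression satisfies the minimal-order linear operator $L_e$ reported in Table~\ref{tab:gessel-models}, match a sufficient number of initial Taylor coefficients, and separately prove that $Q(0,0;t)$ itself is annihilated by $L_e$. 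The latter is the hardest single ingredient and would presumably proceed via a creative-telescoping argument applied to a constant-term representation derived from the orbit equations; the fact that no clean such representation falls out of the (vanishing) section-free equation is precisely what makes this model combinatorially challenging.

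Finally, a conceptually distinct route worth exploring, by analogy with the known proofs of algebraicity for Gessel's small-step walks, is to seek a non-section-free \emph{weighted} combination of the twelve orbit equations of type $O_{12}$ whose right-hand side, though nonzero, involves the sections in a decoupled way that admits an explicit algebraic resolution via the quadratic method. Such a half-orbit-sum would bypass the need to pre-guess algebraic equations for the sections, at the cost of needing a combinatorially motivated choice of weights; the rational parametrization in the conjecture suggests that the underlying curve has genus zero, which is a favourable indication that such a decoupling exists.
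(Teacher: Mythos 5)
The statement you are asked to prove is, in the paper, explicitly labelled a \emph{Conjecture}: the paper offers \emph{no proof}. What the paper actually does is computer-assisted guessing. Starting from many Taylor coefficients of $Q(0,0;t)$ (computed exactly and modulo a large prime), the authors guess a bivariate polynomial $P(t,z)$ of degree $16$ in $z$ and degree $\le 14$ in $t$ satisfying $P(t,E(t)^2)=0$ for $E(t)=Q(0,0;t^{1/4})$, validate it heuristically (small bitsize, annihilation to high order, factorization of the discriminant at $t=1/\mu$), then observe that the curve $P=0$ has genus zero and produce a rational parametrization, from which the displayed closed form with the uniformizer $U$ falls out directly. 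No rigorous justification is attempted.

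Your proposal is therefore not comparable to a proof in the paper, because none exists. What it does is lay out strategies for turning the conjecture into a theorem, and these are sound but not executed. Your first route (guess algebraic equations for the bivariate series $Q(x,y;t)$ or at least for the sections $Q(x,0)$, $Q_{0,-}(y)$, $Q_{1,-}(y)$, then verify them against the uniquely-characterizing functional equation) is precisely the ``automated proof'' part that the paper itself mentions as the standard complement to guessing --- and explicitly declines to carry out because of the expected size of the equations. Your analysis of the obstacle is accurate: degree $32$ already for the single specialization $Q(0,0;t^{1/4})$, with no guessable equation found for $Q(1,1;t)$ even modulo a prime, makes the bivariate object very likely out of reach with current tools. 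Your second route (verify the proposed closed form against $L_e$, match initial conditions, and separately establish that $L_e$ annihilates $Q(0,0)$) correctly isolates the genuinely hard step: one would need a constant-term or integral representation of $Q(0,0)$ amenable to creative telescoping, and the vanishing orbit sum is exactly what denies us the clean representation available for the $231$ solved models. Your third route (a weighted, non-section-free combination of the twelve $O_{12}$ orbit equations whose right-hand side decouples the sections, enabling a quadratic-method-style resolution) is a reasonable analogy to the small-step algebraic models and is not considered in the paper; the genus-zero observation is an encouraging sign, though there is no guarantee such a decoupling combination exists. In short: you correctly recognize why the statement resists the paper's uniform method, and you supply more proof strategy than the paper does, but like the paper you do not close the gap --- the conjecture remains a conjecture.
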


{As mentioned above, we could not guess any differential nor algebraic
equation for $Q(1,1;t)$, even with 100\,000 terms and modulo
$p{=2147483647}$.}

%====================================================
\medskip
\noindent\paragraph{$\bullet$ \bf Case $\boldsymbol{\cG_3=\{
\bar 2\bar 1, \bar 1 0, \bar 1 1, 0\bar 1, 11 \}}$}	
The excursion generating function starts
\[ Q(0,0) = 1+2\, t^3+5\, t^4+16\, t^6+107\, t^7+190\, t^8+O(t^{9})\]
while
\[ Q(1,1) = 1+t+4\, t^2+12\, t^3+39\, t^4+133\, t^5+485\, t^6+1746\, t^7+O(t^8).\]
This model is strongly {aperiodic}. The growth constants are $\mu  \sim
4.03$, the  unique positive root of 
$4069+768\,u-6\,u^2+u^3-27\,u^4$, and
\[ \bar \mu  = 
\frac{\sqrt[3]{1261+57 \, \sqrt{57}}}{6} 
+ \frac{56}{3 \, \sqrt[3]{1261+57 \, \sqrt{57}}}
+\frac{2}{3}
\, \sim 4.22.
\]
This is the unique (positive) real root of $4\, u^3-8\, u^2-32\, u-23$.

Again, $Q(0,0; t)$ appears to be algebraic of degree $32$, this time
with coefficients of degree 57.
The guessed polynomial seems plausible for various reasons, including the nice factorization of its discriminant as
\begin{multline*}
t^{1732} \,
(t^2+1)^{32} \,
(4069\, t^4+768\, t^3-6\, t^2+t-27)^{48} \\
\times \left( \text{irreducible poly. of degree 13} \right)^2
 \times \left( \text{irreducible poly. of degree 31} \right)^4,
\end{multline*}
which vanishes at $t=1/\mu$.

\medskip \noindent{\bf Remark.} There are analogies between excursions of this
model and those of the Kreweras-type model ${\cK_3=\{ \bar 2 \bar 1, \bar 1
\bar 2, \bar 1 \bar 1, 0 1, 1 0 \}}$. Indeed, the sizes of the recurrence
relation and of the differential equation match. The growth constant and the
singular exponent are also the same for both models.

%===============================================
\medskip
\noindent\paragraph{$\bullet$ \bf Case $\boldsymbol{\cG_4=\{
\bar 2\bar 1, \bar 2 0, 10, 11 \}}$}	

The excursion generating function starts
\[ Q(0,0) = 1+3\, t^3+41\, t^6+850\, t^9+21538\, t^{12}+614530\, t^{15}+O(t^{16})\]
while
\[ Q(1,1) = 1+2\, t+4\, t^2+15\, t^3+45\, t^4+121\, t^5+471\, t^6+1533\, t^7+O(t^8).\]
The model has period $m=3$, and $e_n=0$ if $n$ is not a multiple of $3$. The
growth constants are $\mu=9/2^{4/3}$ and $\bar \mu= 3 \cdot 2^{1/3}$.

The leading coefficient of $L_e$ contains the factor $t^{42} \, (729\,
t-16)^{23}$ which is compatible with the value of $\mu$.

%%%%%%%%%%%%%%%%%%%%%%%%%%%%%%%%%%%%%%%%%%%%%%%%%%%%%%%%%%%%%%%%%
\section{A glimpse at asymptotics}
\label{sec:asympt}
%%%%%%%%%%%%%%%%%%%%%%%%%%%%%%%%%%%%%%%%%%%%%%%%%%%%%%%%%%%%%%%%%%%%%ù

The method that we develop in this paper {provides} expressions for \gfs\ of
walks confined to an orthant, as positive parts of certain rational or
algebraic series. We now demonstrate that these expressions are often well
suited to a multivariate singularity analysis. The use of analytic techniques
in this fashion is the domain of \emph{analytic combinatorics in several
variables} (ACSV)~\cite{PemantleWilson2013}; recent work has shown the
strength of this approach, proving conjectures in lattice path
asymptotics~\cite{MelczerWilson2016}, generalizations in higher
dimensions~\cite{MelczerMishna2016}, and handling families of models with
weighted steps~\cite{CourtielMelczerMishnaRaschel2017}. Much of the
singularity analysis is effective~\cite{Melczer2017} when the multivariate
generating function under consideration is represented in the form
$Q(x,y;t)=[x^{\geq }y^{\geq}]R(x,y;t)$ for a {\emm rational,} function
$R(x,y;t)$. Although some asymptotic techniques have been developed to perform
a singularity analysis on multivariate functions with algebraic
singularities~\cite{Greenwood2018}, this is a more difficult task. For the
purposes of this paper, we show how dominant asymptotics for the number of
walks in the four models of Section~\ref{sec:works} can be determined through
the simple use of analytic techniques. {We focus on the series $Q(1,1)$
counting all quadrant walks.} Future work could extend this argument to deal
with the multivariate algebraic functions which arise, for instance, in the
generating functions for {2D} Hadamard models given by
Proposition~\ref{prop:hadamard}.

The first step is to convert our expression of the form $Q(x,y;t)=[x^{\geq }y^{\geq}]R(x,y;t)$ for the multivariate generating function $Q(x,y;t)$
into an expression for the univariate generating function $Q(1,1;t)$
which is amenable to asymptotic computations.  Given an element
\begin{equation}
  R(x,y;t) = \sum_{n \geq 0} \left( \sum_{i,j} r(i,j;n)x^iy^jt^n
  \right) \in \mathbb{Q}[x,\bx,y,\by][[t]], \label{eq:takediag}
\end{equation}
the \emph{diagonal} operator $\Delta$ takes $R(x,y;t)$ and returns the
univariate power series $(\Delta R)(t) := \sum_{n \geq 0}
r(n,n;n)t^n$. The relationship between positive parts and diagonals
is given by the following lemma.

\begin{Lemma} \label{lem:diag}
Given $R(x,y;t)$ as in~\eqref{eq:takediag}, and $(a,b) \in \{0,1\}^2$, one has
\[ [x^\geq y^\geq]R(x,y;t)\bigg|_{x=a,y=b} = \Delta\left( \frac{R\left(\bx,\by;xyt\right)}{(1-x)^a(1-y)^b}\right). \]
\end{Lemma}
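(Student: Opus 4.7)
The plan is to verify both sides by direct coefficient extraction, since both operations (positive part followed by specialization, and multiplication by $1/((1-x)^a(1-y)^b)$ followed by diagonal) are $\mathbb{Q}$-linear in the coefficients $r(i,j;n)$. So it suffices to fix $n$ and check the identity on a single monomial $r(i,j;n)\,x^i y^j t^n$; by linearity (and formal-series convergence, which is automatic since everything is a power series in $t$) the general case follows.

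First I would compute the left-hand side. Positive-part extraction restricts the sum to $i,j\ge 0$, and then the four specializations $(a,b)\in\{0,1\}^2$ yield respectively
\[
\sum_{n\ge 0} r(0,0;n)\,t^n,\quad \sum_{n\ge 0}\sum_{i\ge 0} r(i,0;n)\,t^n,\quad \sum_{n\ge 0}\sum_{j\ge 0} r(0,j;n)\,t^n,\quad \sum_{n\ge 0}\sum_{i,j\ge 0} r(i,j;n)\,t^n,
\]
corresponding to $(a,b)=(0,0),(1,0),(0,1),(1,1)$.

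Next I would transform the right-hand side. The substitution $x\mapsto \bar x$, $y\mapsto \bar y$, $t\mapsto xyt$ yields
\[
R(\bar x,\bar y;xyt)=\sum_{n\ge 0}\sum_{i,j} r(i,j;n)\,x^{n-i}y^{n-j}t^n.
\]
Since $R\in\mathbb{Q}[x,\bar x,y,\bar y][[t]]$, the indices $i,j$ in the sum for each fixed $n$ are bounded, so multiplying by $1/(1-x)^a$ and $1/(1-y)^b$ (expanded as $\sum_{k\ge 0} x^k$ when needed) produces a well-defined element of $\mathbb{Q}[x,\bar x,y,\bar y][[t]]$, on which $\Delta$ acts by extracting the coefficient of $x^n y^n$ in the coefficient of $t^n$.

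Then I would carry out the diagonal extraction in each of the four cases. For $a=b=0$, the condition $[x^n y^n]\,x^{n-i}y^{n-j}=1$ forces $i=j=0$, giving $\sum_n r(0,0;n)t^n$. For $a=1,b=0$, the extra factor $\sum_{k\ge 0} x^k$ means $[x^n y^n]\,x^{n-i+k}y^{n-j}=1$ forces $j=0$ and $k=i\ge 0$, giving $\sum_n\sum_{i\ge 0} r(i,0;n)\,t^n$. The cases $(a,b)=(0,1)$ and $(1,1)$ are symmetric and fully analogous. In every case the output matches the left-hand side computed above.

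There is no real obstacle here; the only point requiring a moment of care is that $R$ must lie in $\mathbb{Q}[x,\bar x,y,\bar y][[t]]$ (not $\mathbb{Q}[[x,\bar x,y,\bar y,t]]$) in order for the substitution $t\mapsto xyt$ and the subsequent multiplication by geometric series in $x$ and $y$ to produce a legitimate element on which $\Delta$ is defined; this hypothesis is exactly what is stated in the lemma, so the argument goes through cleanly.
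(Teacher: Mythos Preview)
Your proof is correct and is precisely the kind of direct formal-series manipulation that the paper has in mind: the paper itself does not spell out a proof, stating only that ``the proof follows from basic formal series manipulations'' and referring to~\cite{MelczerMishna2016} for details. Your coefficient-by-coefficient verification via the monomial substitution $x\mapsto\bx$, $y\mapsto\by$, $t\mapsto xyt$ and the geometric-series expansion of $(1-x)^{-a}(1-y)^{-b}$ is exactly that argument made explicit.
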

The proof follows from basic formal series manipulations; see
Proposition 2.6 of~\cite{MelczerMishna2016} for details.  In
particular, this lemma, combined with the expressions obtained for
$Q(x,y;t)$ in Section~\ref{sec:works}, gives us diagonal representations for
the generating functions  of quadrant walks ending anywhere $(a=b=1)$,
returning to the origin (excursions, $a=b=0$), or returning to the
$x$- or $y$-axes ($a=1$, $b=0$ or $a=0$, $b=1$). 

At its most basic level, the theory of ACSV takes a multivariate Cauchy
residue integral representation for power series coefficients and reduces it
to an integral expression where saddle-point techniques can be used to
determine asymptotics. Because of the simple rational functions which are
obtained for many lattice path models, the usual analysis can be greatly
simplified. In particular, for each of the four models detailed in
Section~\ref{sec:works} we obtain the generating function $Q(1,1;t)$ as a
diagonal of the form
\[
Q(1,1;t) = \Delta
\left(\frac{P(x,y)}{(1-x)(1-y)(1-txyS(\bx,\by))}\right),
\]
where $P(x,y)$ is a Laurent polynomial which is coprime with $1-x$ and $1-y$. 
Expanding the rational function on the right-hand side of this equation as a 
power series in $t$ then gives
\[ 
q_n = [t^n] Q(1,1;t) = [x^ny^nt^n]
\left(\frac{P(x,y)}{(1-x)(1-y)(1-txyS(\bx,\by))}\right) =
[x^0y^0]\frac{P(x,y)S(\bx,\by)^n}{(1-x)(1-y)}, 
\]
and the multivariate Cauchy integral formula~\cite[Prop. 7.2.6]{PemantleWilson2013} implies
\begin{align*} 
q_n &= \frac{1}{(2\pi i)^2}\int_{|x|=r_1,|y|=r_2}
      \frac{P(x,y)S(\bx,\by)^n}{(1-x)(1-y)} \cdot \frac{\mathrm
      dx\,\mathrm dy}{xy} \notag \\[+1mm]
&= \frac{1}{(2\pi i)^2} \int_{|x|=r_1,|y|=r_2}
  \frac{P(x,y)}{xy(1-x)(1-y)}e^{n \log S(\bx,\by)} \mathrm d
  x\,\mathrm d y 
\end{align*}
for any $0 < r_1,r_2 < 1$.  
{Making the substitutions $x=r_1e^{i\theta_1}$ and $y=r_2e^{i\theta_2}$
converts this integral into a
\emm Fourier-Laplace, integral; that is, an integral of the form
\[
\int_T A(\theta_1,\theta_2) e^{-n \phi(\theta_1,\theta_2)}
\mathrm d\theta_1 \mathrm d\theta_2 .
\]
Here  $T = [-\pi, \pi]^2$, while
\[
A(\theta_1, \theta_2)=\frac 1{(2\pi)^2} \frac{P(r_1e^{i\theta_1}, r_2
  e^{i\theta_2})}{(1-r_1e^{i\theta_1})(1-r_2 e^{i\theta_2})},
\]
and
\[
\phi(\theta_1, \theta_2):= -\log S\left(r_1^{-1} e^{-i\theta_1},r_2^{-1}
  e^{-i\theta_2}\right) .
\]
The asymptotics of Fourier-Laplace integrals have been well studied.} In
particular, suppose the \emph{amplitude} $A$ and \emph{phase} $\phi$ are
analytic functions on the domain $T$. If $\phi$ admits a non-empty finite set
of critical points\footnote{For the purposes of this discussion, points in $T$
where the gradient of $\phi$ vanishes.}, at which the Hessian of $\phi$ is
non-singular and the real part of $\phi$ is locally minimized, then explicit
asymptotic formulas in terms of the Taylor coefficients of $A$ and~$\phi$ are
known~\cite[Theorem 7.7.5]{Hormander1990a} (see also~\cite[Prop.
53]{Melczer2017} for the explicit formulas used in our calculations). Each
critical point of $\phi$ has an asymptotic contribution, and one simply sums
up the contributions of all critical points to determine dominant asymptotics
of the Fourier-Laplace integral.

For the above value of $\phi(\theta_1, \theta_2)$, the chain rule shows that
in order to find real values $r_1$ and~$r_2$ such that $\phi$ admits critical
points, it is sufficient to find the complex points $(x,y)$ such that
\begin{equation}\label{crit}
S_x(\bx,\by) = S_y(\bx,\by) = 0
\end{equation}
and take $r_1$ and $r_2$ to be their moduli. One then determines the
corresponding critical pairs $(\theta_1, \theta_2)$, that is,
  the arguments of $x$ and $y$ satisfying~\eqref{crit},
%having these moduli, 
and computes the Hessian of $\phi$
at these points. 
 In each of the
four cases that we consider there are critical points with $0<r_1,r_2<1$, and the Hessian is never singular. 
The next step is to show that the real part of $\phi(\theta_1,\theta_2)$,
\[ 
\Re \phi(\theta_1,\theta_2) = - \log
|S(r_1^{-1}e^{-i\theta_1},r_2^{-1}e^{-i\theta_2})|, 
\]
is locally minimized at critical values of $(\theta_1, \theta_2)$. 
Minimizing this quantity  means
maximizing $|S(\bx,\by)|$ on $\{(x,y): |x|= r_1, |y|=r_2\}$.  Since 
$S(\bx,\by)$ is a {(Laurent)} polynomial with non-negative coefficients, when $|x|$ and $|y|$
are fixed then $|S(\bx,\by)|$ is maximized (in particular)
when~$x$ and~$y$ are positive and real {(that is, $x=r_1, y=r_2$)}.  The triangle inequality then
shows that the maximizers of $|S(\bx,\by)|$ occur when the arguments
of all monomials occurring in 
$S(\bx,\by)$ are equal. 
{When this holds for all critical values} $(\theta_1, \theta_2)$,  explicit asymptotics can be obtained
by direct computation. {In particular, the exponential growth
associated with the critical point $(\theta_1, \theta_2)$ is
$e^{-\phi(\theta_1,\theta_2)}=S(\bx, \by)$.}

We now list our results; full details of the computations can be found in an
accompanying {\sc Maple} worksheet, available on {the authors'
webpages}\footnote{For lattice path examples with more exotic critical point
behaviour, see~\cite[Ch. 10 and 11]{Melczer2017}.}.

%===============================================
\subsection{Case $\boldsymbol{\cS=\{10, \bar 1 0, 0 \bar 1, \bar 2 1\}}$}
%===============================================
Specializing Lemma~\ref{lem:diag} to Proposition~\ref{prop:F} gives the
diagonal representation
\[ Q(1,1;t) = \Delta
\left(\frac{(x^2+1)(x^2+2xy-1)(2x^3+x^2y-y)(x^2-y^2)}{x^2y(1-x)(1-y)(1-t(x^3+x^2y+xy^2+y))}\right).
\]
Solving~\eqref{crit} for $x$ and $y$ gives two solutions with coordinates of modulus less than 1,
\[ (x,y) = \left(3^{-1/2},3^{-1/2}\right) \quad  \text{ and } \quad (x,y) =\left(-3^{-1/2},-3^{-1/2}\right), \]
along with solutions $(i,-i),$ and $(-i,i)$ which are irrelevant
to asymptotics. Taking $r_1=r_2 = 3^{-1/2}$ in the argument above, one
gets a Fourier-Laplace integral with critical points at
$(\theta_1,\theta_2) = (0,0)$ and $(\pi,\pi)$.
A direct computation shows that the
  Hessian of $\phi$ is non-singular at these critical
  points. Following the above lines, we then check that 
\[ 
{\left| S(\bx,\by)\right| =\left| \bx+x+y+x^2\by\right|}
\]
is indeed maximal on the integration domain for angles $(0,0)$ and
$(\pi, \pi)$, as desired.

The exponential growth of the resulting Fourier-Laplace integral is
given by the value of $e^{-\phi(\theta_1,\theta_2)}{= S(\bx, \by)}$ at the critical points, in this case $S(\sqrt{3},\sqrt{3}) = 2\sqrt{3}$ and $S(-\sqrt{3},-\sqrt{3}) = -2\sqrt{3}$.  One then computes successively higher order terms in an asymptotic expansion 
\[ (2\sqrt{3})^n\left( A_0 + \frac{A_1}{n} + \frac{A_2}{n^2} + \cdots \right) + (-2\sqrt{3})^n\left( A_0' + \frac{A_1'}{n} + \frac{A_2'}{n^2} + \cdots \right) \]
until finding terms which are non-zero (see~\cite[Prop. 53]{Melczer2017}). The
vanishing of the highest order terms is related to, but not completely
determined by, the order of vanishing of the amplitude $A(\theta_1,\theta_2)$
at the critical points under consideration. Ultimately, we obtain the
asymptotic expansion
\[ q_n = \frac{(2\sqrt{3})^n}{\pi n^4}\left(C_n + O\left(\frac{1}{n}\right)\right), \]  
where
\begin{equation} C_n = \begin{cases} {5616\sqrt{3}} &: n \text{ even} \\ {9720} &: n
  \text{ odd} \end{cases}. \label{eq:asm1} \end{equation}

%===========================================
\subsection{Case $\boldsymbol{\cS=\{01, 1\bar 1 , \bar 1 \bar 1,
    \bar 2 1\}}$}
%===========================================
Applying Lemma~\ref{lem:diag}  to the generating function expression in Proposition~\ref{prop:12-a} gives a diagonal representation
\[ 
Q(1,1;t) = \Delta \left(
  \frac{(2xy^2+x^2-1)(x-y^2)(x^2y^2+2x^3-y^2)}{xy^2(1-x)(1-y)(1-t(x^2y^2+x^3+y^2+x))}
\right). 
\]
This time the system of equations~\eqref{crit}
admits four solutions whose coordinates have moduli less than $1$,
\[ 
\left(3^{-1/2},3^{-1/4}\right),
  \left(3^{-1/2},-3^{-1/4}\right), \left(-3^{-1/2},i3^{-1/4}\right),
  \left(-3^{-1/2},-i3^{-1/4}\right), 
\]
all of which have coordinate-wise moduli $(r_1,r_2) = \left(3^{-1/2},3^{-1/4}\right)$. A similar analysis to the first case gives
\[ 
q_n = \frac{\left(8\cdot3^{-3/4}\right)^n}{\pi n^4}\left(C_n +
  O\left(\frac{1}{n}\right)\right), 
\]
where
\[ 
C_n = \begin{cases} 
{5120\sqrt{3}} &: n \equiv 0 \mod{4} \\
{6656 \cdot 3^{1/4}} &: n \equiv 1 \mod{4} \\
{26624}/{3} &: n \equiv 2 \mod{4} \\
{3840\cdot3^{3/4}} &: n \equiv 3 \mod{4}
\end{cases}.
\]

%===========================================
\subsection{Case $\boldsymbol{\cS=\{01, 1\bar 1 , \bar 1 \bar 1,
    \bar 2 1, \bar 1 0\}}$}
%===========================================
Specializing Lemma~\ref{lem:diag}
to Proposition~\ref{prop:model3} gives a diagonal representation
\[ Q(1,1;t) = \Delta \left( \frac{(x-y^2)(2xy^2+x^2+xy-1)(x^2y^2+2x^3+x^2y-y^2)}{xy^2(1-x)(1-y) (1-t(x^2y^2+x^3+x^2y+y^2+x)))} \right). \]
Here the system~\eqref{crit}
has four solutions $(x,y)$ with coordinates of modulus less than 1,
which make up the set
\begin{equation} \left\{ (y^2,y) : 3y^4+y^3-1=0
  \right\}. \label{eq:crit3} 
\end{equation}
The polynomial $3y^4+y^3-1$ has a unique positive root, $y_c\simeq
0.688...$, and we consider the solution  $(y_c^2,y_c)$. 
 None of the three other  solutions
 has the same coordinate-wise moduli, {hence our only critical point
  associated with moduli $(r_1, r_2)=(y_c^2,y_c)$ is $(\theta_1,
  \theta_2)=(0,0)$. The Hessian
  of~$\phi$ is not singular at $(0,0)$, and by positivity, this point
  maximizes the modulus of $S$  in $[-\pi, \pi]^2$.}
In the end, one obtains asymptotics
\[ 
q_n = \frac{(8y_c^3 +3y_c^2)^n}{2313\pi n^4}\left(C +
  O\left(\frac{1}{n}\right)\right) \approx
(1112.183\cdots)\frac{(4.03164\cdots)^n}{n^4}, 
\]
where 
\[
C = \sqrt{3}\left(2527386y_c^3 +2727881y_c^2 + 1805111y_c +
  1306017\right). 
\]
It can be checked that the three other solutions in~\eqref{eq:crit3}
are \emm not, local maximizers of $|S(\bx,\by)|$ among points with the
same coordinate-wise moduli. 

%==============================================
\subsection{Case $\boldsymbol{\cS=\{10,  1 \bar 1, \bar 2 1, \bar 2
    0\}}$}
%==============================================
Specializing Lemma~\ref{lem:diag}
to Proposition~\ref{prop:hard-18} gives a diagonal representation
\[ Q(1,1;t) = \Delta \left(
  \frac{(1-2y)(x^3-y^2)(x^6+x^3y^2+3x^3y-y)(2x^3-y)}
{x^3y^2(1-x)(1-y)(1-t(x^3y+x^3+y^2+y))} \right). \]
Here there are three 
solutions to~\eqref{crit} with moduli less than $1$: 
\[ \left(4^{-1/3},1/2\right), \quad \left(e^{2\pi
    i/3}4^{-1/3},1/2\right), \quad \left(e^{-2\pi
    i/3}4^{-1/3},1/2\right). \]
All of them have moduli $(r_1, r_2)=\left(4^{-1/3},1/2\right)$.  They
give rise to three critical points of $\phi$, where the Hessian is
non-singular and $|S(\bx, \by)|$ is maximized. An analysis similar to those above gives another periodic asymptotic expansion
\[ q_n  = \frac{\left(9 \cdot 4^{-2/3}\right)^n}{\pi n^5}\left(C_n + O\left(\frac{1}{n}\right)\right), \]
where
\[ C_n = \begin{cases} 
{216513}/2 &: n \equiv 0 \mod{3} \\
{1358127 \cdot 2^{-11/3}} &: n \equiv 1 \mod{3} \\
{124659\cdot 2^{-1/3}} &: n \equiv 2 \mod{3} 
\end{cases}.
\]

%%%%%%%%%%%%%%%%%%%%%%%%%%%%%%%%%%%%%%%%%%%%%%%%%%%%%%%%%%%%%%
\section{Final questions and comments}
\label{sec:final}
%%%%%%%%%%%%%%%%%%%%%%%%%%%%%%%%%%%%%%%%%%%%%%%%%%%%%%%%%%%%%%

We have outlined above the first general approach to count walks confined to
an orthant with {arbitrary} steps, and demonstrated its efficacy across
several families and a large number of sporadic cases. In addition to the
examples presented here, the power of this method is illustrated by the fact
that it {solves} another family of quadrant models, with steps $\cS=\{(-p,0),
(-p+1, 1), \ldots, (0,p), (1, -1)\}$, which arose naturally in other
applications; the details of this family (containing both large forward and
large backward steps) are given in~\cite{BoFuRa17}. {The current} work
attempts to lay a basis for the systematic study of lattice walks with longer
steps, and we suggest here some possible research directions.

\begin{itemize}
\item{\bf{Uniqueness of the section-free equation.}} Is it true that, for a
model with no large forward step and a finite orbit, there exists a unique
section-free equation (Conjecture~\ref{conj:section-free})? Can one describe
it generically?

\item{\bf{Walks with steps in $\boldsymbol{\{\bar 2, \bar 1, 0, 1\}^2}$.}} In
our study of these walks (Section~\ref{sec:m21}) we have left open the case of
nine models which have analogies with the four tricky-but-algebraic small step
models of Figure~\ref{fig:alg} (see Tables~\ref{tab:kreweras-models}
and~\ref{tab:gessel-models}). Can one apply to them some of the techniques
used for the small step algebraic
models{~\cite{BeBMRa-17,BoKa10,BKR-13,Bous05,mbm-gessel,BoMi10,gessel-proba,KaKoZe08,kurkova-raschel,mishna-jcta}}?
In particular, are the associated series D-finite? Which ones are algebraic?

Can one prove the non-D-finiteness of the 16 models of
Table~\ref{tab:embarassing}, {which have a rational excursion exponent but an
infinite orbit}?

\item {\bf{Walks with steps in $\boldsymbol{\{\bar 1,0,1,2\}^2 }$.}}
Symmetrically, one can examine the $14\,268$ interesting (non-isomorphic and
non-trivial) models with steps in $\{-1,0,1,2\}^2$, having at least one large
forward step. Proceeding as in Section~\ref{sec:m21} reveals that $1\,189$ of
them are included in a half-space, and thus analogous to the 5 half-space
models with small steps. Of the remaining $13\, 079$ {models}, $12\,828$ have
an irrational excursion exponent, and hence a non-D-finite \gf\ and an
infinite orbit (Section~\ref{sec:infinite-orbit}). {The $251$ that have a
rational exponent split in three families}:
  \begin{itemize}
  \item   11  have yet an infinite orbit. They are the reverses
  of the 11 models of Table~\ref{tab:embarassing} that contain a step
  in $\zs_{-}^2$ (for the other 5 models in this table, there is no non-trivial walk  starting at the origin after reversing steps);
\item 227 are Hadamard, and thus solvable by
  Proposition~\ref{prop:hadamard} and D-finite. They are the
  reverses of the 227 Hadamard models of
  Section~\ref{sec:works};
\item  13 are the reverses of the models 
  in Table~\ref{tab:finite}, and thus share their orbit structure: $O_{12}$,
  $\tilde O_{12}$ or $O_{18}$. {They also share their excursion
    \gf, which we have either proved or conjectured to be D-finite in
    all 13 cases.}
  \end{itemize}

\item {It has been proved~\cite{BoChHoKaPe17} that for the 19 small step models in
    the quadrant that are D-finite but transcendental, the series
    $Q(x,y;t)$ has an explicit expression involving integrals and
    specializations of the
    hypergeometric series $_2F_1$. For which models with larger steps
    is this still true? Corollary~\ref{cor:explicit} and Conjecture~\ref{conj:K2-ex} show
    that a similar property may indeed hold in some cases.}

\item {We have focussed in this paper on 2D examples, because
    the quadrant is already a rich source of interesting problems. But
    the four stages of the method, described in Sections~\ref{sec:eqfunc}
  to~\ref{sec:extract}, apply just as well to higher dimensional models. In fact,
  they were already successfully applied to 3D models with small steps
  in~\cite{BoBoKaMe16}.}
\end{itemize}

%%%%%%%%%%%%%%%%%%%%%%%%%%%%%%%%%%%%%%%%%%%%%%%%%%%%%%%%%%%%%%

\bigskip \noindent {\bf Acknowledgments.} We thank Axel Bacher for great help
with computations of walk sequences, {Andrew Elvey Price and Michael Wallner
for providing the parity argument of Proposition~\ref{prop:small-group},} Mark
van Hoeij for useful discussions on hypergeometric solutions of differential
equations, and J\'er\^ome Leroux for pointing us to Farkas' Lemma.

\bigskip
 
\bibliographystyle{plain}
\bibliography{qdp}

\end{document}